\Crefname{appendix}{Appendix}{Appendices}
\crefname{appendix}{Appendix}{Appendices}
\theoremstyle{plain}
\newtheorem{theorem}{Theorem}
\newtheorem{lemma}{Lemma}
\newtheorem{corollary}{Corollary}
\newtheorem{proposition}{Proposition}
\newtheorem{remark}{Remark}
\newtheorem{condition}{Condition}
\def\ba{{\boldsymbol a}}
\def\be{{\boldsymbol e}}
\def\bg{{\boldsymbol g}}
\def\bh{{\boldsymbol h}}
\def\bq{{\boldsymbol q}}
\def\bu{{\boldsymbol u}}
\def\bv{{\boldsymbol v}}
\def\bx{{\boldsymbol x}}
\def\by{{\boldsymbol y}}
\def\bT{{\boldsymbol T}}
\def\bX{{\boldsymbol X}}
\def\bZ{{\boldsymbol Z}}
\def\beps{{\boldsymbol \eps}}
\def\bxi{{\boldsymbol \xi}}
\def\btheta{{\boldsymbol \theta}}
\def\bbeta{{\boldsymbol \beta}}
\def\bSigma{{\boldsymbol \Sigma}}
\newcommand{\eps}{\varepsilon}
\def\noverp{\delta}
\def\noverpmass{\chi}
\crefname{theorem}{theorem}{theorems}
\Crefname{theorem}{Theorem}{Theorems}
\crefname{lemma}{lemma}{lemmas}
\Crefname{lemma}{Lemma}{Lemmas}
\crefname{proposition}{proposition}{propositions}
\Crefname{proposition}{Proposition}{Propositions}
\crefname{corollary}{corollary}{corollaries}
\Crefname{corollary}{Corollary}{Corollaries}
\crefname{definition}{definition}{definitions}
\Crefname{definition}{Definition}{Definitions}
\crefname{assumption}{assumption}{assumptions}
\Crefname{assumption}{Assumption}{Assumptions}
\crefname{condition}{condition}{conditions}
\Crefname{condition}{Condition}{Conditions}
\title{Regularization Using Synthetic Data for High-Dimensional Inference}
\author{
 Weihao Li \\
  National University of Singapore\\
   \texttt{weihao.li@u.nus.edu}
   \And
   Dongming Huang \\
   National University of Singapore\\
   \texttt{stahd@nus.edu.sg}
}
\begin{document}
\maketitle

\begin{abstract}
To address the challenges of obtaining reliable inference in high-dimensional models, we introduce the Synthetic-data Regularized Estimator (SRE). Unlike traditional regularization methods, the SRE regularizes the complex target model via a weighted likelihood based on synthetic data generated from a simpler, more stable model. This method provides a theoretically sound and practically effective alternative to parameter penalization. We establish key theoretical properties of the SRE in generalized linear models, including existence, stability, consistency, and minimax rate optimality.
 We leverage the Convex Gaussian Min-max Theorem to derive precise asymptotic characterizations in high-dimensional linear regimes where $n/p \to \delta > 0$, both for noninformative synthetic data and for informative auxiliary data in a transfer learning setting. Our asymptotic results characterize how performance depends on the signal strength and the similarity between target and auxiliary data sources.
Building upon these results, we develop practical methodologies for high-dimensional inference, including tuning parameter selection, confidence interval construction, and calibrated variable selection. The effectiveness of the SRE is demonstrated through simulation studies and real-data applications.
\end{abstract}

\keywords{ synthetic data   \and regularization \and high-dimensional inference \and exact asymptotics \and generalized linear models}

\section{Introduction}

\label{sec:introduction}

A pervasive challenge in modern data analysis is making reliable statistical inferences from high-dimensional datasets where the number of variables ($p$) is comparable to or larger than the number of observations ($n$).
 In such cases, standard methods like maximum likelihood estimation (MLE)  can become unstable or biased, leading to unreliable inferences.
 For example, in logistic regression, the MLE may not exist or may be biased with high variability when the dimension is comparable to the sample size. Under an asymptotic setting where the ratio $n/p$ approaches a constant, the behavior of the MLE for logistic regression is investigated in \cite{sur2019modern,candes2020phase}. The finite-sample existence of the MLE for logistic regression and its finite-sample properties are investigated in \cite{albert1984existence,firth1993bias,heinze2002solution}.

Penalty-based regularization methods are widely used to control model complexity by directly imposing a penalty function on the parameter vector \citep{wainwright2014structured}. Classical examples include ridge \citep{hoerl1970ridge}, LASSO \citep{tibshirani1996regression}, SCAD \citep{fan2001variable}, group LASSO \citep{yuan2006model}, MCP \citep{zhang2010nearly}, and others.
To address the bias introduced by penalization, a line of work on debiased estimators and post-selection inference has been developed \citep{zhang2014confidence,van2014asymptotically,javanmard2014confidence,lee2016exact}.
Although these methods work well when their underlying assumptions (such as sparsity) hold, they face several challenges in practice. For instance, penalty-based methods often require specialized optimization algorithms, can be highly sensitive to the scaling of the parameters, and become unreliable when the underlying assumptions fail.

\subsection{Synthetic-data regularization and catalytic priors}\label{sec:intro_catalytic}

In response to the limitations of these existing methods, we introduce the Synthetic-data Regularized Estimator (SRE),
a novel frequentist regularization technique that rethinks the regularization mechanism.
Instead of penalizing parameters directly, the SRE regularizes a complex target model by supplementing the observed data with weighted synthetic data generated from a fitted simpler model.

Let $\mathcal{D}$ denote the observed dataset and let $L(\btheta;\mathcal{D})$ be the likelihood function of the target model with the parameter $\btheta\in \mathbf{\Theta}$.
Suppose that we have already generated a synthetic dataset $\mathcal{D}^{*}$ of size $M$ and the likelihood function of the target model based on $\mathcal{D}^{*}$ is denoted by $L(\btheta;\mathcal{D}^*)$.
We define the SRE as the maximizer of the weighted likelihood
\begin{equation}\label{eq:general_SRE}
\widehat{\btheta}=\underset{\btheta\in \mathbf{\Theta}} {\arg\max} \left[ L(\btheta;\mathcal{D})L(\btheta;\mathcal{D}^*)^{\frac{\tau}{M}} \right],
\end{equation}
where $\tau$ is a positive tuning parameter.
The downweighted likelihood based on synthetic data $L(\btheta;\mathcal{D}^*)^{\frac{\tau}{M}}$ can be viewed as a data-centric regularizer.

The SRE is closely related to the catalytic prior method for Bayesian prior specification \citep{huang_catalytic_2020}.
Given a synthetic dataset $D^*$, the catalytic prior takes the form $\pi(\btheta) \propto L(\btheta; D^*)^{\tau / M}$.
Under this prior, the posterior mode coincides with the maximizer in \eqref{eq:general_SRE}.
Unlike Bayesian inference that relies on posterior distributions for estimation and uncertainty quantification, our focus is on developing frequentist estimation and inference methods for the SRE, including confidence interval construction, variable selection, and tuning parameter selection.

The SRE can be defined broadly for general likelihood-based models as in \eqref{eq:general_SRE}. For the sake of concreteness and tractability, in this paper we develop theory and methods for generalized linear models (GLMs).
We present the logistic case in the main paper as a canonical example and provide detailed extensions to other GLMs in \Cref{supp:extension_GLM_section}.

\subsection{Connections to existing approaches}\label{sec:connections-existing}

Penalty-based regularization methods leverage structural assumptions like sparsity or smoothness to enhance statistical and computational efficiency \citep{bickel2006regularization,wainwright2014structured}.
They are preferred when those structural assumptions hold.
Our synthetic-data regularization complements them because it does not rely on specific structural assumptions, which makes it beneficial when such assumptions are questionable.
Connections between catalytic priors and Ridge, LASSO, and elastic net are explored in \citet[Section 4]{huang2022catalytic}.

There are also several existing approaches to achieving regularization without explicit penalties. Below we briefly review their ideas and their differences from the SRE method.

\paragraph{Borrowing from related data}

When auxiliary data are available, a number of existing approaches borrow information from related datasets. In the Bayesian literature, such data are often called historical data and are incorporated through power priors \citep{chen2000power}. In transfer learning, they are often called source data and are used to improve performance on a target task \citep{torrey2010transfer}.

In Bayesian inference, the power prior \citep{chen2000power} incorporates historical data $\mathcal{D}_0$ through a prior proportional to $L(\btheta;\mathcal{D}_0)^{a_0}\pi_0(\btheta)$, where $a_0 \in [0,1]$ controls the degree of borrowing and $\pi_0(\cdot)$ is a baseline prior. When the baseline prior is flat, the posterior mode under this power prior is given by
\begin{equation}\label{eq:power-mode}
\widehat{\boldsymbol{\btheta}}_{\mathrm{pow}}
=
\arg\max_{\btheta\in \mathbf{\Theta}}
\left\{
\log L(\boldsymbol{\btheta};\mathcal{D})
+
a_0\log L(\boldsymbol{\btheta};\mathcal{D}_{0})
\right\}.
\end{equation}
The SRE coincides with this construction if the synthetic dataset $D^*$ is replaced by historical data $D_0$, and $\tau/M = a_0$.
Consequently, the theory developed for the SRE also applies to this posterior mode estimator. In \Cref{sec:exact_asymptocis_infor}, we study this estimator in the linear proportional regime and derive a precise asymptotic characterization, which is used to develop high-dimensional inference tools.
Power priors are Bayesian tools for incorporating genuine historical data, with emphasis on prior specification and posterior analysis. In contrast, the SRE uses synthetic data generated from a simpler model, and we study the resulting estimator from a frequentist perspective.

The weighted source-target objective in \Cref{eq:power-mode} is also closely related to weighted empirical risk minimization in transfer learning. For example, \cite{ben2010theory} study domain adaptation for binary classification and analyze estimators that minimize a convex combination of the empirical target and source losses, for which they derive VC-type upper bounds on the target prediction error. The formulation in \Cref{eq:power-mode} has the same weighted source-target structure, but it is likelihood-based and tailored to parametric models.
Furthermore, our focus is on regularized estimation and statistical inference for parametric models while weighted empirical risk minimization concerns generalization performance.

For GLMs, \cite{hector2024turning} propose an estimator similar to \Cref{eq:power-mode}, but they do not use the source responses directly. Instead, they first fit a GLM to the source data and then replace the source responses in the weighted likelihood term of \Cref{eq:power-mode} by the fitted means. Their estimator coincides with the SRE in the special case where the synthetic covariates are resampled from the empirical distribution of the source covariates, the synthetic responses are generated from the fitted source GLM, and $M\to\infty$.
Beyond this connection, \cite{hector2024turning} study fixed-dimensional inference, whereas we develop a broader framework with high-dimensional theory.

\paragraph{Data augmentation}

In the absence of suitable auxiliary data, data augmentation and feature noising can be interpreted as regularization \citep{matsuoka1992noise,bishop1995training,rifai2011adding};
for an overview of data augmentation methods in machine learning, see \citet{shorten2019survey}.
For GLMs, \cite{wager2013dropout} show that dropout and additive feature corruption schemes induce a label-free quadratic penalty that behaves like an $L_2$-regularizer scaled by the diagonal Fisher information, and
\cite{li2022adaptive} propose a framework that iteratively generates parameter-dependent noisy data so that the augmented loss approximates a pre-specified penalty such as lasso and SCAD.
However, both works rely on second-order Taylor expansions of the loss, which are exact only for linear regression models, and neither develops statistical inference in high-dimensional regimes where the number of parameters is comparable to or larger than the sample size.

More generally, the SRE method differs from data augmentation in how synthetic data are generated and how they are used.
Data augmentation aims to improve prediction accuracy by increasing the diversity of training samples; it typically generates synthetic data via geometric transformations, noise injection, interpolation, or generative models, and then treats them as additional training data. In contrast, the SRE uses synthetic data generated from simpler models to regularize complex models. It combines real and synthetic data through a weighted likelihood, where the synthetic-data weight plays the role of a regularization parameter, and it is designed to improve estimation and uncertainty quantification for model-based statistical inference.

\subsection{Review of synthetic data}
The term synthetic data is broad, and its meaning depends on the role played by the generated data.
Besides the uses in machine learning discussed in \Cref{sec:connections-existing}, prominent lines of work in statistics include both synthetic datasets released for external use and synthetic data constructed internally as part of a statistical procedure. These lines of work have different goals and use synthetic data in different ways. We briefly recall these two statistical perspectives in order to clarify the scope of the present paper.

A major use of synthetic data in statistics arises in disclosure control and public-use data release. \citet{rubin1993statistical} proposed releasing synthetic microdata in place of the original confidential records, and subsequent work developed inferential procedures for public-use synthetic datasets that account for the uncertainty introduced by synthesis \citep{Reiter2002SyntheticDataSets,RaghunathanReiterRubin2003Disclosure,reiter2005releasing}. Recent reviews place this literature in a broader statistical framework and emphasize synthetic data as a tool for widening access to sensitive data while supporting statistical analysis that accounts for synthesis uncertainty \citep{Raghunathan2021SyntheticData,DrechslerHaensch2024ThirtyYears}. This line of work studies synthetic data as externally released surrogates for confidential records, which is not the goal of the present paper.

Closer to the present paper is a line of work in which synthetic or imaginary data are introduced internally for prior construction or inferential stabilization.
In the field of prior specification for Bayesian inference, conditional means priors have been proposed to incorporate additional synthetic data derived from experts' knowledge \citep{bedrick1996new,bedrick1997bayesian}, and expected-posterior priors average posterior distributions over imaginary training samples drawn from a predictive distribution \citep{iwaki1997posterior,perez2002expected,neal2001transferring}.
More recently, catalytic priors generate synthetic observations from a fitted simpler model and incorporate them through a down-weighted likelihood \citep{huang_catalytic_2020}. The SRE adopts this internal-use perspective, but our focus is on regularized frequentist estimation and inference, rather than on public release of synthetic data or on Bayesian posterior inference.

\subsection{Contributions, organization, and notation}
Our work establishes the synthetic-data regularization as a theoretically sound and practically powerful frequentist method. Specifically, we achieve the following:

\begin{enumerate}
    \item We show that the SRE can be constructed to exist
    even when the MLE does not exist, and we demonstrate that the SRE is stable against the randomness in the synthetic data.

  \item We establish that, over the asymptotic regimes covered by our theory, the SRE achieves the estimation error rate
    $\min(p/n, 1)$, which is minimax optimal. This shows that incorporating
    synthetic data does not degrade performance. In particular, the SRE is
    consistent when $p/n \to 0$.
    
    \item
    We characterize the precise asymptotic behavior of the SRE when $n/p \to \delta>0$. Our analysis covers both noninformative synthetic data and informative auxiliary data (as formulated in \eqref{eq:power-mode}). The resulting formulas show how the limiting performance of the SRE depends on signal strength and regularization level and, in the informative case, on the degree of similarity between data sources.
    
     \item Building on the precise asymptotic theory, we develop practical methods for estimating the signal strength and the similarity between data sources. We leverage these asymptotic results to design SRE-based confidence intervals and variable selection strategies that remain effective even in scenarios where MLE fails to exist.
     
\end{enumerate}

The paper is structured as follows.
\Cref{sec:sec:SRE_GLM} introduces the construction of the SRE for GLMs.
\Cref{sec:properties} establishes theoretical properties of the SRE.
\Cref{sec:linear_asymptotic_regime} characterizes the asymptotic behavior of the SRE as $p$ and $n$ diverge linearly.
 We develop practical methodologies and conduct numerical studies in \Cref{sec:adjust_inference}.
Finally, \Cref{sec:discussion} concludes with a discussion of our findings and future directions.
For ease of exposition, we present the theory for logistic regression while deferring extensions to other GLMs, proofs, and additional numerical studies to Section~\ref{supp:extension_GLM_section}, \ref{supp:sec:proof}, and~\ref{supp:sec:numerical}, respectively.

Hereafter, we use $\|\boldsymbol{v}\|_q$ for the $\ell_q$ norm of a vector $\boldsymbol{v}$,
$\|B\|_{\text {op }}$ for the operator norm of a matrix $B$,
and $\lambda_{\min }(A)$ and $\lambda_{\max}(A)$ for the smallest and largest eigenvalues of a positive definite matrix $A$, respectively.
For a positive integer $n$, $[n]$ denotes the set $\{1, 2, \dots, n\}$.
We write the indicator of a statement $\mathcal{E}$ as $\mathbf{1}\{\mathcal{E}\}$, and the point mass at $x$ as $\chi_x$.
We write $\rightsquigarrow$ for weak convergence and $\xrightarrow{\mathbb{P}}$ for convergence in probability.  For any function $\rho:\mathbb{R}\mapsto\mathbb{R}$ and any positive scalar $\gamma$, we define the proximal operator as $\operatorname{Prox}_{\gamma \rho(\cdot)}(x):=\arg\min_{t\in \mathbb R} \left[\rho(t)+\frac{1}{2\gamma}(x-t)^2\right]$.
The sub-gaussian norm of a random variable $W$ is defined as $\|W\|_{\psi_2}=\sup _{t \geq 1} t^{-1 / 2}\left(\mathbb{E}|W|^t\right)^{1 / t}$ and the sub-gaussian norm of a $p$-dimensional random vector $\boldsymbol{W}$ is defined as $\|\boldsymbol{W}\|_{\psi_2}=\sup_{x \in S^{p-1}}\|\langle \boldsymbol{W}, x\rangle\|_{\psi_2}$.
For sequences $\{a_k\}$ and $\{b_k\}$, we write $a_k\lesssim b_k$ if there exists some constant $C$ such that $a_k\leq C b_k$.
We write $a_k\asymp b_k$ if $a_k\lesssim b_k$ and $b_k\lesssim a_k$.

\section{Synthetic-data regularization in GLMs}
\label{sec:sec:SRE_GLM}

In this section we specialize the general SRE construction from \Cref{sec:intro_catalytic} to GLMs and denote the regression parameter by $\bbeta$ instead of the general notation $\btheta$.
We first specify the GLM setup and define the SRE, then give an interpretation in terms of an expected KL penalty, and finally describe how synthetic data are generated.
Rigorous properties of $\widehat{\bbeta}_M$ are developed in \Cref{sec:properties,sec:linear_asymptotic_regime}.

Let $\mathcal{D}=\left\{\left(Y_i, \boldsymbol{X}_i\right)\right\}_{i=1}^n$ be $n$ independent pairs of observed data, where $Y_i$ is a response and $\boldsymbol{X}_i$ is a $p$-dimensional covariate vector.
The GLM assumes that the conditional density of $Y_i$ given $\bX_i$ w.r.t. some base measure is
$$
{f}\left(y \mid \boldsymbol{X}_i, \bbeta\right)=\exp\left[y\eta_i-\rho(\eta_i)\right], \quad \eta_i=\boldsymbol{X}_i^\top  \bbeta,
$$
where $\bbeta\in \mathbb{R}^p$ is the model parameter and $\rho(\cdot)$ is the log-partition function.
The likelihood derived from observed data is
\begin{equation*}
    L(\bbeta; \mathcal{D})=\prod_{i=1}^n f\left(Y_i \mid \boldsymbol{X}_i, \bbeta\right)=\exp \left(\sum_{i=1}^n\left[Y_i \boldsymbol{X}_i^{\top} \bbeta-\rho\left(\boldsymbol{X}_i^{\top} \bbeta\right)\right]\right).
\end{equation*}

When $p$ is comparable to or larger than $n$, the MLE may fail to exist or be unstable, so we seek a regularized estimator for $\bbeta$ based on synthetic data.

\subsection{SRE for GLM}

Let $\mathcal{D}^*=\left\{\left(Y_i^*, \boldsymbol{X}_i^*\right)\right\}_{i=1}^M$ be a synthetic dataset, generated from a simpler model that is stably fitted using the observed data; concrete generation schemes are given in \Cref{sec:synthetic_generation}.
Given $\mathcal{D}^*$, the likelihood function derived from the synthetic data is
\begin{equation}\label{eq: synthetic-likelihood}
  L(\bbeta; \mathcal{D}^*)=\prod_{i=1}^{M}f\left(Y^*_i \mid \boldsymbol{X}^*_i, \bbeta\right)
=\exp\left(\sum_{i=1}^M\left[Y^*_i{\boldsymbol{X}_i^*}^\top \bbeta-\rho\left({\boldsymbol{X}_i^*}^\top\bbeta\right)\right]\right).
\end{equation}
We combine the observed and synthetic data through the weighted likelihood $L(\bbeta; \mathcal{D}) L(\bbeta; \mathcal{D}^*)^{\tau/M}$
so the synthetic data contribute total weight $\tau$ regardless of $M$.
This leads to our construction of the Synthetic-data Regularized Estimator (SRE), which is the maximizer of the weighted sum of the observed and synthetic log-likelihoods:
\begin{equation}\label{eq: SRE_def}
    \widehat{\bbeta}_{M} = \arg \max _{\bbeta \in \mathbb{R}^p} S_{M}(\bbeta)
\end{equation}
where
\begin{equation}\label{eq:SM_def}
\begin{aligned}
S_{M}(\bbeta)  & =  \log L(\bbeta; \mathcal{D}) + \frac{\tau}{M} \log L(\bbeta; \mathcal{D}^*) \\
& = \sum_{i=1}^n \left[Y_i\boldsymbol{X}_i^\top \bbeta-\rho\left(\boldsymbol{X}_i^\top \bbeta\right)\right]+\frac{\tau}{M}\sum_{i=1}^M \left[Y^*_i{\boldsymbol{X}_i^*}^\top \bbeta-\rho\left({\boldsymbol{X}_i^*}^\top\bbeta\right)\right].
\end{aligned}
\end{equation}

Because the SRE is defined through a weighted likelihood, it can be computed using standard software by augmenting the observed data with weighted synthetic observations.
Its effortless implementation contrasts with other regularization methods that demand specialized optimization algorithms.

The invariance, existence, and uniqueness of $\widehat{\bbeta}_M$ are investigated in \Cref{sec:uniqueness_MAP_finite_M}.

\subsection{Data-centric regularization and large-$M$ limit}\label{sec:EKL}

Unlike traditional regularization schemes that impose an explicit penalty on $\bbeta$, the SRE regularizes through a weighted synthetic likelihood: the synthetic dataset encodes a simpler data-generating mechanism, and the weight $\tau/M$ controls how strongly the estimator is pulled toward model parameters whose predictions match this data-generating mechanism.

To understand the nature of this regularization mechanism, let $Q$ be the joint distribution used to generate each synthetic data pair $(\bX^*, Y^*)$.
We call the associated conditional distribution of $Y^*$ given $\bX^*$
\textit{the synthetic response generating distribution} and suppose it has a density $g\left(\cdot \mid \bX^*\right)$ w.r.t. the same base measure as $f\left(y \mid \boldsymbol{X}, \bbeta\right)$.
We define the \textit{expected KL divergence} between the synthetic data generation and the target model at $\bbeta$ as
$$
\text{EKL}(\bbeta)=\mathbb{E}_Q\left[ \log \frac{g( Y^* \mid \bX^*) }{f(Y^* \mid \bX^*,\bbeta)}\right]=\mathbb{E}_{X^*}\left[\operatorname{KL}\left(g\left(\cdot \mid X^*\right) \| f\left(\cdot \mid X^*, \bbeta\right)\right)\right],
$$
where the expectation $\mathbb{E}_Q$ is taken w.r.t. $Q$ and $\mathbb{E}_{X^*}$ w.r.t. the marginal distribution of $X^*$. Here the Kullback-Leibler divergence is given by
\[
  \mathrm{KL}\big(g(\cdot \mid \bX^*) \,\|\, f(\cdot \mid \bX^*, \bbeta)\big)
  = \mathbb{E}_{g}
    \bigg[
      \log \frac{g(Y^* \mid \bX^*)}{f(Y^* \mid \bX^*, \bbeta)}
    \bigg],
\]
where the expectation $\mathbb{E}_{g}$ is taken w.r.t. the conditional distribution of $Y^*$ given $\bX^*$.

Recall the likelihood based on the synthetic data defined in \eqref{eq: synthetic-likelihood} and assume that the synthetic observations $\{(X_i^*, Y_i^*)\}_{i=1}^M$ are i.i.d. samples from $Q$.
For each fixed $\bbeta$, the law of large numbers implies that
$$
\begin{aligned}
\frac{1}{M} \log L(\bbeta; \mathcal{D}^*)
~ = & ~ \frac{1}{M}\sum_{i=1}^M \log f(Y_i^*\mid \bX^*_i,\bbeta) \\
\stackrel{\text{a.s.}}{\rightarrow} & ~~ \mathbb{E}_{Q}
    \bigg[
      \log f(Y^* \mid \bX^*, \bbeta)
    \bigg] ~= ~ C- \operatorname{EKL}(\bbeta), \qquad M\rightarrow \infty,
\end{aligned}
$$
where $C=\mathbb{E}_Q\big[\log g(Y^* \mid \bX^*)\big]$ does not depend on $\bbeta$.
It follows that for large $M$, we have
$$
S_{M}(\bbeta) \approx S_\infty(\bbeta) := \log L(\bbeta; \mathcal{D})  - \tau \, \mathrm{EKL}(\bbeta) + \tau C.
$$
Thus, for large $M$, maximizing $S_{M}(\bbeta)$ is approximately equivalent to maximizing the penalized likelihood
\begin{equation}
  \label{eq:EKL_penalized}
  \log L(\bbeta; \mathcal{D})  - \tau \, \mathrm{EKL}(\bbeta) ,
\end{equation}
where the term $ - \tau \, \mathrm{EKL}(\bbeta)$ acts as a regularizer that penalizes model parameters with large expected KL divergence from the synthetic data generating mechanism.

The above large-$M$ limit holds for each fixed $\bbeta$ rather than uniformly over $\bbeta$, and it does not quantify how $M$ affects the SRE in finite samples.
In particular, it is of interest to see how $\widehat{\bbeta}_M$ differs from the \textit{population Synthetic-data Regularized Estimator} (pSRE) defined as
\begin{equation}
\label{cat_betahat_Minfty}
\begin{aligned}
\widehat{\bbeta}_{\infty}&=\arg\max_{\bbeta \in \mathbb{R}^p} S_\infty(\bbeta) \\
&=\arg \max _{\bbeta \in \mathbb{R}^p} \left\{\sum_{i=1}^n \left[Y_i\boldsymbol{X}_i^\top \bbeta-\rho\left(\boldsymbol{X}_i^\top \bbeta\right)\right]+\tau \mathbb E\left[Y^*\boldsymbol{X}^{*\top}\bbeta-\rho(\boldsymbol{X}^{*\top}\bbeta)\right] \right\},
\end{aligned}
\end{equation}
where the expectation is taken w.r.t. the synthetic data generation.
The pSRE $\widehat{\bbeta}_{\infty}$ serves as a non-random benchmark for theoretical purposes rather than a practical method.
A detailed analysis on the difference between $\widehat{\bbeta}_{M}$ and $\widehat{\bbeta}_{\infty}$ is provided in \Cref{sec:stability_MAP_finite_M}.

\subsection{Generation of synthetic data and default choices}\label{sec:synthetic_generation}

We briefly describe how we generate the synthetic data $(\bX^*, Y^*)$ in practice.
The general principle is to choose a simpler model that can be stably fitted to the observed data, and then to use this fitted model as the synthetic response generating distribution $g(\cdot \mid \bX^*)$, where $\bX^*$ is generated by resampling from the observed covariates with appropriate modifications.

To streamline the discussions in later sections, we adopt the following default pipeline:

\begin{enumerate}
  \item \emph{Fit a simpler model.}
  Choose a low-dimensional or otherwise stable model for $Y$ given $\bX$, for example, a submodel of the GLM that only includes the intercept or a small subset of predictors.
  Fit this model to the observed data $\mathcal{D}$ and set the predictive distribution as $g(\cdot \mid \bX^*)$.

  \item \emph{Generate synthetic covariates.}
  Generate $\bX_1^*, \dots, \bX_M^*$ from a design distribution that is easy to sample from and compatible with the target model.
  Typical choices include independently resampling covariate coordinates from the observed covariates, or sampling from a multivariate normal distribution with estimated mean and covariance matrix based on the observed covariates.

  \item \emph{Generate synthetic responses.}
  For each synthetic covariate $\bX_i^*$, draw $Y_i^*$ from the fitted simpler model, that is
  \[
    Y_i^* \mid \bX_i^* \sim g(\cdot \mid \bX_i^*),
    \qquad i = 1, \dots, M.
  \]
\end{enumerate}

We refer to \cite{huang_catalytic_2020} for more strategies for generating synthetic data.
We emphasize that the simpler model does not need to be well-specified or closely approximate the target model; for example, the target model can be a high-dimensional linear regression model, whereas the simpler model can be a regression tree with a few predictors.
In our numerical studies, the simpler model is the intercept-only model by default, which results in satisfactory performance.

In practice, users must also choose the synthetic sample size $M$ and the total weight $\tau$. In our numerical studies, we set $M$ proportional to the dimension $p$, for example $M \ge 4p$, and choose $\tau$ either proportional to $p$ or via cross-validation. These choices are justified theoretically in \Cref{sec:properties} and empirically in \Cref{sec:adjust_inference}.

\section{Theoretical properties of SRE}\label{sec:properties}

This section examines the theoretical properties of the SRE.
We will focus on logistic regression with $\rho(t) = \log(1 + e^t)$ and extend to other GLMs in \Cref{supp:extension_GLM_section}.

\subsection{Existence and uniqueness} \label{sec:uniqueness_MAP_finite_M}

\cite{albert1984existence} showed that the MLE in logistic regression does not exist if the dataset $\{(\boldsymbol{X}_i, Y_i)\}_{i=1}^n$ is separable, meaning that there exists a hyperplane that perfectly separates the covariate vectors with $Y_i=0$ from those with $Y_i=1$.
The next result guarantees the SRE exists if the synthetic data are not separable.

\begin{theorem}\label{thm:MAP_uniqueness}

If the synthetic data $\{(\boldsymbol {X}^*_i,{Y}^*_i)\}_{i=1}^M$ are not separable, equivalently,
$$
\max_{\|\be\|=1} \min _i\left(2 Y_i^*-1\right) \boldsymbol {X}_i^{* \top} \be < 0,
$$
and the synthetic covariate matrix has full column rank, then the SRE in \eqref{eq: SRE_def} exists and is unique.

\end{theorem}

\Cref{thm:MAP_uniqueness} guarantees the existence and uniqueness of the SRE for any sample size $n$. In contrast, the MLE often fails when $2p>n$ \citep{candes2020phase}, and the Maximum Diaconis-Ylvisaker prior penalized likelihood (MDYPL) estimator does not exist when $p>n$ \citep{sterzinger2023diaconis}.
\Cref{app:estimation-comparison} includes a high-dimensional example with $p>n$, where the SRE remains feasible while both the MLE and the MDYPL estimator fail to exist. This illustrates a practical advantage of the SRE in high-dimensional settings.

The condition in \Cref{thm:MAP_uniqueness} is numerically verifiable.
Furthermore, since we have full control over synthetic data generation, it can always be achieved by choosing the synthetic generation scheme.

\begin{proposition}[Equivariance under reparametrization]\label{prop:invariance}
For any bijective function $\vartheta$ of $\bbeta$, the SRE of
$\vartheta(\bbeta)$ is $\vartheta(\widehat{\bbeta}_{M})$.
\end{proposition}
\Cref{prop:invariance} follows directly from the fact that the SRE maximizes a weighted likelihood, and it holds for any parametric model, not only for GLMs.
This property is desirable because the regularization induced by the synthetic data is not tied to a particular coordinate system. Therefore, routine transformations such as rescaling covariates or recoding categorical variables yield the corresponding transformed estimator. In contrast, penalties based on the $\ell_1$ or $\ell_2$ norm are written directly in the parameter coordinates, so estimators such as ridge and lasso generally are not equivariant under such transformations.

\subsection{Regularity conditions}
To apply \Cref{thm:MAP_uniqueness}, it is of interest to study the weighted likelihood based on synthetic data-generating distributions.
\cite{huang_catalytic_2020} study the properness of catalytic priors for synthetic-covariate generating distributions that are \textit{norm-recoverable}, which means
$$\forall \bbeta \in \mathbb{R}^p, \quad \mathbb{E}\left|\bbeta^{\top} \boldsymbol{X}^* \right| \geq c_*\|\bbeta\|, \quad \frac{1}{M} \sum_{i=1}^M\left| \bbeta^{\top} \boldsymbol{X}^* \right| \geq c_*^\prime\|\bbeta\|$$
holds for some constants $c_*$ and $c_*^\prime$.
They show that if the coordinates of $\widetilde{\boldsymbol{X}}^*$ are independent and uniformly bounded, then the synthetic-covariate generating  distribution is norm-recoverable.
However, such a sufficient condition is too restricted. To relax it, we introduce the following condition for synthetic data.

\begin{condition}[Synthetic covariates and responses]
\label{conditions:synthetic_X_Y}
Conditional on the observed data $D$, the synthetic pairs $(X_i^*, Y_i^*)$ are i.i.d. copies of $(\boldsymbol{X}^*, Y^*)$.
Write the synthetic covariate vector $\boldsymbol{X}^* \in \mathbb{R}^p$  as
$\boldsymbol{X}^*=(1, \widetilde{\boldsymbol{X}}^{*\top})^\top$, where the first coordinate corresponds to the intercept term and
$\widetilde{\boldsymbol{X}}^* \in \mathbb{R}^{p-1}$ is the stochastic component.
The pair $(\widetilde{\boldsymbol{X}}^*, Y^*)$ satisfies the following conditions:
\begin{enumerate}
\item[(C1)] (Centering) $\mathbb{E}\bigl(\widetilde{\boldsymbol{X}}^*\bigr)=\mathbf{0}$.

\item[(C2)] (Covariance) Let
$\boldsymbol{\Sigma}^* := \mathbb{E}\bigl(\widetilde{\boldsymbol{X}}^* \widetilde{\boldsymbol{X}}^{*\top}\bigr)$.
There exist constants $\kappa_- ,\kappa_+>0$ such that
$$
\begin{aligned}
\kappa_- \leq \lambda_{\min}(\boldsymbol{\Sigma}^*) \leq \lambda_{\max}(\boldsymbol{\Sigma}^*) \leq \kappa_+ .
\end{aligned}
$$

\item[(C3)] (Sub-gaussian tail)
There exists a constant $K_X>0$ such that for
every vector
$\boldsymbol{u}\in \mathbb{R}^{p-1}$,
$$
\begin{aligned}
\|\boldsymbol{u}^{\top}\widetilde{\boldsymbol{X}}^*\|_{\psi_2} \leq K_X \|\boldsymbol{u}^{\top}\widetilde{\boldsymbol{X}}^*\|_{L^2}.
\end{aligned}
$$

\item[(C4)] (Clipped synthetic response probabilities) There exists a fixed $q\in(0,1/2]$ such that
$$
\begin{aligned}
\mathbb{P}\bigl(Y^*=1\mid \boldsymbol{X}^*,\mathcal{D}\bigr)\in[q,1-q].
\end{aligned}
$$
\end{enumerate}
\end{condition}
\Cref{conditions:synthetic_X_Y} is mild and can always be satisfied since we have full control over the generation of synthetic data.
(C1) can always be assumed for convenience since the SRE is invariant to reparametrization.
(C2) requires the stochastic components to have well-conditioned covariances, while (C3) imposes a sub-gaussian tail bound on all linear combinations of $\widetilde{\boldsymbol{X}}^*$.
These requirements are satisfied, for instance,
by Gaussian synthetic covariates with well-conditioned covariance, or by independently resampling observed covariate coordinates after coordinate-wise truncation with bounded truncation levels.
(C4) is also mild and can always be satisfied.
For example, if we generate synthetic responses independently from a symmetric Bernoulli distribution, the condition is satisfied with $q=0.5$.

\begin{proposition}\label{prop:properties_synthetic_X}
    Under \Cref{conditions:synthetic_X_Y}, the following statements hold:
    
    \begin{enumerate}
    
        \item For all $t\ge 0$,
$$
\begin{aligned}
\left\| \frac{1}{M}\sum_{i=1}^{M} \boldsymbol{X}_i^*\boldsymbol{X}_i^{*\top } \right\|_{op}
 \leq \left\{1 + \kappa_{+}^{1/2}\left[1+C K_X^2(\sqrt{(p-1)/M}+t)\right] \right\}^2
\end{aligned}
$$
holds with probability at least $1-2\exp(-M t^2)$ where $C>0$ is a universal constant.

\item There exist positive constants $\rho_0$, $\eta_0$, and $r_0$ that only depend on $(\kappa_-,\kappa_+,  K_X)$ such that for every $\bbeta\in\mathbb{R}^p$ with $\|\bbeta\|_2=1$,
\begin{equation}\label{eq:main-spread}
    \begin{aligned}
\mathbb{P}\left(\left|\boldsymbol{X}^{*\top}\bbeta\right|>\eta_0\right)\geq \rho_0.
\end{aligned}
\end{equation}

Furthermore, if $M\geq r_0 p$, then with probability at least $1-2e^{-M \min(1, ~~ \rho_0^2/4) }$,
the synthetic covariate matrix $\mathbb{X}^*$ has full column rank and
$$
\inf _{\|\bbeta\|=1} \frac{1}{M} \sum_{i=1}^M\left|\boldsymbol{X}_i^{*\top} \bbeta\right| \geq \frac{\eta_0 \rho_0}{4}.
$$
\item There exist positive constants $r_1$ and $c_1$ depending only on $q$,
    such that if $M\geq r_1 p$, then the synthetic data $\{(\boldsymbol {X}^*_i,{Y}^*_i)\}_{i=1}^M$ are not separable with probability at least $1-2e^{-c_1 M}$.
\end{enumerate}
\end{proposition}

In \eqref{eq:main-spread}, $\eta_0$ is a margin level and $\rho_0$ is the probability mass beyond that margin, so together they quantify that the synthetic covariates are sufficiently spread out and not concentrated near any hyperplane.

\Cref{prop:properties_synthetic_X} guarantees that under \Cref{conditions:synthetic_X_Y}, if the ratio $M/p\geq \max(r_0, r_1)$, then the condition in \Cref{thm:MAP_uniqueness} holds with high probability.

\subsection{Consistency of SRE when $p$ diverges}\label{sec:Consistency}
This section establishes the consistency of the SRE in the regime where the dimension $p$ can diverge to infinity with $p = o(n)$.
We begin with the following conditions on the true regression coefficients and the observed covariates.

\begin{condition}
\label{condition:constant_signal}
The true coefficient vector $\bbeta_0$ satisfies  $\|\bbeta_0\|_2\leq C_3$.
\end{condition}
\begin{condition}\label{condition:moment_X_bound}
    $\mathbb E\left(\|\bX_i\|^2_2  \right)\leq C_2 p$ for all $i\in \{1,2,\cdots n\}$.
\end{condition}

\begin{condition} \label{condition:SubgaussianX}
There exist positive constants $c_1$, $c_2$, $\zeta$, and $N_0$ such that for any $n>N_0$ and any subset $S\subseteq \{1,2,\cdots,n\} $ with $|S|\geq (1-\zeta) n$, the following inequality holds:
        $$c_1 |S|\leq \lambda_{\min}\left(\sum_{i\in S} \bX_i\bX_i^\top \right)\leq \lambda_{\max}\left(\sum_{i\in S} \bX_i\bX_i^\top \right)\leq c_2 |S| . $$
\end{condition}

\Cref{condition:constant_signal} is a standard regularity condition.
\Cref{condition:moment_X_bound} is a moment condition weaker than common boundedness assumptions in the literature (see, e.g., \cite{portnoy1984asymptotic, liang2012maximum}).
\Cref{condition:SubgaussianX} ensures the Hessian matrix remains well-conditioned when $p$ diverges, which is a mild condition.

\begin{theorem}
	\label{thm:post_mode_consistency}
Consider the estimators $\widehat{\bbeta}_{M}$ defined in \eqref{eq: SRE_def} and $\widehat{\bbeta}_{\infty}$
 defined in \eqref{cat_betahat_Minfty} in logistic regression.
 Suppose $p=o(n)$ and the tuning parameter is chosen such that $\tau \leq C_4 p$ for some fixed constant $C_4<\infty$.
 Under \Cref{conditions:synthetic_X_Y,condition:constant_signal,condition:moment_X_bound,condition:SubgaussianX}, we have
 $$ \|\widehat{\bbeta}_{\infty }-\bbeta_0\|_2^2=O_p\left(\frac{p}{n}\right).$$
 If $p^2=O(Mn)$, then we further have
 $$\|\widehat{\bbeta}_{M}-\bbeta_0\|_2^2=O_p\left(\frac{p}{n}\right).$$

\end{theorem}

\Cref{thm:post_mode_consistency} shows that when $p=o(n)$ and $p=O(M)$, both $\widehat{\bbeta}_M$ and $\widehat{\bbeta}_{\infty}$ converge to $\bbeta_0$ at the rate $O_p\bigl(\tfrac{p}{n}\bigr)$.
This rate matches the minimax lower bound of $O(p/n)$ for the estimation error in GLMs \citep{chen2016bayes} when $p=o(n)$.

The $\tau=O(p)$ requirement in \Cref{thm:post_mode_consistency}  ensures that the synthetic-data regularization does not overwhelm the information in the observed data.

\subsection{Nonasymptotic boundedness} \label{sec:nonasymptotic_bound}

The consistency results in \Cref{sec:Consistency} require $p=o(n)$.
When this requirement is not met, it remains interesting to establish nontrivial bounds on the estimators.
This section establishes that
the SREs remain bounded even when $p$ exceeds $n$.

For the boundedness, we only impose conditions on the synthetic data and the tuning parameter $\tau$, which are both operational.

\begin{condition}
\label{condition:sufficient_regualrization}
$\tau$ is chosen such that $\tau \geq c_* p $ where $c_*$ is any positive constant.
\end{condition}

\Cref{condition:sufficient_regualrization} ensures effective regularization when $p$ is large, which aligns with the principle that models with more parameters require more regularization to prevent overfitting \citep{hastie2009elements}.

\begin{theorem}\label{thm:MAP_bounded}
Consider the estimators $\widehat{\bbeta}_{M}$ defined in \eqref{eq: SRE_def} and $\widehat{\bbeta}_{\infty}$
 defined in \eqref{cat_betahat_Minfty} in logistic regression.
Suppose \Cref{conditions:synthetic_X_Y,condition:sufficient_regualrization} hold and $p > \omega_* n$ for some $\omega_*>0$.

Then, there are some positive constants $\Tilde{c},\Tilde{C}, \overline{C}$ such that the following statements hold:

(i) If $M\geq \Tilde{C}p$, the SRE  $\widehat{\bbeta}_{M}$ satisfies that
$\|\widehat{\bbeta}_{M}\|_2\leq 4\overline{C}$
with probability at least $1-2\exp(-\Tilde{c}M)$.

(ii)  The pSRE $\widehat{\bbeta}_{\infty}$ satisfies that
    $\|\widehat{\bbeta}_{\infty}\|_2\leq \overline{C}
    $.

\end{theorem}

\Cref{thm:MAP_bounded} requires only mild conditions on the synthetic data, which we fully control, and no assumptions on the observed data.
In contrast, the boundedness of MLEs and MDYPL estimators requires stricter conditions, such as $n > p$, normality assumptions on the observed covariates, and a full-rank design matrix \citep{sur2019likelihood, sterzinger2023diaconis}.
These distinctions highlight the robustness and broader applicability of our method.

\Cref{thm:MAP_bounded} reveals that sufficient regularization (i.e., $\tau \geq c_* p$) ensures the norms of the SREs $\|\widehat{\bbeta}_{M}\|_2$ and $\|\widehat{\bbeta}_{\infty}\|_2$ remain bounded.
This regularization condition is compatible with the condition for consistency (i.e., $\tau \leq C_4 p$) in \Cref{thm:post_mode_consistency}: since choosing $\tau \propto p$ satisfies both, this serves as a default choice regardless of the relationship between $p$ and $n$.
Choosing $\tau$ proportional to $p$ also aligns with the empirical recommendation made in \cite{huang_catalytic_2020}.

To illustrate the idea behind the proof of \Cref{thm:MAP_bounded}, we briefly outline how part (ii) can be established.
By \Cref{prop:properties_synthetic_X}, there exist positive constants $\eta_0$ and $\rho_0$ such that for any unit vector $\bu\in \mathbb{R}^p$,
$$
\mathbb{P}\left(\left|\bX^{* \top} \bu\right|>\eta_0\right) \geq \rho_0.
$$
If we define $\nu:=\min (q, 1-q) \rho_0$, we can establish the following coercivity condition:
$$
\mathbb{E} \max \left\{0,-\left(2 Y_i^*-1\right) \bX_i^{* \top} \bbeta \right\} \geq \eta_0 \nu \|\bbeta\|,  \quad \text { for all }\bbeta\in \mathbb{R}^p.
$$
On the other hand, based on the optimality of the objective function in \eqref{cat_betahat_Minfty} and compared with the naive estimator  $\bbeta=\mathbf{0}$, we can obtain
$$\tau \mathbb{E} \max \left\{0,-\left(2 Y^*-1\right) \bX^{* \top} \widehat{\bbeta}_{\infty}\right\} \leq(n+\tau) \log 2.
$$
Combining these two results yields the desired bound on $\|\widehat{\bbeta}_{\infty}\|_2$.

We discuss the implications of  \Cref{thm:MAP_bounded}.
The boundedness property verifies the radius condition required by the stability bound in \Cref{sec:stability_MAP_finite_M}.
Furthermore,  the boundedness property serves as an essential condition for the  high-dimensional exact asymptotic analysis in \Cref{sec:linear_asymptotic_regime}.

Lastly, the boundedness result  implies the following corollary regarding the estimation error.

\begin{corollary}\label{corollary:error_O1}

 Suppose \Cref{condition:constant_signal} and the conditions in \Cref{thm:MAP_bounded} hold. Then, there are positive constants $\Tilde{C}_1$, $\Tilde{C}_2$, and $\Tilde{c}$
 such that
(1) $\|\widehat{\bbeta}_{\infty}-\bbeta_0\|_2^2\leq \Tilde{C}_1,$
 and (2) if $M\geq \Tilde{C}_2 p$, then
	$\|\widehat{\bbeta}_{M}-\bbeta_0\|_2^2\leq \Tilde{C}_1$
with probability at least $1-2\exp(-\Tilde{c}M)$.
\end{corollary}

\Cref{corollary:error_O1} shows the error of the SRE remains bounded even when $p$ grows as fast as or faster than $n$,  unlike the MLE and MDYPL estimator, whose error is unbounded when $p/n$ is large.

\Cref{corollary:error_O1} and \Cref{thm:post_mode_consistency} together imply that, over the asymptotic regimes considered here,
the SRE with $\tau\propto p$ attains the error rate of order $\min(\frac{p}{n}, 1)$, which matches the rate of the minimax lower bound for estimation error in GLMs when no structural assumption is imposed \citep{chen2016bayes}.
This rate can also be attained by other regularized estimators, and it provides a baseline guarantee:
incorporating synthetic data through the SRE, with appropriate tuning, does not worsen the performance.
In \Cref{app:estimation-comparison}, we provide additional simulations to compare the SRE with ridge and Lasso estimators. In the settings considered there, the SRE is competitive with these methods and can improve on them as the dimension increases.

\subsection{Stability against finite $M$}
\label{sec:stability_MAP_finite_M}

A concern with synthetic-data regularization is the potential for instability due to randomness in synthetic data generation.
To address this concern, we examine the impact of the randomness in synthetic data in this section.

As discussed in \Cref{sec:EKL}, the pSRE $\widehat{\bbeta}_{\infty}$ serves as a non-random benchmark, and the computable SRE $\widehat{\bbeta}_{M}$ is expected to converge to $\widehat{\bbeta}_{\infty}$ as $M \to \infty$.
Moreover, when the weight parameter is set as $\tau \asymp p$, \Cref{thm:MAP_bounded,thm:post_mode_consistency} ensure that both estimators $\widehat{\bbeta}_M$ and $\widehat{\bbeta}_{\infty}$ are bounded. This suggests that we can restrict our attention to a compact set to avoid unnecessary technical complications.

Concretely, we define  $\mathcal{B}_K:= \{\bbeta\in \mathbb R^p: \|\bbeta \|_2\leq K\}$ for any $K>0$ and define the constrained estimators
\begin{equation}\label{eq: SRE_BL}
\left\{
\begin{aligned}
\widehat{\bbeta}_{M}^{(K)} &= \arg \max _{\bbeta \in \mathcal{B}_K} S_{M}(\bbeta), \\
\widehat{\bbeta}_{\infty}^{(K)} &= \arg \max _{\bbeta \in \mathcal{B}_K} S_{\infty}(\bbeta).
\end{aligned}
    \right.
\end{equation}
\Cref{thm:MAP_bounded,thm:post_mode_consistency} ensure that under their respective conditions, there is some $K$ such that
$\widehat{\bbeta}_M = \widehat{\bbeta}_{M}^{(K)}$ and $\widehat{\bbeta}_{\infty} = \widehat{\bbeta}_{\infty}^{(K)}$
with probability tending to one.
Therefore, our stability analysis can be conducted on $\widehat{\bbeta}_{M}^{(K)}$ and $\widehat{\bbeta}_{\infty}^{(K)}$ without loss of generality.
In the following, we treat the observed data as fixed and regard the synthetic data as the only source of randomness.

A key ingredient in our stability analysis is that the synthetic-data component supplies
\emph{uniform curvature} of the objective. For GLMs, the log-likelihood is twice differentiable
and the Hessian quantifies the \emph{local sensitivity} of the log-likelihood to perturbations of $\bbeta$.
Equivalently, it measures local identifiability: when the objective is strongly concave near its maximizer, the maximizer is stable under small perturbations of the objective.

For the synthetic likelihood, the (negative) Hessian takes the Fisher-information form
\[
\forall \bbeta\in \mathcal{B}_K, \quad
\left\{
\begin{aligned}
\boldsymbol{H}(\bbeta)
:= &  - \nabla^2 \mathbb{E}_{Q}
    \bigg[
      \log f(Y^* \mid \bX^*, \bbeta)
    \bigg]
=
\mathbb{E}\!\left(\rho^{\prime\prime}(\bX^{*\top}\bbeta)\bX^*\bX^{*\top}\right), \\
\widehat{\boldsymbol{H}}_M(\bbeta)
:= & - \nabla^2 \frac{1}{M} \log L(\bbeta; \mathcal{D}^*)
=
\frac{1}{M}\sum_{i=1}^M \rho^{\prime\prime}(\bX_i^{*\top}\bbeta)\bX_i^*\bX_i^{*\top}.
\end{aligned}
\right.
\]
In logistic regression, $\rho^{\prime\prime}(t)=\rho^\prime(t)[1-\rho^\prime(t)]$, so the weight $\rho^{\prime\prime}(\bX^{*\top}\bbeta)$
is largest when the synthetic responses are most uncertain (probabilities around 0.5), and it becomes small when the linear predictors are extreme (probabilities close to 0 or 1). Thus, the matrix $\widehat{\boldsymbol{H}}_{M}(\bbeta)$ aggregates
directional information $\bX_{i}^{*}\bX_{i}^{*\top}$, weighted by how informative each synthetic observation is at $\bbeta$.

These Hessian matrices enter the stability analysis through the strong-concavity modulus of the full SRE objective:
\begin{equation}\label{eq:hessian-decomposition}
\forall \bbeta\in \mathcal{B}_K, \quad
-\nabla^{2} S_{M}(\bbeta)
=
\underbrace{\sum_{i=1}^{n}\rho^{\prime\prime}(\bX_{i}^{\top}\bbeta)\,\bX_{i}\bX_{i}^{\top}}_{\text{observed information}}
\;+\;
\underbrace{\tau\,\widehat{\boldsymbol{H}}_{M}(\bbeta)}_{\text{synthetic information}},
\end{equation}
where $\tau$ controls the curvature coming from the synthetic term.
The stability analysis relies on the property that this synthetic Fisher information is non-degenerate \emph{uniformly over the region where the
estimator lives}.
Concretely, we have the following result.

\begin{proposition}
\label{prop:strong_convex_SRE}
Suppose \Cref{conditions:synthetic_X_Y} holds and fix any $K>0$.
There exist constants $C_0$ and $c_K$ such that
$$
\inf_{\bbeta\in\mathcal{B}_K}\lambda_{\min}\!\left(\boldsymbol{H}(\bbeta)\right)\ge 2c_K,
$$
and if $M\geq C_0[p+\log(1/\varepsilon)]$, then with probability at least $1-\varepsilon$,
\begin{equation}\label{eq:empirical-hessian-lowerbound}
\inf_{\bbeta\in\mathcal{B}_K}\lambda_{\min}\!\left(\widehat{\boldsymbol{H}}_{M}(\bbeta)\right)\ge c_K.
\end{equation}
\end{proposition}

Intuitively, two features of the synthetic design ensure $c_K>0$:
(i) the synthetic covariates are spread out so that every direction $\bv$ has nontrivial mass in $(\bX^{*\top}\bv)^{2}$,
and (ii) within $\mathcal{B}_K$ the linear predictors $\left|\bX^{*\top}\bbeta\right|$ are not too large,
so $\rho^{\prime\prime}(\bX^{*\top}\bbeta)$ is not systematically near $0$.
An explicit statement and detailed proofs (including a closed-form choice of $c_K$) are given in \Cref{proof_sec:logitic_stability}.

For the observed information in \eqref{eq:hessian-decomposition}, we define
$$
\lambda_{n,K}:=\inf_{\bbeta\in \mathcal{B}_K}\lambda_{\min}\left(\sum_{i=1}^n \rho^{\prime\prime}(\bX_i^\top \bbeta)\bX_i \bX_i^\top\right) ~ \geq 0.
$$
We are now ready to present the stability bound.

\begin{theorem}\label{thm:stability_finite_M}
Suppose that $\tau>0$, $K>0$, and \Cref{conditions:synthetic_X_Y} holds.
There exist constants $C_0$ and $C_1$ depending on $(\kappa_{-},\kappa_{+},K_X)$, and a constant $c_K$ that additionally depends on $K$, such that for any $\epsilon\in (0,1)$,
if $M\geq C_0[p+\log(1/\varepsilon)]$, then the following holds with probability at least $1-\epsilon$ (with respect to the randomness of the synthetic data):
$$
\begin{aligned}
\|\widehat{\bbeta}_M^{(K)}-\widehat{\bbeta}_\infty^{(K)}\|_2
\le
\frac{\tau C_1}{ \lambda_{n,K}+\tau c_K/2 } \sqrt{\frac{p+\log(4/\epsilon)}{M}}.
\end{aligned}
$$
In particular, the inequality can be written as $\|\widehat{\bbeta}_{M}^{(K)}-\widehat{\bbeta}_{\infty}^{(K)}\| \lesssim
 \min(\frac{\tau}{\lambda_{n,K}}, 1) \sqrt{\frac{p+\log(4/\epsilon)}{M}}$.
 
\end{theorem}

\Cref{thm:stability_finite_M} shows that $\|\widehat{\bbeta}_{M}-\widehat{\bbeta}_{\infty}\|_2^{2}$ decays at rate $1/M$.
\Cref{sec:experiment-var-M} numerically illustrates this decay rate in more general settings.
This result suggests that increasing $M$ effectively enhances the stability of the SRE against the randomness of synthetic data.

\textbf{The roles of $\tau$ and $\lambda_{n,K}$}.
In low-dimensional settings where $p=o(n)$, it is typical that $\lambda_{n,K}$ grows linearly with $n$.
If we choose $\tau\propto p$, the upper bound reduces to $O\left( \frac{p^{3/2}}{n \sqrt{M}}\right)=o(\sqrt{p/M})$, so choosing $M \asymp p$ is sufficient to ensure stability.
In high-dimensional settings where $p$ is comparable to or larger than $n$,
$\lambda_{n,K}$ is often near zero and the observed information has limited curvature.
The SRE remains stable in this case: in \eqref{eq:hessian-decomposition}, the synthetic term contributes an additional curvature $\tau c_K$, so the effective strong-convexity modulus grows at least linearly with $\tau$.
This ensures the objective function of the SRE remains strongly concave, and thus the SRE is near the pSRE as long as $p/M$ is small.

 \section{Characterization in the linear asymptotic regime}
\label{sec:linear_asymptotic_regime}

This section studies the behavior of the SRE in the linear asymptotic regime, defined by $\lim n/p =\noverp\in (0, \infty)$.
While consistency is impossible for any estimation method in this setting without additional assumptions such as sparsity, we establish a precise asymptotic characterization of the SRE.
The structure of this section is as follows.
In \Cref{sec:exact_asym_M_finite_non_informative}, we consider the setting where $Y^*_i \mid \boldsymbol{X}^*_i \sim \text{Bern}\left(\rho^{\prime}(\boldsymbol{X}_i^{*\top} \bbeta_s) \right)$ with $\bbeta_s=\textbf{0}$.
\Cref{sec:exact_asymptocis_infor} studies the general case in which $\bbeta_s$ is nonzero and exhibits a nontrivial correlation with the true regression coefficient.
\Cref{sec:numerical_verify} presents numerical experiments that verify the theoretical results in \Cref{sec:exact_asym_M_finite_non_informative,sec:exact_asymptocis_infor}.
Finally, \Cref{sec:road_map_maintext} provides a roadmap for the proofs of the main results in this section.
Throughout this section, we assume there is no intercept term.

\subsection{Precise asymptotics of SRE}
\label{sec:exact_asym_M_finite_non_informative}
In this and the next sections, we will focus on the asymptotic behavior of the SRE under noninformative synthetic data and informative auxiliary data respectively. Informally, in this section,  we demonstrate that
\begin{equation}\label{informal_MAP_linear_asym}
\widehat{\bbeta}_M\stackrel{}{\approx}\alpha_*\bbeta_0+p^{-1/2}\sigma_* \bZ  ,
\end{equation}
where $\bZ$ is a standard normal vector, and $(\alpha_*, \sigma_*)$ are constants that depend on $\noverp$, $\tau$, and the data generation process.
This suggests that asymptotically the SRE  is centered around $\alpha_*\bbeta_0$ with some additive Gaussian noise.
To proceed with rigorous justification, we introduce some scaling parameters and necessary conditions.
\begin{condition}\label{condition:proper_scaling}
The parameters $\tau$ and $M$ scale linearly with $n$ such that $\tau/n = \tau_0$, $M/n= m$, and $p/n= 1/\noverp$ for fixed constants
$\tau_0\in (0,\infty)$, $m\in (0,\infty)$, and $\noverp\in (0,\infty)$.
\end{condition}

\Cref{condition:proper_scaling} is motivated by our previous findings:
as shown in \Cref{sec:Consistency,sec:nonasymptotic_bound}, choosing $\tau$ proportional to $p$ is crucial for achieving optimal rates in estimation;
\Cref{sec:stability_MAP_finite_M} suggests that the estimator is stable for sufficiently large  $M/p$.
This condition also echoes the practical guidelines provided by \cite{huang_catalytic_2020}.

\begin{condition}\label{condition:dist_condition(modify)}
    $\left\{\boldsymbol{X}_i\right\}_{i=1}^n \stackrel{\text { i.i.d. }}{\sim} \mathcal{N}\left(\mathbf{0},  \mathbf{I}_p\right)$, $Y_i\mid \bX_i \sim \text{Bern}\left(\rho^\prime(\bX_i^{\top}  \bbeta_0) \right)$ and    there is a constant $\kappa_1>0$, such that $\lim_{p\rightarrow\infty}\|\bbeta_0\|^2=\kappa_1^2$.
\end{condition}

 \Cref{condition:dist_condition(modify)} imposes a strong condition on the covariate matrix, assuming that its entries are independent standard Gaussian random variables.
 Standard Gaussian design conditions are commonly imposed in the study of precise asymptotics such as those discussed in \Cref{rem: literature on precise asymptotics}.
 Some recent works attempt to relax the standard Gaussian design condition in various settings to allow general covariance structures \citep{zhao2022asymptotic,celentano2023lasso} and replace the normality assumption with moment conditions \citep{el2018impact,han2023universality}.
The independence in \Cref{condition:dist_condition(modify)} is relaxed in \Cref{coro:arbitray_cov_exact_cat_M_MAP_noninformative}.
Furthermore, we expect that it is possible to relax the Gaussian design condition for our result and we provide empirical justification in \Cref{sec:beyond_gaussian_empirical_justification}, which suggests that the same convergence seems to hold if the entries of $\boldsymbol{X}_i$'s are independent with zero mean, unit variance, and a finite fourth moment.
However, the development will be much more complicated than the current work and we leave it for future study.

\begin{condition}\label{condition:non-informative_syn_data}
    $\left\{\boldsymbol{X}^*_i\right\}_{i=1}^M \stackrel{\text { i.i.d. }}{\sim} \mathcal{N}\left(\mathbf{0},  \mathbf{I}_p\right)$ and $\left\{Y^*_i\right\}_{i=1}^M \stackrel{\text { i.i.d. }}{\sim}\text{Bern}(0.5)$.
\end{condition}

\Cref{condition:non-informative_syn_data} essentially assumes $Y^*_i$ are generated under the noninformative coefficients $\bbeta_s=\boldsymbol{0}$, which is always achievable since we have full control over the synthetic data generation.
\Cref{sec:exact_asymptocis_infor} relaxes this condition to allow for general $\bbeta_s$.

To make the statement in \eqref{informal_MAP_linear_asym} rigorous, the constants $\alpha_*$ and $\sigma_*$ therein are taken from the solution to the following system of equations in three variables ($\alpha,\sigma,\gamma$):
\begin{equation}
\label{nonlinear_three_equation(modify)}
\left\{\begin{aligned}
\frac{\sigma^2}{2 \noverp} & = \, \mathbb{E}\left[\rho^{\prime}\left(-\kappa_1 Z_1\right)\left(W-\operatorname{Prox}_{\gamma \rho(\cdot)}\left(W\right)\right)^2 + \frac{m}{2}\left(W-\operatorname{Prox}_{{m^{-1} \gamma \tau_0} \rho(\cdot)}\left(W\right)\right)^2\right] \\
1-\frac{1}{\noverp}  & =\,
 \mathbb{E}\left[\frac{2 \rho^{\prime}\left(-\kappa_1 Z_1\right)}{1+\gamma \rho^{\prime \prime}\left(\operatorname{Prox}_{\gamma \rho(\cdot)}\left(W\right)\right)}-\frac{\gamma \tau_0 \rho^{\prime \prime}\left(\operatorname{Prox}_{{m^{-1} \gamma \tau_0} \rho(\cdot)}\left(W\right)\right)}{1+{m^{-1} \gamma \tau_0} \rho^{\prime \prime}\left(\operatorname{Prox}_{{m^{-1} \gamma \tau_0} \rho(\cdot)}\left(W\right)\right)}\right], \\
-\frac{\alpha}{2 \noverp} & =\,
 \mathbb{E}\left[\rho^{\prime \prime}\left(-\kappa_1 Z_1\right) \operatorname{Prox}_{\gamma \rho(\cdot)}\left(W\right)\right],
\end{aligned}\right.
\end{equation}
where $Z_1$ and $Z_2$ are independent standard normal variables, $W:=\kappa_1 \alpha Z_1+\sigma Z_2$.
The system of equations in \eqref{nonlinear_three_equation(modify)} arises as the first-order optimality conditions of a limiting scalar saddle point problem, and the solution with positive $\sigma$ and $\gamma$ is unique; see \Cref{sec:unique-saddle-R}.

We can now precisely characterize the asymptotic behavior of the SRE.

\begin{theorem} \label{thm:exact_cat_M_MAP_noninformative(modify)}
Consider the SRE $\widehat{\bbeta}_{M}$ defined in \eqref{eq: SRE_def}.
   Suppose that \Cref{condition:proper_scaling,condition:dist_condition(modify),condition:non-informative_syn_data} hold and $m\noverp>2$.
   Suppose the  positive parameters ($\kappa_1,\noverp,\tau_0,m$) are such that the system of equations  \eqref{nonlinear_three_equation(modify)} has a solution $(\alpha_*,\sigma_*,\gamma_*)$ with positive $\sigma_*$ and $\gamma_*$. Then, the following statements hold:
   
   (1) For any fixed index set $\mathcal{S} \subset$ $\{1, \ldots, p\}$ with $\sqrt{p}\left\|\bbeta_{0, \mathcal{S}}\right\|_2=O(1)$, we have
$$
\frac{\sqrt{p}}{\sigma_*} \left(\widehat{\bbeta}_{M, \mathcal{S}}-\alpha_* \bbeta_{0, \mathcal{S}}\right)\stackrel{d}{\longrightarrow} \mathcal{N}\left(\mathbf{0}, \boldsymbol{I}_{|\mathcal{S}|}\right) .
$$

 (2) Suppose $\frac{1}{p}\sum_{j=1}^p \noverpmass_{\sqrt{p}\bbeta_{0,j}} \rightsquigarrow \Pi$ for a distribution $\Pi$ with $\mathbb{E}_{\Pi}[\beta^2]=\kappa_1^2$. For any locally-Lipschitz  function\footnote{A function $\Psi: \mathbb{R}^m \rightarrow \mathbb{R}$ is said to be locally-Lipschitz   if there exists a constant $L>0$ such that for all $\boldsymbol{t}_0, \boldsymbol{t}_1 \in \mathbb{R}^m,$ $\left\|\Psi\left(\boldsymbol{t}_0\right)-\Psi\left(\boldsymbol{t}_1\right)\right\| \leq L\left(1+\left\|\boldsymbol{t}_0\right\|+\left\|\boldsymbol{t}_1\right\|\right)\left\|\boldsymbol{t}_0-\boldsymbol{t}_1\right\|$.}
 $\Psi(a,b)$ or for the indicator function $\Psi(a,b)=\mathbf{1}\{|a/\sigma_*|\leq t\}$ with fixed $t>0$, we have
 $$	\frac{1}{p} \sum_{j=1}^p \Psi\left(\sqrt{p}(\widehat{\bbeta}_{M,j}-\alpha_* \bbeta_{0,j}), \sqrt{p}\bbeta_{0,j}\right) \stackrel{\mathbb P}{\longrightarrow} \mathbb{E}[\Psi( \sigma_*Z, \beta)] ,$$
 where $Z\sim N(0,1)$ is independent of  $\beta\sim  \Pi$.
\end{theorem}

The proof of \Cref{thm:exact_cat_M_MAP_noninformative(modify)} follows the same argument as that of \Cref{thm:exact_cat_M_MAP_informative}, but is much simpler, so we defer the discussion until after the latter theorem.

\Cref{thm:exact_cat_M_MAP_noninformative(modify)} reveals that
in the linear asymptotic regime, the SRE $\widehat{\bbeta}_{M}$ is centered around the scaled true coefficient vector $\alpha_*\bbeta_0$ and
$\sqrt{p}\left(\widehat{\bbeta}_{M}-\alpha_*\bbeta_0\right)$ is approximately normal with independent entries with standard deviation $\sigma_*$.
\Cref{thm:exact_cat_M_MAP_noninformative(modify)} implies various asymptotic relationships between the estimator $\widehat{\bbeta}_{M}$ and the true coefficients $\bbeta_0$ by varying the locally-Lipschitz function $\Psi$. Here are some examples:

 \textbf{Squared error and cosine similarity.}
By taking $\Psi(a,b)=(a+(\alpha_{*}-1)b)^2$ and Slutsky's theorem, we have
\begin{align}
   & \|\widehat{\bbeta}_M-\bbeta_0\|^2\stackrel{\mathbb{P}}{\longrightarrow} \sigma_*^2+(\alpha_{*}-1)^2\kappa_1^2. \label{eq:exact_cat_M_MAP_noninformative_MSE}\\
   &\frac{\langle \widehat{\bbeta}_M,\bbeta_0 \rangle}{\|\widehat{\bbeta}_M\|_2\|\bbeta_0\|_2} \xrightarrow{\mathbb{P}}\frac{\alpha_*\kappa_1}{\sqrt{\alpha_*^2\kappa_1^2+\sigma_*^2}}. \label{eq:exact_cat_M_MAP_noninformative_similarity}
\end{align}
In \Cref{sec:numberical_verify_non_infor}, we plot the theoretical limit against the value of $\tau_0$ and reveal a bias and variance trade-off phenomenon for the regularization using synthetic data.

These choices of $\Psi$ have previously been explored in the literature and we remark that these results continue to hold without the condition that $\frac{1}{p}\sum_{j=1}^p \noverpmass_{\sqrt{p}\bbeta_{0,j}} \rightsquigarrow \Pi$.
Other examples, including \textit{generalization error} and \textit{predictive deviance}, are discussed in \Cref{proof:deviance_generalization}.

\textbf{Oracle confidence intervals.}
\Cref{thm:exact_cat_M_MAP_noninformative(modify)} also yields an oracle inference result for individual coordinates.
For each $j\in[p]$, consider
\begin{equation}\label{eq:exact_cat_M_MAP_noninformative_CI}
\mathrm{CI}_j=
\left[
\frac{\widehat{\bbeta}_{M,j}-1.96\,\sigma_*/\sqrt{p}}{\alpha_*},
\frac{\widehat{\bbeta}_{M,j}+1.96\,\sigma_*/\sqrt{p}}{\alpha_*}
\right].
\end{equation}
\Cref{thm:exact_cat_M_MAP_noninformative(modify)} implies two types of asymptotic validity.
First, for any fixed $j$ with $\sqrt{p}\bbeta_{0,j}=O(1)$, part (1) gives
$$
\mathbb{P}\bigl(\bbeta_{0,j}\in \mathrm{CI}_j\bigr)\to 0.95.
$$
Second, by applying part (2) with
$\Psi(a,b)=\mathbf{1}\{|a/\sigma_*|\le 1.96\}$, we obtain
$$
\frac{1}{p}\sum_{j=1}^p \mathbf{1}\{\bbeta_{0,j}\in \mathrm{CI}_j\}
\xrightarrow{\mathbb{P}} 0.95.
$$
Since $\alpha_*$ and $\sigma_*$ depend on unknown parameters, the intervals in \Cref{eq:exact_cat_M_MAP_noninformative_CI} are not directly implementable.
\Cref{sec:adjust_inference} develops a feasible version by estimating the required quantities.

In \Cref{thm:exact_cat_M_MAP_noninformative(modify)}, the condition that $\operatorname{Cov}(\boldsymbol{X})=\mathbb{I}_p$ can be relaxed to allow for a general covariance matrix, as stated in the following corollary.

\begin{corollary}\label{coro:arbitray_cov_exact_cat_M_MAP_noninformative}
Consider the logistic regression model and the SRE $\widehat{\bbeta}_{M}$ defined in \Cref{sec:sec:SRE_GLM} under \Cref{condition:proper_scaling} and the condition $m\noverp > 2$.
Suppose $\boldsymbol{X}_i \stackrel{\text { i.i.d. }}{\sim} \mathcal{N}\left(\mathbf{0}, \mathbf{\Sigma}\right)$ for $i\in [n]$ and $\boldsymbol{X}^*_i\stackrel{\text { i.i.d. }}{\sim} \mathcal{N}\left(\mathbf{0},  \mathbf{\Sigma}\right)$  for $i \in [M]$, where $\mathbf{\Sigma}$ is a positive definite matrix.
Let $v_j^2=\operatorname{Var}\left(X_{i, j} \mid \boldsymbol{X}_{i,-j}\right)$ denote the conditional variance of $X_{i, j}$ given all other covariates. Furthermore, assume that the empirical distribution $\frac{1}{p}\sum_{j=1}^p \noverpmass_{\sqrt{p} v_j \bbeta_{0,j}} $ converges weakly to a distribution $\Pi$ with a finite second moment, $\|\mathbf{\Sigma}^{1/2}\bbeta_0\|^2\xrightarrow{\mathbb{P}} \kappa_1^2$, and $\sum_{j=1}^p v_j^2 \bbeta_{0,j}^2 \xrightarrow{\mathbb{P}} \mathbb{E}\left[\beta^2\right]$ for $\beta \sim \Pi$.
Given the parameters ($\kappa_1,\noverp,\tau_0,m$) are such that the system of equations  \eqref{nonlinear_three_equation(modify)} has a   solution $(\alpha_*,\sigma_*,\gamma_*)$. Then, for any locally-Lipschitz  function
 $\Psi: \mathbb{R} \times \mathbb{R} \rightarrow \mathbb{R}$ or for the indicator function $\Psi(a,t)=\mathbf{1}\{|a/\sigma_*|\leq t\}$ with any fixed $t>0$, we have
    $$ \frac{1}{p} \sum_{j=1}^p \Psi\left(\sqrt{p}v_j(\widehat{\bbeta}_{M,j}-\alpha_*\bbeta_{0,j}), \sqrt{p}v_j\bbeta_{0,j}\right) \stackrel{\mathbb P}{\longrightarrow}  \mathbb{E}[\Psi( \sigma_*Z, \beta)]. $$
    where $Z\sim N(0,1)$ independent of  $\beta\sim  \Pi$.
\end{corollary}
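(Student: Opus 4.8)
The plan is to reduce the general-covariance problem to the isotropic case of Theorem~\ref{thm:exact_cat_M_MAP_noninformative} by the whitening reparametrization $\widetilde{\boldsymbol\beta} := \bSigma^{1/2}\bbeta$, and then to transport the resulting Gaussian surrogate for the MAP estimator back through $\bSigma^{-1/2}$, invoking the delocalization argument of \cite{zhao2022asymptotic} to handle the averaged coordinatewise functional. Concretely, I would set $\widetilde{\bX}_i = \bSigma^{-1/2}\bX_i$, $\widetilde{\bX}^*_i = \bSigma^{-1/2}\bX^*_i$, and $\widetilde{\boldsymbol\beta}_0 = \bSigma^{1/2}\bbeta_0$, so that $\widetilde{\bX}_i,\widetilde{\bX}^*_i \stackrel{\text{i.i.d.}}{\sim}\mathcal{N}(\boldsymbol{0},\id_p)$, $Y_i\mid\widetilde{\bX}_i\sim\text{Bern}(\rho'(\widetilde{\bX}_i^\top\widetilde{\boldsymbol\beta}_0))$, $Y^*_i\sim\text{Bern}(0.5)$ independently of $\widetilde{\bX}^*_i$, and $\|\widetilde{\boldsymbol\beta}_0\|^2\xrightarrow{\mathbb{P}}\kappa_1^2$ by assumption. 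Since $\bX_i^\top\bbeta = \widetilde{\bX}_i^\top\widetilde{\boldsymbol\beta}$ and similarly for the synthetic terms, the objective in \eqref{cat_betahat} is invariant under this change of variables, hence $\bSigma^{1/2}\widehat{\boldsymbol\beta}_M$ is exactly the MAP estimator for the isotropic non-informative model of Section~\ref{sec:exact_asym_M_finite_non_informative} with signal strength $\kappa_1$; the conditions $m\noverp > 2$ and Condition~\ref{condition:proper_scaling} are preserved.

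The crucial point is that the scalar system \eqref{nonlinear_three_equation} depends on the data-generating process only through $(\kappa_1,\noverp,\tau_0,m)$, not through the empirical distribution of the regression coefficients. Consequently the same triple $(\alpha_*,\sigma_*,\gamma_*)$ governs the whitened problem even though the empirical distribution of the coordinates of $\sqrt{p}\,\widetilde{\boldsymbol\beta}_0 = \sqrt{p}\,\bSigma^{1/2}\bbeta_0$ need not converge, and those conclusions of Theorem~\ref{thm:exact_cat_M_MAP_noninformative} that do not involve $\Pi$ (the limits of $\|\cdot\|^2$, of inner products with $\widetilde{\boldsymbol\beta}_0$, and the directional central limit theorems) continue to hold, as already noted in the discussion following that theorem. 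Re-examining the CGMT analysis underlying Theorem~\ref{thm:exact_cat_M_MAP_noninformative}, it applies verbatim to the whitened problem and yields the stronger ``Gaussian surrogate'' statement: to leading order $\sqrt{p}\,(\bSigma^{1/2}\widehat{\boldsymbol\beta}_M-\alpha_*\widetilde{\boldsymbol\beta}_0)$ behaves like $\sigma_*\bg$ with $\bg\sim\mathcal{N}(\boldsymbol{0},\id_p)$ whose direction is asymptotically uniform on the sphere and uncorrelated with $\widetilde{\boldsymbol\beta}_0$, the residual being negligible in every fixed direction.

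Transforming back, I would put $\bu_j := v_j\bSigma^{-1/2}\be_j$; since $\|\bSigma^{-1/2}\be_j\|^2 = (\bSigma^{-1})_{jj} = 1/v_j^2$, each $\bu_j$ is a unit vector, and
$$
\sqrt{p}\,v_j\bigl(\widehat{\boldsymbol\beta}_{M,j}-\alpha_*\boldsymbol\beta_{0,j}\bigr) = \bu_j^\top\,\sqrt{p}\,\bigl(\bSigma^{1/2}\widehat{\boldsymbol\beta}_M-\alpha_*\widetilde{\boldsymbol\beta}_0\bigr)\ \approx\ \sigma_*\,\bu_j^\top\bg ,
$$
with each $\bu_j^\top\bg$ marginally $\mathcal{N}(0,1)$. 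The remaining step, which I expect to be the main obstacle, is to upgrade this coordinatewise heuristic to the averaged statement $\tfrac{1}{p}\sum_{j}\Psi\bigl(\sqrt{p}\,v_j(\widehat{\boldsymbol\beta}_{M,j}-\alpha_*\boldsymbol\beta_{0,j}),\ \sqrt{p}\,v_j\boldsymbol\beta_{0,j}\bigr)\xrightarrow{\mathbb{P}}\mathbb{E}[\Psi(\sigma_* Z,\beta)]$. The difficulty is that for a general $\bSigma$ the unit vectors $\{\bu_j\}$ are far from orthogonal, so $\{\bu_j^\top\bg\}_j$ is not a jointly i.i.d.\ Gaussian family and Theorem~\ref{thm:exact_cat_M_MAP_noninformative} cannot simply be quoted in the whitened coordinates. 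This is exactly where the argument of \cite{zhao2022asymptotic} enters: under their spectral regularity conditions on $\bSigma$, the empirical joint law of $\bigl(\bu_j^\top\bg,\ \sqrt{p}\,v_j\boldsymbol\beta_{0,j}\bigr)_{j\le p}$ converges to that of $(Z,\beta)$ with $Z\perp\beta$ and $\beta\sim\Pi$, using the assumptions $\tfrac{1}{p}\sum_j\noverpmass_{\sqrt{p}\,v_j\beta_{0,j}}\rightsquigarrow\Pi$ and $\sum_j v_j^2\beta_{0,j}^2\xrightarrow{\mathbb{P}}\mathbb{E}[\beta^2]$ to pin down the signal margin. Combining this with the Gaussian surrogate of the previous paragraph, plus a routine truncation/continuity argument to cover both locally-Lipschitz $\Psi$ and the indicator $\Psi(a,t)=\mathbf{1}\{|a/\sigma_*|\le t\}$ and to absorb the leading-order error $\sqrt{p}(\bSigma^{1/2}\widehat{\boldsymbol\beta}_M-\alpha_*\widetilde{\boldsymbol\beta}_0)-\sigma_*\bg$ into an $o_{\mathbb{P}}(1)$ term, gives the claim. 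Showing that this error stays negligible after simultaneous contraction against all the $\bu_j$ --- i.e.\ that the Gaussian approximation holds in a norm strong enough to survive the nonlinear map into the averaged functional --- is the technical heart of the proof and is precisely what the machinery of \cite{zhao2022asymptotic} supplies.
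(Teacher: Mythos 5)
Your proposal is correct and matches the approach the paper indicates: it says the corollary ``can be proved by combining the proof of Theorem~\ref{thm:exact_cat_M_MAP_noninformative} with the argument in \cite{zhao2022asymptotic}'' and omits the details, and your whitening reparametrization, the observation that the objective and hence $\bSigma^{1/2}\widehat{\bbeta}_M$ are invariant under the change of variables, the identity $(\bSigma^{-1})_{jj}=1/v_j^2$ making $\bu_j=v_j\bSigma^{-1/2}\be_j$ a unit vector, the resulting representation $\sqrt{p}\,v_j(\widehat{\bbeta}_{M,j}-\alpha_*\bbeta_{0,j})=\bu_j^\top\sqrt{p}(\bSigma^{1/2}\widehat{\bbeta}_M-\alpha_*\widetilde{\bbeta}_0)$, and the appeal to \cite{zhao2022asymptotic} to handle the non-orthogonality of $\{\bu_j\}$ are precisely the intended ingredients. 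You also correctly identify the subtle point that the scalar system \eqref{nonlinear_three_equation} depends only on $(\kappa_1,\noverp,\tau_0,m)$ rather than on the empirical law of the coefficients, so the whitened intermediate conclusions (in particular the distributional surrogate isolated in the proof of Theorem~\ref{thm:exact_cat_M_MAP_noninformative} via the convergences of $\alpha(p)$, $\sigma(p)$ and the identity $\bT^{approx}\stackrel{d}{=}\tilde\bZ^{scaled}$) hold even though $\frac{1}{p}\sum_j \noverpmass_{\sqrt{p}(\bSigma^{1/2}\bbeta_0)_j}$ need not converge; the three displayed conditions of the corollary are exactly what is then needed to pin down the limiting margin of $\sqrt{p}\,v_j\beta_{0,j}$.
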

\Cref{coro:arbitray_cov_exact_cat_M_MAP_noninformative} makes our theory more applicable in practice.  It can be proved by employing the argument in \cite{zhao2022asymptotic} and we omit the details here.

\subsection{Extension to informative auxiliary data}\label{sec:exact_asymptocis_infor}

In many practical scenarios, \textit{informative auxiliary data} (e.g., data from different but similar studies) may be available.
As discussed in \Cref{sec:connections-existing}, when the synthetic data are replaced by such auxiliary data, the SRE coincides with the posterior mode under the power prior.
From a frequentist perspective, this estimator can also be viewed as a source-target weighted estimator in transfer learning.
In this section, we derive a precise asymptotic characterization for this estimator in the proportional regime and quantify how its performance depends on the similarity between the target and auxiliary data sources.

To be concrete, suppose $(\bX_i^*,Y_i^*)$ are informative auxiliary data such that $Y^*_i \mid \boldsymbol{X}^*_i \sim \text{Bern}\left(\rho^{\prime}(\boldsymbol{X}_i^{*\top} \bbeta_s) \right)$ and $\bbeta_s$ is correlated with $\bbeta_0$.
In addition to \Cref{condition:proper_scaling,condition:dist_condition(modify)}, further assume $\|\bbeta_s\|_2\to \kappa_2$ and $\langle\bbeta_s,\bbeta_0\rangle/(\|\bbeta_s\|_2\|\bbeta_0\|_2) \to \xi$.
The parameter $\xi$ measures the alignment between the directions of $\bbeta_s$ and $\bbeta_0$ and admits a natural cosine-type geometric interpretation.
In analogy to \eqref{informal_MAP_linear_asym}, we have the following for the SRE $\widehat{\bbeta}_{M}$ using informative auxiliary data:
\begin{equation}\label{informal_MAP_informative_linear_asym}
    \widehat{\bbeta}_M\approx \alpha_{1*} \bbeta_0+ \frac{\alpha_{2*}}{\sqrt{1-\xi^2}}(\bbeta_s-\xi\frac{\|\bbeta_s\|}{\|\bbeta_0\|}\bbeta_0)+p^{-1/2}\sigma_{*}\bZ ,
\end{equation}
where $\bZ$ is a standard normal vector, and $(\alpha_{1*},\alpha_{2*},\sigma_{*})$ depend on $\noverp,\tau,M$ and the data generation process.
Compared with \eqref{informal_MAP_linear_asym}, the SRE is not centered at scaled $\bbeta_0$ but a linear combination of $\bbeta_0$ and $\bbeta_s$.

\begin{remark}
Many methods for transfer learning have been investigated recently from statistical perspectives; see for example \cite{bastani2021predicting,reeve2021adaptive,li2022transfer,li2023transfer,tian2023transfer,li2023estimation,zhang2023transfer}.
These developments usually assume that the difference between the target model and the source model is sufficiently small.
In contrast, our analysis accommodates any fixed similarity level $\xi\in (-1,1)$.
This parameter is related in spirit to recent angle-based notions of source-target alignment, although the formulations and conclusions are different.
\citet{gu2025robust} study high-dimensional linear regression and introduce source-target alignment through a random coefficient model for the target and source regression coefficients, where the common entrywise correlation parameter plays a role analogous to our $\xi$.
Their results give upper and lower bounds on the limiting expected predictive risk.
\cite{tian2025learning} study multi-task regression under a low-rank representation assumption, where task similarity is quantified through the maximum principal angle. In the rank one case, this becomes a function of the angle between the regression coefficient vectors. Their results are nonasymptotic upper bounds and minimax lower bounds, so the conclusions are at the level of rates rather than an exact proportional asymptotic characterization.
\end{remark}

To make the normal approximation in \eqref{informal_MAP_informative_linear_asym} rigorous, we continue to assume \Cref{condition:proper_scaling,condition:dist_condition(modify)}, and we suppose the auxiliary data satisfy the following condition.

\begin{condition}\label{condition:informative_syn_data}
    The covariate vectors  $\left\{\boldsymbol{X}^*_i\right\}_{i\in [M]}  \stackrel{\text { i.i.d. }}{\sim} \mathcal{N}\left(\mathbf{0},  \mathbf{I}_p\right)$.
    Given these covariate vectors, the auxiliary responses are conditionally independent with $Y^*_i \mid \boldsymbol{X}^*_i \sim \text{Bern}\left(\rho^{\prime}(\bbeta_s^{\top} \boldsymbol{X}^*_i) \right)$.
    There is a  constant  $\kappa_2> 0$, and $\xi\in (-1,1)$, such that  $\lim_{p\rightarrow\infty}\|\bbeta_s\|^2=\kappa_2^2$ and $\lim_{p\rightarrow\infty}\frac{1}{\|\bbeta_0\|\|\bbeta_s\|}\langle \bbeta_0,\bbeta_s\rangle=\xi$.
\end{condition}

Similar to \eqref{nonlinear_three_equation(modify)}, we introduce an important system of equations in four variables ($\alpha_1,\alpha_2,\sigma,\gamma$), which includes an extra variable $\alpha_2$ to track the influence of informative auxiliary data.

To present the new system of equations, let  $Z_1,Z_2,Z_3$ be i.i.d. standard normal random variables.  The variable $W_I$ is defined as  a linear combination of $Z_1,Z_2$ and $Z_3$, specifically  $W_I:=\kappa_1 \alpha_1 Z_1+\kappa_2 \alpha_2 Z_2 +\sigma Z_3$.
The system of equations is given as follows.

\begin{equation}
\label{nonlinear_four_equation}
\left\{\begin{aligned}
\frac{\sigma^2}{2 \noverp} & =\mathbb{E}\left[\rho^{\prime}\left(-\kappa_1 Z_1\right)\left(W_I-\operatorname{Prox}_{\gamma \rho(\cdot)}\left(W_I\right)\right)^2\right] \\
& \quad  +m \mathbb{E}\left[\rho^{\prime}\left(-\kappa_2 \xi Z_1-\kappa_2 \sqrt{1-\xi^2}Z_2\right)\left(W_I-\operatorname{Prox}_{{m^{-1} \gamma \tau_0} \rho(\cdot)}\left(W_I\right)\right)^2\right],\\
1-\frac{1}{\noverp}+m  & =\mathbb{E}\left[\frac{2 \rho^{\prime}\left(-\kappa_1 Z_1\right)}{1+\gamma \rho^{\prime \prime}\left(\operatorname{Prox}_{\gamma \rho(\cdot)}\left(W_I\right)\right)}+\frac{2m \rho^{\prime}\left(-\kappa_2 \xi Z_1-\kappa_2 \sqrt{1-\xi^2}Z_2\right)}{1+{m^{-1} \gamma \tau_0} \rho^{\prime \prime}\left(\operatorname{Prox}_{{m^{-1} \gamma \tau_0} \rho(\cdot)}\left(W_I\right)\right)}\right] \\
-\frac{\alpha_1}{2 \noverp} & =\mathbb{E}\left[\rho^{\prime \prime}\left(-\kappa_1 Z_1\right) \operatorname{Prox}_{\gamma \rho(\cdot)}\left(W_I\right)\right] \\
&\quad +m \xi \frac{\kappa_2}{\kappa_1}\mathbb{E}\left[\rho^{\prime \prime}\left(-\kappa_2 \xi Z_1-\kappa_2 \sqrt{1-\xi^2}Z_2\right) \operatorname{Prox}_{{m^{-1} \gamma \tau_0} \rho(\cdot)}\left(W_I\right)\right] , \\
-\frac{\alpha_2}{2 \noverp} & =m \sqrt{
1-\xi^2
} \mathbb{E}\left[\rho^{\prime \prime}\left(-\kappa_2 \xi Z_1-\kappa_2 \sqrt{1-\xi^2}Z_2\right) \operatorname{Prox}_{{m^{-1} \gamma \tau_0} \rho(\cdot)}\left(W_I\right)\right].
\end{aligned}\right.
\end{equation}
Similar to \eqref{nonlinear_three_equation(modify)}, the system of equations in \eqref{nonlinear_four_equation} arises as the first-order optimality conditions of a limiting scalar saddle point problem. This system admits a unique admissible solution, denoted by $(\alpha_{1*},\alpha_{2*},\sigma_{*},\gamma_{*})$,
where admissibility means $\sigma_{*}>0$ and $\gamma_{*}>0$. In practice, the solution can be computed by fixed-point iteration. The uniqueness of the admissible solution is established in Appendices D.6.7 and D.6.8.

We are now ready to make the statement in \eqref{informal_MAP_informative_linear_asym} rigorous.
\begin{theorem} \label{thm:exact_cat_M_MAP_informative}
	Consider the SRE defined in \eqref{eq: SRE_def}. Suppose \Cref{condition:proper_scaling,condition:dist_condition(modify),condition:informative_syn_data} hold.
    Suppose the parameters $\noverp, \kappa_1>0,\kappa_2>0,\tau_0>0,m>0$, and $\xi$ are such that the system of equations  \eqref{nonlinear_four_equation} has an admissible solution $(\alpha_{1*},\alpha_{2*},\sigma_{*},\gamma_{*})$.
    Assume further that $m\delta>2$ and
$\kappa_2<\overline{\kappa}_{\mathrm{MLE}}(m\delta)$,
where $\overline{\kappa}_{\mathrm{MLE}}(\cdot)$ is the logistic MLE phase-transition boundary defined in \Cref{lemma:MAP_bounded_gaussian_design}.
Then, the following statements hold:

      (1) For any fixed index set $\mathcal{S} \subset$ $\{1, \ldots, p\}$ with $\sqrt{p}\left\|\bbeta_{0, \mathcal{S}}\right\|_2=O(1)$ and $\sqrt{p}\left\|\bbeta_{s, \mathcal{S}}\right\|_2=O(1)$, we have
$$
\frac{\sqrt{p}}{\sigma_*}\left(\widehat{\bbeta}_{M, \mathcal{S}}-\alpha_{1*}\bbeta_{0,\mathcal{S}}-\frac{\alpha_{2*}}{\sqrt{1-\xi^2}}(\bbeta_{s,\mathcal{S}}-\xi \frac{\kappa_2}{\kappa_1}\bbeta_{0,\mathcal{S}})\right) \stackrel{d}{\longrightarrow} \mathcal{N}\left(\mathbf{0}, \boldsymbol{I}_{|\mathcal{S}|}\right) .
$$

 (2) Suppose $\frac{1}{p}\sum_{j=1}^p \noverpmass_{\sqrt{p}\bbeta_{0,j}} \rightsquigarrow \Pi$ for a distribution $\Pi$ with $\mathbb{E}_{\Pi}[\beta^2]=\kappa_1^2$. For any locally-Lipschitz  function
 $\Psi(a,b)$ or for the indicator function $\Psi(a,b)=\mathbf{1}\{|a/\sigma_*|\leq t\}$ with fixed $t>0$, we have
\begin{equation}\label{eq:infor_theorem_xi_neq_one}
    \frac{1}{p} \sum_{j=1}^p \Psi\left(\sqrt{p}[\widehat{\bbeta}_{M,j}-\alpha_{1*}\bbeta_{0,j}-\frac{\alpha_{2*}}{\sqrt{1-\xi^2}}(\bbeta_{s,j}-\xi \frac{\kappa_2}{\kappa_1}\bbeta_{0,j})], \sqrt{p}\bbeta_{0,j}\right) \xrightarrow{\mathbb{P}} \mathbb{E}[\Psi( \sigma_{*}Z, \beta)],
\end{equation}
where $Z\sim N(0,1)$ is independent of $\beta\sim \Pi$.
Furthermore, if we allow $\xi=1$, \eqref{eq:infor_theorem_xi_neq_one} continues to hold after replacing the left-hand side with $\frac{1}{p} \sum_{j=1}^p \Psi\left(\sqrt{p}(\widehat{\bbeta}_{M,j}-\alpha_{1*} \bbeta_{0,j}), \sqrt{p}\bbeta_{0,j}\right)$.
\end{theorem}

Our proof of \Cref{thm:exact_cat_M_MAP_informative} relies on a novel application of the Convex Gaussian Min-max Theorem (CGMT) \citep{thrampoulidis2018precise}.
While CGMT has been used for regularized M-estimators with separable regularization\footnote{A regularization function $h(\boldsymbol{b})$ is said to be separable if $h(\boldsymbol{b})=\sum_{j=1}^p h\left(b_j\right)$ for some convex function $h(\cdot)$. E.g.: $h(\boldsymbol{b})=\|\boldsymbol{b}\|_1=\sum_{i}|b_i|$ and $h(\boldsymbol{b})=\|\boldsymbol{b}\|_2^2=\sum_{i}b_i^2$ are separable regularization functions.}, the existing techniques do not apply to the non-separable regularization in \eqref{eq: SRE_def}.
To apply CGMT, it is generally necessary to reduce the optimization problem to an ancillary optimization (AO) over compact sets of variables and then analyze the optima of the AO.
Traditional analyses of CGMT  proceed through a rank-one projection of the optima on the direction of $\bbeta_0$. These analyses are applicable only to separable regularization.
For our optimization \eqref{eq: SRE_def}, the regularization is non-separable and we need to project $\bbeta$ into a space spanned by $\bbeta_0$ and $\bbeta_s$, where the traditional argument fails to work.
We overcome this challenge by employing a new strategy, which  extends the application of CGMT to non-separable regularization.

We provide a brief overview of our proof.
Suppose the Gram-Schmidt process yields two orthonormal vectors $\be_1,\be_2$ in the space spanned by $\bbeta_0$ and $\bbeta_s$. We decompose the SRE as follows:
\begin{align*}
\widehat \bbeta_M&=(\be_1^\top\widehat \bbeta_M) \be_1+(\be_2^\top\widehat \bbeta_M) \be_2+\mathbf{P}^{\perp} \widehat \bbeta,
\end{align*}
where $\mathbf{P}^{\perp}$ is the projection matrix onto the orthogonal complement of the space spanned by $\bbeta_0$ and $\bbeta_s$.
Next, we develop a novel reduction of the AO problem to track the limits of $\be_1^\top\widehat \bbeta_M$ and $\be_2^\top\widehat \bbeta_M$.
Finally, we demonstrate that $\mathbf{P}^\perp \widehat{\bbeta}_M$ will be asymptotically equal to $p^{-1/2}\sigma_* \mathbf{P}^\perp \bZ$.
We believe this novel decomposition could be of independent interest and applicable in other analyses where it is necessary to project the optimization variable into a multidimensional space.

\Cref{thm:exact_cat_M_MAP_informative}  can be viewed as an extension of
\Cref{thm:exact_cat_M_MAP_noninformative(modify)} by incorporating an
additional $\bbeta_s$-component. Formally, the noninformative synthetic-data
setting in
\Cref{thm:exact_cat_M_MAP_noninformative(modify)} corresponds to the boundary case $\boldsymbol{\beta}_s=\mathbf{0}$ and
$\kappa_2=0$, for which the alignment
parameter $\xi$ and the factor $\alpha_{2*}$ become vacuous.
After removing the
last equation related to $\alpha_{2*}$ from the system of equations in \eqref{nonlinear_four_equation} and setting
$W_I=\kappa_1\alpha_1 Z_1+\sigma Z_3$, the remaining system reduces to the system of equations in
\eqref{nonlinear_three_equation(modify)}.
Similarly, after removing the second term related to $\bbeta_s$ and $\alpha_{2*}$ from the asymptotic representation in
\Cref{informal_MAP_informative_linear_asym},
the representation formally reduces to that
in \Cref{informal_MAP_linear_asym}.

When $\kappa_2$ is nonzero,
\Cref{thm:exact_cat_M_MAP_informative} shows that the SRE admits the approximate decomposition
\begin{equation}\label{eq:exact_informative_decomp}
 \widehat{\bbeta}_M\approx \underbrace{\alpha_{1*} \bbeta_0}_{\text{signal}}+ \underbrace{\frac{\alpha_{2*}}{\sqrt{1-\xi^2}}\left(\bbeta_s-\xi\frac{\kappa_2}{\kappa_1}\bbeta_0\right)}_{\text{bias}}+\underbrace{p^{-1/2}\sigma_{*}\bZ}_{\text{noise}},
\end{equation}
where $\bZ$ is a standard normal vector.
The first term is the signal component along $\bbeta_0$, with magnitude governed by $\alpha_{1*}$.
The second term is proportional to
\begin{equation}\label{eq:ortho_source}
    \frac{\bbeta_s-\xi(\kappa_2/\kappa_1)\bbeta_0}{\sqrt{1-\xi^2}},
\end{equation}
which is the renormalized component of the source coefficient that is orthogonal to $\bbeta_0$.
Its magnitude is governed by $\alpha_{2*}$, and it represents the source-specific bias that arises from auxiliary information not aligned with the target signal.
The third term is the high-dimensional isotropic noise of order $p^{-1/2}$, with magnitude $\sigma_*$.

This decomposition clarifies the role of the similarity parameter $\xi$.
When $\xi \to 1$, the source coefficient $\bbeta_s$ becomes parallel to $\bbeta_0$.
The unnormalized orthogonal component
$\bbeta_s-\xi(\kappa_2/\kappa_1)\bbeta_0$
has asymptotic norm \(\kappa_2\sqrt{1-\xi^2}\), which vanishes as
\(\xi\to 1\).
Furthermore, the last equation in \eqref{nonlinear_four_equation} implies that $\alpha_{2*}=O(\sqrt{1-\xi^2})$, so the orthogonal bias term in \Cref{eq:exact_informative_decomp} vanishes and the approximation reduces to $\widehat{\bbeta}_M\approx \alpha_{1*} \bbeta_0+p^{-1/2}\sigma_{*}\bZ$.
When $\xi \to 0$, the source coefficient $\bbeta_s$ becomes orthogonal to $\bbeta_0$, so the auxiliary data contain no aligned information about the target signal. In this sense, $\xi=0$ corresponds to a noninformative source for estimating $\bbeta_0$, although the resulting estimator may still differ from the MLE through the bias-variance tradeoff.

Although our theory applies to any $\xi\in (-1,1)$, the case $\xi<0$ corresponds to anti-alignment, which may lead to negative transfer and is not our main focus.
The negative transfer phenomenon is illustrated numerically in \Cref{app:negative_transfer}, where the squared error for $\xi<0$ is larger than that for the benchmark case $\xi=0$.

In \Cref{sec:apply_theory_estimate_tau_MSE}, we utilize the asymptotic characterization in \Cref{thm:exact_cat_M_MAP_informative} to illustrate that when $\xi$ is sufficiently large, the SRE based on informative auxiliary data can be substantially better than the version based on noninformative synthetic data.

\begin{remark}\label{rem: literature on precise asymptotics}
In the past two decades, new theoretical frameworks have been developed to characterize the precise asymptotic behavior of MLEs and regularized estimators in the linear asymptotic regime.
These frameworks have been successfully employed in linear models \citep{bayati2011dynamics,el2013robust,thrampoulidis2015regularized,el2018impact} and binary regression models \citep{sur2019modern,salehi2019impact,taheri2020sharp,deng2022model}.
The main technical tools for these frameworks include approximate message passing (AMP) \citep{donoho2009message,bayati2011dynamics}, Convex Gaussian Min-max Theorem (CGMT) \citep{thrampoulidis2015regularized,thrampoulidis2018precise}, and the leave-one-out analysis \citep{el2013robust,el2018impact}.
In particular, our precise characterization of the SRE is based on CGMT. Although CGMT is a powerful tool for reducing the analysis of a min-max optimization to a much simpler optimization with the same optimum, the analysis of the reduced optimization is problem-specific and often challenging.
To deal with the synthetic data in the SRE, novel probabilistic analyses have to be developed.
\end{remark}

\begin{remark}\label{rem:sk-compare}
\cite{sterzinger2023diaconis} employed AMP to analyze the asymptotic behavior of the MDYPL estimator with noninformative pseudo-responses for the same observed covariates, but their analysis focused on the case where $p<n$.
We point out that AMP is not suitable for the analysis of the SRE because the AMP algorithm becomes too complex when applied to synthetic data with covariates that are different from the observed ones.
See  \Cref{proof_sec:amp_comparison} for more details.
In contrast, our novel application of CGMT can accommodate this situation.
\end{remark}

Similar to \Cref{thm:exact_cat_M_MAP_noninformative(modify)}, \Cref{thm:exact_cat_M_MAP_informative} yields several asymptotic characterizations of the relationship between the SRE and the true coefficients as discussed in \Cref{sec:exact_asym_M_finite_non_informative}. In particular, for squared error, we have
\begin{equation}\label{eq:exact_cat_M_MAP_informative_square_error}
    \|\widehat{\bbeta}_M-\bbeta_0\|_2^2\xrightarrow{\mathbb{P}}(\alpha_{1*}-1)^2\kappa_1^2+\alpha_{2*}^2\kappa_2^2+\sigma_*^2.
\end{equation}
The detailed derivation is provided in \Cref{sec:exact_limit_square_error}.
Furthermore, for the cosine similarity, we obtain
\begin{equation}\label{eq:exact_cat_M_MAP_informative_similarity}
    \frac{\langle \widehat{\bbeta}_M,\bbeta_0 \rangle}{\|\widehat{\bbeta}_M\|_2\|\bbeta_0\|_2} \xrightarrow{\mathbb{P}}\frac{\alpha_{1*}\kappa_1}{\sqrt{\alpha_{1*}^2\kappa_1^2+\alpha_{2*}^2\kappa_2^2+\sigma_*^2}}
\end{equation}

The asymptotic characterization provides a direct way to evaluate the effect of informative auxiliary data through the limiting risk.
Formally, the target-only likelihood corresponds to the boundary case as $\tau_0\to 0$.
Hence, for any fixed similarity level $\xi$, one can numerically compare the limiting risk at any $\tau_0>0$ with that of the target-only MLE.
Moreover, by minimizing the limiting risk over $\tau_0$, one can compare the best achievable limiting risk under informative auxiliary data with the corresponding limiting risk under noninformative synthetic-data regularization studied earlier in \Cref{sec:exact_asym_M_finite_non_informative}.
We illustrate these comparisons in \Cref{sec:num_verify_infor_syn}.
Although we do not obtain a closed-form condition that guarantees general improvement, the exact formulas make it possible to numerically identify favorable regimes where informative auxiliary data can lead to smaller limiting risk.

For practical inference, we develop a data-driven method for estimating $\xi$ in \Cref{sec:est_xi1}. Combined with the asymptotic comparisons above, this provides a principled way to assess the potential usefulness of the available auxiliary information in any given application.

\subsection{Numerical illustration}\label{sec:numerical_verify}

In this section, through simulation experiments, we test the finite-sample accuracy of our theoretical results on the SRE in \Cref{thm:exact_cat_M_MAP_noninformative(modify),thm:exact_cat_M_MAP_informative}.
We focus on the squared error $\|\widehat{\bbeta}_M-\bbeta_0\|_2^2$ and the cosine similarity  $\frac{\langle \widehat{\bbeta}_M,\bbeta_0 \rangle}{\|\widehat{\bbeta}_M\|_2\|\bbeta_0\|_2}$ and we compare the theoretical prediction on these quantities with the finite-sample counterparts.
Throughout this section, the synthetic sample size is set to $M=20p$ and the SRE is computed with tuning parameter $\tau=p \tau_0\noverp$ for some sequence of values for $\tau_0$.
To get the solutions from the systems of equations  \eqref{nonlinear_three_equation(modify)} and \eqref{nonlinear_four_equation} , we use the fixed-point iterative method \citep[Ch 1.2]{berinde2007iterative}.

\subsubsection{Noninformative synthetic data}\label{sec:numberical_verify_non_infor}

We consider the setting in \Cref{sec:exact_asym_M_finite_non_informative} where the SRE is constructed with noninformative synthetic data.
In the experiments, we pick different combinations of parameters $\delta$ and $\kappa_1$, and fix $p$ at $250$ so that $n$ is $250\noverp$.
The observed data $\{\bX_i,Y_i\}_{i=1}^n$ and the synthetic data $\{\bX^*_i,Y^*_i\}_{i=1}^M$ are generated under the conditions of \Cref{thm:exact_cat_M_MAP_noninformative(modify)}.
For the true coefficients $\bbeta_0$, we first generate $T_j\sim t_3$ independently for each $j\in [p]$ and  then set $\bbeta_{0j}=\frac{\kappa_1}{\sqrt{3p}}T_j$.
The limiting values of the squared error and the cosine similarity are given in \eqref{eq:exact_cat_M_MAP_noninformative_MSE} and \eqref{eq:exact_cat_M_MAP_noninformative_similarity}, respectively.

\begin{figure}[hbtp]
	\centering
	\includegraphics[scale=0.35]{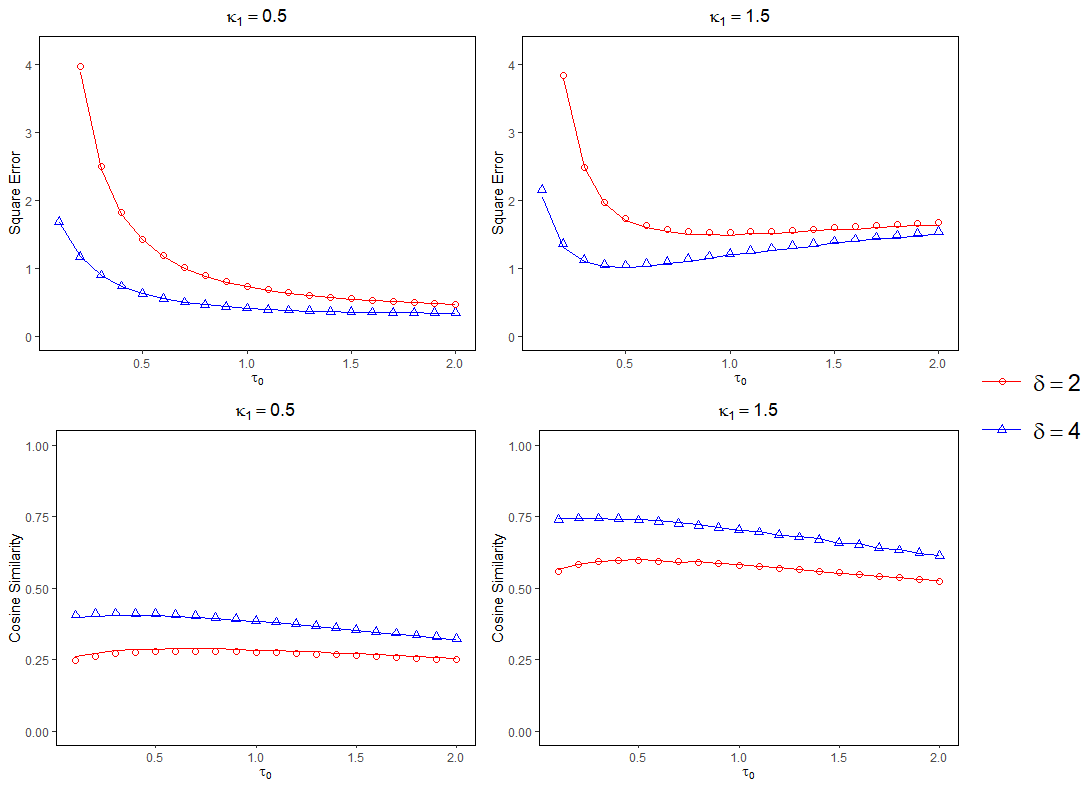}
		\caption{
 Performance of the SRE with noninformative synthetic data as a function of $\tau_0=\tau/n$.
 Each point is obtained by averaging the performance metric of the SRE over 50 simulation replications.
 The solid lines represent the corresponding theoretical prediction.
 }
		\label{fig:num_non_infor_syn_mse_cor_kappa1_half_onehalf}
\end{figure}

For $\kappa_1=0.5$ and $\kappa_1=1.5$, we plot the finite-sample averaged squared error and cosine similarity as points and we draw the limiting values as curves in \Cref{fig:num_non_infor_syn_mse_cor_kappa1_half_onehalf}, where the x-axis shows the value of $\tau_0$.
Results for $\kappa_1=1$ and $2$ are provided in \Cref{supp:sec:estimation_kappa1}.
In these plots, the points align well with the curves, which demonstrates that our asymptotic theory has desirable finite-sample accuracy.
Furthermore, the U-shaped curve of the squared error suggests that for the bias-variance tradeoff, the optimal value of $\tau$ should have the same order as the dimension $p$, which aligns with the practical suggestion in \cite{huang2022catalytic}.

\subsubsection{Informative auxiliary data}\label{sec:num_verify_infor_syn}
We consider the setting in \Cref{sec:exact_asymptocis_infor} where the auxiliary data are generated using regression coefficients $\bbeta_s$ that have nonzero cosine similarity $\xi$ with the true regression coefficients $\bbeta_0$.
In the experiments, we pick different combinations of parameters $\delta$ and $\kappa_1$, and fix $\kappa_2=1$, $\xi=0.9$, and $p=250$ so that $n=p\delta$.
The observed data and true regression coefficients $\bbeta_0$ are generated as in \Cref{sec:numberical_verify_non_infor}.
We set $\bbeta_s=\xi\frac{\kappa_2}{\kappa_1}\bbeta_0+\kappa_2\sqrt{1-\xi^2}\Tilde{\beps}$ with $\xi=0.9$, where $\tilde \beps$ is a random vector independent of $\bbeta_0$ and the entries of $\Tilde{\beps}$ are independently generated from the scaled t-distribution with 3 degrees of freedom and mean zero and variance $1/p$.
This particular choice guarantees that $\lim_{p\rightarrow\infty}\|\bbeta_s\|_2^2=\kappa_2^2$ and $\lim_{p\rightarrow\infty}\frac{1}{\|\bbeta_0\|_2\|\bbeta_s\|_2}\langle \bbeta_0,\bbeta_s\rangle=\xi$.   Then we generate informative auxiliary data as in \Cref{condition:informative_syn_data}.
The limiting values of the squared error and the cosine similarity are given in \eqref{eq:exact_cat_M_MAP_informative_square_error} and \eqref{eq:exact_cat_M_MAP_informative_similarity} respectively.

\begin{figure}[hbtp]
	\centering
	\includegraphics[scale=0.35]{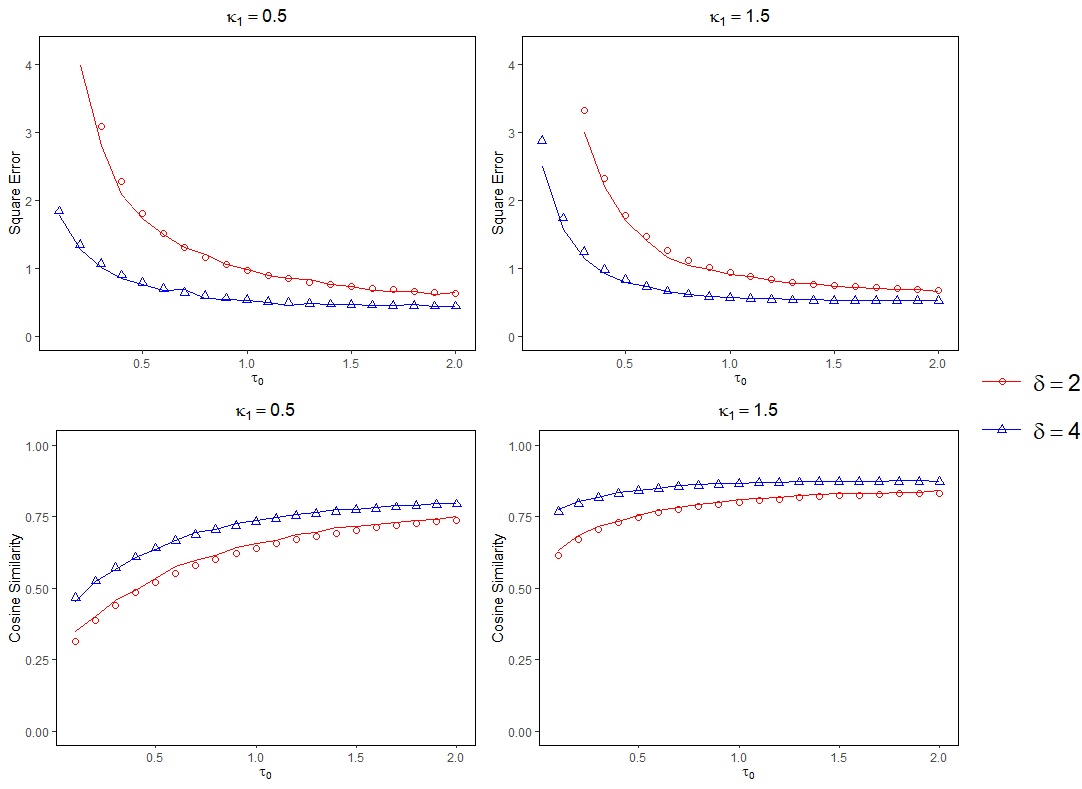}
		\caption{
 Performance of the SRE with informative auxiliary data $(\kappa_2=1,\xi=0.9)$ as a function of $\tau_0=\tau/n$.
 Each point is obtained by averaging the performance metric of the SRE over 50 simulation replications. 
 The solid lines represent the corresponding theoretical prediction.
  }
  \label{fig:num_infor_syn_mse_cor_kappa1_0.5_1.5}
\end{figure}

For $\kappa_1=0.5$ and $\kappa_1=1.5$, we plot the finite-sample averaged squared error and cosine similarity as points and we draw the limiting values as curves in \Cref{fig:num_infor_syn_mse_cor_kappa1_0.5_1.5}, where the x-axis shows the value of $\tau_0$.
Results for $\kappa_1=1$ and $2$ are provided in \Cref{supp:sec:estimation_kappa1}.
In these plots, the points align well with the curves, which demonstrates that our asymptotic theory has desirable finite-sample accuracy.

When compared with the experiments in \Cref{sec:numberical_verify_non_infor}, \Cref{fig:num_infor_syn_mse_cor_kappa1_0.5_1.5} demonstrates that incorporating additional informative auxiliary data can significantly reduce estimation errors.
For example, consider the case with parameters $(\noverp=2,\kappa_1=1.5)$.
In \Cref{fig:num_non_infor_syn_mse_cor_kappa1_half_onehalf}, the lowest MSE is approximately 1.5. In contrast, \Cref{fig:num_infor_syn_mse_cor_kappa1_0.5_1.5} shows a reduction in this value to below 1.
Similarly, we observe that the maximum cosine similarity improves from 0.6 to 0.8.
These observations indicate the effectiveness of transferring valuable information from informative auxiliary data in enhancing the estimation accuracy of the SRE.

\subsection{Brief road-map of the proof}
\label{sec:road_map_maintext}
This section outlines the high-level strategy for characterizing the asymptotically exact SRE behavior (\Cref{thm:exact_cat_M_MAP_informative}). The complete proof is provided in  \Cref{proof_sec:logitic_exact}.

\textbf{First step: Reformulation of original problem.}
To make our optimization problem more suitable for exact asymptotic analysis, we execute a series of transformations on the original optimization problem. By integrating these transformation steps, we reach an equivalent formulation known as the Primal Optimization (PO) problem:
$$
\begin{aligned}
	\min _{\bbeta_S \in \mathcal{S}_{\bbeta}, \bbeta_{S^{\perp}} \in \mathcal{S}_{\bbeta}, \mathbf{u}_1 \in \mathbb{R}^n, \mathbf{u}_2 \in \mathbb{R}^M} \max _{\bv \in \mathcal{S}_{\bv}}&\left(\frac{1}{n} \mathbf{1}^T \rho\left(\mathbf{u}_1\right)-\frac{1}{n} \mathbf{y}_1^T \mathbf{u}_1+\frac{\tau_0}{M} \mathbf{1}^T \rho\left(\mathbf{u}_2\right)-\frac{\tau_0}{M} \mathbf{y}_2^T \mathbf{u}_2\right.\\
	&\left. +\frac{1}{\sqrt{n}} \bv^T\left(\left[\begin{array}{l}
\mathbf{u}_1 \\
\mathbf{u}_2
\end{array}\right]-  \mathbf{H} \bbeta_S\right)-\frac{1}{\sqrt{n}}   \bv^T \mathbf{H} \bbeta_{S^{\perp}}\right)
\end{aligned}
$$
where $\mathbf{H}$ is a matrix with entries that are i.i.d. standard normal,  $\bbeta_{S}:=\mathbf{P}\bbeta$ and $\bbeta_{S^{\perp}}:=\mathbf{P}^\perp \bbeta$, where $\mathbf{P}$ is the projection matrix onto the column space spanned by $\bbeta_0$ and $\bbeta_s$ and $\mathbf{P}^\perp$ is the projection onto the orthogonal complement of that space.

\smallskip
\textbf{Second step: Reduction  to an Auxiliary Optimization (AO) problem.}
The particular form of PO allows us to use the Convex Gaussian Min-max Theorem \citep{thrampoulidis2015regularized}, which characterizes the exact asymptotic behavior of min-max optimization problems that are affine in Gaussian matrices.
This result enables us to characterize the properties of $\widehat{\bbeta}_M$ by studying the asymptotic behavior of the following, arguably simpler, Auxiliary Optimization (AO) problem:
$$
\begin{aligned}
	\min _{\bbeta_S \in \mathcal{S}_{\bbeta}, \bbeta_{S^{\perp}} \in \mathcal{S}_{\bbeta}, \mathbf{u}_1 \in \mathbb{R}^n, \mathbf{u}_2 \in \mathbb{R}^M} \max _{\bv \in \mathcal{S}_{\bv}}&\left(\frac{1}{n} \mathbf{1}^T \rho\left(\mathbf{u}_1\right)-\frac{1}{n} \mathbf{y}_1^T \mathbf{u}_1+\frac{\tau_0}{M} \mathbf{1}^T \rho\left(\mathbf{u}_2\right)-\frac{\tau_0}{M} \mathbf{y}_2^T \mathbf{u}_2\right.\\
	&\left. +\frac{1}{\sqrt{n}} \bv^T\left(\left[\begin{array}{l}
\mathbf{u}_1 \\
\mathbf{u}_2
\end{array}\right]- \mathbf{H} \bbeta_S\right) -\frac{1}{ \sqrt{n}}\left(\bv^T \mathbf{h}\left\|\mathbf{P}^{\perp} \bbeta\right\|+\|\bv\| \mathbf{g}^T \mathbf{P}^{\perp} \bbeta\right)\right)
\end{aligned}
$$
where $\mathbf{h} \in \mathbb{R}^{n+M}$ and $\mathbf{g} \in \mathbb{R}^p$ have i.i.d. standard normal entries.

\smallskip
\textbf{Third step: Scalarization of the Auxiliary Optimization problem.}
We further simplify AO to an optimization over some scalar variables. Specifically, we demonstrate that the asymptotic behavior of AO can be captured through the following optimization problem:
$$\begin{aligned}
\min _{\substack{ \alpha_1\in \mathbb{R} , \alpha_2\in \mathbb{R}\\v,\sigma>0}}\max _{r>0} & \quad \left( -\frac{r\sigma}{\sqrt{\noverp}}+\frac{r}{2v} -\frac{1}{4rv}-\kappa_1^2\alpha_1\mathbb{E}(\rho^{\prime\prime}(\kappa_1 Z_1))-\frac{\tau_0^2}{4rvm}      \right.\\
 & -\tau_0 \kappa_2 \mathbb{E}(\rho^{\prime\prime}(\kappa_2 \xi Z_1+\kappa_2 \sqrt{1-\xi^2}Z_2))(\alpha_1\kappa_1\xi+\alpha_2\kappa_2\sqrt{1-\xi^2} )\\
 &+\mathbb{E}(M_{\rho(\cdot)}(\kappa_1\alpha_1 Z_1+\kappa_2\alpha_2 Z_2+\sigma Z_3+\frac{1}{rv}Ber(\rho^{\prime}(\kappa_1 Z_1)),\frac{1}{rv}))\\
 +\tau_0\mathbb{E}&(M_{\rho(\cdot)}(\kappa_1\alpha_1 Z_1+\kappa_2\alpha_2 Z_2+\sigma Z_3+\frac{\tau_0}{rvm}Ber(\rho^{\prime}(\kappa_2\xi Z_1+\kappa_2 \sqrt{1-\xi^2}Z_2)),\frac{\tau_0}{rvm}))
 \end{aligned}$$
By checking the first-order optimality conditions of the above scalar optimization, we can derive the system of equations \eqref{nonlinear_four_equation}.

\section{Statistical inference based on precise asymptotics}\label{sec:adjust_inference}

This section develops practical methods for estimation and inference based on the asymptotic theory in \Cref{sec:linear_asymptotic_regime}.
\Cref{sec:estimation_kappa1} focuses on the noninformative case ($\bbeta_s = \mathbf{0}$) and estimates the signal strength $\kappa_1$, which yields plug-in estimates of $\alpha_*$ and $\sigma_*$ for use in adjusted confidence intervals.
\Cref{sec:est_xi1} considers the general case with informative auxiliary data and develops estimation for the similarity parameter $\xi$.
\Cref{sec:apply_theory_estimate_tau_MSE} studies selection of the tuning parameter and illustrates the performance of the SRE through a real data example.
\Cref{sec:apply_theory_variable_selection_FDR} considers variable selection based on the SRE and demonstrates its performance on another real data example.

\subsection{Estimation of signal strength}\label{sec:estimation_kappa1}

The precise asymptotic characterization in \Cref{thm:exact_cat_M_MAP_noninformative(modify)} depends on the unknown signal strength $\kappa_1$.
\cite{sur2019modern} proposed a method for estimating $\kappa_1$ called \textit{ProbeFrontier} based on an asymptotic theory of the existence of the MLE, but their method only works when $p/n<1/2$. Our method introduced below works for any value of $p/n>0$.

\begin{figure}[!t]
\caption{
  Relationship between $\eta^2_{M}$ and $\kappa_1$ across different values of $\noverp$. For each $\noverp$, $\eta^2_M$ is computed using a grid of $\kappa_1$ values, with $\tau_0 = 1/4$ and $m = 20/\noverp$.}
	\centering
 	\includegraphics[scale=0.35]
 {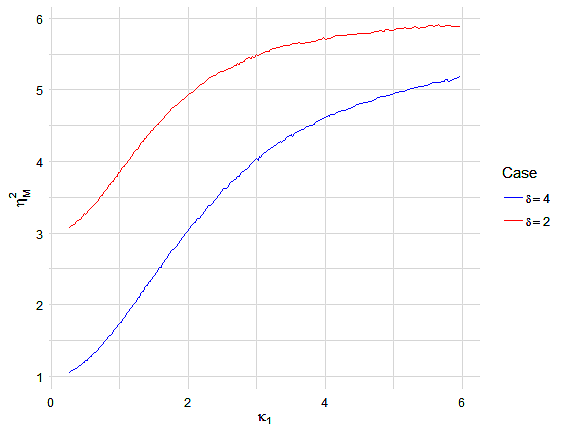}

	\label{fig:dictionary_M20}
\end{figure}

Our method is based on the precise limit of the SRE.
For any given $(\noverp, \tau_0, m)$ and any $\kappa_1$, let $\alpha_*(\kappa_1)$ and $\sigma_*(\kappa_1)$ be  the solutions of \eqref{nonlinear_three_equation(modify)}.
Intuitively, if the norm of the true coefficients (i.e., signal strength $\kappa_1$) increases,
the norm of the SRE increases accordingly. This is in light of the result proved in \cite{candes2020phase} that a large $\kappa_1$ makes the norm of the MLE unbounded.
This intuition can be justified by plotting the limiting value of $\|\widehat{\bbeta}_M\|_2^2$ with respect to $\kappa_1$.
\Cref{thm:exact_cat_M_MAP_noninformative(modify)} suggests that the squared norm of the SRE converges to $\eta_{M}^2:= \alpha_*^2(\kappa_1)\kappa_1^2+\sigma_*^2(\kappa_1)$.
We illustrate the relationship between $\eta_{M}^2$ and $\kappa_1$ in \Cref{fig:dictionary_M20}, which suggests that $\eta_{M}^2$ is increasing in $\kappa_1$.
We denote this relationship as $\eta_{M}=g_{\noverp}(\kappa_1)$, where we omit the dependence on  $\tau_0$ and $m$ because the values of $\tau_0$ and $m$ are manipulable and can be pre-chosen.
Although it could be challenging to estimate $\kappa_1$ directly, it is straightforward to estimate $\eta_{M}$ by $\widehat{\eta}_{M}:=\|\widehat{\bbeta}_{M}\|_2$, the norm of the SRE with noninformative synthetic data of size $M=m n$ and with total weight parameter $\tau=\tau_0 n$.
Given $\widehat{\eta}_{M}$, we set $\widehat{\kappa}_1$ to be the smallest minimizer of $|g_{\noverp}(\kappa) - \widehat{\eta}_{M}|$ over a bounded interval chosen to cover the plausible range of signal strengths; in practice, this minimization is carried out numerically over a grid on that interval.
Given the value of $\widehat{\kappa}_1$, the corresponding solution to the system of equations  \eqref{nonlinear_three_equation(modify)} will be denoted by $\left(\widehat{\alpha}_*, \widehat{\sigma}_*,\widehat{\gamma}_*\right)$.
The accuracy of our estimation method for parameters $(\kappa_1,\alpha_*,\sigma_*)$ is empirically demonstrated in \Cref{supp:sec:estimation_kappa1}.

Substituting the unknown parameters in \eqref{eq:exact_cat_M_MAP_noninformative_CI} with these estimates, we construct the following $95\%$ adjusted confidence intervals (ACI):
$
\widehat{\mathrm{CI}}_j=\left[\frac{\widehat{\bbeta}_{M,j}-1.96 \widehat{\sigma}_*/\sqrt{p}}{\widehat{\alpha}_*}, \frac{\widehat{\bbeta}_{M,j}+1.96 \widehat{\sigma}_*/\sqrt{p}}{\widehat{\alpha}_*}\right], j\in [p].
$
We investigate the performance of the ACI  in \Cref{table:adjust_CI_delta2}  with $\delta=2$, and the cases with $\delta=4$ are provided in  \Cref{supp:extra_CI}.
For $\delta=2$, the MLE does not exist, and consequently, methods relying on the MLE, such as classical asymptotic confidence intervals and adjusted confidence intervals, cannot be applied.
In contrast, our adjusted confidence intervals achieve desirable average coverage for the true regression coefficients.

\begin{table}[ht]
\centering
\caption{ Coverage rates of 95\% adjusted confidence intervals based on $\widehat{\bbeta}_M$ with $\noverp=2$ (MLE does not exist) in \Cref{sec:estimation_kappa1}. The results are averaged over 50 independent experiments.}
\begin{tabular}{ |c|c|c|c|c|}
\hline
  $p$ & $\kappa_1=0.5$ & $\kappa_1=1$ & $\kappa_1=1.5$ & $\kappa_1=2$ \\ \hline
  100        & 0.947                 & 0.948               & 0.948                  & 0.942                \\ \hline
 400        & 0.948                 & 0.950               & 0.946                  & 0.946                \\ \hline
\end{tabular}

\label{table:adjust_CI_delta2}
\end{table}

For the general covariance setting considered in \Cref{coro:arbitray_cov_exact_cat_M_MAP_noninformative},  \Cref{supp:est_kappa_general_covariance} provides an estimation method for the signal strength.

\subsection{Estimation of similarity}\label{sec:est_xi1}

In addition to signal strength parameters,
the precise asymptotic characterization in \eqref{eq:infor_theorem_xi_neq_one}  also depends on the unknown similarity $\xi$.
In the following, we introduce an estimation method for the similarity between the underlying regression coefficients for two datasets.
Suppose we have two independent datasets: target dataset $\{\bX_{i0},Y_{i0}\}_{i=1}^{n_0}$ and source dataset $\{\bX_{is},Y_{is}\}_{i=1}^{n_s}$, both satisfy \Cref{condition:dist_condition(modify)} with true regression coefficients $\bbeta_0$ and $\bbeta_s$ respectively. Furthermore, we assume $\|\bbeta_0\|_2=\kappa_1$, $\|\bbeta_s\|_2=\kappa_2$, and $\frac{1}{\|\bbeta_0\|\|\bbeta_s\|}\langle \bbeta_0,\bbeta_s\rangle=\xi$.
For each original dataset, we generate an independent noninformative synthetic dataset of size $M$ and then construct the SRE separately.
For simplicity, we choose the tuning parameter $\tau=n_0\tau_0$ (or $\tau=n_s \tau_0$) for a fixed $\tau_0$.
The resultant estimators are denoted by $\widehat{\bbeta}_{M,0}$ for the target dataset and $\widehat{\bbeta}_{M,s}$ for the source dataset.
\Cref{thm:exact_cat_M_MAP_noninformative(modify)} suggests that
\begin{equation*}
    \label{estimation_xi_rational}
    \begin{aligned}
    &\widehat{\bbeta}_{M,0}\approx \alpha_{*1}\bbeta_0+p^{-1/2}\sigma_{*1}\bZ_1,
	&\widehat{\bbeta}_{M,s}\approx \alpha_{*2}\bbeta_s+p^{-1/2}\sigma_{*2}\bZ_2,
    \end{aligned}
\end{equation*}
where $(\alpha_{*1} ,\sigma_{*1})$ are the solutions to the system \eqref{nonlinear_three_equation(modify)} based on the parameter tuple $(\noverp_0=n_0/p,\kappa_1,\tau_0,M/n_0)$, $(\alpha_{*2} ,\sigma_{*2})$ are based on the parameter tuple $(\noverp_s=n_s/p,\kappa_2,\tau_0,M/n_s)$, and the entries of $\bZ_1$ and $\bZ_2$ are independent $N(0,1)$ random variables.
Based on this relationship, we have
$ \langle \widehat{\bbeta}_{M,0},\widehat{\bbeta}_{M,s}\rangle\approx  \alpha_{*1} \alpha_{*2} \cdot  \langle \bbeta_0,\bbeta_s \rangle \approx \alpha_{*1} \alpha_{*2} \kappa_1 \kappa_2 \xi$.
This leads to the following estimator for $\xi$: $$\widehat{\xi}=\frac{\langle \widehat{\bbeta}_{M,0},\widehat{\bbeta}_{M,s}\rangle}{ \alpha_{*1} \alpha_{*2} \kappa_1 \kappa_2}, $$
where $\kappa_1$ and $\kappa_2$ can be estimated by the method introduced in \Cref{sec:estimation_kappa1} if unknown.
\Cref{supp:numerical_est_cosine_similar} provides a numerical study to illustrate the accuracy of our estimation of $\xi$.
Once $\xi$, $\kappa_1$, and $\kappa_2$ are estimated, we can use \eqref{eq:infor_theorem_xi_neq_one} to perform downstream inference, such as constructing ACIs using the SRE with informative auxiliary data.

Since $\widehat{\xi}$ is constructed from two SREs computed on separate datasets, it inherits uncertainty from both. The noise in each SRE is governed by its own sample-size-to-dimension ratio: a smaller $\delta_0$ or $\delta_s$ yields a noisier estimator, and the accuracy of $\widehat{\xi}$ is ultimately limited by the less favorable of the two ratios.
The numerical study in \Cref{supp:numerical_est_cosine_similar} confirms that when $\delta_0<\delta_s$, increasing $\delta_0$ reduces the estimation error of $\widehat{\xi}$ more effectively than increasing $\delta_s$.
This pattern is consistent with the common phenomenon in transfer learning that the potential efficiency gain is limited by the smaller sample size.

\subsection{Selection of tuning parameter}\label{sec:apply_theory_estimate_tau_MSE}

The tuning parameter $\tau$ controls the bias-variance tradeoff for the SRE.
This section discusses several methods for selecting the value of $\tau$ and compares the performance of the resulting estimators.

A widely used strategy for selecting $\tau$ is cross-validation, which requires data-splitting and recomputing the estimator on subsets of data \citep[Section 7.10]{hastie2009elements}.
Here, we describe leave-one-out cross-validation and propose an efficient approximation.
The validation error (VE) is measured using the deviance as follows:
 \begin{equation*}
     \text{VE}(\tau) = -\sum_{i=1}^n\left\{Y_i\bX_i^\top \widehat{\bbeta}_{M,-i} -\rho(\bX_i^\top \widehat{\bbeta}_{M,-i}) \right\},
 \end{equation*}
where $\widehat{\bbeta}_{M,-i} $ denotes the SRE in \eqref{eq: SRE_def} computed using all observed data except the $i$-th observation.
Since computing all $\widehat{\bbeta}_{M,-i} $ is computationally intensive,
it is beneficial to only compute $\widehat{\bbeta}_M$ once (for each value of $\tau$).
Motivated by the leave-one-out estimators in \cite{sur2019modern}, we propose an accurate approximation to $\text{VE}(\tau)$.
To be concrete, let  $\mathcal{I}_{-i}=[n]\setminus\{i\}$, and approximate  $\bX_i^\top \widehat{\bbeta}_{M,-i}$ by
$$
\tilde l_i:=\bX_i^\top \widehat{\bbeta}_{M}+\bX_i^{\top}\left( H_{\tau} + \rho^{\prime\prime}\left(\widehat{\bbeta}_M^{\top} \bX_i\right)\bX_i\bX_i^{\top}    \right)^{-1} \bX_i\left(Y_i-\rho^{\prime}\left(\bX_i^\top \widehat{\bbeta}_{M} \right)\right),
$$
where $H_{\tau}$ is the Hessian matrix of the objective in \eqref{eq: SRE_def},
i.e.,
$H_{\tau}=-\sum_{j \in [n]} \rho^{\prime\prime}\left(\widehat{\bbeta}_M^{\top} \bX_j\right)\bX_j\bX_j^{\top}- \frac{\tau}{M}\sum_{j\in [M]}\rho^{\prime\prime}\left(\widehat{\bbeta}_M^{\top} \bX_j^*\right)\bX_j^*\bX_j^{*\top}$.
The matrix inversion in the above display can be computed efficiently using the Sherman-Morrison inverse formula \citep{sherman1950adjustment}.
Subsequently, we approximate $\text{VE}(\tau)$ by $\widetilde{\text{VE}}(\tau):= -\sum_{i=1}^n\left\{Y_i\tilde l_i-\rho(\tilde l_i) \right\}$.
In \Cref{appendix_sec:LOOCV}, we provide a detailed derivation and summarize the algorithm for selecting $\tau$ by minimizing $\widetilde{VE}(\tau)$.
The SRE resulting from this selection of $\tau$  is named the \textbf{S}RE with \textbf{L}eave-one-out \textbf{C}ross  \textbf{V}alidation (SLCV).

Another way to select $\tau$ is to minimize the theoretical limit of the squared error given by \Cref{thm:exact_cat_M_MAP_noninformative(modify)}.
Using the estimator $\widehat{\kappa}_1$ from \Cref{sec:estimation_kappa1}, we compute the corresponding solutions to  \eqref{nonlinear_three_equation(modify)} for any $\tau_0 = \tau / n$, denoted as $\left(\widehat{\alpha}_*(\tau), \widehat{\sigma}_*(\tau), \widehat{\gamma}_*(\tau)\right)$.
We can then estimate the limit of the squared error by \eqref{eq:exact_cat_M_MAP_noninformative_MSE} for a fixed grid of values of $\tau$ and select the one that minimizes the estimated limit.
The SRE resulting from this selection of $\tau$ is named the \textbf{S}RE with \textbf{E}stimated \textbf{S}quared  \textbf{E}rror (SESE).
For comparison, we also consider the optimal $\tau$ that minimizes the limit of the squared error based on the true value of $\kappa_1$, and call the resulting estimator \textbf{S}RE with \textbf{T}rue \textbf{S}quared \textbf{E}rror (STSE).

We provide numerical experiments to highlight the effectiveness of our proposed tuning parameter selection methods in \Cref{supp:sec:apply_theory_estimate_tau_MSE}.
The results show that both SESE and SLCV perform comparably to the benchmark STSE.
Furthermore, these methods, when applied with informative auxiliary data, demonstrate a significant improvement in estimation accuracy compared to using noninformative synthetic data.

\textbf{Real data illustration.}
To illustrate the practical benefits of our methods, particularly the advantage of informative auxiliary data and our tuning parameter selection, we analyze the Wisconsin Diagnostic Breast Cancer dataset \citep{street1993nuclear}, which consists of $n=569$ observations. The response variable is binary, indicating whether a tumor is malignant or benign, and we have $p=10$ standardized covariates measuring various tumor characteristics.

We simulate a transfer learning scenario by partitioning the data randomly into three subsets: target training set ($n_{train}=50$), target test set ($n_{test}=119$), and source set ($n_s=400$).
We compare the classification performance of different methods on the test set with predicted label $\hat{Y}=1\{\bX_{test}^\top \widehat{\bbeta}>0\}$ for any estimator $\widehat{\bbeta}$. We consider two SREs: (1) \textbf{SRE(I)}, which uses the source set as informative auxiliary data; and (2) \textbf{SRE(N)}, which uses   synthetic data  $\{\bX_i\}_{i=1}^M\stackrel{i.i.d.}{\sim} N(\mathbf{0},\mathbf{I}_p), \{Y_i^*\}_{i=1}^M\stackrel{i.i.d.}{\sim}$ \text{Bern}(0.5) with $M=n_s$.
Since the tuning procedure of SLCV is computationally much more efficient and SLCV has comparable numerical performance to the SESE, we tune both SRE(I) and SRE(N) in the same way as SLCV.
For comparison, we consider $\ell_2$-penalized MLE (\textbf{ridge}) and $\ell_1$-penalized MLE (\textbf{Lasso}) (note that the MLE does not exist for the target training set).
We employ TransGLM \citep{tian2023transfer} as the benchmark for incorporating source data, which is implemented using  \texttt{glmtrans}.
Lasso and ridge estimators are implemented using the $\mathrm{R}$ package \texttt{glmnet} \citep{simon_regularization_2011}.
Tuning parameters for these methods are selected using their respective default cross-validation procedures.

\begin{table}[ht]
\caption{\label{realdata:classification_error}
Average classification error over 50 random splits 
of the Wisconsin Diagnostic Breast Cancer dataset. Standard errors are shown in parentheses.}
    \centering
  \begin{tabular}{l|ccccc}
\hline
& SRE(N) & SRE(I) & Lasso & Ridge & TransGLM\\
\hline \hline Error & $0.078(0.003)$ & $\textbf{0.069}(0.003)$ & $0.084(0.004)$ & $0.074(0.003)$  &$0.081(0.004)$\\ \hline
\end{tabular}
\end{table}

\Cref{realdata:classification_error} summarizes the classification errors of different methods. Our method, SRE(I), achieves the lowest error among the five methods, which demonstrates the benefit of incorporating informative auxiliary data in the SRE.
SRE(N) also performs competitively, showing the regularization effect of synthetic data even without specific prior information.
TransGLM and Lasso exhibit higher errors, potentially due to their use of the
$\ell_1$ penalty, which may not be suitable for this dataset.

\subsection{Variable selection}\label{sec:apply_theory_variable_selection_FDR}

Our precise asymptotic characterization of the SRE can be applied to variable selection with  False Discovery Rate (FDR) control
using the data-splitting method introduced by \cite{dai2023scale}.
While the original method requires the MLE to exist on split datasets, our extension lifts this restriction and applies more broadly.

The index set of null (irrelevant) variables is denoted by $S_0$ and the index set of relevant variables by $S_1$; for logistic regression, $S_0=\{j\in [p]: \bbeta_{0,j} = 0\}$ and $S_1=[p]\setminus S_0$.
Let $\widehat{S}$ be the index set of selected variables.
The False Discovery Proportion (FDP) and False Discovery Rate (FDR) are defined as
$$\text{FDP}=\frac{\#( S_0\cap \widehat{S}) }{\# \widehat{S} },\quad \text{FDR}=\mathbb{E} [\text{FDP}].$$
\cite{dai2023scale} considered a variable selection framework based on mirror statistics $M_j$'s that are constructed for all $j\in [p]$.
A mirror statistic exhibits two key features: (1) large values indicate potentially important variables, and (2) it is symmetrically distributed around zero for null variables. Thus, variables can be ranked by the magnitude of their mirror statistics, and those exceeding a chosen cutoff are selected.
The second property suggests an estimated upper bound for FDP for each $t$, which is given by $\frac{\#\left\{j: M_j<-t\right\}}{\#\left\{j: M_j>t\right\}}$. Following these two intuitions, the cutoff with a preassigned FDR level $q \in(0,1)$ is given by
$\text{Cutoff}(q,\{M_j\}_{j=1}^p):=\inf \left\{t>0:   {\#\left\{j: M_j<-t\right\}}/{\#\left\{j: M_j>t\right\}} \leq q\right\}$,
and we select variables with mirror statistics greater than the above cutoff value.

Next, we construct the mirror statistic that satisfies the two properties mentioned above. According to \Cref{coro:arbitray_cov_exact_cat_M_MAP_noninformative}, for each $j$ we have ${v_j}\widehat{\bbeta}_{M,j} \approx {v_j}\alpha_* \bbeta_{0,j} + {\sigma_*}Z_j$, where $Z_j\sim N(0,1/p)$ and $v^2_j=\operatorname{Var}\left(X_j \mid \bX_{-j}\right)$ is the conditional variance.
Adapting the data-splitting method in \cite{dai2023scale}, we split the observed data into two equal-sized halves, and compute the SRE for each half with separately generated synthetic data.
This leads to
\begin{equation}\label{ds_split_logic}
    {v_j}\widehat{\bbeta}^{(1)}_{M,j} \approx {v_j}\alpha_* \bbeta_{0,j} + {\sigma_*}Z^{(1)}_j \quad \text{and} \quad {v_j}\widehat{\bbeta}^{(2)}_{M,j} \approx {v_j}\alpha_* \bbeta_{0,j} + {\sigma_*}Z^{(2)}_j,
\end{equation}
where $(\widehat{\bbeta}^{(1)}_{M,j},Z^{(1)}_j)$ is independent of  $(\widehat{\bbeta}^{(2)}_{M,j},Z^{(2)}_j)$ due to data splitting.
\eqref{ds_split_logic} enables us to define  the mirror statistic as $M_j:=v_j^2\widehat{\bbeta}^{(1)}_{M,j}\widehat{\bbeta}^{(2)}_{M,j}$, which will be large in magnitude when $\bbeta_{0,j}\neq 0$ and its distribution will be symmetric around 0 when $\bbeta_{0,j}=0$.
When $v_j^2$'s are unknown, we estimate them using either node-wise regression or the diagonal entries of the inverse of the sample covariance matrix $\widehat{\mathbf{\Sigma}}=\frac{1}{n}\sum_{i=1}^n \bX_i\bX_i^\top$.
To overcome the power loss due to data splitting,
\cite{dai2023scale} introduced the Multiple Data-Splitting (MDS) procedure that aggregates multiple selection results via repeated sample splits; see Algorithm 2 therein.

In addition to variable selection via mirror statistics, we can consider the adjusted Benjamini-Hochberg (ABH) procedure and the adjusted Benjamini-Yekutieli (ABY) procedure \citep{benjamini1995controlling,benjamini2001control}.
Both procedures rely on the adjusted p-values, which are given by $2\Phi(-|\widehat{v}_j\sqrt{p}\widehat{\bbeta}_{M,j}/\widehat{\sigma}_*|)$ for $j\in [p]$, where $\Phi(\cdot)$ is the cumulative distribution function of standard Gaussian, $\widehat{v}^2_j$ is an estimate of the conditional variance $\operatorname{Var}\left(X_j \mid \bX_{-j}\right)$, and $\widehat{\sigma}_*$ is an estimate of  $\sigma_*$ defined in \Cref{coro:arbitray_cov_exact_cat_M_MAP_noninformative}.
Here $\widehat{\sigma}_*$ can be obtained by solving \Cref{nonlinear_three_equation(modify)} with $\kappa_1$ estimated using the method introduced in  \Cref{supp:est_kappa_general_covariance}.

We conduct numerical experiments across different settings to compare the performance of the aforementioned variable selection methods based on SREs in terms of FDR and power.
See the caption of \Cref{fig:FDR_extra_noMLE} for details of the experiments.
In each simulation, we numerically verified that the MLE does not exist, so MLE-based methods are inapplicable in all these experiments.
We have the following observations from \Cref{fig:FDR_extra_noMLE}.
When the signal strength is fixed and the correlation $r$ of the covariate matrix is varied,
the MDS procedure based on the SRE effectively controls the FDR when $r\leq 0.2$, but it suffers from inflation of FDR when $r\geq 0.3$.
This is probably due to the difficulty of estimating $v_j$'s in the presence of high correlations.
In addition, ABH is more powerful than MDS in every case, although it lacks theoretical guarantees on FDR control. In contrast, ABY comes with a theoretical guarantee, but it is too conservative and has the lowest power across all settings.
When $r$ is fixed at 0.2 while the signal strength is increasing, all three methods have decreasing FDR and increasing power since it becomes easier to distinguish the relevant variables from the null ones.

To compare with the variable selection methods based on the MLE, we also reproduce the numerical experiments in \citet[Section 5.1.1]{dai2023scale} where the MLE exists.
The results are presented in \Cref{supp:FDR_numerical} and they reveal that the selection methods based on SREs perform similarly to the MLE-based methods.

\begin{figure}[!t]
 \caption{
    Empirical FDRs and powers in a logistic regression with $p=200$ and $n=500$.
    The covariate vectors are sampled from a normal distribution $N(0, \Sigma)$, where $\Sigma$ is a Toeplitz matrix ($\Sigma_{ij}=r^{|i-j|}$).
    The left panel varies correlation ($r$) while fixing signal strength at $\left|\bbeta_{0j}\right|=1$ for elements in $S_1$; the right panel fixes $r=0.2$ and varies signal strength from $1$ to $2$.
    In each scenario, there are 40 relevant features.
    The nominal FDR level is $q=0.1$.
    The power is assessed as the proportion of correctly identified relevant features.
    Each point averages over 100 replications. The SRE is computed using noninformative synthetic data with $M=20p$ and $\tau=p$.}
    \centering
    \includegraphics[scale=0.2]{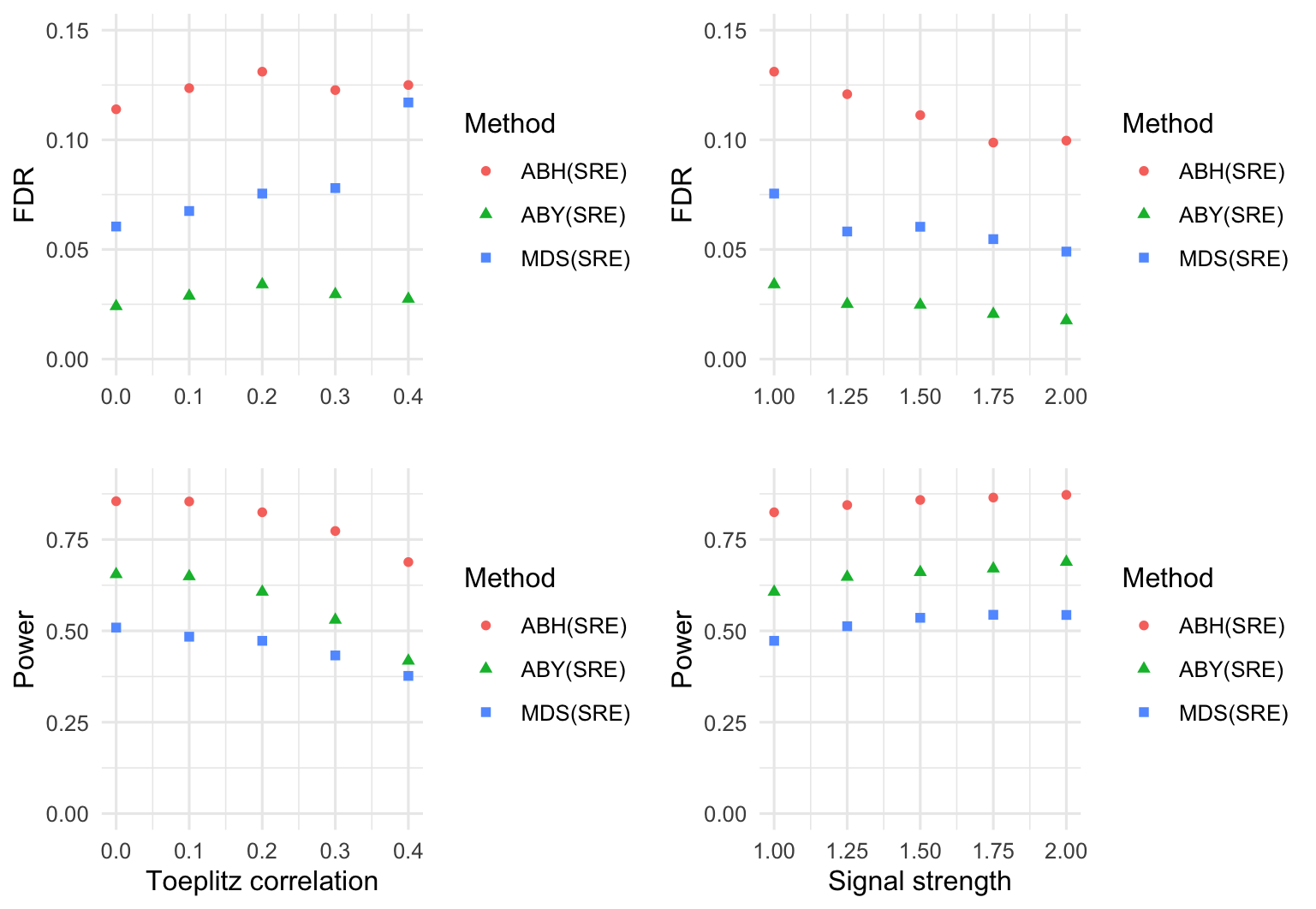}
   
    \label{fig:FDR_extra_noMLE}
\end{figure}

\textbf{Real data illustration.}
To demonstrate the performance of our SRE-based variable selection methods (MDS, ABY, and ABH), we analyze a single-cell RNA sequencing (scRNA-seq) dataset from \cite{hoffman2020single}, where the goal is to identify genes associated with the glucocorticoid response (GR).
This dataset consists of $n=2400$ gene expression samples from 2000 glucocorticoid-treated cells and 400 vehicle-treated control cells, with the binary response indicating glucocorticoid treatment.
We focus on the top 600 most variable genes ($p=600$) after centering and filtering out uncommon genes, following the procedure described in \cite{dai2023scale}.

Note that the separability of this dataset renders MLE-based methods inapplicable.
Using our SRE with noninformative synthetic data (generated as described previously), we apply the MDS, ABY, and ABH variable selection procedures.
All three methods identify HSPA1A and NFKBIA, while ABH selects two additional genes, namely EEF1A1 and RPL10.
These findings are supported by existing literature; see \Cref{supp:realdata}.

This example demonstrates the usefulness of our SRE-based variable selection methods, even in a setting where standard MLE approaches are inapplicable.

\section{Discussion}
\label{sec:discussion}

This paper introduces the synthetic-data regularization method inspired by Bayesian catalytic priors.
Rather than penalizing the parameter directly, the SRE augments the observed likelihood with a down-weighted likelihood based on synthetic data generated from a simpler model or auxiliary data sampled from a related population.
This method is easy to implement and equivariant under reparametrization.
With both theoretical and practical developments, we establish the synthetic-data regularization as a rigorous framework for improved frequentist inference.

Theoretically, we analyze the properties of the SRE in GLMs where no structural assumptions are imposed.
We prove that the SRE achieves minimax rate optimality across the asymptotic regimes covered by our theory, and we provide a precise asymptotic characterization in the high-dimensional linear regime.
These results clarify the roles of the tuning parameters in determining the behavior of the SRE.

Practically, building upon our theoretical results, we develop novel methodologies for implementation and inference.
Specifically, we propose methods for estimating the bias and variance parameters that govern the SRE's asymptotic behavior, which are then used to adjust high-dimensional inference.
The effectiveness of these adjusted inferences is demonstrated through simulations and real-data examples.

Several research questions remain open:
(1)
The synthetic-data regularization construction is broadly applicable, while the theory developed here focuses on GLMs.
Extending this theory to other model classes is an important direction for future work. 
(2) Our estimation method for signal strength is empirically accurate, but a theoretical guarantee requires further investigation.
(3) While the precise asymptotic characterization assumes Gaussian designs, numerical results in \Cref{sec:beyond_gaussian_empirical_justification} suggest this characterization holds under weaker conditions.
Proving this universality is promising but technically challenging.

\bigskip

\section{Acknowledgements}
D. Huang was partially supported by NUS Start-up Grant A-0004824-00-0 and by Singapore Ministry of Education AcRF Tier 1 Grants A-8000466-00-00 and A-8004149-00-00.

\newpage
 
\numberwithin{equation}{section}
\numberwithin{theorem}{section}
\numberwithin{lemma}{section}
\numberwithin{proposition}{section}
\numberwithin{corollary}{section}
\numberwithin{assumption}{section}
\numberwithin{condition}{section}
\numberwithin{definition}{section}
\numberwithin{remark}{section}
\numberwithin{figure}{section}
\numberwithin{table}{section}
\numberwithin{algorithm}{section}

\begin{appendix}

This appendix summarizes the estimation strategies for $\kappa_1$ and $\xi$   and the variable selection approaches mentioned in the main text. \Cref{supp:sec:methods} also details the rationale for the approximate leave-one-out cross-validation (LOOCV).
\Cref{supp:sec:numerical} collects all additional numerical experiments mentioned in the main text; see the detailed outline therein.
\Cref{supp:extension_GLM_section} extends the theoretical results developed in the main text from the logistic regression model to the generalized linear model (GLM) with the canonical link.
\Cref{supp:sec:proof} provides all proofs for the theorems presented in the main text.

\section{Methodology}\label{supp:sec:methods}

\subsection{Estimation of signal strength $\kappa_1$}

\Cref{algorithm:estimation_kappa1} summarizes the estimation method of signal strength described in \Cref{sec:estimation_kappa1}.

\begin{algorithm}
    \centering
 \caption{Estimation of signal strength $\kappa_1$}
\begin{tabular}{p{15cm}}
\textbf{Input}: \\
Observed Data: $\{\bX_i,Y_i\}_{i=1}^n$,  \\
noninformative synthetic data: $\{\bX_i^*,Y_i^*\}_{i=1}^M$ \\
Relationship function: $g_{\noverp}$ with $\noverp=n/p,\tau_0=0.25,m=20/\noverp$ \\
\textbf{Process}:
 \begin{enumerate}
        \item Compute $\widehat{\bbeta}_M$ with $\tau=0.25n$ and set $\widehat{\eta}_M=\|\widehat{\bbeta}_M\|_2$.
        \item

        Find the smallest minimizer, $\widehat{\kappa}_1$, of the function $\kappa\mapsto\left|g_{\noverp}(\kappa)-\widehat{\eta}_M\right|$ over a prespecified bounded grid.
        
    \end{enumerate}

\textbf{Output}:  $\widehat{\kappa}_1$ \\
\end{tabular}
\label{algorithm:estimation_kappa1}
\end{algorithm}

\subsection{Estimation of similarity $\xi$}

\Cref{algorithm:estimation_xi} summarizes the procedure to estimate $\xi$ described in \Cref{sec:est_xi1} of the main text.

\begin{algorithm}
    \centering
 \caption{Estimation of similarity $\xi$}
 \begin{tabular}{ll}
\textbf{Input}: \\
Target dataset $\{\bX_{i0},Y_{i0}\}_{i=1}^{n_0}$ and source dataset $\{\bX_{is},Y_{is}\}_{i=1}^{n_s}$ \\
\textbf{Process}: \\
1. Generate two noninformative synthetic datasets: $\left\{\boldsymbol{X}^*_{i0}\right\}_{i=1}^M,\left\{\boldsymbol{X}^*_{is}\right\}_{i=1}^M \stackrel{\text { i.i.d. }}{\sim} \mathcal{N}\left(\mathbf{0},  \mathbf{I}_p\right)$ \\
\hskip 1cm and $\left\{Y^*_{i0}\right\}_{i=1}^M$, $\left\{Y^*_{is}\right\}_{i=1}^M \stackrel{\text { i.i.d. }}{\sim}\text{Bern}(0.5)$, $M=20p$. \\
2. Compute the SREs $\widehat{\bbeta}_{M,0}$ with $\tau=0.25n_0$ and $\widehat{\bbeta}_{M,s}$ with $\tau=0.25n_s$ based on \eqref{eq: SRE_def}. \\
3. Apply \Cref{algorithm:estimation_kappa1} to obtain two estimates $\widehat{\kappa}_1,\widehat{\kappa}_2$. Find solutions of the system of equations \eqref{nonlinear_three_equation(modify)} \\
$(\widehat{\alpha}_{1*},\widehat{\sigma}_{1*},\widehat{\gamma}_{1*})$
based on parameters $(\noverp=n_0/p,\widehat{\kappa}_1,\tau_0=0.25,m=M/n_0)$ and $(\widehat{\alpha}_{2*},\widehat{\sigma}_{2*},\widehat{\gamma}_{2*})$ \\
based on parameters $(\noverp=n_s/p,\widehat{\kappa}_2,\tau_0=0.25,m=M/n_s)$ \\
\textbf{Output}:  $\widehat{\xi}= \langle \widehat{\bbeta}_{M,0},\widehat{\bbeta}_{M,s}\rangle/({ \widehat{\alpha}_{*1} \widehat{\alpha}_{*2} \widehat{\kappa}_1\widehat{\kappa}_2})$ \\

\end{tabular}

\label{algorithm:estimation_xi}
\end{algorithm}

\subsection{Approximate leave-one-out cross-validation (LOOCV)}
\label{appendix_sec:LOOCV}
We provide the rationale for  the approximated leave-one-out cross-validation (LOOCV) method described in \Cref{sec:apply_theory_estimate_tau_MSE} of the main text for tuning the parameter $\tau$.
To mitigate the extensive computational burden of the standard LOOCV, we design an approximation to speed up the computation of the validation error (VE) for each candidate value of $\tau$.
This approximation requires running the optimization in \eqref{eq: SRE_def} only once per candidate value.

Recall that the VE is measured using the deviance and it is given by
\begin{equation*}
    VE(\tau)= -\sum_{i=1}^n\left\{Y_i\bX_i^\top \widehat{\bbeta}_{M,-i} -\rho(\bX_i^\top \widehat{\bbeta}_{M,-i}) \right\}
\end{equation*}
where $\widehat{\bbeta}_{M,-i} $ denotes the optimizer of \eqref{eq: SRE_def} computed using all data except for the $i$-th observation.

\subsubsection{Part 1}
We consider the approximation of $\widehat{\bbeta}_{M,-i}^\top \bX_i$ with any $i\in [n]$. To ease the notation, we drop the subscript $M$.
Let $\mathcal{I}=\{1, \ldots, n\}$ be the indices of all observations and $\mathcal{I}_{-i}=\{1, \ldots, i-1, i+1, \ldots, n\}$ be the indices of all but the $i$-th observation. Now we can write out the first-order optimality condition for $\widehat{\bbeta}$ and $\widehat{\bbeta}_{-i}$:
\begin{align*}
	0&=\sum_{j \in \mathcal{I}} \bX_j\left(Y_j-\rho^{\prime}\left(\widehat{\boldsymbol{\beta}}^{\top} \bX_j\right)\right) +\frac{\tau}{M} \sum_{j=1}^M \bX_j^*\left(Y_j^*-\rho^{\prime}\left(\widehat{\boldsymbol{\beta}}^{\top} \bX_j^*\right)\right), \\
	0&= \sum_{j \in \mathcal{I}_{-i}} \bX_j\left(Y_j-\rho^{\prime}\left(\widehat{\boldsymbol{\beta}}_{-i}^{\top} \bX_j\right)\right) +\frac{\tau}{M} \sum_{j=1}^M \bX_j^*\left(Y_j^*-\rho^{\prime}\left(\widehat{\boldsymbol{\beta}}_{-i}^{\top} \bX_j^*\right)\right).
\end{align*}

Taking the difference between these two equations yields
\begin{align*}
	0&=\bX_i\left(Y_i-\rho^\prime\left(\widehat{\boldsymbol{\beta}}^{\top} \bX_i\right)\right)+\sum_{j \in \mathcal{I}_{-i}} \bX_j\left[\rho^{\prime}\left(\widehat{\boldsymbol{\beta}}_{-i}^{\top} \bX_j\right)-\rho^{\prime}\left(\widehat{\boldsymbol{\beta}}^{\top} \bX_j\right)\right] + \\
	& \frac{\tau}{M} \sum_{j=1}^M \bX_j^*\left[\rho^{\prime}\left(\widehat{\boldsymbol{\beta}}_{-i}^{\top} \bX_j^*\right)-\rho^{\prime}\left(\widehat{\boldsymbol{\beta}}^{\top} \bX_j^*\right)\right].
\end{align*}

We expect the difference between $\widehat{\boldsymbol{\beta}}_{-i}$ and $\widehat{\boldsymbol{\beta}}$ to be small, so we can well approximate the difference $\rho^{\prime}\left(\widehat{\boldsymbol{\beta}}_{-i}^{\top} \bX_j\right)-\rho^{\prime}\left(\widehat{\boldsymbol{\beta}}^{\top} \bX_j\right)$ and $\rho^{\prime}\left(\widehat{\boldsymbol{\beta}}_{-i}^{\top} \bX_j^*\right)-\rho^{\prime}\left(\widehat{\boldsymbol{\beta}}^{\top} \bX_j^*\right)$ using a Taylor expansion of $\rho^\prime$ around $\widehat{\boldsymbol{\beta}}^{\top} \bX_j$ and $\widehat{\boldsymbol{\beta}}^{\top} \bX_j^* $, respectively. In other words, we have
\begin{align*}
	0&\approx \bX_i\left(Y_i-\rho^{\prime}\left(\widehat{\boldsymbol{\beta}}^{\top} \bX_i\right)\right)+\sum_{j \in \mathcal{I}_{-i}} \rho^{\prime\prime}\left(\widehat{\boldsymbol{\beta}}^{\top} \bX_j\right)\bX_j\bX_j^{\top}\left(\widehat{\boldsymbol{\beta}}_{-i}-\widehat{\boldsymbol{\beta}}\right) + \\
	&\frac{\tau}{M}\sum_{j=1}^M  \rho^{\prime\prime}\left(\widehat{\boldsymbol{\beta}}^{\top} \bX_j^*\right)\bX_j^*\bX_j^{*\top}\left(\widehat{\boldsymbol{\beta}}_{-i}-\widehat{\boldsymbol{\beta}}\right)
\end{align*}

To simplify the notation, we introduce the following shorthands for the Hessian matrices appearing in the above display:

\begin{align*}
	& H_{\tau}=-\sum_{j \in \mathcal{I}} \rho^{\prime\prime}\left(\widehat{\boldsymbol{\beta}}^{\top} \bX_j\right)\bX_j\bX_j^{\top}, \\
 &H_{\tau,-i}=-\sum_{j \in \mathcal{I}_{-i}} \rho^{\prime\prime}\left(\widehat{\boldsymbol{\beta}}^{\top} \bX_j\right)\bX_j\bX_j^{\top},\\
	& H_{\tau}^*=-\frac{\tau}{M}\sum_{j=1}^M  \rho^{\prime\prime}\left(\widehat{\boldsymbol{\beta}}^{\top} \bX_j^*\right)\bX_j^*\bX_j^{*\top}.
\end{align*}
Admitting this second order approximation, we have
\begin{equation*}
	\bX_i\left(Y_i-\rho^{\prime}\left(\widehat{\boldsymbol{\beta}}^{\top} \bX_i\right)\right)\approx \left(H_{\tau,-i}+H_{\tau}^*\right)\left(\widehat{\boldsymbol{\beta}}_{-i}-\widehat{\boldsymbol{\beta}}\right),
\end{equation*}
or
\begin{equation*}
	\left(\widehat{\boldsymbol{\beta}}_{-i}-\widehat{\boldsymbol{\beta}}\right)\approx \left(H_{\tau,-i}+H_{\tau}^*\right)^{-1}\bX_i\left(Y_i-\rho^{\prime}\left(\widehat{\boldsymbol{\beta}}^{\top} \bX_i\right)\right).
\end{equation*}

Therefore, we can approximate the term $\widehat{\boldsymbol{\beta}}_{-i}^{\top} \bX_i$ by
\begin{equation}\label{eq:approximate-xbeta-tilde-l}
\tilde l_i:=\widehat{\boldsymbol{\beta}}^{\top} \bX_i+\bX_i^{\top}\left( H_{\tau,-i}+H_{\tau}^*\right)^{-1} \bX_i\left(Y_i-\rho^{\prime}\left(\widehat{\boldsymbol{\beta}}^{\top} \bX_i \right)\right).
\end{equation}

\subsubsection{Part 2}
The derivation above involves a matrix inversion for each $i\in [n]$.
To obtain the inverse of $H_{\tau,-i}+H_{\tau}^*$ for all $i$ efficiently, we can take advantage of the fact that they are each a rank one update from the $H_\tau+H_\tau^*$:
$$
H_{\tau,-i}+H_{\tau}^*=H_{\tau}+H_{\tau}^*+\rho^{\prime\prime}\left(\widehat{\boldsymbol{\beta}}^{\top} \bX_i\right)\bX_i\bX_i^{\top}.
$$
Applying the Sherman-Morrison inverse formula, we have for each $i$:
\begin{equation}\label{eq:Sherman-Morrison-H-sum}
    (H_{\tau,-i}+H_{\tau}^*)^{-1}=(H_{\tau}+H_{\tau}^*)^{-1}-\frac{(H_{\tau}+H_{\tau}^*)^{-1}\rho^{\prime\prime}\left(\widehat{\boldsymbol{\beta}}^{\top} \bX_i\right)\bX_i\bX_i^{\top} (H_{\tau}+H_{\tau}^*)^{-1}}{1+\rho^{\prime\prime}\left(\widehat{\boldsymbol{\beta}}^{\top} \bX_i\right)\bX_i^{\top}(H_{\tau}+H_{\tau}^*)^{-1}\bX_i}.
\end{equation}

\subsubsection{Synthesis}
Based on the derivation above, we are ready to approximate $VE(\tau)$ using the following $\widetilde{VE}(\tau)$:
\begin{equation}
	\label{validation_error_approx}
	\widetilde{VE}(\tau)= -\sum_{i=1}^n\left\{Y_i\tilde l_i-\rho(\tilde l_i) \right\}.
 \end{equation}
 We summarize the procedure for the approximated LOOCV in \Cref{algorithm:LOOCV}.

\begin{algorithm}
\centering
\caption{Approximated LOOCV}
\begin{tabular}{p{15cm}}
\textbf{Input}: \\
Data: $\{\bX_i,Y_i\}_{i=1}^n$ \\
Synthetic data: $\{\bX_j^*,Y_j^*\}_{j=1}^M$ \\
Sequence of candidate tuning parameters $\tau_k, k\in \{1,2,\cdots, K\}$  \\
\textbf{Process}: \\
 For each $\tau_k$:
    \begin{enumerate}
        \item compute $\widehat{\bbeta}_M$ according to \eqref{eq: SRE_def},
        \item compute
        $\tilde l_i:=\bX_i^\top \widehat{\bbeta}_{M}+\bX_i^{\top}\left( H_{\tau,-i}+H_{\tau}^*\right)^{-1} \bX_i\left(Y_i-\rho^{\prime}\left(\bX_i^\top \widehat{\bbeta}_{M} \right)\right),$
        for $i=\{1,2\ldots,n\}$,
        \item compute
        $\widetilde{VE}(\tau_k)= -\sum_{i=1}^n\left\{Y_i\tilde l_i-\rho(\tilde l_i) \right\}.$
    \end{enumerate}

\textbf{Output}:  $\widehat{\tau}_{cv}=\arg\min_{\tau_k}\widetilde{VE}(\tau_k)$ \\
\end{tabular}
\label{algorithm:LOOCV}
\end{algorithm}

\subsection{Estimation of signal strengths with general covariance structures}
\label{supp:est_kappa_general_covariance}

In \Cref{sec:estimation_kappa1}, we provide a method for estimating $\kappa_1$ when the covariance of the covariate vector is identity, where the key idea is to make use of the one-to-one correspondence between $\lim_{n\rightarrow \infty}\|\widehat{\bbeta}_M\|^2$ and $\kappa_1$ which is defined as $\kappa_1=\lim_{p\rightarrow \infty} \|\bbeta_0\|$.
Here we provide an extension to the case where the covariance of $\bX_i$ is a general covariance $\Sigma$.

Let $\Sigma^{1/2}$ be a symmetric square root of $\Sigma$. We can write $\bX_i=\Sigma^{1/2} \bZ_i$ with $\bZ_i\sim N(0,\mathbb \mathbb{I}_p)$ and $\bX_i^* =\Sigma^{1/2}\bZ_i^*$ with $\bZ_i^*\sim N(0,\mathbb \mathbb{I}_p)$. The expression in \eqref{eq: SRE_def} can be written as
\begin{align*}
    \widehat{\boldsymbol{\beta}}_{M}&=\arg \max _{\boldsymbol{\beta} \in \mathbb{R}^p} \cdot\sum_{i=1}^n \left[Y_i\boldsymbol{Z}_i^\top\Sigma^{1/2} \boldsymbol{\beta}-\rho\left(\boldsymbol{Z}_i^\top \Sigma^{1/2}\boldsymbol{\beta}\right)\right]+\frac{\tau}{M}\sum_{i=1}^M \left[Y^*_i{\boldsymbol{Z}_i^*}^\top \boldsymbol{\beta}-\rho\left({\boldsymbol{Z}_i^*}^\top\Sigma^{1/2}\boldsymbol{\beta}\right)\right] \\
   &= \arg \max _{\boldsymbol{\beta} \in \mathbb{R}^p} \cdot\sum_{i=1}^n \left[Y_i\boldsymbol{Z}_i^\top \Sigma^{1/2}\boldsymbol{\beta}-\rho\left(\boldsymbol{Z}_i^\top \Sigma^{1/2}\boldsymbol{\beta}\right)\right]+\frac{\tau}{M}\sum_{i=1}^M \left[Y^*_i{\boldsymbol{Z}_i^*}^\top \Sigma^{1/2}\boldsymbol{\beta}-\rho\left({\boldsymbol{Z}_i^*}^\top\Sigma^{1/2}\boldsymbol{\beta}\right)\right]
\end{align*}
If we consider the reparametrization for $\Sigma^{1/2}\boldsymbol{\beta}$, we can follow the same logic as in \Cref{sec:estimation_kappa1} to obtain the one-to-one correspondence between $\lim_{n\rightarrow\infty}\|\Sigma^{1/2}\widehat{\bbeta}_M\|^2$ and $\kappa_1^{\Sigma}:=\lim_{p\rightarrow \infty}\|\Sigma^{1/2}\bbeta_0\|$.
Suppose $\widehat{\eta}^2_{M}$ is an estimate for
$\lim_{n\rightarrow\infty}\|\Sigma^{1/2}\widehat{\bbeta}_M\|^2$.
Again, following the reasoning in Section \ref{sec:estimation_kappa1}, the estimate of the signal strength $\kappa_1^{\Sigma}$ is given by the solution $\widehat{\kappa_1^{\Sigma}}$ to the equation $g_{\noverp}(\kappa)=\widehat{\eta}_{M}$, where the function $g_{\noverp}(\cdot)$ is defined in Section \ref{sec:estimation_kappa1}.

It remains to find an estimator $\widehat{\eta}^2_{M}$ for $\lim_{n\rightarrow\infty}\|\Sigma^{1/2}\widehat{\bbeta}_M\|^2$.
Suppose $\bX\sim N(0,\Sigma)$ is independent of $\{\bX_i,Y_i\}_{i=1}^n$.
We have $\|\Sigma^{1/2}\widehat{\bbeta}_M\|^2=\operatorname{Var}_{\bX}(\bX^\top \widehat{\bbeta}_M)$. Using the leave-one-out method, $\operatorname{Var}(\bX^\top \widehat{\bbeta}_M)$ can be estimated by

$$
\frac{1}{n} \sum_{i=1}^n\left(\widehat{\bbeta}_{M,-i}^{\top} \bX_i\right)^2-\left(\frac{1}{n} \sum_{i=1}^n \widehat{\bbeta}_{M,-i}^{\top} \bX_i\right)^2
$$
where $\widehat{\bbeta}_{M,-i} $ denotes the optimum of \eqref{eq: SRE_def} computed using all data except for the $i$-th observation.
We can reduce the computational burden of $\widehat{\bbeta}_{M,-i}$ by the same approximation outlined in \Cref{appendix_sec:LOOCV}. Specifically, we recall the approximation for the term $\bX_i^\top \widehat{\bbeta}_{M,-i}$ in \eqref{eq:approximate-xbeta-tilde-l} that
$$
\tilde l_i:=\bX_i^\top \widehat{\bbeta}_{M}+\bX_i^{\top}\left( H_{\tau,-i}+H_{\tau}^*\right)^{-1} \bX_i\left(Y_i-\rho^{\prime}\left(\bX_i^\top \widehat{\bbeta}_{M} \right)\right),
$$
where $H_{\tau,-i}=-\sum_{j \in \mathcal{I}_{-i}} \rho^{\prime\prime}\left(\widehat{\boldsymbol{\beta}}_M^{\top} \bX_j\right)\bX_j\bX_j^{\top}$ and $H_{\tau}^*=-\frac{\tau}{M}\sum_{j=1}^M  \rho^{\prime\prime}\left(\widehat{\boldsymbol{\beta}}_M^{\top} \bX_j^*\right)\bX_j^*\bX_j^{*\top}$ denote the empirical Hessian matrix of the log likelihood based on leave-one-out data and synthetic data respectively.
The inversion of $\left( H_{\tau,-i}+H_{\tau}^*\right)^{-1}$ can be done using Sherman-Morrison inverse formula as in \eqref{eq:Sherman-Morrison-H-sum}.
Then our estimator for $\lim_{n\rightarrow\infty}\|\Sigma^{1/2}\widehat{\bbeta}_M\|^2$ is defined as
\begin{equation*}
    \widehat{\eta}^2_{M}=\frac{1}{n} \sum_{i=1}^n\left(\tilde l_i\right)^2-\left(\frac{1}{n} \sum_{i=1}^n \tilde l_i\right)^2.
\end{equation*}

\subsection{Variable selection}

\Cref{sec:apply_theory_variable_selection_FDR} has proposed a feature selection procedure that utilizes the SRE by adapting the method from \citet[Algorithm 3]{dai2023scale}.
We summarize this procedure in
\Cref{algorithm:DS_FDR}.
In \Cref{algorithm:DS_FDR}, the value $\tau=p$ is taken for convenience and can be replaced by other values.

\begin{algorithm}
    \centering
 \caption{Feature selection using data-splitting}
 \begin{tabular}{ll}
 
\textbf{Input}: \\
Observed Data $\{\bX_i,Y_i\}_{i=1}^n$, synthetic data $\{\bX^*_i,Y^*_i\}_{i=1}^{M}$ and FDR level $q \in(0,1)$ \\
\textbf{Process}: \\
1. Split the observed data into two equal-sized halves $\{\bX^{(1)}_i,Y^{(1)}_i\}_{i=1}^{n/2}$  and $\{\bX^{(2)}_i,Y^{(2)}_i\}_{i=1}^{n/2}$.  \\
2. Split the synthetic data into two equal-sized halves $\{\bX^{(1)*}_i,Y^{(1)*}_i\}_{i=1}^{M/2}$  and $\{\bX^{(2)*}_i,Y^{(2)*}_i\}_{i=1}^{M/2}$.  \\
3. Compute the SRE for each part of data using
a chosen value of $\tau$
in \eqref{eq: SRE_def}.\\ Denote the estimators by $\widehat{\bbeta}^{(1)}_M$ and $\widehat{\bbeta}^{(2)}_M$.\\
4. For $j \in[p]$, regress $\bX_j^{(1)}$ onto $\mathbb X_{-j}^{(1)}$, and regress $\bX_j^{(2)}$ onto $\mathbb X_{-j}^{(2)}$. Let  $(\widehat{v}_j^{(1)})^2=\frac{\operatorname{RSS}_j^{(1)}}{n / 2-p+1}$,\\
\hskip 1cm and $ (\widehat{v}_j^{(2)})^2=\frac{\operatorname{RSS}_j^{(2)}}{n / 2-p+1}$
where $\mathrm{RSS}_j$ is the residual sum of squares.   \\
5. Compute the mirror statistic for $j\in [p]$: $M_j=T_j^{(1)}T_j^{(2)}$, \\
\hskip 1cm where  $T_j^{(1)}=\widehat{v}_j^{(1)} \widehat{\bbeta}_{M,j}^{(1)}$ and $T_j^{(2)}=\widehat{v}_j^{(2)} \widehat{\bbeta}_{M,j}^{(2)}$.\\
6. Calculate the cutoff $\omega_q$ as $\omega_q=\inf \left\{t>0:  \frac{\#\left\{j: M_j<-t\right\}}{\#\left\{j: M_j>t\right\}} \leq q\right\}$.\\
7. Output the selection set: $\widehat{S}_{\omega_q}=\left\{j\in [p]: M_j>\omega_q\right\}$.
\end{tabular}

\label{algorithm:DS_FDR}
\end{algorithm}

Following the argument in \cite{dai2023scale}, we can show the procedure in \Cref{algorithm:DS_FDR} can asymptotically control FDR at any given desired level. A precise statement is summarized in \Cref{proposition:FDR_DS_MAP}. To theoretically justify DS, we define $S_{1, \text { strong }}$ to be the largest subset of $S_1$ such that
$$
\sqrt{n} \min _{j \in S_{1, \text { strong }}}\left|\beta_j^{\star}\right| \rightarrow \infty .
$$
 Let $p_{1, \text { strong }}=\left|S_{1, \text { strong }}\right|$ and recall that $p_0$ is the number of null features.

\begin{proposition}\label{proposition:FDR_DS_MAP}
   Suppose the conditions of \Cref{coro:arbitray_cov_exact_cat_M_MAP_noninformative} hold and $n > 2p$. Assume $p_0 \rightarrow \infty,$ and $ \liminf \frac{p_{1, \text { strong }}}{ p_0}>0$ as $n, p \rightarrow \infty$.
   Then,
$$
\limsup _{n, p \rightarrow \infty} \mathbb{E}\left[\frac{\#\left\{j: j \in S_0, j \in \widehat{S}_{\omega_q}\right\}}{\#\left\{j: j \in \widehat{S}_{\omega_q}\right\}}\right] \leq q .
$$
using the data-splitting method outlined in \Cref{algorithm:DS_FDR}.

\end{proposition}

\section{Numerical experiments }\label{supp:sec:numerical}

This section includes additional experiments. An outline is as follows.
\begin{itemize}
    \item \Cref{app:estimation-comparison} compares the numerical performance of the SRE with those of the ridge and Lasso estimators.
    \item \Cref{sec:experiment-var-M} provides a numerical illustration for the convergence indicated in \Cref{thm:stability_finite_M}.
  
  \item  \Cref{supp:sec:estimation_kappa1} illustrates the accuracy of our estimation of $\kappa_1$ as well as the solutions $(\alpha_*, \sigma_*)$.
\item \Cref{supp:extra_CI} shows the performance of the adjusted confidence intervals when MLE exists.
 \item \Cref{supp:numerical_est_cosine_similar} illustrates the accuracy of our estimation of $\xi$.

\item  \Cref{supp:FDR_numerical} replicates the experiments from \cite{dai2023scale} for feature selection in the cases where MLE exists and compares our methods with theirs.

\item   In \Cref{sec:beyond_gaussian_empirical_justification}, we numerically demonstrate that the results in \Cref{thm:exact_cat_M_MAP_noninformative(modify)} can be extended to general covariates with finite fourth moments.
\item In \Cref{supp:realdata}, we provide the supporting evidence for gene selection.
\item In \Cref{supp:sec:apply_theory_estimate_tau_MSE}, we provide a numerical experiment to illustrate proposed tuning parameter selection methods.
\item \Cref{app:negative_transfer} illustrates negative transfer when the source and target signals are anti-aligned.
\end{itemize}

\subsection{Comparison of SRE, Ridge, and Lasso}\label{app:estimation-comparison}

Empirical studies demonstrate that the SRE outperforms the MLE in estimation and prediction, especially when
$p$ is large relative to $n$.
For example,
\Cref{fig:enter-label} illustrates a high-dimensional experiment with $p>n$, where the SRE remains feasible while both the MLE and the Maximum Diaconis-Ylvisaker prior penalized likelihood (MDYPL) estimator \citep{sterzinger2023diaconis} fail to exist.
As an additional illustration, this section presents two simulation studies in which the SRE outperforms ridge and Lasso estimators with increasing dimensions.

\begin{figure}[!t]
  \caption{
    An example of SREs in logistic regression using $\widehat{\boldsymbol{\beta}}_{M}$ with varying $\tau$ ($n=200$ and $p=250$).
    Observed data:
   $\bX_i\sim N(\mathbf{0},\mathbf{I}_p)$ and $Y_i\sim \text{Bern}(\rho^\prime(\bX_i^\top \bbeta_0))$ with $\|\bbeta_0\|_2=2.5$.
   Synthetic data: $\bX_i^*\sim N(\mathbf{0},\mathbf{I}_p)$, $Y_i^*\sim \text{Bern}(0.5)$, and $M=20p$.
   }
    \centering\includegraphics[width=0.45\linewidth]{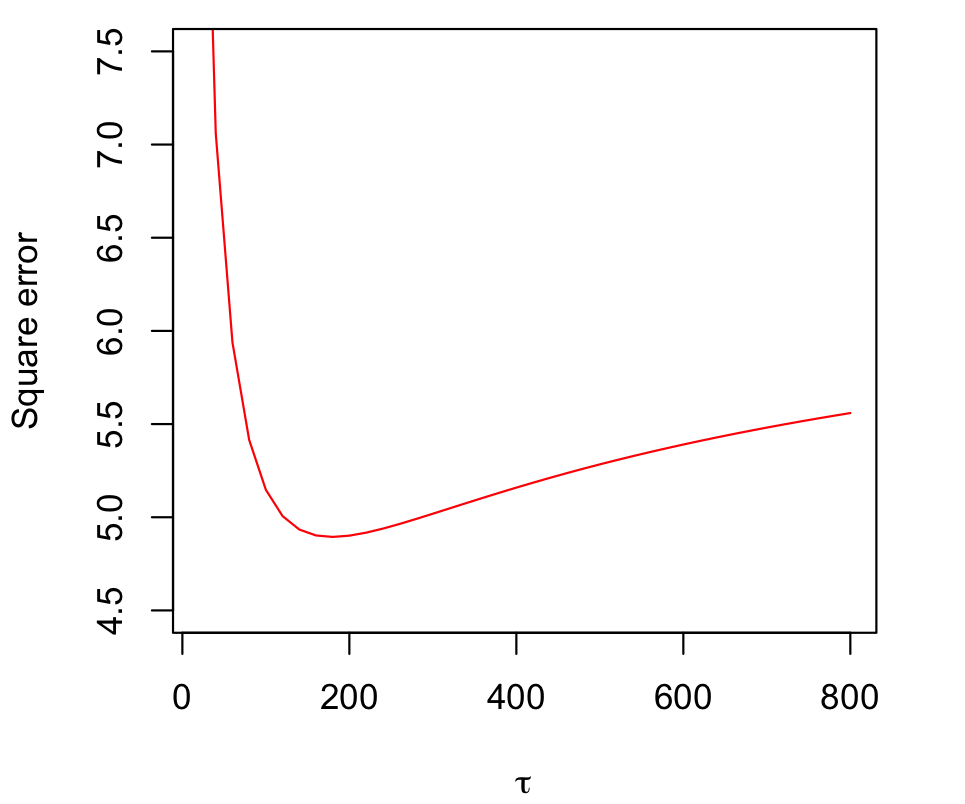}
  
    \label{fig:enter-label}
\end{figure}

\begin{table}[h]
    \centering
    \caption{Average Mean Squared Error  (with standard error in parentheses) over 50 independent trials for different dimensions \( p \).}
    \label{tab:errors}
    \begin{tabular}{c ccc}
        \hline
        Dimension \( p \) & SRE & Ridge & Lasso \\
        \hline
        50  & 8.319 (0.349) & 8.748 (0.351) & 9.840 (0.414) \\
        60  & 9.531 (0.309) & 9.699 (0.298) & 11.186 (0.398) \\
        70  & 10.208 (0.328) & 10.118 (0.310) & 11.650 (0.317) \\
        80  & 10.586 (0.274) & 10.486 (0.293) & 12.088 (0.367) \\
        90  & 11.616 (0.323) & 11.221 (0.296) & 12.750 (0.365) \\
        100 & 11.736 (0.270) & 14.649 (0.291) & 13.116 (0.301) \\
        110 & 12.122 (0.279) & 14.825 (0.303) & 13.429 (0.354) \\
        120 & 12.809 (0.235) & 14.955 (0.268) & 13.841 (0.287) \\
        130 & 12.992 (0.232) & 15.012 (0.255) & 14.222 (0.276) \\
        140 & 13.527 (0.240) & 15.125 (0.256) & 14.631 (0.276) \\
        150 & 13.770 (0.250) & 15.025 (0.255) & 14.505 (0.279) \\
        160 & 13.888 (0.205) & 14.934 (0.227) & 14.409 (0.246) \\
        170 & 14.276 (0.244) & 14.965 (0.226) & 14.579 (0.240) \\
        180 & 14.453 (0.234) & 15.053 (0.209) & 14.953 (0.223) \\
        190 & 14.704 (0.227) & 14.983 (0.213) & 14.771 (0.204) \\
        200 & 14.662 (0.252) & 14.937 (0.218) & 14.729 (0.229) \\
        \hline
    \end{tabular}
\end{table}

\begin{table}[h]
    \centering
    \caption{Average Mean Squared Error (with standard error in parentheses) over 50 independent trials for different dimensions \( p \).}
    \label{tab:experiment2}
    \begin{tabular}{c ccc}
        \hline
        Dimension \( p \) & SRE & Ridge & Lasso \\
        \hline
        50  & 1.057 (0.047) & 1.775 (0.119) & 1.067 (0.048) \\
        60  & 1.536 (0.073) & 2.102 (0.138) & 1.490 (0.062) \\
        70  & 2.147 (0.092) & 2.376 (0.114) & 2.103 (0.072) \\
        80  & 2.858 (0.138) & 2.837 (0.128) & 2.891 (0.136) \\
        90  & 3.551 (0.167) & 3.417 (0.151) & 3.884 (0.184) \\
        100 & 4.712 (0.244) & 12.607 (0.276) & 5.465 (0.204) \\
        110 & 5.569 (0.253) & 12.872 (0.294) & 6.508 (0.192) \\
        120 & 6.575 (0.312) & 13.069 (0.273) & 7.635 (0.210) \\
        130 & 7.495 (0.260) & 13.304 (0.264) & 8.651 (0.231) \\
        140 & 8.069 (0.280) & 13.292 (0.273) & 9.724 (0.323) \\
        150 & 8.521 (0.272) & 13.314 (0.266) & 10.088 (0.238) \\
        160 & 9.236 (0.218) & 13.174 (0.226) & 10.675 (0.228) \\
        170 & 9.864 (0.324) & 13.158 (0.249) & 11.037 (0.234) \\
        180 & 10.443 (0.315) & 13.266 (0.214) & 11.641 (0.243) \\
        190 & 10.762 (0.283) & 13.216 (0.206) & 12.168 (0.293) \\
        200 & 11.120 (0.253) & 13.206 (0.221) & 12.402 (0.282) \\
        \hline
    \end{tabular}
\end{table}

In the first experiment, we consider logistic regression, while the second experiment is based on linear regression.

For logistic regression, the response variable is generated as:
$$
Y_i \sim \text{Bern}\left(\frac{1}{1+\exp(-\bX_i^\top \bbeta_0)}\right).
$$
For linear regression, the response follows:
$$
Y_i \sim N(\bX_i^\top \bbeta_0,1).
$$

In both experiments, the observed sample size is fixed at $n=100$.
Each of the independent repetitions starts by simulating the true coefficient vector $\bbeta_0$, where each entry is independently drawn from
$$
\beta_{0,j} \sim N\left(0, \frac{16}{\sqrt{p}}\right).
$$
The observed $p$-dimensional covariate vectors $\boldsymbol{X}_i$ ($i=1,\dots,n$) are generated with independent entries as follows:
\begin{itemize}
    \item $X_{i,1} \sim \text{Bern}(0.1)$,
\item $X_{i,2} \sim \chi_1^2$ (Chi-square with 1 degree of freedom),
\item  $X_{i,3} \sim \chi_4^2$ (Chi-square with 4 degrees of freedom),
\item  For remaining entries ($j \geq 4$), $X_{i,j}$ follows a $t$-distribution with 4 degrees of freedom, mean 0, and variance 1.
\end{itemize}

Here, the first three entries of a covariate vector are designed to mimic real-world data characteristics, such as highly unbalanced categorical variables and skewed continuous distributions.

The synthetic data for the SRE are generated as follows.
For each entry $X_{j}^{*}$ of a synthetic covariate vector $\boldsymbol{X}^*$, $X_{j}^{*}$ is sampled from the marginal empirical distribution of observed $\{X_{i,j}\}_{i=1}^{n}$.
To accommodate the highly unbalanced binary covariate ($j=1$) in our simulations, half of the sampled $X_{1}^{*}$ will be replaced by i.i.d. random variables drawn from Bernoulli($p=0.5$); this follows the \textit{flattening} strategy proposed in the supplementary material of \cite{huang_catalytic_2020}.
To accommodate the skewness in continuous covariates ($j\geq 2$), half of the sampled $X_{j}^{*}$ will be replaced by i.i.d random variables drawn from a normal distribution with median and interquartile range (IQR) matched to those of the observed covariates. Specifically, the normal distribution is $N(\mu_j,\sigma_j^2)$, where $\mu_j$ is the sample median of $\{X_{i,j}\}_{i=1}^{n}$ and $\sigma_j$ is chosen properly such that
$1/4=\Phi\left( - {IQR_j}/{(2 \sigma_j)} \right)$,
where $IQR_j$ is the IQR of observed $\{X_{i,j}\}_{i=1}^{n}$ and $\Phi$ is the cumulative distribution function of standard normal. For logistic regression, the synthetic response is generated as:
$$Y^*\sim \text{Bern}(0.5).$$
For linear regression, the synthetic response follows:
$$Y^*\sim N(0,1).$$
The synthetic sample size is fixed at $M=1000$ across all scenarios.
 Tuning parameters for Ridge and Lasso are selected using their respective default cross-validation procedures in \texttt{glmnet} \citep{simon_regularization_2011}. 
 The tuning parameter for SRE is selected using \Cref{algorithm:LOOCV}.

\Cref{tab:errors,tab:experiment2} present the mean error (with standard error in parentheses) over 50 independent trials for various dimensions $p$.
In the logistic regression experiment (\Cref{tab:errors}), the SRE estimator consistently yields lower errors compared to both the Ridge and Lasso estimators across all considered dimensions.
In the linear regression experiment (\Cref{tab:experiment2}), SRE again demonstrates superior performance. Notably, as the dimensionality increases, Ridge's error rises markedly, while Lasso's performance remains slightly inferior to that of SRE.

Overall, these findings demonstrate the effectiveness of the SRE for estimation, particularly in high-dimensional settings with complicated covariate structures.
The results suggest that SRE may offer a more reliable estimation method compared to traditional Ridge and Lasso estimators.

\subsection{Stability of the SRE against M}\label{sec:experiment-var-M}

In this section, we present an experiment to demonstrate that, with fixed observed data, the SRE with a finite $M$ approaches its limit at the rate of $\frac{1}{M}$, as stated in \Cref{thm:stability_finite_M}.
We set $n = 1000$, $p = 250$, and $\tau = 500$, and gradually increase the   synthetic sample size  $M \in \{2^{k-1}p : k \in \{1, 2, 3, 4, 5, 6, 7\}\}$. The generation of observed and synthetic data is listed below. For observed data, we first sample regression coefficients ${\beta}_j \stackrel{\mathrm{i.i.d.}}{\sim} \mathcal{N}(0, 1/p)$ for $j\in [p]$, and then generate covariates $\boldsymbol{X}_i \stackrel{\mathrm{i.i.d.}}{\sim} \mathcal{N}(0, \mathbb{I}_p)$ and responses $Y_i \sim \text{Bern}(\rho^\prime(\boldsymbol{X}_i^\top \boldsymbol{\beta}))$ for $i \in [n]$.
For synthetic data, for each $i \in [M]$, generate $Y_i^* \stackrel{\mathrm{i.i.d.}}{\sim} \text{Bern}(0.5)$ and $\boldsymbol{X}_i^* \stackrel{\mathrm{i.i.d.}}{\sim} \mathcal{N}(0, \mathbb{I}_p)$.
Note that this synthetic data generation allows for a mathematical derivation of an exact formula for computing the SRE with infinite synthetic samples.

The SRE $\widehat{\boldsymbol{\beta}}_M$ is computed based on \eqref{eq: SRE_def}. For $\widehat{\boldsymbol{\beta}}_{\infty}$, since we know the synthetic data-generating distribution, we first rewrite \eqref{cat_betahat_Minfty} by finding an analytical expression of the expectation. Note that $Y^*\sim \text{Bern}(0.5)$ and $\bX^*\sim \mathcal{N}(0, \mathbb{I}_p)$, we have $\mathbb E(Y^*\bX^*)=\mathbf{0}$. We have
\begin{align*}
    \operatorname{pen}(\bbeta):=\mathbb E\left[\rho(\boldsymbol{X}^{*\top}\boldsymbol{\beta})-Y^*\boldsymbol{X}^{*\top}\boldsymbol{\beta}\right]&=  \mathbb E\left[\rho(\boldsymbol{X}^{*\top}\boldsymbol{\beta})\right]=\int_{-\infty}^{\infty} \rho(\|\bbeta\|_2 z)\frac{1}{\sqrt{2\pi}}\exp(-\frac{z^2}{2})dz.
\end{align*}
The function $ \operatorname{pen}(\bbeta)$ is convex in $\bbeta$, which is a direct consequence of the convexity of the function $\rho(\cdot)$ and the convexity of Euclidean norm. Then $\widehat{\bbeta}_{\infty}$ can be computed via following convex optimization:
\begin{align*}
    \widehat{\boldsymbol{\beta}}_{\infty}=\arg \min _{\boldsymbol{\beta} \in \mathbb{R}^p} \left\{\sum_{i=1}^n \left[\rho\left(\boldsymbol{X}_i^\top \boldsymbol{\beta}\right)-Y_i\boldsymbol{X}_i^\top \boldsymbol{\beta}\right]+\tau  \operatorname{pen}(\bbeta) \right\}.
\end{align*}
We denote the difference between $\widehat{\boldsymbol{\beta}}_{M}$ and $\widehat{\boldsymbol{\beta}}_{\infty}$ as $\text{Err} := \|\widehat{\boldsymbol{\beta}}_{M} - \widehat{\boldsymbol{\beta}}_{\infty}\|^2$. For one simulation, the relationship between Err and the value of $M$ is illustrated in \Cref{fig:vari_M}.
To confirm a linear dependence between $\log(\text{Err})$ and $\log(M)$, we fit a linear regression model for $\log(\text{Err})$ on $\log(M)$, where the least squares estimated slope is $-1.048$ with a small standard error of $0.006$.
This observation aligns with the rate of convergence between $\widehat{\boldsymbol{\beta}}_{M}$ and $\widehat{\boldsymbol{\beta}}_{\infty}$  established in \Cref{thm:stability_finite_M}.

\begin{figure}[ht]
	\centering
	\includegraphics[scale=0.5]{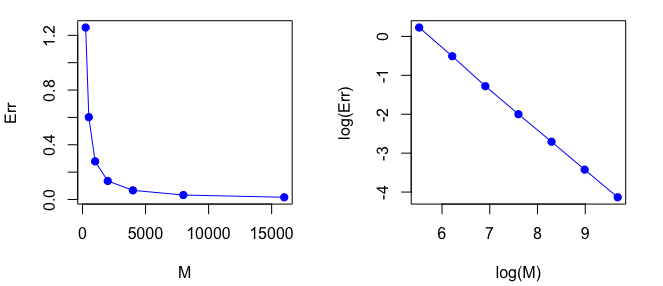}
	\caption{Convergence of the SRE as $M$ increases. The slope in the right figure is -1.048 (standard error 0.006) based on the least squares estimate.}
 \label{fig:vari_M}
\end{figure}

\subsection{Numerical illustration of estimating signal strength}\label{supp:sec:estimation_kappa1}

We demonstrate the accuracy of our estimation of $\kappa_1$ as well as the solutions $(\alpha_*, \sigma_*)$ via some empirical results.
We consider the same setting described in \Cref{sec:numerical_verify} but examine a sequence of dimensions $p = \{100, 400, 1600\}$.

We first investigate the estimation accuracy of $\kappa_1$. The results are displayed in \Cref{table:estimation_kappa1}.
From the table, it is evident that when $\noverp$ and $\kappa_1$ are held constant, both the estimation error and its standard deviation decrease as $p$ increases.
This trend is expected since $\widehat{\eta}_{M}$ converges to its limit $\eta_{M}$ as $p$ increases. Given $\kappa_1$ and $p$, the estimation error is smaller for larger $\noverp$, since the sample size is larger.
This observation aligns with the curves of $g_\delta(\cdot)$ in \Cref{fig:dictionary_M20}, where a larger value of $\noverp$ leads to a steeper slope and thus a more accurate estimate for $\kappa_1$,  the solution to $g_{\noverp }(\kappa)=\eta_M$.

  \begin{table}[ht]
\centering
\caption{Mean and standard deviation (in parentheses) of error $|\widehat{\kappa}_1-\kappa_1|$ based on 50 independent replications. }
\begin{tabular}{cccc}
\hline
\textbf{$\kappa_1$} & \textbf{$p$} & \textbf{$\noverp = 2$} & \textbf{$\noverp = 4$} \\ \hline \hline
                   & 100              & 0.363(0.315)             & 0.196(0.127)             \\
0.5               & 400              & 0.234(0.132)             & 0.128(0.102)             \\
                   & 1600             & 0.129(0.102)             & 0.060(0.045)             \\
\hline
                   & 100              & 0.397(0.285)             & 0.228(0.160)             \\
1                 & 400              & 0.227(0.165)             & 0.134(0.116)             \\
                   & 1600             & 0.104(0.114)             & 0.068(0.067)             \\
\hline
                   & 100              & 0.426(0.325)             & 0.294(0.240)             \\
1.5               & 400              & 0.230(0.214)             & 0.178(0.164)             \\
                   & 1600             & 0.154(0.159)             & 0.103(0.091)             \\
\hline
                   & 100              & 0.678(0.747)             & 0.396(0.305)             \\
2                 & 400              & 0.329(0.307)             & 0.209(0.255)             \\
                   & 1600             & 0.201(0.214)             & 0.135(0.121)             \\
\hline
\end{tabular}
\label{table:estimation_kappa1}
\end{table}

 Next, we investigate the estimation accuracy of $\left(\alpha_*, \sigma_*\right)$ in \Cref{table:estimation_alpha_sigma}.
 The true values $\left(\alpha_*, \sigma_*\right)$ are presented in \Cref{table:true_alpha_sigma}.
 We observe that the estimation errors for $(\alpha_*, \sigma_*)$ are relatively small compared to the true values, thus the estimates are quite accurate despite the estimation error of $\kappa_1$. Furthermore, we note that the errors decrease as $p$ increases, which aligns with the observed pattern in the estimation of $\kappa_1$.

\begin{table}[ht]
\centering
\caption{Mean and standard deviation (in parentheses) of the estimation error of true solutions of the system of equations ($\alpha_*,\sigma_*$) based on 50 independent replications. }
\begin{tabular}{|c|c|cc|cc|}
\hline
\textbf{$\kappa_1$} & \textbf{$p$} & \multicolumn{2}{c|}{\textbf{$\noverp = 2$}} & \multicolumn{2}{c|}{\textbf{$\noverp = 4$}} \\ \hline
                        &                  & $|\widehat{\alpha}_{*}-\alpha_*|$      & $|\widehat{\sigma}_{*}-\sigma_*|$     & $|\widehat{\alpha}_{*}-\alpha_*|$      & $|\widehat{\sigma}_{*}-\sigma_*|$    \\ \hline
0.5                     & 100              & 0.049(0.058)       & 0.007(0.017)       & 0.017(0.012)        & 0.006(0.004)       \\
                        & 400              & 0.028(0.020)       & 0.002(0.002)       & 0.011(0.010)        & 0.004(0.003)       \\
                        & 1600             & 0.015(0.011)       & 0.003(0.002)       & 0.005(0.004)        & 0.003(0.002)       \\ \hline
1                       & 100              & 0.066(0.051)       & 0.015(0.017)       & 0.027(0.018)        & 0.006(0.004)       \\
                        & 400              & 0.040(0.032)       & 0.009(0.008)       & 0.018(0.014)        & 0.003(0.003)       \\
                        & 1600             & 0.018(0.021)       & 0.006(0.006)       & 0.010(0.008)        & 0.003(0.002)       \\ \hline
1.5                     & 100              & 0.079(0.055)       & 0.026(0.026)       & 0.041(0.033)        & 0.004(0.003)       \\
                        & 400              & 0.044(0.040)       & 0.020(0.020)       & 0.025(0.023)        & 0.002(0.002)       \\
                        & 1600             & 0.029(0.029)       & 0.014(0.015)       & 0.015(0.012)        & 0.002(0.002)       \\ \hline
2                       & 100              & 0.110(0.092)       & 0.051(0.047)       & 0.052(0.039)        & 0.005(0.006)       \\
                        & 400              & 0.058(0.049)       & 0.031(0.028)       & 0.029(0.033)        & 0.004(0.005)       \\
                        & 1600             & 0.036(0.034)       & 0.021(0.018)       & 0.018(0.018)        & 0.003(0.003)       \\ \hline
\end{tabular}

\label{table:estimation_alpha_sigma}
\end{table}
\begin{table}[ht]
\centering
\caption{Solutions of system of equations ($\alpha_*,\sigma_*$) under different settings with noninformative synthetic data. }
\begin{tabular}{|c|c|c|c|c|}
\hline
$\noverp$ \textbackslash $\kappa_1$ & 0.5 & 1 & 1.5 & 2 \\
\hline
2 & (1.004, 1.735) & (0.932, 1.726) & (0.833, 1.708) & (0.740, 1.665) \\
4 & (0.890, 1.008) & (0.836, 1.021) & (0.773, 1.030) & (0.701, 1.031) \\
\hline
\end{tabular}

\label{table:true_alpha_sigma}
\end{table}

\subsection{Adjusted confidence intervals for $\delta=4$}
\label{supp:extra_CI}

We follow the same experimental setting in \Cref{supp:sec:estimation_kappa1} but consider the case where $\delta=4$.
Note that the MLE nearly always exists in this case.
In this experiment, we compare the coverage rates given by three methods: our adjusted confidence intervals (Adjusted SRE), the confidence intervals based on classical MLE asymptotics, and the adjusted confidence intervals based on the MLE (Adjusted MLE) as implemented in the R package \textbf{glmhd} \citep{zhao2020glmhd}.
The results are shown in \Cref{table:adjust_CI_delta4}.
As we can see, when the MLE exists, the coverage rate of the confidence interval provided by classical MLE asymptotics is lower than 0.95, but both adjusted confidence intervals provide the expected coverage.

\begin{table}[!htbp]
\centering
\caption{Coverage rates of 95\% confidence intervals based on classical MLE asymptotics and adjusted intervals with $\noverp=4$ (MLE exists). Average over 50 independent experiments.}
\begin{tabular}{|c|c|c|c|c|c|}
\hline
Method & $p$ & $\kappa_1=0.5$ & $\kappa_1=1$ & $\kappa_1=1.5$ & $\kappa_1=2$ \\ \hline
MLE Asymptotics      & 100        & 0.900                 & 0.884               & 0.857                  & 0.817                \\ \hline
MLE Asymptotics     & 400        & 0.902                 & 0.889               & 0.863                  & 0.827                \\ \hline
Adjusted MLE    & 100        & 0.943                 & 0.946               & 0.936                  & 0.931                \\ \hline
Adjusted MLE    & 400        & 0.948                 & 0.949               & 0.947                  & 0.944                \\ \hline
Adjusted SRE    & 100        & 0.943                 & 0.948               & 0.943                  & 0.944                \\ \hline
Adjusted SRE    & 400        & 0.951                 & 0.951               & 0.950                  & 0.949                \\ \hline
\end{tabular}

\label{table:adjust_CI_delta4}
\end{table}

\subsection{Numerical illustration of estimating $\xi$}
\label{supp:numerical_est_cosine_similar}

We conduct a series of experiments to examine the performance of our proposed method to estimate $\xi$, which is referred to as \textit{adjusted correlation}.
Specifically, we compare our estimate with the cosine similarity between $\widehat{\bbeta}_{M,0}$ and $\widehat{\bbeta}_{M,s}$, which is referred to as \textit{naive correlation}.
In this experiment, we enumerate $p\in\{100,400,1600\}$, set the target sample size to be $n_0=\delta_0p$, and set the source sample size to be $n_s=\delta_s p$.
The data are generated as follows.
For target data, we draw the coordinates of $\sqrt{p}\boldsymbol{\beta}_0$ independently from the scaled t-distribution with 3 degrees of freedom and variance equal to $1$, generate the covariates $\left\{\boldsymbol{X}_{i0}\right\}_{i=1}^{n_0}$ independently from $\mathcal{N}\left(\mathbf{0},  \mathbf{I}_p\right)$, and sample the response $Y_{i0}$ from $\text{Bern}\left(\rho^{\prime}(\boldsymbol{X}_{i0}^{\top} \boldsymbol{\beta}_0) \right)$.
For source data, the covariates and responses are generated in a similar manner as the target dataset, except that the coefficient is now $\boldsymbol{\beta}_s=\xi \boldsymbol{\beta}_0+ \sqrt{1-\xi^2}\tilde{\beps}$, where $\tilde \beps$ is an independent noise vector whose entries are independently generated from the scaled t-distribution with 3 degrees of freedom and variance equal to $1/p$.
The true cosine similarity $\xi$ is fixed at $0.9$ in this experiment.

\Cref{table:estimation_xi} presents the experimental results.
First, our proposed adjusted correlation outperforms the naive correlation across all settings.
Second, it is clear that as $p$ increases, the estimation error decreases, which follows the same pattern observed in the estimation of signal strength.
Moreover, a larger value of $\delta_0=n_0/p$ (the ratio of the target sample size to the dimension) results in a smaller estimation error.
In contrast, increasing $\delta_s$ from 4 to 16 while holding $\delta_0=2$ produces little reduction in the estimation error, illustrating that the accuracy of $\widehat{\xi}$ is primarily limited by the less informative sample.

\begin{table}[ht]
\centering
\caption{Mean and standard deviation (in parentheses) of the estimation error $|\widehat{\xi}-\xi|$ across various settings of $(\noverp_0, \noverp_s, p)$.  Average over 50 independent replications.}
\begin{tabular}{|c|c|c|c|c|}
\hline
\textbf{$\noverp_0$} & \textbf{$\noverp_s$} & \textbf{$p$} & \textbf{Adjusted Correlation} & \textbf{Naive Correlation} \\ \hline
 &    & 100 & 0.133(0.101) & 0.624(0.089) \\
2 & 4   & 400 & 0.128(0.076) & 0.634(0.064) \\
 &    & 1600 & 0.059(0.044) & 0.632(0.028) \\ \hline
 &    & 100 & 0.151(0.146) & 0.567(0.094) \\
2 & 10  & 400 & 0.102(0.062) & 0.563(0.069) \\
 &    & 1600 & 0.062(0.042) & 0.562(0.038) \\ \hline
 &    & 100 & 0.142(0.128) & 0.541(0.106) \\
2 & 16  & 400 & 0.114(0.075) & 0.538(0.070) \\
 &    & 1600 & 0.068(0.048) & 0.537(0.037) \\ \hline
 &    & 100 & 0.100(0.084) & 0.507(0.106) \\
4 & 4   & 400 & 0.079(0.050) & 0.513(0.065) \\
 &    & 1600 & 0.055(0.035) & 0.530(0.038) \\ \hline
 &    & 100 & 0.112(0.096) & 0.447(0.105) \\
4 & 10  & 400 & 0.062(0.054) & 0.442(0.065) \\
 &    & 1600 & 0.039(0.029) & 0.446(0.040) \\ \hline
 &    & 100 & 0.103(0.098) & 0.417(0.104) \\
4 & 16  & 400 & 0.070(0.046) & 0.404(0.066) \\
 &    & 1600 & 0.031(0.020) & 0.412(0.037) \\ \hline
\end{tabular}
\label{table:estimation_xi}
\end{table}

\subsection{Additional material for feature selection}
\label{supp:FDR_numerical}

Now we examine the experiments described in \citet[Section 5.1]{dai2023scale}, which consist of two experiments: one in a small-p-n setting ($p=60, n=500$) and the other in a large-p-n setting ($p=500, n=3000$). The number of relevant features, denoted as $p_1 = p - p_0$, is set to 30 in the small-p-n setting and 50 in the large-p-n setting.
We use the SRE to conduct the ABH, ABY and MDS procedures as described in \Cref{sec:apply_theory_variable_selection_FDR}.
In addition, we consider three competing methods that utilize the MLE: MDS, BHq, and $\mathrm{ABH}$.
The implementation of the MDS method follows \citet[Algorithm 3]{dai2023scale}.
The $\mathrm{BH}$ method utilizes classical asymptotic p-values calculated via the Fisher information, whereas the $\mathrm{ABH}$ method is based on adjusted asymptotic p-values computed via the \textbf{R} package \texttt{glmhd} \citep{zhao2020glmhd}.

\Cref{fig:FDR_reproduce_small_n_p} shows the experimental results for the small-p-n setting.
It is evident that our proposed procedures  ABH and MDS perform comparably to the alternatives: all methods control the FDR at the nominal level of 0.1, and their power appears close when focusing on either MLE or SRE. 
The BH procedure using the MLE often has a slightly higher power at the expense of a larger FDR. 
For ABY, it has the lowest FDR but it is too conservative and has the lowest power.

\Cref{fig:FDR_reproduce_large_p} shows the experimental results for the large-p-n setting.
It is seen that the BH procedure using the MLE does not provide satisfactory error control since its FDR exceeds the nominal level significantly.
The ABH procedure, whether using the MLE or the SRE, has a higher power than the other methods, albeit at the price of slight inflation of the FDR in some cases.
The MDS procedure using either the MLE or the SRE performs reasonably well in every case, since the FDR is close to or below the nominal level and the power is not much lower than that of the ABH procedure.

\begin{figure}[H]
    \centering
    \includegraphics[scale=0.25]{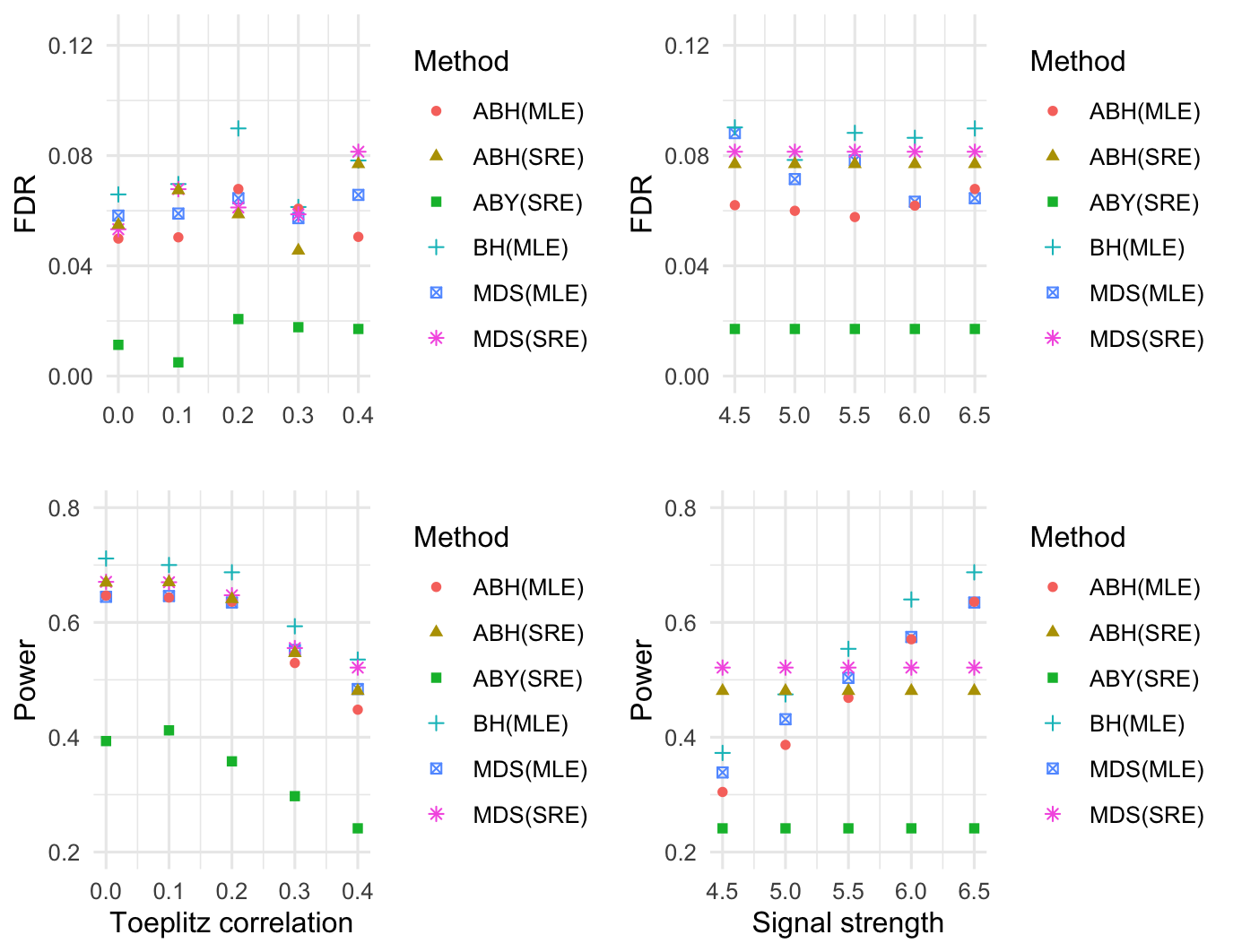}
    \caption{
Empirical FDRs and powers in the small-p-n setting. The covariates are independently drawn from a normal distribution $N(0, \Sigma)$, where $\Sigma$ has a Toeplitz correlation structure ($\Sigma_{i j}=r^{|i-j|}$).
In the left panel, we keep the signal strength constant at $\left|\beta_{0j}\right|=0.291$ for each $j$ in the set $S_1$ (same setting as in \citet{dai2023scale} without standardization on design matrix), while varying the correlation coefficient $r$.
In the right panel, we fix the correlation at $r=0.2$ and adjust the signal strength.
In each scenario, there are 30 relevant features.
The nominal FDR level is $q=0.1$.
The power is assessed as the proportion of correctly identified relevant features.
Each point represents the average of 50 replications. The SRE is computed using noninformative synthetic data with $M=20p$ and $\tau=p$.
}
    \label{fig:FDR_reproduce_small_n_p}
\end{figure}

\begin{figure}[H]
    \centering
    \includegraphics[scale=0.25]{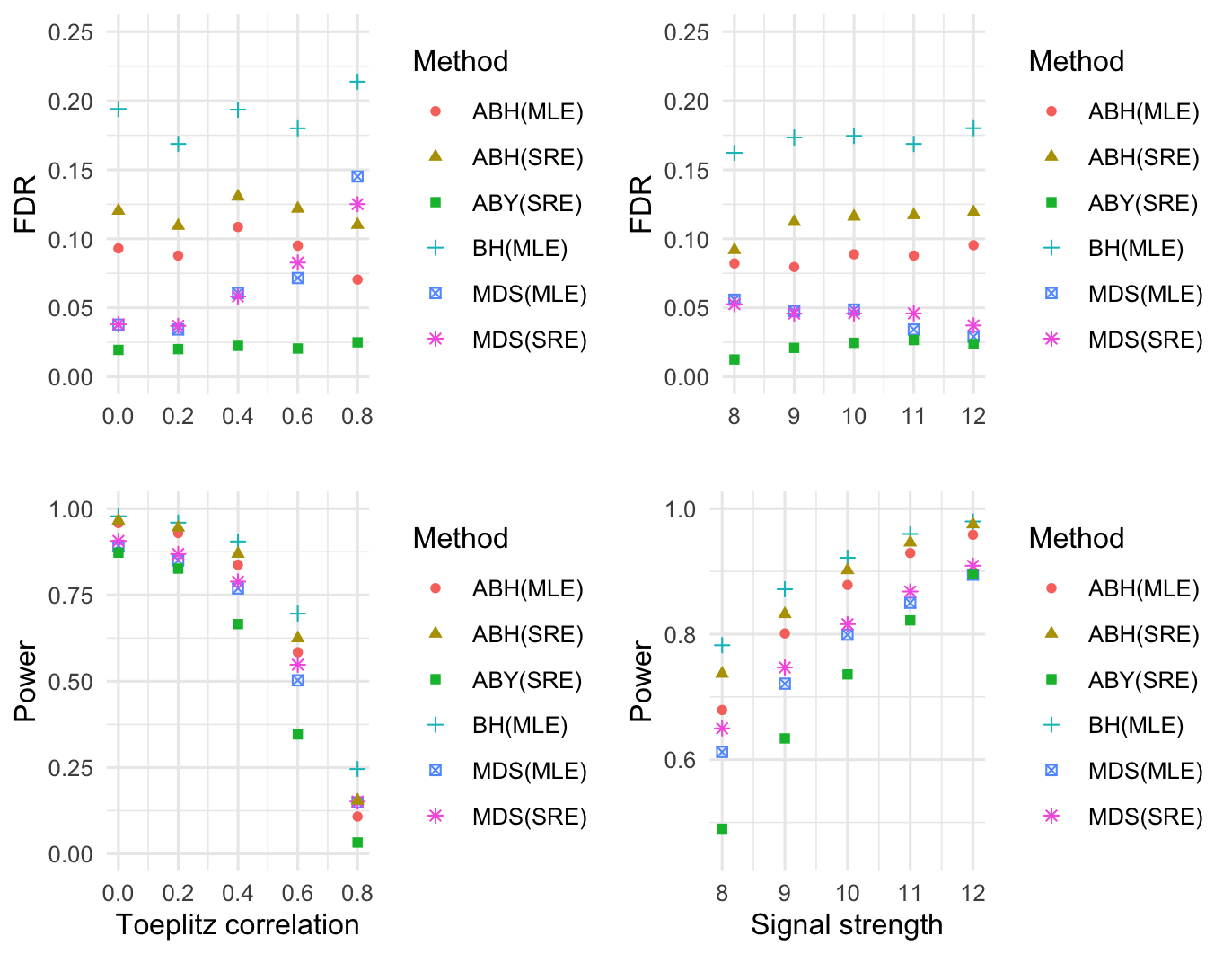}
    \caption{
    Empirical FDRs and powers in the large-p-n setting.
    The simulation of the covariate matrix follows the procedure described in \Cref{fig:FDR_reproduce_small_n_p}.
    In the left panel, we keep the signal strength constant at $\left|\beta_{0j}\right|=0.201$   for each $j$ in the set $S_1$ (same setting as in \citet{dai2023scale} without standardization on design matrix), while varying the correlation coefficient $r$.
In the right panel, we fix the correlation at $r=0.2$ and adjust the signal strength.
In each scenario, there are 50 relevant features.
The nominal FDR level is $q=0.1$.
Each point represents the average of 50 replications.
The SRE is computed using noninformative synthetic data with $M=20p$ and $\tau=p$.}
    \label{fig:FDR_reproduce_large_p}
\end{figure}

\subsection{Beyond Gaussian design empirical studies}
\label{sec:beyond_gaussian_empirical_justification}

In this section, we provide several numerical experiments to empirically justify that the Gaussian design condition used in \Cref{thm:exact_cat_M_MAP_noninformative(modify)} can be relaxed.

In the following experiments, the entries of the observed and synthetic covariate matrices are i.i.d. samples from a t-distribution with various degrees of freedom. The entries of the covariate matrix are scaled to have a mean of $0$ and a variance of $1$,
matching the first two moments of the standard Gaussian.
We compare
\begin{itemize}
    \item the averaged empirical squared error: $\|\widehat{\bbeta}_M-\bbeta_0\|^2$,
    \item the asymptotic squared error as derived from \Cref{thm:exact_cat_M_MAP_noninformative(modify)}: $(\alpha_*-1)^2\kappa_1^2+\sigma_*^2$,
\end{itemize}
where $(\alpha_*,\sigma_*,\gamma_*)$ is the solution of the system of equations \eqref{nonlinear_three_equation(modify)} based on ($\kappa_1,\noverp,\tau_0,m $), with $\kappa_1=1,\noverp=4,m=5$ in the current experiments.
We plot the empirical values as points and the theoretical values as a curve in \Cref{fig:design_t_dist}.
We observe that when the number of degrees of freedom is below 4, the alignment between empirical and theoretical values is not perfect. However, when the number of degrees of freedom is 4 or greater, the alignment becomes perfect. This observation suggests that our theoretical result can be extended beyond Gaussian design if a fourth moment condition is imposed.

Furthermore, we observe from \Cref{fig:design_t_dist} that when the number of degrees of freedom is as small as $3$, the 3rd moment does not exist but the theoretical value provides a reasonably good approximation for the empirical value. This also justifies the usefulness of our theory.

\begin{figure}[H]
    \centering
    \includegraphics[scale=0.5]{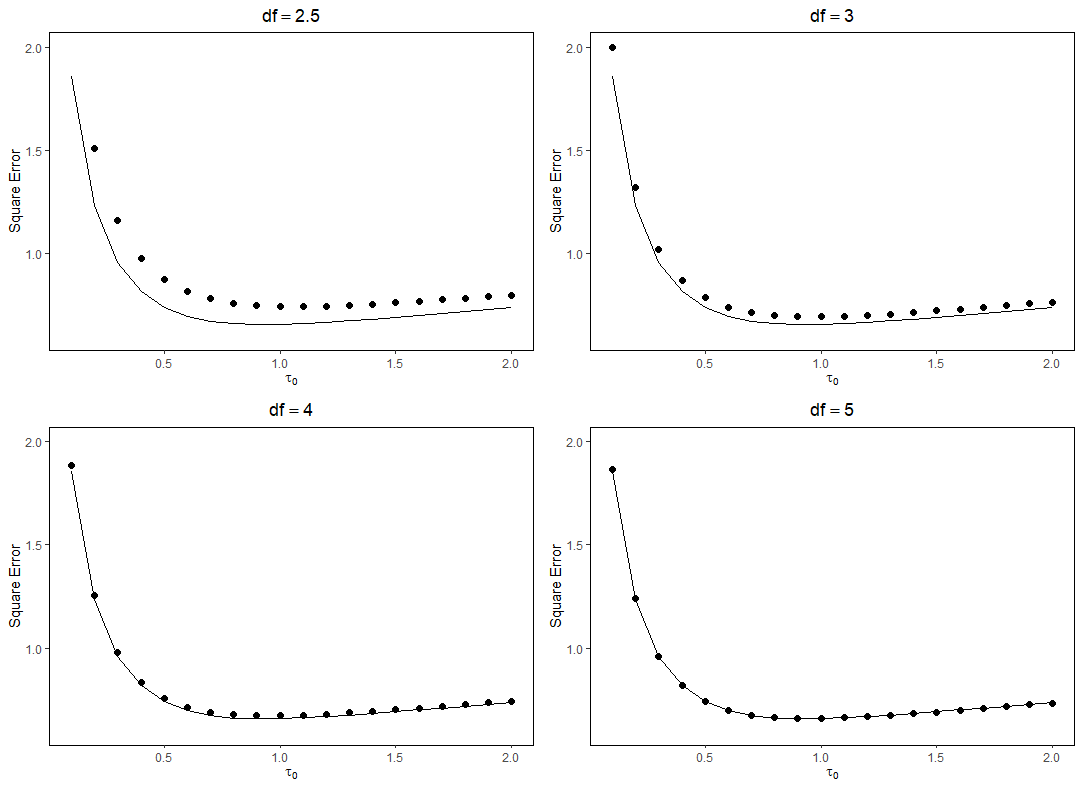}
    \caption{
    Performance of the SRE with noninformative synthetic data as a function of $\tau_0=\tau/n$.
  `df' represents the number of degrees of freedom of the t-distribution used to produce the covariate matrix.
  Each point is obtained by averaging the performance metric of the SRE over 100 simulation replications.
 The solid lines represent the corresponding theoretical prediction  derived from \Cref{thm:exact_cat_M_MAP_noninformative(modify)}. }
    \label{fig:design_t_dist}
\end{figure}

\subsection{Supporting Evidence for Gene Selection}\label{supp:realdata}

 In the analysis with the scRNA-seq dataset in \Cref{sec:apply_theory_variable_selection_FDR}, we applied the MDS, ABY, and ABH variable selection procedures using the SRE.
All three methods identify HSPA1A and NFKBIA, while ABH selects two additional genes, namely EEF1A1 and RPL10.
Below, we provide supporting evidence for these selections:
(1) HSPA1A inactivates GR through partial unfolding \citep{kirschke2014glucocorticoid}.
(2) NFKBIA is involved in GR activation \citep{deroo2001glucocorticoid}.
(3) EEF1A1 may modulate the cellular response to glucocorticoid treatment in breast cancer due to its role in cytoskeletal dynamics and apoptosis \citep{abbas2015eef1a}.
(4) RPL10 plays a role in tumor progression in epithelial ovarian cancer \citep{shi2018biological}, which may be associated with glucocorticoid treatment.

\subsection{Adjusting estimation by selection of tuning parameter}\label{supp:sec:apply_theory_estimate_tau_MSE}

The method in \Cref{sec:apply_theory_estimate_tau_MSE} can be
 naturally extended to cases where the SRE is constructed using informative auxiliary data.
More concretely, we can estimate the limit of the squared error by \eqref{eq:exact_cat_M_MAP_informative_square_error} using the estimation method for $(\kappa_1, \kappa_2, \xi)$ in \Cref{sec:est_xi1} and we call the resulting estimator \textbf{SESE(I)} where the suffix (I) denotes informative auxiliary data.
Similarly, we can select $\tau$ that minimizes the limit of the squared error based on the true value of $(\kappa_1, \kappa_2, \xi)$ and call the resulting estimator \textbf{STSE(I)}.
The procedure for leave-one-out cross-validation remains the same as before and the resulting estimator with informative auxiliary data is named SLCV(I).

We provide an experiment to illustrate these methods: SESE, STSE, and  SLCV that
are based on observed data and noninformative synthetic data; SESE(I), STSE(I), and SLCV(I) are based on observed data and informative auxiliary data.
We consider the scenarios where $p=400$, $n$ is either $2p$ or $4p$, and $\kappa_1$ is either $1$ or $2$.
The observed covariates and responses are generated according to the observed data generation process described in \Cref{sec:numerical_verify}.
The noninformative synthetic data are generated  $\{\bX_i\}_{i=1}^M\stackrel{i.i.d.}{\sim} N(\mathbf{0},\mathbf{I}_p), \{Y_i^*\}_{i=1}^M\stackrel{i.i.d.}{\sim} \text{Bern}(0.5)$ with $M=20 \cdot p$.
The informative auxiliary data are generated following the procedure described in \Cref{sec:numerical_verify} and we fix $\xi=0.9$, $\kappa_2=1$, and $M=10\cdot p$.
In each scenario, we repeat the experiments 50 times and evaluate the  squared error of each estimator.

The results across different scenarios are shown in \Cref{fig:compare_diff_method_MSE}.
In each scenario, both SESE and SLCV perform on par with the benchmark given by STSE, which indicates that our selection methods, either using theoretical limits with estimated signal strengths or using leave-one-out cross-validation, are effective in selecting the tuning parameter $\tau$.
In addition, the performance of the estimator using informative auxiliary data is significantly superior to that using noninformative synthetic data and there is little difference among SLCV(I), SESE(I), and STSE(I).
This suggests that in the presence of informative auxiliary data, our proposed selection methods can effectively utilize the information from the auxiliary data by selecting a suitable value of $\tau$.

\begin{figure}[H]
    \centering
    \includegraphics[scale=0.2]{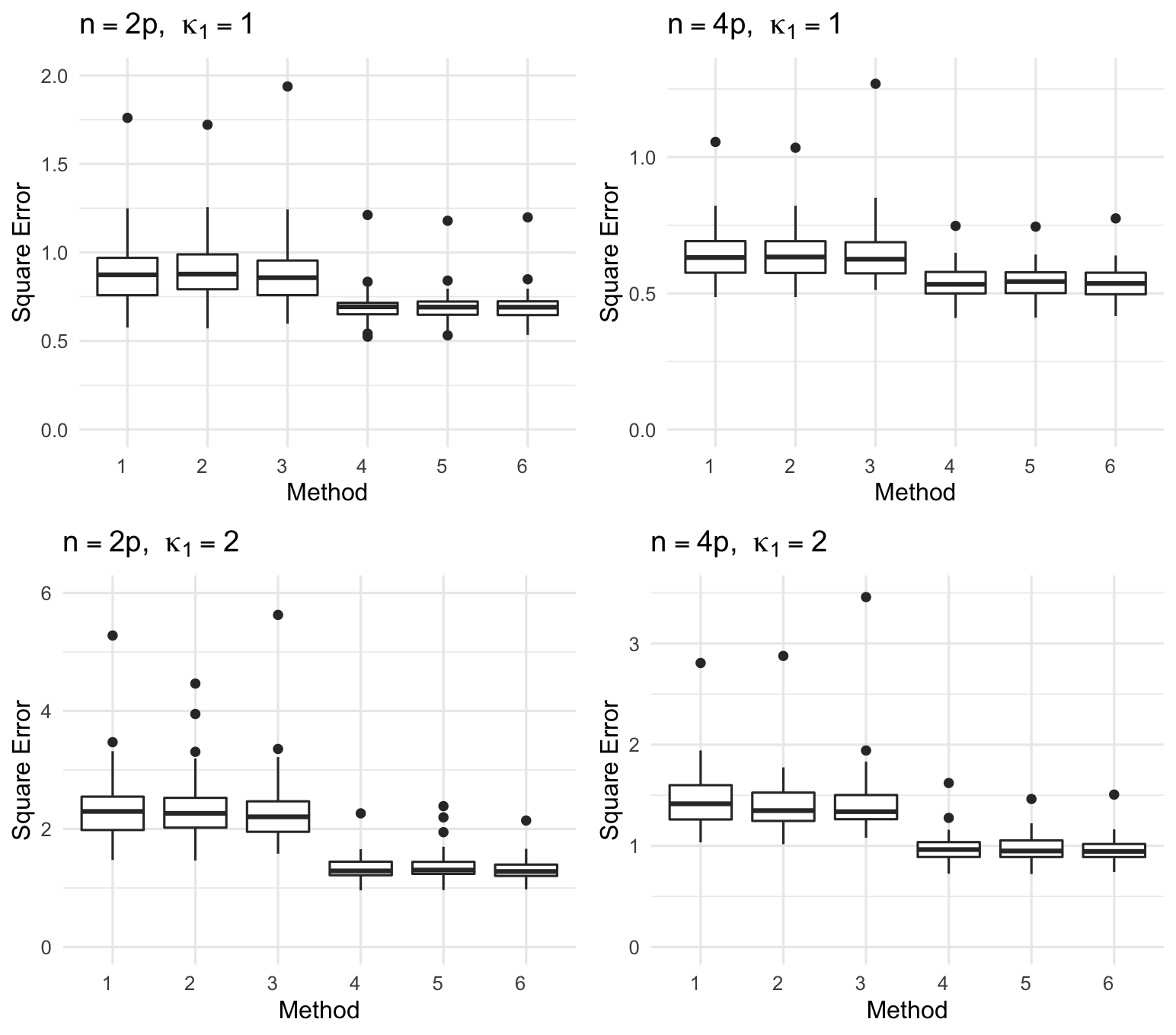}
    \caption{The box plot is constructed from 50 independent trials.
    The x-axis, labeled {1,2,3,4,5,6}, represents different estimators. Estimators 1 to 3 are based on noninformative synthetic data, specifically SLCV, SESE, and STSE; estimators 4 to 6 are based on informative auxiliary data, specifically SLCV(I), SESE(I),  and STSE(I).   }
    \label{fig:compare_diff_method_MSE}
\end{figure}

\subsection{Illustrations of negative transfer}
\label{app:negative_transfer}

The asymptotic characterization in \Cref{thm:exact_cat_M_MAP_informative} remains valid for any similarity parameter $\xi \in (-1,1)$.
In many applications where auxiliary data are from a source population similar to the target population, the regime $\xi \in [0,1)$ is more relevant.
When $\xi<0$, the source signal is anti-aligned with the target signal, so the auxiliary data may become harmful rather than helpful.
This subsection provides a numerical illustration of this phenomenon.

We use the same simulation setting as
in \Cref{sec:num_verify_infor_syn},
 except that the similarity parameter $\xi$ varies over both positive and negative values.
For each value of $\xi$ and each choice of $\tau_0=\tau/n \in \{0.5,1,2,5\}$, we compute the SRE $\widehat{\bbeta}_M$ and record both the squared error
$\|\widehat{\bbeta}_M-\bbeta_0\|_2^2$
and the cosine similarity $\frac{\langle \widehat{\bbeta}_M,\bbeta_0\rangle}{\|\widehat{\bbeta}_M\|_2\|\bbeta_0\|_2}$.
The circles in \Cref{fig:effect_xi} are empirical averages over $50$ independent runs, and the solid curves are the corresponding theoretical predictions from the asymptotic formulas in \Cref{eq:exact_cat_M_MAP_informative_square_error,eq:exact_cat_M_MAP_informative_similarity}.

\Cref{fig:effect_xi} shows close agreement between theory and simulation throughout the whole range of $\xi$.
Moreover, in this experiment, the squared error is consistently larger when $\xi<0$ than at the benchmark value $\xi=0$, which corresponds to a noninformative source.
At the same time, the cosine similarity is smaller for $\xi<0$, and for larger values of $\tau_0$ it can even become negative.
This indicates that when the source signal points in an opposite direction to the target signal, borrowing information from the source may deteriorate estimation accuracy.
In this sense, $\xi<0$ leads to negative transfer.

Therefore, although the theory applies more generally, the regime $\xi \in [0,1)$ is the most relevant one when the goal is to transfer information from a similar source.
In practice, the estimator of $\xi$ developed in \Cref{sec:est_xi1} can be used to assess whether such anti-alignment may be present.

\begin{figure}[H]
    \centering
    \includegraphics[width=0.85\linewidth]{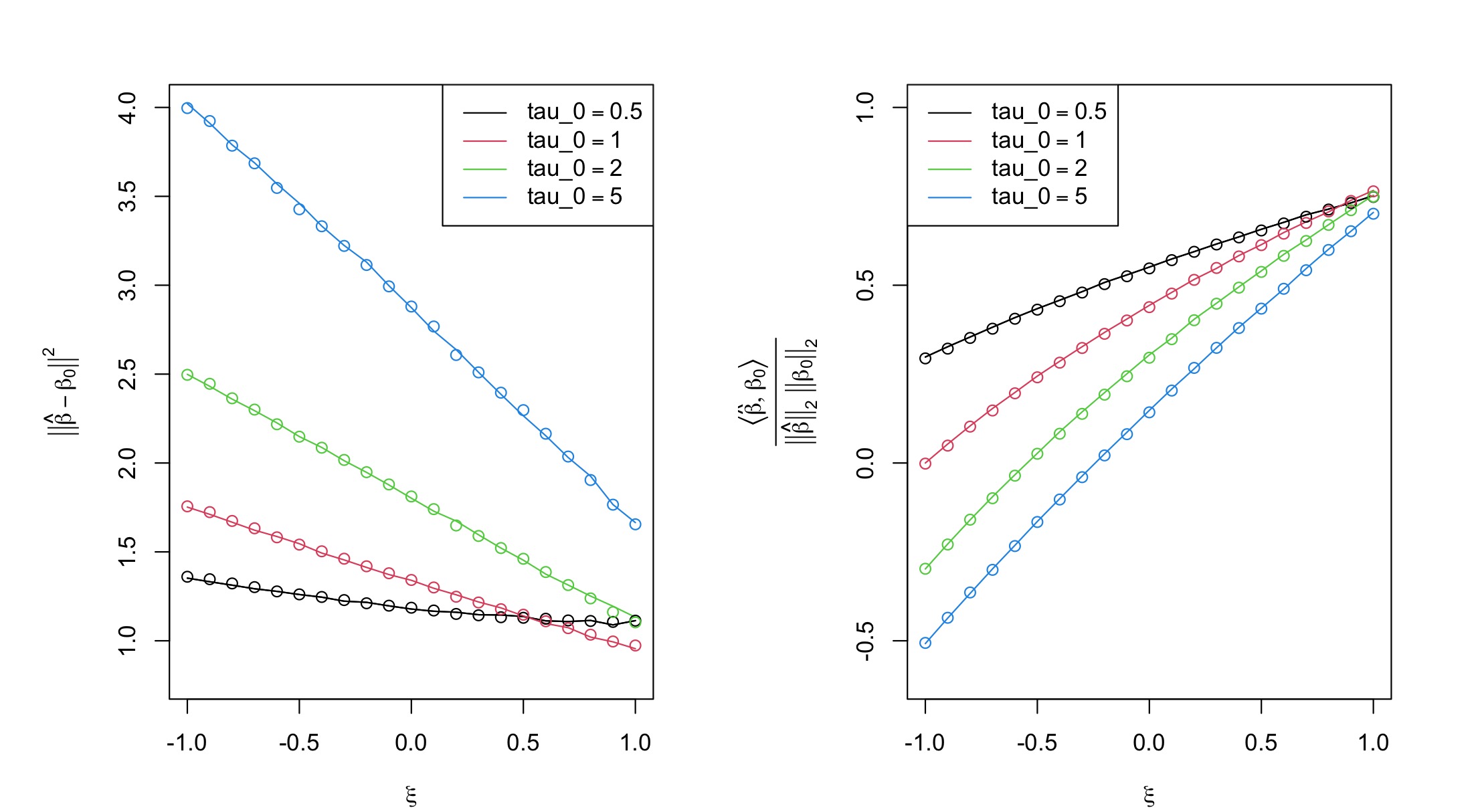}
    \caption{Effect of the similarity parameter $\xi$ for several values of $\tau_0=\tau/n$. Left: squared error; right: cosine similarity.
Circles: empirical averages over $50$ independent runs; solid curves: theoretical predictions.
    When $\xi<0$, the source signal is anti-aligned with the target signal, and the squared error is larger than at the benchmark $\xi=0$.}
    \label{fig:effect_xi}
\end{figure}

\section{Extension to generalized linear models (GLMs)}\label{supp:extension_GLM_section}

In this section, we extend the theoretical results developed in \Cref{sec:properties,sec:linear_asymptotic_regime} from the logistic regression model to the generalized linear model (GLM) with the canonical link.
Let $Y \in \mathcal{Y} \subset \mathbb{R}$ be a real-valued (response) variable and $\bX$ be a covariate vector of dimension $p$.
The conditional density of $Y$ given $\bX$ is assumed to be
\begin{equation}\label{GLM_distribution}
p_G(y \mid \bX,\bbeta_0)=h\left(y \right) \exp \left(y
 \bX^{\top} \boldsymbol{\beta}_0-\rho\left(\bX^{\top} \boldsymbol{\beta}_0\right)\right), y\in \mathcal{Y},
\end{equation}
where $\rho(\theta)$ and $h(y)$ are Borel functions associated with a particular GLM. Here, we consider a broader case rather than restricting $\rho(\theta)$ to $\log (1+\exp (\theta))$ as in the logistic regression setting.
Under a catalytic prior with some synthetic data, the SRE for this GLM is given by
\begin{equation}\label{GLM:cat_betahat}
\widehat{\boldsymbol{\beta}}_{M}^G=\arg \min _{\boldsymbol{\beta} \in \mathbf{R}^p} \sum_{i=1}^n \ell_G(Y_i,\bX_i^\top \boldsymbol{\beta})+\frac{\tau}{M}\sum_{i=1}^M \ell_G(Y^*_i,\bX_i^{*\top} \boldsymbol{\beta}) ,
\end{equation}
where $\ell_G(y, \theta):=\rho(\theta)-y \theta$ denotes the negative log-density and the subscript (superscript) refers to GLM.
Similarly, the pSRE with infinite synthetic data is given by
\begin{equation}\label{GLM:cat_betahat_Minfty}
    \widehat{\boldsymbol{\beta}}_{\infty}^G=\arg \min _{\boldsymbol{\beta} \in \mathbf{R}^p} \sum_{i=1}^n \ell_G(Y_i,\bX_i^\top \boldsymbol{\beta})+\tau
\mathbb E\left[\ell_G(Y^*,\bX^{*\top} \boldsymbol{\beta}) \right],
\end{equation}
where the expectation is taken over the synthetic data-generating distribution.

To present our theoretical result, we begin with some conditions on the model.
\begin{condition}\label{GLM_cond:link}
The density function of the GLM satisfies the following:
\begin{enumerate}
    \item For any $y\in \mathcal{Y}$ and $\bbeta\in \mathbb R^p$,  $p_G(y\mid \bX,\bbeta)\leq C_1$ for some universal constant $C_1$.
    
    \item
    For any $y\in \mathcal{Y}$ and $\theta\in \mathbb{R}$,
    $|\partial_\theta \ell_{G}(y,\theta)|\leq L_g$.

    \item
    $\rho$ is twice continuously differentiable and strictly convex.
    For any positive value $B$, there exists $c_\rho(B)>0$ such that $\rho^{\prime\prime}(\theta)$ is lower bounded by $c_\rho(B)$ for all $|\theta|\leq B$.
\end{enumerate}
\end{condition}

\begin{remark}
The requirements in \Cref{GLM_cond:link} are mild and commonly adopted in theoretical analysis on GLMs, as seen in \cite{van2008high,fan2010sure,huang_catalytic_2020}. The first requirement states that the probability density function should be bounded.  The second and third requirements generalize the properties of the log-likelihood function and log partition function, respectively, in logistic regression.

\end{remark}

For the synthetic data generation, we impose the following conditions.
\begin{condition}
\label{GLM_condition:synthetic_data}
The synthetic data are i.i.d. copies of $(\boldsymbol{X}^*, Y^*)$ such that the following statements hold:
	\begin{itemize}
	    \item The synthetic covariate vector $\boldsymbol{X}^*=\left(X_1^*, X_2^*, \cdots, X_p^*\right)$ satisfies \Cref{conditions:synthetic_X_Y} (C1)--(C3).

		\item For the synthetic response $Y^*$,  there are some constants $q\in (0,1/2] $ and $\varsigma>0$ such that
  
  $\min\{\mathbb P(Y^*\geq \rho^{\prime}(0)+\varsigma\mid \bX^*),\mathbb P(Y^*\leq \rho^{\prime}(0)-\varsigma\mid \bX^*)\}\geq q$.
	\end{itemize}
\end{condition}
\Cref{GLM_condition:synthetic_data} is an extension of \Cref{conditions:synthetic_X_Y} with no difference in the generation of synthetic covariates.
The requirement on the generation of responses ensures that synthetic responses do not highly skew towards one side of the domain $\mathcal{Y}$.
In logistic regression, this requirement becomes the same as in \Cref{conditions:synthetic_X_Y} if we take $\varsigma=0.5$.

We first present our results and defer the proofs in \Cref{proof_GLM_supp}.

\subsection{Existence of the SRE for GLM}

\begin{proposition}\label{GLM_thm:MAP_uniqueness}
		Assume \Cref{GLM_condition:synthetic_data} holds and there exists a positive constant $c_0$ such that
        $$
        \inf_{\bbeta\in\mathbb R^p, \|\bbeta\|>c_0} \frac{\tau}{M}\sum_{i=1}^M \ell_G(Y^*_i,\mathbf{X}_i^{*\top} \boldsymbol{\beta}) > (n+\tau)\rho(0)
        $$
        and
        $$
        \inf_{\bbeta\in\mathbb R^p, \|\bbeta\|>c_0} \sum_{i=1}^n \ell_G(Y_i,\mathbf{X}_i^{\top} \boldsymbol{\beta})>0 .
        $$

 Additionally, assume that the synthetic covariate matrix has full column rank. Under these conditions, the SRE in \eqref{GLM:cat_betahat} exists and is unique.
\end{proposition}

\subsection{Consistency of SRE when p diverges for GLM}

\begin{condition}\label{condition:GLM_moment_bound}
     $\{Y_i,\bX_i\}_{i=1}^n$ are independent and
    $\mathbb E [\operatorname{Var}(Y_i|\bX_i)\|X_i \|^2]\leq C_2 p$ for all $i\in [n]$.
\end{condition}

\begin{proposition}
	\label{thm:GLM_post_mode_consistency}
 Suppose $p/n\rightarrow 0$ and the tuning parameter is chosen such that $\tau \leq C_4 p$ for a constant $C_4$. 
 Under \Cref{condition:SubgaussianX,GLM_cond:link} and \Cref{condition:GLM_moment_bound}, the following statements hold:
 \begin{enumerate}[label=(\roman*)]
     \item
     Suppose the synthetic covariate matrix is of full rank and there is a constant $\Lambda$ such that
     $\| \frac{1}{M}\sum_{i=1}^{M} \boldsymbol{X}_i^*\boldsymbol{X}_i^{*\top }\| \leq \Lambda$,
     then
     $$\|\widehat{\boldsymbol{\beta}}^G_{M}-\boldsymbol{\beta}_0\|^2=O_p\left(\frac{p}{n}\right)$$
     \item Under covariate condition in \Cref{GLM_condition:synthetic_data}, we have
     $$ \|\widehat{\boldsymbol{\beta}}^G_{\infty }-\boldsymbol{\beta}_0\|^2=O_p\left(\frac{p}{n}\right)$$
 \end{enumerate}
\end{proposition}

\subsection{Nonasymptotic results in the linear asymptotic regime for GLM}
\begin{proposition}\label{thm:GLM_MAP_bounded}
Suppose \Cref{GLM_cond:link,GLM_condition:synthetic_data,condition:sufficient_regualrization} hold and $p > \omega_1 n$ for some positive constant $\omega_1$. Let $C_*$ be the constant $1+c_*\omega_1$. Assume  $\max\{\frac{1}{n}\sum_{i=1}^n \ell_G(Y_i,0), \frac{1}{M}\sum_{i=1}^M \ell_G(Y^*_i,0)\}\leq C_y$.
  Then,  the following statements hold:
\begin{enumerate}[label=(\roman*)]
    \item The estimator defined in the optimization \eqref{GLM:cat_betahat_Minfty} satisfies that
    $$\|\widehat{\bbeta}^G_{\infty}\|_2\leq \frac{C_*C_y}{\varsigma\eta_0 \nu} $$
	where $\eta_0,\nu$ are some positive constants that only depend on $(\kappa_{-},\kappa_{+},K_X,q)$ in \Cref{GLM_condition:synthetic_data}.
 \item The estimator defined in the optimization \eqref{GLM:cat_betahat} satisfies that
$$\|\widehat{\bbeta}^G_{M}\|_2\leq \frac{4C_*C_y}{\varsigma\eta_0 \nu}$$
with probability at least $1-2\exp(-\tilde{c}M)$ if  $M\geq \tilde{C}p$,
where $\tilde{c},\tilde{C}, \eta_0,\nu$ are positive constants that only depend on the constants $L_g$ and $(\kappa_{-},\kappa_{+},K_X,q)$ in \Cref{GLM_condition:synthetic_data}.
\end{enumerate}
\end{proposition}

\subsection{Stability of SRE due to finite $M$ for GLM}
\label{sec:GLM_stability _MAP_finite_M}

We study the influence of the synthetic sample size $M$ on the stability of the SRE.
Specifically, we establish a bound on the distance between the estimate $\widehat{\boldsymbol{\beta}}^G_{M}$ based on $M$ synthetic samples defined in \eqref{GLM:cat_betahat} and the estimate
$\widehat{\boldsymbol{\beta}}^G_{\infty}$ based on an infinite amount of synthetic data defined in \eqref{GLM:cat_betahat_Minfty}.
This bound decays to 0 linearly in $M$.

For the purpose here, we treat the observed data as fixed and consider the synthetic data the only source of randomness. For any $K>0$, we define $\mathcal B_K := \{\bbeta\in \mathbb R^p: \|\bbeta \|_2\leq K\}$.
Let $\widehat{\boldsymbol{\beta}}_{M}^{G,(K)}$ and   $\widehat{\boldsymbol{\beta}}_{\infty}^{G,(K)}$ be the constrained version of the SRE and pSRE over $\mathcal B_K$.

\begin{proposition}\label{GLM_thm:stability_finite_M}
Suppose that $\tau>0$ and the following holds
\begin{enumerate}[label=(\alph*)]
    \item the synthetic data are generated according to \Cref{GLM_condition:synthetic_data};
    \item  \Cref{GLM_cond:link} holds. 
\end{enumerate}
Let $\lambda_{n,K}\geq 0$ be a constant such that for any $\bbeta \in \mathcal B_K$, the smallest eigenvalue of $\sum_{i=1}^n \rho^{\prime\prime}(\bX_i^\top \bbeta)\bX_i \bX_i^\top$   is lower bounded by $\lambda_{n,K}$.
Then, the following statements hold:
 \begin{enumerate}[label=(\roman*)]
     \item
     There is a positive constant $\gamma$ that only depends on the constants $K$ and $\kappa_{-},\kappa_{+},K_X$ in \Cref{GLM_condition:synthetic_data} such that
     the smallest eigenvalue of $\mathbb{E}\left( \rho^{\prime\prime}(\bX^{*\top}\bbeta)\bX^{*}\bX^{*\top}  \right)$ is lower bounded by $\gamma$ for all $\bbeta \in \mathcal B_K$.
\item For any $\epsilon\in (0,1)$, it holds with probability at least $1-\epsilon$ that
\[\|\widehat{\boldsymbol{\beta}}^{G,(K)}_{M}-\widehat{\boldsymbol{\beta}}^{G,(K)}_{\infty}\|_2 \leq \frac{\tau C_1}{ \lambda_{n,K}+\tau \gamma/2 } \sqrt{\frac{p+\log(4/\epsilon)}{M}}.
	\]
where $C_1$ and $\gamma$ depend on $\kappa_{-},\kappa_{+},L_g,K$ and $K_X$ only.  In particular, since $\lambda_{n,K}\geq 0$, we have $\|\widehat{\boldsymbol{\beta}}^{G,(K)}_{M}-\widehat{\boldsymbol{\beta}}^{G,(K)}_{\infty}\|^{2} = O_p
\left( \frac{p}{M \gamma^2}\right)$.
 \end{enumerate}
\end{proposition}

\subsection{Exact asymptotics in the linear asymptotic regime for GLM}

\begin{proposition}\label{proposition:GLM_exact_cat_M_MAP_informative}
    Consider the optimization program \eqref{GLM:cat_betahat}, under \Cref{condition:proper_scaling}, and assume that the solution of the optimization program \eqref{GLM:cat_betahat} lies in a compact set. Assume $\left\{\boldsymbol{X}_i\right\}_{i=1}^n$ and $\left\{\boldsymbol{X}^*_i\right\}_{i=1}^M$ are i.i.d.\ samples from $\mathcal{N}\left(\mathbf{0}, \mathbf{I}_p\right)$. Let the responses $Y_i, Y_i^*$ be generated according to the GLM \eqref{GLM_distribution} with linear predictors $\boldsymbol{X}_i^{\top}\boldsymbol{\beta}_0$ and $\boldsymbol{X}_i^{*\top}\boldsymbol{\beta}_s$, respectively.
    Assume that $\Pi_2$ is a distribution on $\mathbb{R}$ with $\mathbb{E}_{\Pi_2}[\beta^2]=\kappa_1^2$ and that the empirical distribution of the entries of $\sqrt{p}\boldsymbol{\beta}_0 $ converges weakly to $\Pi_2$, i.e., $\frac{1}{p}\sum_{j=1}^p\noverpmass_{\sqrt{p}\beta_{0j} } \rightarrow \Pi_2$.
    Additionally, assume that
    $\lim_{p\rightarrow\infty}\|\boldsymbol{\beta}_0\|^2 = \kappa_1^2$, $\lim_{p\rightarrow\infty}\|\boldsymbol{\beta}_s\|^2 = \kappa_2^2$, and $\lim_{p\rightarrow\infty}\frac{1}{\|\boldsymbol{\beta}_0\|\|\boldsymbol{\beta}_s\|}\langle \boldsymbol{\beta}_0, \boldsymbol{\beta}_s \rangle = \xi \in [0,1)$. Then, as $p \rightarrow \infty$, for any locally-Lipschitz function $\Psi: \mathbb{R} \times \mathbb{R} \rightarrow \mathbb{R}$, we have,
    \[
    \frac{1}{p}\sum_{j=1}^p \Psi\left(\sqrt{p}[\widehat{\boldsymbol{\beta}}_{M,j}^G-\alpha_1^*\boldsymbol{\beta}_{0,j}-\frac{\alpha_2^*}{\sqrt{1-\xi^2}}(\bbeta_{s,j}-\xi \frac{\kappa_2}{\kappa_1}\bbeta_{0,j})] ,\sqrt{p} \boldsymbol{\beta}_{0,j}\right) \xrightarrow{\mathbb{P}} \mathbb{E}\left[\Psi\left( \sigma^*Z, \beta\right)\right]
    \]
    where $Z \sim \mathcal{N}(0,1)$ is independent of $ \beta  \sim \Pi_2$, and $(\alpha_1^*, \alpha_2^*, \sigma^*)$ depend on the GLM \eqref{GLM_distribution} and parameters $\varsigma, \kappa_1, \kappa_2, \tau_0, \xi$.
\end{proposition}

\section{Proofs}\label{supp:sec:proof}

\subsection{Preliminaries and basic properties}

In the following, we assume $K_X\geq 1$; otherwise we replace $K_X$ by $\max\{K_X,1\}$ and preserve \Cref{conditions:synthetic_X_Y}(C3).

The following lemma consists of some standard results; see, for example, \cite{vershynin2010introduction}.
\begin{lemma}[Standard Orlicz and Bernstein facts]
\label{lem:standard_psi_Bernstein_facts}
There exist universal constants $c,C>0$ such that:

\begin{enumerate}
\item[(i)] If $W$ is sub-gaussian, then $W^2$ is sub-exponential and
\[
\|W^2\|_{\psi_1}\le C \|W\|_{\psi_2}^2.
\]

\item[(ii)] If $W$ is sub-gaussian, then for every $q\ge 1$,
\[
\bigl(\mathbb{E}|W|^q\bigr)^{1/q}\le C \sqrt{q}\,\|W\|_{\psi_2}.
\]
In particular, $\mathbb{E}|W| \le C\|W\|_{\psi_2}$ and $\bigl(\mathbb{E}W^4\bigr)^{1/4}\le C\|W\|_{\psi_2}$.

\item[(iii)] If $Y_1,\ldots,Y_M$ are independent, mean zero, and satisfy $\|Y_i\|_{\psi_1}\le K$ for all $i$, then for every $t>0$,
\[
\mathbb{P}\!\left(\left|\frac{1}{M}\sum_{i=1}^M Y_i\right|\ge t\right)
\le
2\exp\!\left(-cM\min\left\{\frac{t^2}{K^2},\frac{t}{K}\right\}\right).
\]
\end{enumerate}
\end{lemma}

\bigskip

Our proof relies on properties of the synthetic data generating distribution.
In the supplementary material of \citet{huang_catalytic_2020}, Theorem 5.7 establishes those properties under more restricted conditions on the synthetic covariates, that is, independently distributed and uniformly bounded coordinates.
To relax their condition, we have to first establish similar results to their Propositions 5.11 and 5.12. In particular, we have the following lemma and propositions.

\begin{lemma}\label{lem:synthetic_X_4th_moment}
   
Under \Cref{conditions:synthetic_X_Y} (C3), it holds for every vector $\boldsymbol{u}\in \mathbb{R}^{p-1}$ that
$$
\begin{aligned}
\mathbb{E}\bigl(\boldsymbol{u}^{\top}\widetilde{\boldsymbol{X}}^*\bigr)^4 \leq \mu_4   \left[\mathbb{E}\bigl(\boldsymbol{u}^{\top}\widetilde{\boldsymbol{X}}^*\bigr)^2\right]^2,
\end{aligned}
$$
where $\mu_4:=2 K_X^4$.
\end{lemma}
This is a standard result for sub-Gaussian variables and its proof is omitted.

The following propositions are restatements of the three parts in  \Cref{prop:properties_synthetic_X}.

\begin{proposition}[Part 1 in \Cref{prop:properties_synthetic_X}]
\label{prop:design_op_norm}
Assume \Cref{conditions:synthetic_X_Y} (C1)--(C3).
Let $\widetilde{\mathbb{X}}^*\in\mathbb{R}^{M\times (p-1)}$ have i.i.d. rows
$\widetilde{\boldsymbol{X}}_1^{*\top},\ldots,\widetilde{\boldsymbol{X}}_M^{*\top}$ and set $d:=p-1$.
Then there exists a universal constant $C>0$ such that for all $t\ge 0$,
$$
\begin{aligned}
\|\mathbb{X}^*\|_{\mathrm{op}} \leq \sqrt{M}+ \kappa_{+}^{1/2}\left[\sqrt{M}+C K_X^2(\sqrt{d}+t)\right]
\end{aligned}
$$
with probability at least $1-2\exp(-t^2)$.

\end{proposition}

\begin{proposition}[Part 2 in \Cref{prop:properties_synthetic_X}]
    \label{prop:var_small_ball}
Assume \Cref{conditions:synthetic_X_Y} (C1)--(C3).
Then the following statements hold:

\begin{enumerate}
\item[(i)] For any $\boldsymbol{\beta}=(\beta_1,\widetilde{\boldsymbol{\beta}})\in\mathbb{R}^p$ with
$\|\boldsymbol{\beta}\|_2=1$,
$$
\begin{aligned}
\mathrm{Var}\!\left(\boldsymbol{X}^{*\top}\boldsymbol{\beta}\right)
= \mathrm{Var}\!\left(\widetilde{\boldsymbol{X}}^{*\top}\widetilde{\boldsymbol{\beta}}\right)
\leq \kappa_+ .
\end{aligned}
$$

\item[(ii)] Define
$$
\begin{aligned}
\eta_0:=\sqrt{\frac{\min\{1,\kappa_-\}}{2}},
\qquad
\rho_0:=\frac{\min\{1,\kappa_-\}^2}{32(1+\mu_4 \kappa_{+}^2)}.
\end{aligned}
$$
Then for every $\boldsymbol{\beta}\in\mathbb{R}^p$ with $\|\boldsymbol{\beta}\|_2=1$,
$$
\begin{aligned}
\mathbb{P}\left(\left|\boldsymbol{X}^{*\top}\boldsymbol{\beta}\right|>\eta_0\right)\geq \rho_0 .
\end{aligned}
$$

\item [(iii)]
 There exists a constant $r_0>0$ depending only on
$(\kappa_-,\kappa_+,K_X)$ such that if $M\ge r_0p$, then with probability at least
\[
1-2\exp\{-M\min(1,\rho_0^2/4)\},
\]
the synthetic covariate matrix $\mathbb X^*$ has full column rank and
\[
\inf_{\|\boldsymbol{\beta}\|_2=1}
\frac{1}{M}\sum_{i=1}^M
\left|\boldsymbol{X}_i^{*\top}\boldsymbol{\beta}\right|
\geq \frac{\eta_0\rho_0}{4}.
\]
\end{enumerate}
\end{proposition}

\begin{proposition}[Part 3 in \Cref{prop:properties_synthetic_X}]\label{prop:nonseparability}

   Suppose \Cref{conditions:synthetic_X_Y} holds. There exist positive constants $r_1$ and $c_1$ depending only on $q$,
    such that if $M\geq r_1 p$, then the synthetic data $\{(\boldsymbol {X}^*_i,{Y}^*_i)\}_{i=1}^M$ are not separable with probability at least $1-2e^{-c_1 M}$.
\end{proposition}

The rest of this subsection is devoted to proving the above propositions.

\begin{proof}[Proof of ~\Cref{prop:var_small_ball}]
(i) By \Cref{conditions:synthetic_X_Y}(C1),
$\mathbb{E}(\widetilde{\boldsymbol{X}}^*)=\mathbf{0}$, hence
$\mathbb{E}(\boldsymbol{X}^{*\top}\boldsymbol{\beta})=\beta_1$ and
$$
\begin{aligned}
\mathrm{Var}\!\left(\boldsymbol{X}^{*\top}\boldsymbol{\beta}\right)
&=\mathbb{E}\Bigl(\boldsymbol{X}^{*\top}\boldsymbol{\beta}-\beta_1\Bigr)^2
= \mathbb{E}\Bigl(\widetilde{\boldsymbol{X}}^{*\top}\widetilde{\boldsymbol{\beta}}\Bigr)^2 \\
&=\widetilde{\boldsymbol{\beta}}^{\top}\boldsymbol{\Sigma}^* \widetilde{\boldsymbol{\beta}}
\leq \lambda_{\max}(\boldsymbol{\Sigma}^*)\|\widetilde{\boldsymbol{\beta}}\|_2^2
\leq \kappa_+,
\end{aligned}
$$
where the last inequality is due to \Cref{conditions:synthetic_X_Y} (C2).

(ii) Fix any $\boldsymbol{\beta}=(\beta_1,\widetilde{\boldsymbol{\beta}})$ with $\|\boldsymbol{\beta}\|_2=1$.
Let $S:=\boldsymbol{X}^{*\top}\boldsymbol{\beta}=\beta_1+\widetilde{\boldsymbol{X}}^{*\top}\widetilde{\boldsymbol{\beta}}$.

First, since $\mathbb{E}(\widetilde{\boldsymbol{X}}^{*\top}\widetilde{\boldsymbol{\beta}})=0$, we have
$$
\begin{aligned}
\mathbb{E}(S^2)
&=\beta_1^2 + 2\beta_1\,\mathbb{E}(\widetilde{\boldsymbol{X}}^{*\top}\widetilde{\boldsymbol{\beta}})
+ \mathbb{E}\bigl(\widetilde{\boldsymbol{X}}^{*\top}\widetilde{\boldsymbol{\beta}}\bigr)^2 \\
&=\beta_1^2 + \widetilde{\boldsymbol{\beta}}^{\top}\boldsymbol{\Sigma}^*\widetilde{\boldsymbol{\beta}}\\
&\geq \beta_1^2 + \kappa_-\|\widetilde{\boldsymbol{\beta}}\|_2^2 \\
&=\beta_1^2 \cdot 1 + (1-\beta_1^2) \cdot \kappa_- \\
&\geq \min\{1,\kappa_-\},
\end{aligned}
$$
where the first inequality is due to \Cref{conditions:synthetic_X_Y} (C2) and the last inequality is due to the convex combination.

Second, by $(a+b)^4\leq 8(a^4+b^4)$,
$$
\begin{aligned}
\mathbb{E}(S^4)
&\leq 8\beta_1^4 + 8\,\mathbb{E}\bigl(\widetilde{\boldsymbol{X}}^{*\top}\widetilde{\boldsymbol{\beta}}\bigr)^4.
\end{aligned}
$$
If $\widetilde{\boldsymbol{\beta}}=\mathbf{0}$, then $S\equiv \beta_1$ and
$\mathbb{P}(|S|>\eta_0)=1\geq \rho_0$.
Otherwise, by \Cref{lem:synthetic_X_4th_moment},
$$
\begin{aligned}
\mathbb{E}\bigl(\widetilde{\boldsymbol{X}}^{*\top}\widetilde{\boldsymbol{\beta}}\bigr)^4
\leq \mu_4   \left[\mathbb{E}\bigl(\widetilde{\boldsymbol{\beta}}^{\top}\widetilde{\boldsymbol{X}}^*\bigr)^2\right]^2 \leq \mu_4 \left(\kappa_{+} \|\widetilde{\boldsymbol{\beta}}\|^2\right)^2 \leq \mu_4 \kappa_{+}^2.
\end{aligned}
$$

Note that $\beta_1^4\leq 1$. Hence $\mathbb{E}(S^4)\leq 8(1+\mu_4 \kappa_{+}^2)$.

Now apply the Paley--Zygmund inequality to $Z:=S^2$ with $\theta=1/2$:
$$
\begin{aligned}
\mathbb{P}\bigl(S^2 > \tfrac12\,\mathbb{E}(S^2)\bigr)
\geq \frac{1}{4}\cdot \frac{\mathbb{E}(S^2)^2}{\mathbb{E}(S^4)} .
\end{aligned}
$$
Since $\sqrt{\tfrac12\,\mathbb{E}(S^2)}\geq  \eta_0$, we have
$$
\begin{aligned}
\mathbb{P}\bigl(|S|>\eta_0\bigr)
\geq \frac{1}{4}\cdot \frac{\min\{1,\kappa_-\}^2}{8(1+\mu_4 \kappa_{+}^2)}
=\rho_0 .
\end{aligned}
$$

(iii) The proof follows directly from Lemma 5.8 in the supplement of \cite{huang_catalytic_2020}, which uses a concentration inequality for the sum of Bernoulli variables $\xi_i=\mathbf{1}\{\left|\boldsymbol{X}_i^{\top} \boldsymbol{\beta}\right|>\eta\}$ and a standard net argument.
The detail is omitted.

\end{proof}

\begin{proof}[Proof of \Cref{prop:design_op_norm}]
Consider the isotropic rows
$\widetilde{Z}_i:=\left(\boldsymbol{\Sigma}^{*}\right)^{-1/2}\widetilde{\boldsymbol{X}}_i^*$ and the transformed matrix $\widetilde{\mathbb{Z}}=\widetilde{\mathbb{X}}\left(\boldsymbol{\Sigma}^{*}\right)^{-1/2}$.

\Cref{conditions:synthetic_X_Y} (C3) implies that $\|\widetilde{Z}_i\|_{\psi_2}\leq K_X$.
Applying \cite[Theorem~4.6.1]{vershynin2018high}, there exists an absolute constant $C$ such that
$$
 \|\widetilde{\mathbb{Z}}\|_{\mathrm{op}} \leq \sqrt{M}+C K_X^2(\sqrt{d}+t)
$$
with probability at least $1-\exp(-t^2)$ for any $t\geq 0$.

Since the first column (for the intercept term) has operator norm $\|\mathbf{1}\|_2=\sqrt{M}$,
the desired result is proved by using
$$
\begin{aligned}
\|\mathbb{X}^*\|_{\mathrm{op}} & \leq \sqrt{M} + \|\widetilde{\mathbb{X}}^*\|_{\mathrm{op}}\\
&\leq \sqrt{M} + \|\left(\boldsymbol{\Sigma}^{*}\right)^{1/2}\|_{\mathrm{op}}  \|\widetilde{\mathbb{Z}}\|_{\mathrm{op}}
\end{aligned}
$$
\end{proof}

\begin{proof}[Proof of \Cref{prop:nonseparability}]

Without loss of generality, assume $M\geq p$.

Given the covariate vectors $\boldsymbol {X}^*_i$, let $\mathcal{S}_{\mathbb{X}^*}$ be the set of labelings in $\{0,1\}^M$ that are separable by a homogeneous hyperplane in $\mathbb{R}^p$.
By Function-Counting Theorem \citep[Theorem 1]{cover1965geometrical} and noting that points that are not in general position will only have a smaller number of possible labelings, we have
$$
\left|\mathcal{S}_{\mathbb{X}^*}\right|\leq C(M, p)=2 \sum_{k=0}^{p-1}\binom{M-1}{k}.
$$

For any labeling $\{y_i^*\}_{i=1}^M \in\{0,1\}^M$, \Cref{conditions:synthetic_X_Y}(C4) implies that for each $j$,
$$\mathbb{P}\left(Y_j^*=y_j^* \mid \{\bX_{i}^*\}_{i=1}^M\right) \leq 1- q.
$$
By \Cref{conditions:synthetic_X_Y},
the synthetic responses $\{Y_j^*\}_{j=1}^M$ are mutually conditionally independent given $\{\bX_{i}^*\}_{i=1}^M$.
Since $1-q\leq e^{-q}$, we have
$$
\begin{aligned}
\mathbb{P}\left( \text{ data are separable} \mid \{\bX_{i}^*\}_{i=1}^M\right) & = \sum_{\{y_i^*\}_{i=1}^M\in \mathcal{S}_{\mathbb{X}^*}} \mathbb{P}\left(Y_j^*=y_j^*, j\in [M] \mid \{\bX_{i}^*\}_{i=1}^M\right) \\
& \leq \left|\mathcal{S}_{\mathbb{X}^*}\right| (1-q)^M\\
& \leq 2 e^{-qM} \sum_{k=0}^{p-1}\binom{M-1}{k}.
\end{aligned}
$$

Let $\alpha=(p-1)/(M-1)$ and assume $\alpha\in (0, 1/2]$.
Recall $H(\alpha)=-\alpha \log \alpha-(1-\alpha) \log (1-\alpha)$
We can see the following holds:
$$
\sum_{k=0}^{p-1}\binom{M-1}{k} \leq e^{(M-1) H(\alpha)}.
$$
To see this, take $T=\{A \subseteq[M-1]:|A| \leq p-1\}$. Then $|T|=\sum_{i \leq p-1 }\binom{M-1}{i}$.
By \citet[Corollary~15.7.3]{alon2016probabilistic}, we have
$$
|T| \leq e^{\sum_{j=1}^{M-1} H\left(p_j\right)},
$$
where $p_j$ is the fraction of sets in $T$ containing $j$ ($j\in [M-1]$).
For this $T$, $p_j \leq \alpha$, and since $\alpha \leq 1 / 2$ we have $H\left(p_j\right) \leq H(\alpha)$. Therefore, $|T| \leq e^{(M-1) H(\alpha)}$.

Therefore,
$$
\begin{aligned}
\mathbb{P}\left( \text{ data are separable} \mid \{\bX_{i}^*\}_{i=1}^M\right)
&\le 2 \exp\!\left\{-qM + (M-1)H(\alpha)\right\}.
\end{aligned}
$$

Since $H(\cdot)$ is increasing on $(0,1/2]$, we can choose a constant $\alpha_q\in(0,1/2]$
such that $H(\alpha_q)\le q/2$.
Next, choose
$$
r_1 := \max\left\{2,\frac{1}{\alpha_q}\right\}.
$$
If $M\ge r_1 p$, then $M\ge 2p$, hence $\alpha=(p-1)/(M-1)\le 1/2$ and also
$$
\alpha=\frac{p-1}{M-1} \le \frac{p}{M}\le \frac{1}{r_1}\le \alpha_q.
$$
Consequently, $H(\alpha)\le H(\alpha_q)\le q/2$, and thus
$$
\begin{aligned}
\mathbb{P}\left( \text{ data are separable} \mid \{\bX_{i}^*\}_{i=1}^M\right)
&\le 2 \exp\!\left\{-qM + (M-1)\cdot \frac{q}{2}\right\} \\
&\le 2 e^{-(q/2)M}.
\end{aligned}
$$

Taking expectation over $\{\bX_i^*\}_{i=1}^M$ yields the desired conclusion with $c_1=q/2$.

\end{proof}

\subsection{Proof of Theorem \ref{thm:MAP_uniqueness}}
\label{proof_sec:logitic_existence}

We prove the existence of the SRE estimate.
Recall for logistic regression $\rho(t)=\log(1+\exp(t))$.
Using the elementary identity that $yt-\rho(t)=-\log(1+e^{(1-2y)t})$ for $y\in \{0,1\}$ and $t\in \mathbb{R}$, we can express the SRE using following optimization problem:

{
\begin{align*}
    \widehat{\bbeta}_{M}&=\arg \max _{\bbeta\in \mathbb R^p} \sum_{i=1}^{n} \left(Y_{i} \bX_i^{\top} \bbeta-\rho(\bX_i^{\top} \bbeta)\right)+ \frac{\tau}{M} \sum_{i=1}^{M} \left(Y^*_{i} \bX_i^{*\top} \bbeta-\rho(\bX_i^{*\top} \bbeta)\right)\\
	&=\arg \min_{\bbeta\in \mathbb R^p}\underbrace{\sum_{i=1}^{n} \log\left(1+\exp(-(2Y_i-1)\bX_i^{\top} \bbeta) \right)+ \frac{\tau}{M} \sum_{i=1}^{M} \log\left(1+\exp(-(2Y_i^*-1)\bX_i^{*\top} \bbeta) \right)}_{\overset{\text{Def.}}{=}\ell(\bbeta)} .
\end{align*}
}

Note that $\ell(\mathbf{0}) = (n+\tau)\log 2$.
Our goal is to demonstrate that the norm of the optima is finite. For any $\be\in \mathbb{S}^{p-1}$, define
$$\kappa(\be):=\min_{i\in [M]} (2Y_i^*-1)\bX_i^{*\top} \be,$$
which is a continuous function over $\mathbb{S}^{p-1}$.
Given that the synthetic data set $\{(\boldsymbol {X}^*_i,
{Y}^*_i)\}_{i=1}^M$ is not separable, we have $\kappa(\be)<0$. Based on Extreme Value Theorem and compactness of $\mathbb{S}^{p-1}$, $\kappa(\be)$ attains its maximum over $\mathbb{S}^{p-1}$, denoted by $\iota$. We have $\iota<0$.

Let $c_0 = \frac{M}{\tau(-\iota)} \cdot (n+\tau)\log 2$.
For any $\bbeta_1 \in \mathbb{R}^p\backslash \{\mathbf{0}\}$, there exists some $j \in [M]$ such that $(2Y^*_{j}-1) \bX_j^{*\top} \bbeta_1<0$. Take $\tilde j$ such that $(2Y^*_{\tilde j}-1) \bX_{\tilde j}^{*\top} \bbeta_1= \min_{j}(2Y^*_{j}-1) \bX_j^{*\top} \bbeta_1\leq \iota<0$.
For any $c>c_0$, we have
\begin{align*}
    \ell(c\bbeta_1/\|\bbeta_1\|_2) &>
\frac{\tau}{M}\log\left(1 + \exp\left[-c(2Y_{\tilde j}^*-1)\bX_{\tilde j}^{*\top} \frac{\bbeta_1}{\|\bbeta_1\|_2}\right]\right) \\
&>c\frac{\tau}{M}[-(2Y_{\tilde j}^*-1)\bX_{\tilde j}^{*\top} \frac{\bbeta_1}{\|\bbeta_1\|_2}|]\\
&>c\frac{\tau}{M}[-\iota]\\
&> (n+\tau)\log 2 = \ell(\mathbf{0}),
\end{align*}
where the first two inequalities are due to $\log(1+\exp(t))\geq \max(0,t)$ for all $t\in \mathbb{R}$, the third inequality is due to the definition of $\iota$ and $\tilde{j}$, and the last inequality is because $c>c_0$.
This suggests that the trivial estimator $\mathbf{0}$ results in a smaller loss compared to any other $\bbeta_1$ with norm larger than $c_0$. Therefore, the norm of the optima must be no larger than $c_0$.

The uniqueness of the optima is guaranteed by the strict convexity of $\ell(\bbeta)$, which can be verified straightforwardly by confirming that the Hessian matrix of $\ell(\bbeta)$ is positive definite since the synthetic covariate matrix is full rank.

\subsection{Proof of Theorem \ref{thm:stability_finite_M} and Proposition~\ref{prop:strong_convex_SRE}}
\label{proof_sec:logitic_stability}

We prove the stability of the SRE with respect to the finite synthetic sample size $M$.

Since the proof of \Cref{GLM_thm:stability_finite_M} is essentially the same, we present a unified argument for logistic regression and general GLMs.
For general GLMs, we additionally assume \Cref{GLM_cond:link} and introduce the constant $L_g$.
For logistic regression, \Cref{GLM_cond:link} holds automatically with $L_g=1$ and $c_\rho(t)=e^t/(1+e^t)^2$.

\subsubsection{Proof of Propositions~\ref{prop:strong_convex_SRE} and~\ref{GLM_thm:stability_finite_M}}

We present the full statement of \Cref{prop:strong_convex_SRE} and its proof.
\begin{proposition}[Lower curvature for synthetic Hessian]
\label{prop:lower_curvature}

Fix the radius $K>0$.
For logistic regression, assume \Cref{conditions:synthetic_X_Y} holds.
For general GLMs, assume both \Cref{GLM_condition:synthetic_data} and \Cref{GLM_cond:link} hold.
Let $D_K := K\sqrt{\frac{2(1+\kappa_+)}{\rho_0}}$. Recall the constants $\rho_0$ and $\eta_0$ in \Cref{prop:var_small_ball}.
Recall that $c_\rho(b)=\inf _{|t| \leq b} \rho^{\prime \prime}(t)>0$.
Define
\[
C_K := \frac{\rho_0\eta_0^2}{2}\, c_\rho(D_K).
\]
Define for $\bbeta\in \mathbb{R}^{p}$ that
\[
\boldsymbol{H}(\bbeta)
:=
\mathbb{E}\!\left(\rho^{\prime\prime}(\bX^{*\top}\bbeta)\bX^*\bX^{*\top}\right),
\qquad
\widehat{\boldsymbol{H}}_M(\bbeta)
:=
\frac{1}{M}\sum_{i=1}^M \rho^{\prime\prime}(\bX_i^{*\top}\bbeta)\bX_i^*\bX_i^{*\top}.
\]

Then:
\begin{enumerate}
\item[(a)]  For all $\bbeta\in\mathcal{B}_K$, it holds that $
\boldsymbol{H}(\bbeta)\succcurlyeq C_K\,\mathbf{I}_p$.

\item[(b)]  There exists a universal constant $C>0$ such that for any
$\epsilon\in(0,1)$, if
\[
M \ge C\,\frac{p+\log(1/\epsilon)}{\rho_0^2},
\]
then with probability at least $1-\epsilon$ (with respect to the synthetic sample),
\[
\inf_{\bbeta\in\mathcal{B}_K}\lambda_{\min}\bigl(\widehat{\boldsymbol{H}}_M(\bbeta)\bigr)
\ge \frac{1}{2}C_K
=
\frac{\rho_0\eta_0^2}{4}\,\rho^{\prime\prime}(D_K).
\]
\end{enumerate}
\end{proposition}

\begin{proof}[Proof of \Cref{prop:lower_curvature}]
We begin with two results.

\medskip
\noindent\textbf{Result 1: lower bounds via indicators.}
Fix $\bbeta\in\mathcal{B}_K$ and $\bv\in\mathbb{S}^{p-1}$.
Write
\[
\bv^\top \boldsymbol{H}(\bbeta)\bv
=
\mathbb{E}\!\left(\rho^{\prime\prime}(\bX^{*\top}\bbeta)(\bX^{*\top}\bv)^2\right),
\qquad
\bv^\top \widehat{\boldsymbol{H}}_M(\bbeta)\bv
=
\frac{1}{M}\sum_{i=1}^M \rho^{\prime\prime}(\bX_i^{*\top}\bbeta)(\bX_i^{*\top}\bv)^2.
\]

Based on part 3 of \Cref{GLM_cond:link},
on the event
$\{|\bX^{*\top}\bbeta|\le D_K\}$, we have $\rho^{\prime\prime}(\bX^{*\top}\bbeta)\ge c_\rho(D_K)>0$.
Furthermore, on $\{|\bX^{*\top}\bv|>\eta_0\}$ we have $(\bX^{*\top}\bv)^2\ge \eta_0^2$.
Therefore,
$$
\begin{aligned}
\rho^{\prime\prime}(\bX^{*\top}\bbeta)(\bX^{*\top}\bv)^2
&\ge
c_\rho(D_K)\eta_0^2\,
\mathbf{1}\bigl\{|\bX^{*\top}\bbeta|\le D_K,\ |\bX^{*\top}\bv|>\eta_0\bigr\}.
\end{aligned}
$$
Taking expectations yields
\begin{equation}\label{eq:population_hessian_fixed_lower}
    \begin{aligned}
\bv^\top \boldsymbol{H}(\bbeta)\bv
&\ge
c_\rho(D_K)\eta_0^2\,
\mathbb{P}\bigl(|\bX^{*\top}\bbeta|\le D_K,\ |\bX^{*\top}\bv|>\eta_0\bigr).
\end{aligned}
\end{equation}

Similarly, taking empirical averages yields
\begin{equation}\label{eq:empirical_hessian_fixed_lower}
    \begin{aligned}
\bv^\top \widehat{\boldsymbol{H}}_M(\bbeta)\bv
&\ge
c_\rho(D_K)\eta_0^2\,
\frac{1}{M}\sum_{i=1}^M
\mathbf{1}\bigl\{|\bX_i^{*\top}\bbeta|\le D_K,\ |\bX_i^{*\top}\bv|>\eta_0\bigr\}.
\end{aligned}
\end{equation}

\medskip
\noindent\textbf{Result 2: a uniform population lower bound for the intersection event.}
For any $\bbeta\in\mathcal{B}_K$ and $\bv\in\mathbb{S}^{p-1}$,
$$
\begin{aligned}
\mathbb{P}\bigl(|\bX^{*\top}\bbeta|\le D_K,\ |\bX^{*\top}\bv|>\eta_0\bigr)
&\ge
\mathbb{P}\bigl(|\bX^{*\top}\bv|>\eta_0\bigr)
-
\mathbb{P}\bigl(|\bX^{*\top}\bbeta|>D_K\bigr).
\end{aligned}
$$
By \Cref{prop:var_small_ball}, the first term is at least $\rho_0$.
Furthermore, write $\bbeta=(\beta_1,\widetilde{\bbeta})^\top$, we have
$$
\mathbb{E}[(\bX^{*\top}\bbeta)^2] = \beta_1^2 + \widetilde{\boldsymbol{\beta}}^{\top}\boldsymbol{\Sigma}^*\widetilde{\boldsymbol{\beta}} \leq K^2 +  \kappa_+ \|\widetilde{\boldsymbol{\beta}}\|^2 \leq K^2 (1+ \kappa_+).
$$
By Markov's inequality and the uniform variance bound,
$$
\begin{aligned}
\mathbb{P}\bigl(|\bX^{*\top}\bbeta|>D_K\bigr)
\le
\frac{\mathbb{E}[(\bX^{*\top}\bbeta)^2]}{D_K^2}
\le
\frac{K^2(1+\kappa_+)}{D_K^2}
=
\frac{\rho_0}{2}.
\end{aligned}
$$
By definition of $D_K$, we have
\begin{equation}\label{eq:population_hessian_fixed_probability}
\begin{aligned}
\inf_{\bbeta\in\mathcal{B}_K,\ \bv\in\mathbb{S}^{p-1}}
\mathbb{P}\bigl(|\bX^{*\top}\bbeta|\le D_K,\ |\bX^{*\top}\bv|>\eta_0\bigr)
\ge
\frac{\rho_0}{2}.
\end{aligned}
\end{equation}

\medskip
We can now prove Parts (a) and (b).

\noindent\textbf{Proof of (a): }

For all $\bbeta\in\mathcal{B}_K$ and all $\bv\in\mathbb{S}^{p-1}$, \eqref{eq:population_hessian_fixed_lower} and \eqref{eq:population_hessian_fixed_probability} together imply that
$$
\begin{aligned}
\bv^\top \boldsymbol{H}(\bbeta)\bv
&\ge
c_\rho(D_K)\eta_0^2\cdot \frac{\rho_0}{2}
=
C_K.
\end{aligned}
$$
Taking the infimum over $\bv\in\mathbb{S}^{p-1}$ gives
$\lambda_{\min}(\boldsymbol{H}(\bbeta))\ge C_K$ for all $\bbeta\in\mathcal{B}_K$, which is equivalent to
$\boldsymbol{H}(\bbeta)\succcurlyeq C_K\,\mathbf{I}_p$.

\medskip
\noindent\textbf{Proof of (b):}

Define a collection of indicator functions indexed by $(\bbeta,\bv)$ as
\[
f_{\bbeta,\bv}(\bx)
:=
\mathbf{1}\bigl\{|\bx^\top\bbeta|\le D_K,\ |\bx^\top\bv|>\eta_0\bigr\},
\qquad
\mathcal{F}:=\{f_{\bbeta,\bv}:\bbeta\in\mathcal{B}_K,\ \bv\in\mathbb{S}^{p-1}\}.
\]
Let $P^*$ be the law of $\bX^*$ and $P^*_M$ be the empirical measure of $\{\bX_i^*\}_{i=1}^M$. Then \eqref{eq:empirical_hessian_fixed_lower} implies
\begin{equation}\label{eq:synthetic-hessian-eigen-lower}
    \bv^\top \widehat{\boldsymbol{H}}_M(\bbeta)\bv
\ge
c_\rho(D_K)\eta_0^2\, P^*_M f_{\bbeta,\bv}.
\end{equation}

By \eqref{eq:population_hessian_fixed_probability}, we have
$$
\begin{aligned}
\inf_{f\in\mathcal{F}} P^* f
\ge
\frac{\rho_0}{2}.
\end{aligned}
$$

It remains to lower bound $\inf_{f\in\mathcal{F}}P^*_M f$ uniformly.
The class $\mathcal{F}$ is obtained from linear halfspaces by taking a finite number of unions and intersections.
More concretely,  $|\bx^\top \bbeta|\le D_K$ is an intersection of two halfspaces and $|\bx^\top\bv|>\eta_0$ is a union of two halfspaces, and we have
$$\begin{aligned}
\left\{\left|x^{\top} \beta\right| \leq D_K,\left|x^{\top} v\right|>\eta_0\right\}= & \left(\left\{x^{\top} \beta \leq D_K\right\} \cap\left\{-x^{\top} \beta \leq D_K\right\} \cap\left\{x^{\top} v>\eta_0\right\}\right) \\ & \cup\left(\left\{x^{\top} \beta \leq D_K\right\} \cap\left\{-x^{\top} \beta \leq D_K\right\} \cap\left\{-x^{\top} v>\eta_0\right\}\right). \end{aligned}
$$
In other words, $\mathcal{F}$ is the class of indicator functions for a 2-fold union of 3-fold intersections of halfspaces.
By Lemma 3.2.3 in \citet{blumer1989learnability},  $\mathcal{F}$ is a VC class with VC dimension
\[
v_{\mathcal{F}}\le C\,p.
\]

Therefore, by a standard bounded difference inequality and the VC bound on Rademacher complexity (see Theorem~4.10 and Equation (5.50) respectively in \citet{wainwright2019high}), there exists a universal constant $C>0$ such that for any $\epsilon\in(0,1)$,
with probability at least $1-\epsilon$,
$$
\begin{aligned}
\sup_{f\in\mathcal{F}}|P^*_M f-P^* f|
\le
C\sqrt{\frac{p}{M}}+ \sqrt{\frac{8\log(2/\epsilon)}{M}}.
\end{aligned}
$$
Therefore, if $M \ge C'\,(p+\log(1/\epsilon))/\rho_0^2$ with $C'$ large enough, then the right side of the above display is at most $\rho_0/4$.
On this event, we have
$$
\begin{aligned}
\inf_{f\in\mathcal{F}} P^*_M f
&\ge
\inf_{f\in\mathcal{F}} P^* f - \sup_{f\in\mathcal{F}}|P^*_M f-P^* f|
\ge
\frac{\rho_0}{2}-\frac{\rho_0}{4}
=
\frac{\rho_0}{4}.
\end{aligned}
$$
Consequently, for all $\bbeta\in\mathcal{B}_K$ and all $\bv\in\mathbb{S}^{p-1}$, \Cref{eq:synthetic-hessian-eigen-lower} implies that
$$
\begin{aligned}
\bv^\top \widehat{\boldsymbol{H}}_M(\bbeta)\bv
&\ge
c_\rho(D_K)\eta_0^2\cdot \frac{\rho_0}{4}
=
\frac{1}{2}C_K.
\end{aligned}
$$
Taking the infimum over $\bv\in\mathbb{S}^{p-1}$ yields
$\lambda_{\min}(\widehat{\boldsymbol{H}}_M(\bbeta))\ge C_K/2$ for all $\bbeta\in\mathcal{B}_K$,
and then taking the infimum over $\bbeta\in\mathcal{B}_K$ completes the proof.
\end{proof}

\bigskip

\subsubsection{Proof of Theorem~\ref{thm:stability_finite_M}}

Recall the definitions of the constrained estimators $\widehat{\bbeta}_{M}^{(K)}$ and $\widehat{\bbeta}_{\infty}^{(K)}$ in \eqref{eq: SRE_BL} in the main text.
Since the theorem is stated in terms of these constrained estimators, in the proof, we will drop the superscripts for readability. In other words, we write
\[
\widehat{\bbeta}_M := \widehat{\bbeta}_M^{(K)},\qquad
\widehat{\bbeta}_\infty := \widehat{\bbeta}_\infty^{(K)}.
\]

The theorem is an implication of the following two lemmas.

\begin{lemma}\label{lem:convexity-stability}
Recall the constant $C_K$ in \Cref{prop:lower_curvature}.
On the event
$$\mathcal{E}_{\mathrm{curv}}:=\Bigl\{\inf_{\bbeta\in\mathcal{B}_K}\lambda_{\min}\bigl(\widehat{\boldsymbol{H}}_M(\bbeta)\bigr)\ge C_K/2\Bigr\},$$
the following holds for the constrained estimators:
$$
\begin{aligned}
\|\widehat{\bbeta}_M-\widehat{\bbeta}_\infty\|_2
\le
\frac{\tau}{ \lambda_{n,K}+\tau C_K/2 }\,
\|\nabla\delta_M(\widehat{\bbeta}_\infty)\|_2.
\end{aligned}
$$
where $
\delta_{M}(\boldsymbol{\beta}) :=\mathbb{E}\left(Y^*\bX^{*\top}\bbeta-\rho(\bX^{*\top}\bbeta)\right) - \frac{1}{M}\sum_{i\leq M} \left(Y_i^*\bX_i^{*\top}\bbeta-\rho(\bX_i^{*\top}\bbeta)\right)
$.
\end{lemma}

\begin{lemma}[Concentration of $\nabla\delta_M(\bbeta)$ for a fixed $\bbeta$]
\label{lem:grad_delta_fixed_beta}
Suppose \Cref{conditions:synthetic_X_Y} or \Cref{GLM_condition:synthetic_data} holds.
For GLM, also assume \Cref{GLM_cond:link} holds (note that for logistic regression, $L_g=1$).
We only consider the randomness in the synthetic sample $\{(\bX_i^*,Y_i^*)\}_{i=1}^M$.
There exist universal constants $c,C>0$ such that for any vector $\bbeta_0\in\mathbb{R}^p$ not depending on the synthetic sample and for any $\epsilon\in(0,1)$, the following holds with probability at least
$1-\epsilon$:
$$
\begin{aligned}
\|\nabla\delta_M(\bbeta_0)\|_2
\le
C L_g K_X\sqrt{1+\kappa_+}\sqrt{\frac{p+\log(2/\epsilon)}{M}}.
\end{aligned}
$$
\end{lemma}

\begin{proof}[Proof of \Cref{thm:stability_finite_M}]

Fix $\epsilon\in(0,1)$ and let $\epsilon_1=\epsilon_2=\epsilon/2$.

By \Cref{prop:lower_curvature}(b), if
\[
M \ge C\,\frac{p+\log(1/\epsilon_1)}{\rho_0^2},
\]
then with probability at least $1-\epsilon_1$,
\[
\inf_{\bbeta\in\mathcal{B}_K}\lambda_{\min}\bigl(\widehat{\boldsymbol{H}}_M(\bbeta)\bigr)\ge \frac{1}{2}C_K.
\]
Denote this event by $\mathcal{E}_1$.

Next, note that $\widehat{\bbeta}_\infty$ does not depend on the synthetic sample.
Applying \Cref{lem:grad_delta_fixed_beta} with $\bbeta_0=\widehat{\bbeta}_\infty$ and $\epsilon_2$ yields that with probability at least $1-\epsilon_2$,
\[
\|\nabla\delta_M(\widehat{\bbeta}_\infty)\|_2
\le
C L_g K_X\sqrt{1+\kappa_+}\sqrt{\frac{p+\log(2/\epsilon_2)}{M}}
=
C L_g K_X\sqrt{1+\kappa_+}\sqrt{\frac{p+\log(4/\epsilon)}{M}}.
\]
Denote this event by $\mathcal{E}_2$.

On the intersection event $\mathcal{E}_1\cap\mathcal{E}_2$, \Cref{lem:convexity-stability} implies
\[
\|\widehat{\bbeta}_M-\widehat{\bbeta}_\infty\|_2
\le
\frac{\tau}{\lambda_{n,K}+\tau C_K/2}\,
C L_g K_X\sqrt{1+\kappa_+}\sqrt{\frac{p+\log(4/\epsilon)}{M}}.
\]
Finally, by the union bound, $\mathbb{P}(\mathcal{E}_1\cap\mathcal{E}_2)\ge 1-\epsilon$.
This completes the proof with $c_L=C_K/2$ and $C_1=C L_g K_X\sqrt{1+\kappa_+}$.
\end{proof}

The rest of this section is devoted to proving \Cref{lem:convexity-stability} and \Cref{lem:grad_delta_fixed_beta}.

\begin{proof}[Proof of \Cref{lem:convexity-stability}]

Define
\[
\begin{aligned}
g_1(\bbeta)
&=
\sum_{i=1}^n \Bigl[Y_i\,\bX_i^\top \bbeta-\rho\!\left(\bX_i^\top \bbeta\right)\Bigr],\\
g_2(\bbeta)
&=
\frac{1}{M}\sum_{i=1}^M \Bigl[Y^*_i\,{\bX_i^*}^\top \bbeta-\rho\!\left({\bX_i^*}^\top \bbeta\right)\Bigr],\\
g_3(\bbeta)
&=
\mathbb{E}\Bigl[Y^*\,\bX^{*\top}\bbeta-\rho\!\left(\bX^{*\top}\bbeta\right)\Bigr],
\end{aligned}
\]
and
\[
\delta_M(\bbeta):=g_3(\bbeta)-g_2(\bbeta),\qquad
S_M(\bbeta):=g_1(\bbeta)+\tau g_2(\bbeta),\qquad
S_\infty(\bbeta):=g_1(\bbeta)+\tau g_3(\bbeta).
\]
Consequently, we have
$$S_M=S_\infty-\tau\delta_M.$$

By definitions in \eqref{eq: SRE_BL}, we have
\[
\widehat{\bbeta}_M=\arg\max_{\bbeta\in\mathcal{B}_K} S_M(\bbeta),
\qquad
\widehat{\bbeta}_\infty=\arg\max_{\bbeta\in\mathcal{B}_K} S_\infty(\bbeta).
\]

\medskip
\noindent\textbf{Step 1: strong concavity of $S_M$ on $\mathcal{B}_K$.}
For any $\bbeta\in\mathcal{B}_K$,
\[
\begin{aligned}
-\nabla^2 g_1(\bbeta)
&=
\sum_{i=1}^n \rho''(\bX_i^\top \bbeta)\,\bX_i\bX_i^\top,\\
-\nabla^2 g_2(\bbeta)
&=
\frac{1}{M}\sum_{i=1}^M \rho''({\bX_i^*}^\top \bbeta)\,\bX_i^*{\bX_i^*}^\top
=
\widehat{\boldsymbol{H}}_M(\bbeta).
\end{aligned}
\]
Hence
\[
-\nabla^2 S_M(\bbeta)
=
\sum_{i=1}^n \rho''(\bX_i^\top \bbeta)\,\bX_i\bX_i^\top
+\tau\,\widehat{\boldsymbol{H}}_M(\bbeta).
\]
By the definition of $\lambda_{n,K}$ in \Cref{thm:stability_finite_M},
\[
\sum_{i=1}^n \rho''(\bX_i^\top \bbeta)\,\bX_i\bX_i^\top \succcurlyeq \lambda_{n,K}\,\mathbf{I}_p,
\qquad
\forall\,\bbeta\in\mathcal{B}_K.
\]
On the event $\mathcal{E}_{\mathrm{curv}}$, we have
\[
\inf_{\bbeta\in\mathcal{B}_K}\lambda_{\min}\bigl(\widehat{\boldsymbol{H}}_M(\bbeta)\bigr)
\ge
\frac{1}{2}C_K,
\]
and therefore
\[
-\nabla^2 S_M(\bbeta)
\succcurlyeq
\Bigl(\lambda_{n,K}+\tau C_K/2\Bigr)\mathbf{I}_p,
\qquad
\forall\,\bbeta\in\mathcal{B}_K.
\]
Thus $-S_M$ is strongly convex on $\mathcal{B}_K$ with parameter $\lambda_{n,K}+\tau C_K/2$.
By \citet[Theorem 2.1.9]{nesterov2013introductory}, for any $\bbeta,\bbeta'\in\mathcal{B}_K$,
\begin{equation}\label{eq:stability-strong-convexity}
\bigl\langle \nabla S_M(\bbeta)-\nabla S_M(\bbeta'),\, \bbeta'-\bbeta \bigr\rangle
\ge
\Bigl(\lambda_{n,K}+\tau C_K/2\Bigr)\|\bbeta'-\bbeta\|_2^2.
\end{equation}

\medskip
\noindent\textbf{Step 2: apply \eqref{eq:stability-strong-convexity} at the constrained maximizers.}
Let $\Delta:=\widehat{\bbeta}_M-\widehat{\bbeta}_\infty$.
Applying \eqref{eq:stability-strong-convexity} with
$(\bbeta,\bbeta')=(\widehat{\bbeta}_\infty,\widehat{\bbeta}_M)$ gives
\[
\Bigl(\lambda_{n,K}+\tau C_K/2\Bigr)\|\Delta\|_2^2
\le
\bigl\langle \nabla S_M(\widehat{\bbeta}_\infty)-\nabla S_M(\widehat{\bbeta}_M),\,\Delta \bigr\rangle.
\]
Since $\widehat{\bbeta}_M$ maximizes $S_M$ over $\mathcal{B}_K$ and $S_M$ is concave,
the first-order optimality condition implies
\[
\bigl\langle \nabla S_M(\widehat{\bbeta}_M),\,\bbeta-\widehat{\bbeta}_M \bigr\rangle \le 0,
\qquad \forall\,\bbeta\in\mathcal{B}_K.
\]
Taking $\bbeta=\widehat{\bbeta}_\infty$ yields
$\langle \nabla S_M(\widehat{\bbeta}_M),\,\Delta\rangle \ge 0$, and therefore
\[
\bigl\langle \nabla S_M(\widehat{\bbeta}_\infty)-\nabla S_M(\widehat{\bbeta}_M),\,\Delta \bigr\rangle
\le
\bigl\langle \nabla S_M(\widehat{\bbeta}_\infty),\,\Delta \bigr\rangle.
\]
Next, since $S_M=S_\infty-\tau\delta_M$, we have
$\nabla S_M(\widehat{\bbeta}_\infty)=\nabla S_\infty(\widehat{\bbeta}_\infty)-\tau\nabla\delta_M(\widehat{\bbeta}_\infty)$.
Because $\widehat{\bbeta}_\infty$ maximizes $S_\infty$ over $\mathcal{B}_K$ and $S_\infty$ is concave,
\[
\bigl\langle \nabla S_\infty(\widehat{\bbeta}_\infty),\,\bbeta-\widehat{\bbeta}_\infty \bigr\rangle \le 0,
\qquad \forall\,\bbeta\in\mathcal{B}_K,
\]
and taking $\bbeta=\widehat{\bbeta}_M$ gives
$\langle \nabla S_\infty(\widehat{\bbeta}_\infty),\,\Delta\rangle \le 0$.
Thus,
\[
\bigl\langle \nabla S_M(\widehat{\bbeta}_\infty),\,\Delta \bigr\rangle
\le
-\tau \bigl\langle \nabla\delta_M(\widehat{\bbeta}_\infty),\,\Delta \bigr\rangle
\le
\tau\|\nabla\delta_M(\widehat{\bbeta}_\infty)\|_2\,\|\Delta\|_2.
\]
Combining the above bounds, we obtain
\[
\Bigl(\lambda_{n,K}+\tau C_K/2\Bigr)\|\Delta\|_2^2
\le
\tau\|\nabla\delta_M(\widehat{\bbeta}_\infty)\|_2\,\|\Delta\|_2.
\]
If $\Delta=\mathbf{0}$ the claim is trivial; otherwise divide both sides by
$\bigl(\lambda_{n,K}+\tau C_K/2\bigr)\|\Delta\|_2$ to get
\[
\|\widehat{\bbeta}_M-\widehat{\bbeta}_\infty\|_2
\le
\frac{\tau}{\lambda_{n,K}+\tau C_K/2}\,
\|\nabla\delta_M(\widehat{\bbeta}_\infty)\|_2.
\]
\end{proof}

\medskip

\begin{proof}[Proof of \Cref{lem:grad_delta_fixed_beta}]
Throughout, $c,C>0$ denote universal constants whose values may change from line to line.
Fix $\bbeta_0$ and write
$$
\begin{aligned}
\nabla\delta_M(\bbeta_0)
=
\mathbb{E}\Bigl[\bigl(Y^*-\rho^{\prime}(\bX^{*\top}\bbeta_0)\bigr)\bX^*\Bigr]
-
\frac{1}{M}\sum_{i=1}^M \Bigl(Y_i^*-\rho^{\prime}(\bX_i^{*\top}\bbeta_0)\Bigr)\bX_i^*.
\end{aligned}
$$
Define mean zero random vectors
$$
\begin{aligned}
\bxi_i
:=
\mathbb{E}\Bigl[\bigl(Y^*-\rho^{\prime}(\bX^{*\top}\bbeta_0)\bigr)\bX^*\Bigr]
-
\Bigl(Y_i^*-\rho^{\prime}(\bX_i^{*\top}\bbeta_0)\Bigr)\bX_i^*,
\qquad i\in[M],
\end{aligned}
$$
so that $\nabla\delta_M(\bbeta_0)=\frac{1}{M}\sum_{i=1}^M \bxi_i$.

\paragraph{Step 1: a uniform $\psi_2$ bound for one dimensional projections.}
For any $\bv\in\mathbb{S}^{p-1}$, let $Z_i(\bv):=\bv^\top\bxi_i$. Then $Z_i(\bv)$ are i.i.d.\ and mean zero.
According to part 2 of \Cref{GLM_cond:link}, the log likelihood is differentiable in $\theta$ and Lipschitz with constant $L_g$, we have
$$\left|\rho'(\theta)-y\right|=\left|\partial_\theta \ell_G(y, \theta)\right| \leq L_g.$$
In particular, for logistic regression $L_g=1$.
Using \Cref{lem:standard_psi_Bernstein_facts}, we have
\begin{equation}\label{eq:Zi_psi2}
\begin{aligned}
\|Z_i(\bv)\|_{\psi_2}
&\le
\Bigl\|\Bigl(Y_i^*-\rho^\prime(\bX_i^{*\top}\bbeta_0)\Bigr)\bv^\top\bX_i^*\Bigr\|_{\psi_2}
+
\Bigl\|\mathbb{E}\Bigl[\Bigl(Y^*-\rho^\prime(\bX^{*\top}\bbeta_0)\Bigr)\bv^\top\bX^*\Bigr]\Bigr\|_{\psi_2}  \\
&\le
L_g\|\bv^\top\bX^*\|_{\psi_2}
+
L_g\mathbb{E}|\bv^\top\bX^*|
\le
C\|\bv^\top\bX^*\|_{\psi_2},
\end{aligned}
\end{equation}
where we have the fact that for a constant $c$, $\|c\|_{\psi_2}=|c|/\sqrt{\log 2}$.

Write $\bX^*=(1,\widetilde{\bX}^{*\top})^\top$ and $\bv=(v_1,\widetilde{\bv})$.
Then $\bv^\top\bX^*=v_1+\widetilde{\bv}^{\top}\widetilde{\bX}^*$ and thus
\begin{equation}
\label{eq:vX_psi2}
\begin{aligned}
\|\bv^\top\bX^*\|_{\psi_2}
\le
C\Bigl(|v_1|+\|\widetilde{\bv}^{\top}\widetilde{\bX}^*\|_{\psi_2}\Bigr)
\le
C\Bigl(1+\|\widetilde{\bv}^{\top}\widetilde{\bX}^*\|_{\psi_2}\Bigr).
\end{aligned}
\end{equation}
By \Cref{conditions:synthetic_X_Y}(C3),
$\|\widetilde{\bv}^{\top}\widetilde{\bX}^*\|_{\psi_2}\le K_X\|\widetilde{\bv}^{\top}\widetilde{\bX}^*\|_{L^2}$.
By \Cref{conditions:synthetic_X_Y}(C2),
$$
\begin{aligned}
\|\widetilde{\bv}^{\top}\widetilde{\bX}^*\|_{L^2}
=
\bigl(\widetilde{\bv}^\top \bSigma^*\widetilde{\bv}\bigr)^{1/2}
\le
\sqrt{\kappa_+}\,\|\widetilde{\bv}\|_2
\le
\sqrt{\kappa_+}.
\end{aligned}
$$
Therefore, \eqref{eq:Zi_psi2} becomes
$$
\begin{aligned}
\|Z_i(\bv)\|_{\psi_2}
\le
CL_g\|\bv^\top\bX^*\|_{\psi_2}
\le
CL_g\bigl(1+K_X\sqrt{\kappa_+}\bigr)
\le
C L_g K_X\sqrt{1+\kappa_+},
\end{aligned}
$$
where we used $K_X\ge 1$ without loss of generality.

\paragraph{Step 2: sub-gaussian concentration and a sphere net.}
By standard sub-gaussian concentration for averages (see \citet[Theorem 2.6.2]{vershynin2018high}), there is some universal constant $c$ such that for all $t>0$,
\begin{equation}\label{eq:concentration-Z_v}
\begin{aligned}
\mathbb{P}\left(\left|\frac{1}{M}\sum_{i=1}^M Z_i(\bv)\right|\ge t\right)
\le
2\exp\left(-c\,M\,\frac{t^2}{L_g^2K_X^2(1+\kappa_+)}\right).
\end{aligned}
\end{equation}
Let $\mathcal{V}$ be a $1/4$-net of $\mathbb{S}^{p-1}$ with $|\mathcal{V}|\le 9^p$ (which can be shown using a volume argument).
A standard net argument yields
\[
\|\nabla\delta_M(\bbeta_0)\|_2
=
\sup_{\bv\in\mathbb{S}^{p-1}}\bv^\top\nabla\delta_M(\bbeta_0)
\le 2\sup_{\bv\in\mathcal{V}} \bigl|\bv^\top\nabla\delta_M(\bbeta_0)\bigr|.
\]
For each $\bv\in\mathcal{V}$, we apply \eqref{eq:concentration-Z_v} $t = 2^{-1} C L_g K_X\sqrt{1+\kappa_+}\sqrt{\frac{p+\log(2/\epsilon)}{M}}$
with $C$ large enough such that the probability is upper bounded as
$$
2 \exp \left(-c M \frac{t^2}{L_g^2 K_X^2\left(1+\kappa_{+}\right)}\right)
= 2\exp\left( -c C^2(p+\log (2 / \epsilon)) \right)
\leq 9^{-p} \epsilon.
$$
We then apply a union bound over $\bv\in\mathcal{V}$, which yields that with probability at least $1- \epsilon$,
$$
\begin{aligned}
\|\nabla\delta_M(\bbeta_0)\|_2
=
\left\|\frac{1}{M}\sum_{i=1}^M \bxi_i\right\|_2
\le
C L_g K_X\sqrt{1+\kappa_+}\sqrt{\frac{p+\log(2/\epsilon)}{M}}.
\end{aligned}
$$
This completes the proof.
\end{proof}

\subsection{Proof of Theorem \ref{thm:post_mode_consistency}}
\label{proof_sec:logitic_consistency}

In this section, we establish the consistency of the SRE.
Let $\mathbb X=\left[\boldsymbol{X}_1, \boldsymbol{X}_2, \ldots \boldsymbol{X}_n\right]^{\top}$ be the covariate matrix of the observed data.

We first show that \Cref{condition:SubgaussianX} holds almost surely if the observed covariates are i.i.d. samples from a sub-Gaussian distribution.

\begin{lemma}
    Suppose $\boldsymbol{X}_i$'s are i.i.d. sub-gaussian random vectors with covariance matrix $\bSigma$.
We assume that the largest eigenvalue of $\bSigma$ is upper bounded by $\lambda_{\bSigma}^+ <\infty$ and the smallest eigenvalue is lower bounded by $\lambda_{\bSigma}^- >0$.
Furthermore, the sub-gaussian norm of $\boldsymbol{X}_i$ is upper bounded by $K<\infty$.
    If $p/n\rightarrow 0$, then with probability 1,  \Cref{condition:SubgaussianX} holds almost surely.
\end{lemma}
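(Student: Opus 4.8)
The plan is to reduce Condition~\ref{condition:SubgaussianX} to a uniform concentration statement for empirical second-moment matrices of sub-Gaussian vectors, with a union bound that is made affordable by two monotonicity observations. Write $\bSigma=\mathbb{E}[\boldsymbol{X}_1\boldsymbol{X}_1^\top]$ for the second-moment matrix (equal to the covariance when the $\boldsymbol{X}_i$ are centered; in general the stated eigenvalue bounds should be read for this matrix). Since $\sum_{i\in S}\boldsymbol{X}_i\boldsymbol{X}_i^\top\preceq\sum_{i\in S'}\boldsymbol{X}_i\boldsymbol{X}_i^\top$ whenever $S\subseteq S'$, the map $S\mapsto\lambda_{\max}(\sum_{i\in S}\boldsymbol{X}_i\boldsymbol{X}_i^\top)$ is non-decreasing, and so is $S\mapsto\lambda_{\min}(\sum_{i\in S}\boldsymbol{X}_i\boldsymbol{X}_i^\top)$ (adding a PSD matrix cannot decrease the smallest eigenvalue). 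Hence the upper bound in the condition only has to be verified for $S=[n]$, and the lower bound only for subsets of the minimal size $m_0:=\lceil(1-\zeta)n\rceil$; an arbitrary admissible $S$ then inherits the lower bound from any size-$m_0$ subset contained in it, and the upper bound from $[n]$ together with $|S|\ge(1-\zeta)n$.

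For the upper bound I would isotropize: with $\boldsymbol{A}_i:=\bSigma^{-1/2}\boldsymbol{X}_i$, the $\boldsymbol{A}_i$ are i.i.d.\ isotropic sub-Gaussian with $\|\boldsymbol{A}_i\|_{\psi_2}\le K/\sqrt{\lambda_{\bSigma}^-}=:K'$. Applying a standard concentration bound for sample covariance matrices of sub-Gaussian vectors (e.g.\ \citet[Theorem~5.39]{vershynin2010introduction}, with constants depending only on $K'$) to $\tfrac1n\sum_{i=1}^n\boldsymbol{A}_i\boldsymbol{A}_i^\top$ with deviation parameter $t=\sqrt{2\log n}$, and using $p/n\to0$, Borel--Cantelli yields $\lambda_{\max}(\sum_{i=1}^n\boldsymbol{X}_i\boldsymbol{X}_i^\top)\le\lambda_{\bSigma}^+\,\lambda_{\max}(\sum_{i=1}^n\boldsymbol{A}_i\boldsymbol{A}_i^\top)\le 2\lambda_{\bSigma}^+ n$ for all large $n$, a.s.; hence $\lambda_{\max}(\sum_{i\in S}\boldsymbol{X}_i\boldsymbol{X}_i^\top)\le 2\lambda_{\bSigma}^+ n\le\tfrac{2\lambda_{\bSigma}^+}{1-\zeta}|S|$ for every $S$ with $|S|\ge(1-\zeta)n$.

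For the lower bound I would fix a size-$m_0$ subset $S$ and, again by \citet[Theorem~5.39]{vershynin2010introduction}, obtain $\lambda_{\min}(\tfrac1{m_0}\sum_{i\in S}\boldsymbol{A}_i\boldsymbol{A}_i^\top)\ge(1-C\sqrt{p/m_0}-t/\sqrt{m_0})^2$ with probability at least $1-2e^{-ct^2}$ ($C,c$ depending only on $K'$), and then take a union bound over all $\binom{n}{n-m_0}$ such subsets. The key point is that this count is at most $2^{nH(\zeta)}$, where $H$ is the binary entropy function, so $H(\zeta)\to0$ as $\zeta\to0$. Choosing $t_n^2=\tfrac{2\log 2}{c}H(\zeta)\,n+\tfrac2c\log n$ makes the union-bound failure probability $2^{nH(\zeta)}\cdot 2e^{-ct_n^2}$ summable, so Borel--Cantelli makes the estimate hold simultaneously over all size-$m_0$ subsets, for all large $n$, a.s. Since $t_n/\sqrt{m_0}\le\sqrt{\tfrac{2\log2}{c}\cdot\tfrac{H(\zeta)}{1-\zeta}}+o(1)$ and $\sqrt{p/m_0}\to0$, picking $\zeta$ small enough (depending only on $K'$, i.e.\ on $K$ and $\lambda_{\bSigma}^-$) forces $C\sqrt{p/m_0}+t_n/\sqrt{m_0}\le\tfrac12$ for large $n$, whence $\lambda_{\min}(\sum_{i\in S}\boldsymbol{X}_i\boldsymbol{X}_i^\top)\ge\lambda_{\bSigma}^-\,\lambda_{\min}(\sum_{i\in S}\boldsymbol{A}_i\boldsymbol{A}_i^\top)\ge\tfrac14\lambda_{\bSigma}^- m_0\ge\tfrac{(1-\zeta)\lambda_{\bSigma}^-}{4}\,n\ge\tfrac{(1-\zeta)\lambda_{\bSigma}^-}{4}\,|S|$ for every admissible $S$, using the monotonicity reduction and $n\ge|S|$. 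Collecting the two parts gives Condition~\ref{condition:SubgaussianX} with $c_1=\tfrac{(1-\zeta)\lambda_{\bSigma}^-}{4}$, $c_2=\tfrac{2\lambda_{\bSigma}^+}{1-\zeta}$, and $N_0$ the (random, a.s.\ finite) threshold from Borel--Cantelli.

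The main obstacle is precisely the union bound in the lower-bound step. A naive union over all $2^n$ subsamples would force $t\gtrsim\sqrt n$, making the deviation term $t/\sqrt m$ of constant order with a constant proportional to $K'=K/\sqrt{\lambda_{\bSigma}^-}$, which can exceed $1$ and render the lower bound vacuous. The resolution --- restricting to the $\binom{n}{n-m_0}$ minimal subsets, whose number is only $2^{nH(\zeta)}$ with $H(\zeta)$ made arbitrarily small by taking $\zeta$ small, and then tuning $\zeta$ against the sub-Gaussian constant --- is the crux of the argument. A minor technical point is the non-centered case (e.g.\ an intercept coordinate $X_{i,1}\equiv1$): one works with $\bSigma=\mathbb{E}[\boldsymbol{X}_1\boldsymbol{X}_1^\top]$ throughout and the argument is unchanged.
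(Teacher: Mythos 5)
Your proof is correct and follows essentially the same route as the paper: isotropize (or not), apply Vershynin's sub-Gaussian matrix concentration (Theorem 5.39) to the relevant sub-sample Gram matrix, take a union bound over subsets whose count is controlled by $\exp(nH(\zeta))$, tune $\zeta$ small against the sub-Gaussian constant so the deviation term stays bounded away from $1$, and close with Borel--Cantelli. Your two refinements --- explicitly isotropizing via $\boldsymbol{A}_i=\bSigma^{-1/2}\boldsymbol{X}_i$, and observing the monotonicity of $\lambda_{\min}$ and $\lambda_{\max}$ under subset inclusion so that the union bound only needs to run over minimal-size subsets (for the lower bound) and $[n]$ alone (for the upper bound) --- are tidy simplifications, but the crux (the $H(\zeta)\to0$ entropy budget) is identical to the paper's. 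One small slip: with $t=\sqrt{2\log n}$ the failure probability $2e^{-ct^2}=2n^{-2c}$ need not be summable when the absolute constant $c$ (which depends on $K'$) is small; take $t=C'\sqrt{\log n}$ with $C'$ large enough that $c(C')^2>1$.
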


\begin{proof}

Without loss of generality, we assume $K=1$ for convenience.
We first show that there exists a positive constant $c_3$, such that the inequalities in \Cref{condition:SubgaussianX} hold with probability exceeding $1-n\exp(- c_3 n)$.

Following the Theorem 5.39 in \cite{vershynin2010introduction}, there are universal positive constants $C'_1$ and $C'_2$ such that for every $t \geq 0$, the following inequality holds for any subset $S\subset [n]$ with probability at least $1-2 \exp \left(- C'_1 t^2\right)$ :
$$
\left\|\frac{1}{|S|} \mathbb{X}_{S}^\top \mathbb{X}_{S}-\bSigma\right\| \leq \max \left(\delta, \delta^2\right)\|\bSigma\| \quad \text { where } \quad \delta=C'_2\sqrt{\frac{p}{|S|}}+\frac{t}{\sqrt{|S|}}.
$$
When this event holds, the smallest eigenvalue of $\mathbb{X}_{S}^\top \mathbb{X}_{S}$ is lower bounded by
$$
|S|\lambda_{\bSigma}^-\left[1-\max \left(\delta, \delta^2\right) \lambda_{\bSigma}^+ /\lambda_{\bSigma}^- \right]
$$
and the largest eigenvalue is upper bounded by
$$
|S|\lambda_{\bSigma}^+\left[1+\max \left(\delta, \delta^2\right)\right].
$$

Since $p/n\rightarrow 0$, we can take $c_3$ small enough and $n_0$ large enough so that if $t=\sqrt{2c_3 n/C'_1}$, $|S|>n/2$, and $n>n_0$, then $\max \left(\delta, \delta^2\right)<\min\left[1,\lambda_{\bSigma}^-/(2\lambda_{\bSigma}^+ ) \right]$.
If we choose the positive constant $c_1$ to be $\lambda_\bSigma^-/4$ and $c_2$ to be $2\lambda_{\bSigma}^+$, then
for any given $S \subseteq[n]$ with $|S|\geq n/2$, it holds that

$$\mathbb{P}\left\{
\lambda_{\min }\left(\sum_{i \in S} \boldsymbol{X}_i \boldsymbol{X}_i^{\top}\right)< c_1 n,
\text{ or }
\lambda_{\max }\left(\sum_{i \in S} \boldsymbol{X}_i \boldsymbol{X}_i^{\top}\right)> c_2 n,
\right\}  \leq 2\exp \left(-2c_3 n\right).$$

Define $H(\epsilon):=-\epsilon \log \epsilon-(1-\epsilon)\log(1-\epsilon)$. We choose a positive $\zeta$ to be sufficiently small such that $H(\zeta)<c_3$ and $\zeta<1/2$. By taking the union bound over subsets $S$ with $|S|>(1-\zeta)n$, we have
$$
\begin{aligned}
& \mathbb{P}\left\{\exists S \subseteq[n] \text { with }|S|\geq (1-\zeta)n\text { s.t. } \lambda_{\min }\left(\sum_{i \in S} \boldsymbol{X}_i \boldsymbol{X}_i^{\top}\right)<  c_1n
\text{ or }
\lambda_{\max }\left(\sum_{i \in S} \boldsymbol{X}_i \boldsymbol{X}_i^{\top}\right)> c_2 n,
\right\} \\
& \quad \leq \sum_{k = \lceil (1-\zeta)n \rceil}^{n} \left(\begin{array}{c}
n \\
k
\end{array}\right)  2\exp \left(- 2c_3 n\right) \\
& \quad \leq   \sum_{k = \lceil (1-\zeta)n \rceil}^{n} 2\exp \left(n H(\frac{n-k}{n})- 2c_3 n\right)\\
& \quad \leq   2 \zeta n \exp \left(n H(\zeta)- 2c_3 n\right)\\
& \quad \leq   n \exp \left( - c_3 n\right),
\end{aligned}
$$
where in the second inequality we use $\left(\begin{array}{c}n \\ k \end{array}\right) \leq e^{n H(k/n)}$ \citep[Example 11.1.3]{cover2012elements}, the third inequality is due to the monotonicity of $H(\epsilon)$ for $\epsilon\in (0,1/2)$, and the fourth is due to $H(\zeta)<c_3$ and $2\zeta<1$.

Lastly, by the first Borel-Cantelli lemma and the fact that $\sum_{n} ne^{-c_3 n}<\infty$, for the above choices of $c_1$, $c_2$, and $\zeta$, with probability 1, there exists a constant $N_0$, such that for any $n\geq N_0$,  the inequalities in \Cref{condition:SubgaussianX} hold.

\end{proof}

Next, we first present two useful lemmas that will be used for proving \Cref{thm:post_mode_consistency}.

\begin{lemma}\label{lemma:SB_beta_set_size}
Let $c_2$ be the constant in \Cref{condition:SubgaussianX}.
For any $\boldsymbol{\beta}\in \mathbb R^p$ and any $C>2$, define
$$
\mathcal{S}_{C}(\boldsymbol{\beta}):=\left\{i:\left|\boldsymbol{X}_i^{\top} \boldsymbol{\beta}\right| \leq \sqrt{C c_2}\|\boldsymbol{\beta}\|\right\}.
$$
Under \Cref{condition:SubgaussianX},
the cardinality of $\mathcal{S}_{C}(\boldsymbol{\beta})$ is uniformly bounded from below as
	$$
\left|\mathcal{S}_{C}(\boldsymbol{\beta})\right| \geq(1-\frac{1}{C})n, \quad \forall \boldsymbol{\beta}.
$$
\end{lemma}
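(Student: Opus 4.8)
The plan is to control the \emph{complement} $\mathcal{S}_{C}(\boldsymbol{\beta})^c=\{i:|\boldsymbol{X}_i^{\top}\boldsymbol{\beta}|>\sqrt{Cc_2}\|\boldsymbol{\beta}\|\}$ by a short second-moment (energy) argument. First I would dispose of the degenerate cases: if $\boldsymbol{\beta}=\boldsymbol{0}$ then $\mathcal{S}_{C}(\boldsymbol{\beta})=[n]$ and there is nothing to prove, and if the complement is empty the bound is immediate; so henceforth assume $\|\boldsymbol{\beta}\|>0$ and write $T:=\mathcal{S}_{C}(\boldsymbol{\beta})^c\neq\emptyset$.

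The core computation is a two-sided estimate of $\sum_{i\in T}(\boldsymbol{X}_i^{\top}\boldsymbol{\beta})^2$. From below, the definition of $T$ gives $(\boldsymbol{X}_i^{\top}\boldsymbol{\beta})^2>Cc_2\|\boldsymbol{\beta}\|^2$ for every $i\in T$, hence $\sum_{i\in T}(\boldsymbol{X}_i^{\top}\boldsymbol{\beta})^2>|T|\,Cc_2\|\boldsymbol{\beta}\|^2$. From above, I would write $\sum_{i\in T}(\boldsymbol{X}_i^{\top}\boldsymbol{\beta})^2=\boldsymbol{\beta}^{\top}\big(\sum_{i\in T}\boldsymbol{X}_i\boldsymbol{X}_i^{\top}\big)\boldsymbol{\beta}$ and use $\sum_{i\in T}\boldsymbol{X}_i\boldsymbol{X}_i^{\top}\preceq\sum_{i=1}^n\boldsymbol{X}_i\boldsymbol{X}_i^{\top}$ (the omitted summands are positive semidefinite), so that the quantity is at most $\lambda_{\max}\big(\sum_{i=1}^n\boldsymbol{X}_i\boldsymbol{X}_i^{\top}\big)\|\boldsymbol{\beta}\|^2\leq c_2 n\|\boldsymbol{\beta}\|^2$ by Condition~\ref{condition:SubgaussianX} applied to the full index set $S=[n]$ (permissible since $|[n]|=n\geq(1-\zeta)n$, for $n>N_0$). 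Chaining the two bounds and cancelling the positive factor $Cc_2\|\boldsymbol{\beta}\|^2$ yields $|T|<n/C$, hence $|\mathcal{S}_{C}(\boldsymbol{\beta})|=n-|T|>(1-1/C)n$, which is the claimed bound (indeed slightly stronger, with a strict inequality).

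The only point that requires a little care — and is the closest thing to an obstacle — is that Condition~\ref{condition:SubgaussianX} only supplies the operator-norm upper bound for subsets of size at least $(1-\zeta)n$, whereas $T$ is precisely the (small) set we are trying to bound; routing the upper estimate through $S=[n]$ rather than $S=T$ circumvents this cleanly. Everything else is elementary, and in fact the argument never uses $C>2$: it delivers $|\mathcal{S}_{C}(\boldsymbol{\beta})|\geq(1-1/C)n$ for any $C>1$, the restriction $C>2$ being needed only so that the bound is informative (e.g.\ $|\mathcal{S}_{C}(\boldsymbol{\beta})|>n/2$) in the subsequent consistency proof.
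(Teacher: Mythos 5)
Your proof is correct and is essentially the same argument as the paper's: both lower-bound the energy on the complement by $|\mathcal{S}_C(\boldsymbol{\beta})^c|\,Cc_2\|\boldsymbol{\beta}\|^2$ using the definition of $\mathcal{S}_C$, and both upper-bound it by $\|\mathbb{X}\boldsymbol{\beta}\|^2\leq c_2 n\|\boldsymbol{\beta}\|^2$ via Condition~\ref{condition:SubgaussianX} applied to $S=[n]$; your explicit PSD comparison $\sum_{i\in T}\boldsymbol{X}_i\boldsymbol{X}_i^{\top}\preceq\sum_{i=1}^n\boldsymbol{X}_i\boldsymbol{X}_i^{\top}$ is just a restatement of the paper's inequality $\sum_{i\notin\mathcal{S}_C}|\boldsymbol{X}_i^{\top}\boldsymbol{\beta}|^2\leq\|\mathbb{X}\boldsymbol{\beta}\|^2$. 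Your side remarks (that the argument needs only $S=[n]$ and $n>N_0$, and that $C>2$ is not used) are accurate.
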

\begin{proof}[Proof of \Cref{lemma:SB_beta_set_size}]
If \(\boldsymbol\beta=\mathbf 0\), then
\(\mathcal S_C(\boldsymbol\beta)=[n]\) and the result is trivial.
We therefore focus on \(\boldsymbol\beta\ne \mathbf 0\).
We first note that
$$
\|\mathbb X \boldsymbol{\beta}\|^2 \geq \sum_{i \notin \mathcal{S}_{C} (\boldsymbol{\beta})}\left|\boldsymbol{X}_i^{\top} \boldsymbol{\beta}\right|^2 \geq\left(n-\left|\mathcal{S}_{C} (\boldsymbol{\beta})\right|\right)(\sqrt{C c_2}\|\boldsymbol{\beta}\|)^2=n\left(1-\frac{\left|\mathcal{S}_{C}  (\boldsymbol{\beta})\right|}{n}\right) C c_2\|\boldsymbol{\beta}\|^2.
$$
Under \Cref{condition:SubgaussianX},
we have
$$
\|\mathbb X \boldsymbol{\beta}\|^2 \leq c_2 n\|\boldsymbol{\beta}\|^2, \quad \forall \boldsymbol{\beta}\ne \mathbf 0.
$$
The above two inequalities imply that $$\left(1-\frac{\left|\mathcal{S}_{C}  (\boldsymbol{\beta})\right|}{n}\right) C \leq 1, \quad \forall \boldsymbol{\beta}\ne \mathbf 0. $$
\end{proof}

\begin{lemma}\label{lem:bounded-syn-cov-op-norm}
Under \Cref{conditions:synthetic_X_Y}(C1)--(C3), let
\[
\lambda_M^*:=\lambda_{\max }\left(\frac{1}{M} \sum_{i=1}^M \boldsymbol{X}_i^* \boldsymbol{X}_i^{* \top}\right).
\]
Then there exists a constant $C_\lambda>0$, depending only on $(\kappa_+,K_X)$, such that for every $t\geq 0$,
\begin{equation}\label{eq:synthetic_covariance_tail_bound_for_consistency}
\lambda_M^*\frac{p}{n}
\leq
C_\lambda\left(
\frac{p}{n}+\frac{p^2}{Mn}+\frac{t^2p}{Mn}
\right)
\end{equation}
with probability at least $1-2\exp(-t^2)$.
Consequently, if $p=o(n)$ and $p^2/(Mn)=O(1)$, then
\[
\lambda_M^*\frac{p}{n}=O_p(1).
\]
\end{lemma}

\begin{proof}
Let $\mathbb{X}^*\in\mathbb{R}^{M\times p}$ be the synthetic covariate matrix and write $d=p-1$. By \Cref{prop:design_op_norm}, under \Cref{conditions:synthetic_X_Y}(C1)--(C3), there exists a universal constant $C>0$ such that, for every $t\geq 0$, with probability at least $1-2\exp(-t^2)$,
\[
\|\mathbb{X}^*\|_{\mathrm{op}}
\leq
\sqrt{M}+\kappa_+^{1/2}\{\sqrt{M}+CK_X^2(\sqrt d+t)\}.
\]
Therefore, on the same event,
\[
\frac{\|\mathbb{X}^*\|_{\mathrm{op}}}{\sqrt M}
\leq
1+\kappa_+^{1/2}
+C\kappa_+^{1/2}K_X^2\left(\sqrt{\frac dM}+\frac{t}{\sqrt M}\right).
\]
Since $d=p-1\leq p$, there exists a constant $C_\lambda>0$, depending only on $(\kappa_+,K_X)$, such that
\[
\lambda_M^*
=
\lambda_{\max}\left(\frac{1}{M}\mathbb{X}^{*\top}\mathbb{X}^*\right)
=
\frac{\|\mathbb{X}^*\|_{\mathrm{op}}^2}{M}
\leq
C_\lambda\left(1+\frac{p}{M}+\frac{t^2}{M}\right).
\]
Multiplying both sides by $p/n$ gives \eqref{eq:synthetic_covariance_tail_bound_for_consistency}.

We now prove the $O_p(1)$ statement. Fix any $\epsilon\in(0,1)$ and choose
\[
t_\epsilon=\sqrt{\log(4/\epsilon)}.
\]
Then $2\exp(-t_\epsilon^2)=\epsilon/2$. On the event in \eqref{eq:synthetic_covariance_tail_bound_for_consistency},
\[
\lambda_M^*\frac{p}{n}
\leq
C_\lambda\left(
\frac{p}{n}+\frac{p^2}{Mn}+\frac{t_\epsilon^2p}{Mn}
\right).
\]
Because $p=o(n)$, we have $p/n\to0$. Since $M\geq1$,
\[
\frac{p}{Mn}\leq \frac{p}{n}\to0.
\]
By the assumption \(p^2/(Mn)=O(1)\), there exists a finite constant \(A\) such that
\(p^2/(Mn)\le A\) and $p/n\leq 1$ for all sufficiently large \(n\). Hence, for all sufficiently large \(n\), the
right-hand side is bounded by
\[
C_\lambda\{1+A+\log(4/\epsilon)\},
\]
on an event with probability at least \(1-\epsilon/2\).

This completes the proof.
\end{proof}

Now we are ready to show $\|\widehat{\boldsymbol{\beta}}_{M}-\bbeta_0\|^2=O_p(p/n)$.
We write the gradient of the objective function in \eqref{eq: SRE_def} as
	$$F(\boldsymbol{\beta})=\sum_{i=1}^{n} \left(Y_i-\rho^{\prime}(\boldsymbol{X}_i^\top \boldsymbol{\beta})\right )\boldsymbol{X}_i +\frac{\tau}{M} \sum_{i=1}^{M}  \left(Y^*_i-\rho^{\prime}(\boldsymbol{X}_i^{*\top} \boldsymbol{\beta})\right )\bX^*_i. $$
	Then the point estimator  $\widehat{\boldsymbol{\beta}}_{M}$ is the root of $F(\boldsymbol{\beta})=0$.
 Based on \citet[Result 6.3.4]{ortega1970iterative},  it suffices to show that  for any $\epsilon>0$, there is some constant $\tilde B>0$ such that   $(\boldsymbol{\beta}-\boldsymbol{\beta}_0)^{\top} F(\boldsymbol{\beta})<0$ for all $\boldsymbol{\beta}$ satisfies $\|\boldsymbol{\beta}-\boldsymbol{\beta}_0\|^2=\tilde{B}p/n$ with probability $1-\epsilon$.

 By Taylor's theorem with integral remainder, we have
 $$\rho^{\prime}(\boldsymbol{X}_i^\top \boldsymbol{\beta})=\rho^{\prime}(\boldsymbol{X}_i^\top \boldsymbol{\beta}_0)+\int_0^{\boldsymbol{X}_i^\top (\boldsymbol{\beta}-\boldsymbol{\beta}_0)}\rho^{\prime\prime}(\boldsymbol{X}_i^\top \boldsymbol{\beta}_0+s)ds. $$

For any fixed $\bbeta$ with $\|\boldsymbol{\beta}-\boldsymbol{\beta}_0\|^2=\tilde{B}p/n$, we write

{\allowdisplaybreaks
\begin{align}\label{beta_time_gradient}
    (\boldsymbol{\beta}-\boldsymbol{\beta}_0)^\top F(\boldsymbol{\beta})&=(\boldsymbol{\beta}-\boldsymbol{\beta}_0)^\top \sum_{i=1}^{n} \left(Y_i-\rho^{\prime}(\boldsymbol{X}_i^\top \boldsymbol{\beta})\right )\boldsymbol{X}_i \nonumber \\
	&\quad +\frac{\tau}{M} \sum_{i=1}^{M}  \left(Y^*_i-\rho^{\prime}(\boldsymbol{X}_i^{*\top} \boldsymbol{\beta})\right )(\boldsymbol{\beta}-\boldsymbol{\beta}_0)^\top \bX^*_i \nonumber \\
	&=\underbrace{ \sum_{i=1}^n (\boldsymbol{\beta}-\boldsymbol{\beta}_0)^\top\boldsymbol{X}_i\left(Y_i-\rho^{\prime}(\boldsymbol{X}_i^\top \boldsymbol{\beta}_0)\right ) }_{Q_1(\boldsymbol{\beta})}\\
	&\quad \quad -  \underbrace{\sum_{i=1}^n (\boldsymbol{\beta}-\boldsymbol{\beta}_0)^\top\boldsymbol{X}_i \int_0^{\boldsymbol{X}_i^\top (\boldsymbol{\beta}-\boldsymbol{\beta}_0)}\rho^{\prime\prime}(\boldsymbol{X}_i^\top \boldsymbol{\beta}_0+s)ds}_{Q_2(\boldsymbol{\beta})}\nonumber\\
	& \quad \quad + \underbrace{\frac{\tau}{M} \sum_{i=1}^{M}  \left(Y^*_i-\rho^{\prime}(\boldsymbol{X}_i^{*\top} \boldsymbol{\beta})\right )(\boldsymbol{\beta}-\boldsymbol{\beta}_0)^\top \bX^*_i}_{Q_3(\boldsymbol{\beta})},\nonumber
\end{align}}

where the second equation follows from applying the Taylor expansion of $\rho^\prime(\cdot)$ to the observed data.
We will derive upper bounds on $Q_1$ and $Q_3$, and a lower bound on $Q_2$ in the following steps, where $\tilde{B}$ is a positive number to be determined that only depends on $\epsilon$ and the constants.

\textbf{Upper bound on $Q_3$}:
Let $\lambda_{M}^{*}:=\lambda_{\max}\left(\frac{1}{M}\sum_{i=1}^{M} \boldsymbol{X}_i^*\boldsymbol{X}_i^{*\top }\right)$.
It is straightforward to see that
\begin{equation}\label{Q3_upper_bd}
	\begin{aligned}
	|Q_3(\boldsymbol{\beta})|&\leq \frac{\tau}{M} \sum_{i=1}^{M}  |(\boldsymbol{\beta}-\boldsymbol{\beta}_0)^\top \bX^*_i|\\
	&\leq \tau \sqrt{(\boldsymbol{\beta}-\boldsymbol{\beta}_0)^\top \left(\frac{1}{M}\sum_{i=1}^{M} \boldsymbol{X}_i^*\boldsymbol{X}_i^{*\top }\right)    (\boldsymbol{\beta}-\boldsymbol{\beta}_0) }\\
	& \leq \tau \sqrt{\lambda_{M}^{*} \tilde{B} p / n}.
\end{aligned}
\end{equation}

\textbf{Upper bound on $Q_1$}:
By \Cref{condition:moment_X_bound} and the fact that $|Y_i-\rho^{\prime}\left(\boldsymbol{X}_i^{\top} \boldsymbol{\beta}\right)|\leq 1$,
$$\begin{aligned}
	\mathbb E\|\sum_{i=1}^n (Y_i-\rho^{\prime}(\boldsymbol{X}_i^\top \boldsymbol{\beta}_0))\boldsymbol{X}_i\|^2&\leq \mathbb E\left(\sum_{i=1}^n \|\boldsymbol{X}_i\|^2\right)\leq C_2 n p.
\end{aligned}$$
Note that $|Q_1(\boldsymbol{\beta})|\leq \|\boldsymbol{\beta}-\boldsymbol{\beta}_0\|\cdot\|\sum_{i=1}^n (Y_i-\rho^{\prime}(\boldsymbol{X}_i^\top \boldsymbol{\beta}_0))\boldsymbol{X}_i\|$.
For any $\epsilon_1>0$, we have
\begin{equation}\label{Q1_upper_bd}
\begin{aligned}
&\mathbf P\left( \exists \bbeta, \text{such that}  \|\boldsymbol{\beta}-\boldsymbol{\beta}_0\|^2=\tilde{B}p/n, Q_1(\boldsymbol{\beta})\geq \sqrt{C_2np}\|\boldsymbol{\beta}-\boldsymbol{\beta}_0\|/\sqrt{\epsilon_1} \right) \\
\leq & P\left( \|\sum_{i=1}^n (Y_i-\rho^{\prime}(\boldsymbol{X}_i^\top \boldsymbol{\beta}_0))\boldsymbol{X}_i\|  \geq \sqrt{C_2np}/\sqrt{\epsilon_1} \right) \\
\leq & \epsilon_1,
 \end{aligned}
\end{equation}
where the second inequality is due to Markov's inequality.

\textbf{Lower bound on $Q_2$}:
 
Fix \(C^\dagger>\max\{4,2/\zeta\}\).
Define
$$\mathcal{S}_{+}(\boldsymbol{\beta})=\left\{i:\left|\boldsymbol{X}_i^{\top} (\boldsymbol{\beta}-\boldsymbol{\beta}_{0})\right| \leq \sqrt{c_2 C^{\dagger}}\|\boldsymbol{\beta}-\boldsymbol{\beta}_{0}\|\right\} \cap \left\{i:\left|\boldsymbol{X}_i^{\top} \boldsymbol{\beta}_{0}\right| \leq \sqrt{c_2 C^{\dagger}}\|\boldsymbol{\beta}_{0}\|\right\},
$$
where $c_2$ is the constant in the upper bound in \Cref{condition:SubgaussianX}.
By \Cref{lemma:SB_beta_set_size}, we have
$\left|\mathcal{S}_{+}(\boldsymbol{\beta})\right| \geq(1-2/C^{\dagger})n$.
For any $i\in \mathcal{S}_{+}(\boldsymbol{\beta})$, $|\boldsymbol{X}_i^\top (\boldsymbol{\beta}-\boldsymbol{\beta}_0)|\leq \sqrt{c_2 C^{\dagger}} \|\boldsymbol{\beta}-\boldsymbol{\beta}_0\|$ and $|\boldsymbol{X}_i^\top \boldsymbol{\beta}_0 |\leq \sqrt{c_2 C^{\dagger}}\|\boldsymbol{\beta}_{0}\|$.
Furthermore, for any $n$ sufficiently large such that $\tilde{B}\frac{p}{n}\leq C_3^2$ (recall that \Cref{condition:constant_signal} states that $\|\bbeta_0\|\leq C_3$), it holds for $i\in \mathcal{S}_{+}(\boldsymbol{\beta})$ that
$$|\boldsymbol{X}_i^\top \boldsymbol{\beta}_0 |+|\boldsymbol{X}_i^\top (\boldsymbol{\beta}-\boldsymbol{\beta}_0)|\leq \sqrt{c_2 C^{\dagger}} \left(\|\bbeta_0\| + \|\boldsymbol{\beta}-\boldsymbol{\beta}_0\| \right) \leq \sqrt{c_2 C^{\dagger}} \left( C_3 + \sqrt{\tilde{B}\frac{p}{n} } \right)
\leq  2\sqrt{c_2 C^{\dagger}}C_3. $$

We can lower bound $Q_2$ as follows:
\begin{equation}\label{Q2_lower_bound}
	\begin{aligned}
	Q_2&\geq \sum_{i\in \mathcal{S}_{+}(\boldsymbol{\beta})}\left( |\boldsymbol{X}_i^\top (\boldsymbol{\beta}-\boldsymbol{\beta}_0)|^2\cdot \inf \left\{\rho^{\prime\prime}(\boldsymbol{X}_i^\top \boldsymbol{\beta}_0+t):|t|\leq |\boldsymbol{X}_i^\top (\boldsymbol{\beta}-\boldsymbol{\beta}_0|)  \right\}  \right)\\
	&\geq \|\boldsymbol{\beta}-\boldsymbol{\beta}_0\|^2 \cdot \lambda_{\min}\left(\sum_{i\in \mathcal{S}_{+}(\boldsymbol{\beta})}\boldsymbol{X}_i \boldsymbol{X}_i^\top \right)\rho^{\prime\prime}( 2\sqrt{c_2 C^{\dagger}}C_3) \\
	&\geq \tilde{B} \frac{p}{n}\cdot 0.5c_1n\cdot \rho^{\prime\prime}( 2\sqrt{c_2 C^{\dagger}}C_3)
	\end{aligned}
\end{equation}
where the second inequality is due to the symmetry and the monotonicity of $\rho^{\prime\prime}(\cdot)$ and the third inequality is due to the lower bound in \Cref{condition:SubgaussianX} and $|\mathcal{S}_{+}(\boldsymbol{\beta})|\geq 0.5n$ and $|\mathcal{S}_{+}(\boldsymbol{\beta})|\geq (1-\zeta)n$.

Let
\[
a_0:=0.5c_1\rho^{\prime\prime}(2\sqrt{c_2C^\dagger}C_3)>0.
\]
By \Cref{lem:bounded-syn-cov-op-norm}, we have
\[
C_4\sqrt{\lambda_M^*p/n}=O_p(1).
\]
Thus, for any $\epsilon_2>0$, there exists a finite constant $L_{\epsilon_2}>0$ such that, for all sufficiently large $n$,
\begin{equation}\label{eq:finite_synthetic_score_quantile_bound}
\mathbb P\left(C_4\sqrt{\lambda_M^*p/n}\leq L_{\epsilon_2}\right)
\geq 1-\epsilon_2.
\end{equation}

In view of \eqref{Q3_upper_bd}, \eqref{Q1_upper_bd}, \eqref{Q2_lower_bound}, and \eqref{eq:finite_synthetic_score_quantile_bound}, with probability at least $1-\epsilon_1-\epsilon_2$, uniformly over the sphere $\|\boldsymbol{\beta}-\boldsymbol{\beta}_0\|^2=\tilde Bp/n$, a stochastic upper bound of $Q_1-Q_2+Q_3$ is given by
\begin{align*}
& \sqrt{\frac{C_2\tilde{B}}{\epsilon_1} } p- a_0\tilde Bp + L_{\epsilon_2}\sqrt{\tilde B}p \\
={}&p\sqrt{\tilde{B}}\left( \sqrt{C_2/\epsilon_1}+L_{\epsilon_2}-a_0\sqrt{\tilde B}\right).
\end{align*}
If we choose
\[
\tilde B>
\left(
\frac{\sqrt{C_2/\epsilon_1}+L_{\epsilon_2}}{a_0}
\right)^2,
\]
then from \eqref{beta_time_gradient}, we have
\begin{align*}
&\mathbb{P}\left(
    \exists \bbeta, \text{s.t.} \|\boldsymbol{\beta}-\boldsymbol{\beta}_0\|^2=\tilde{B}p/n, (\boldsymbol{\beta}-\boldsymbol{\beta}_0)^{\top} F(\boldsymbol{\beta})\geq 0
    \right)
\leq \epsilon_1+\epsilon_2.
\end{align*}
Taking, for example, $\epsilon_1=\epsilon_2=\epsilon/2$, we conclude that for any $\epsilon>0$, there is $\tilde{B}$ such that
\begin{align*}
&  \mathbb{P}\left(\|\widehat{\boldsymbol{\beta}}_{M}-\boldsymbol{\beta}_0\|^2\leq \tilde{B}\frac{p}{n} \right) \\
\geq{} & \mathbb{P}\left((\boldsymbol{\beta}-\boldsymbol{\beta}_0)^{\top} F(\boldsymbol{\beta})<0 \text{ for all }\boldsymbol{\beta} \text{ satisfying } \|\boldsymbol{\beta}-\boldsymbol{\beta}_0\|^2=\tilde{B}p/n\right)\\
\geq{} & 1-\epsilon.
\end{align*}
\Cref{condition:SubgaussianX} implies that $\mathbb{X}^\top\mathbb{X}$ is invertible. As a result, the objective function in \eqref{eq: SRE_def} is strictly concave and $\widehat{\boldsymbol{\beta}}_{M}$ is the unique root of $F(\boldsymbol{\beta})=0$.
We conclude that $\|\widehat{\boldsymbol{\beta}}_{M}-\boldsymbol{\beta}_0\|^2=O_p(p/n)$.

 The proof of $\|\widehat{\boldsymbol{\beta}}_{\infty}-\boldsymbol{\beta}_0\|^2=O_p(\frac{p}{n})$ follows by a similar argument if we replace $F(\bbeta)$ by

 $$F_\infty(\boldsymbol{\beta})=\sum_{i=1}^{n} \left(Y_i-\rho^{\prime}(\boldsymbol{X}_i^\top \boldsymbol{\beta})\right )\boldsymbol{X}_i +
\tau \mathbb E \left(Y^*-\rho^{\prime}(\boldsymbol{X}^{*\top} \boldsymbol{\beta})\right )\bX^*. $$
Correspondingly, a modification of the upper bound on $Q_3$ will replace
$\lambda_{M}^{*}$ by $\lambda_{\infty}^{*}:=\lambda_{\max}\left(\mathbb E (\bX^*\bX^{*\top})\right)$.
The bounds for $Q_1$ and $Q_2$ remain the same.

\subsection{Proof of Theorem~\ref{thm:MAP_bounded}}
\label{proof:MAP_bounded_proportional}

This section proves that the SRE is bounded in the linear asymptotic regime.

We begin with the bound for the SRE $\widehat{\bbeta}_{M}$ with finite $M$, which is given by following minimization problem:
{\small
$$\begin{aligned}
	\widehat{\bbeta}_{M}&=\arg \min_{\bbeta\in \mathbb R^p} \left\{\sum_{i=1}^{n} \log\left(1+\exp(-(2Y_i-1)\bX_i^{\top} \bbeta) \right)+ \frac{\tau}{M} \sum_{i=1}^{M} \log\left(1+\exp(-(2Y_i^*-1)\bX_i^{*\top} \bbeta) \right) \right\}.
\end{aligned}$$}
Note that the objective function evaluated at $\widehat{\bbeta}_{M}$ is necessarily no greater than that evaluated at $\bbeta=\mathbf{0}$, which is $(n+\tau)\log (2)$. Together with an elementary inequality that $\max\{0,t\}\leq \log(1+\exp(t))$ for $t\in \mathbb R$, we have
\begin{equation} \label{loss_at_beta_less_than_l_0}
	\frac{\tau}{M} \sum_{i=1}^{M} \max\{0,-(2Y_i^*-1)\bX_i^{*\top} \widehat{\bbeta}_{M} \}\leq (n+\tau)\log(2) .
\end{equation}

Note that the left-hand side of \eqref{loss_at_beta_less_than_l_0} can be lower bounded by
$$
\|\widehat{\bbeta}_{M}  \| \left( \inf _{\|\boldsymbol{\beta}\|=1} \frac{1}{M} \sum_{i=1}^M \max\{0,-(2Y_i^*-1)\bX_i^{*\top} \bbeta \} \right),
$$
for which we have the following result.

\begin{lemma}\label{lemma:uniform_small_ball_probability}
	Under \Cref{conditions:synthetic_X_Y}, we have
\begin{equation}\label{small_ball_ReLi}
    \inf _{\|\boldsymbol{\beta}\|=1} \frac{1}{M} \sum_{i=1}^M \max\{0,-(2Y_i^*-1)\bX_i^{*\top} \bbeta \} \geq \frac{\eta_0 \nu}{4}
\end{equation}
with probability at least $1-\exp(-c_BM)-\exp\left(-\frac{\nu^2}{2} M +p\log\left(1+\frac{8C_B}{\eta_0\nu}\right)\right)$, where $c_B,C_B,\eta_0,\nu$ are positive constants that depend on the constants $\kappa_{-},\kappa_{+},K_X$ and $q$ in \Cref{conditions:synthetic_X_Y}.

Furthermore, for any $\bbeta\in \mathbb R^p$ with $\|\bbeta\|_2=1$,
\begin{equation}\label{eq:clip-sign-separation}
    \mathbb E \max\{0,-(2Y^*-1)\bX^{*\top} \bbeta \} \geq \eta_0 \nu.
\end{equation}
\end{lemma}
\bigskip

We defer the proof of \Cref{lemma:uniform_small_ball_probability}.

The conditions of \Cref{thm:MAP_bounded} imply that  $1+\frac{n}{\tau}\leq C_*$.
When \eqref{small_ball_ReLi} holds, we can conclude from \eqref{loss_at_beta_less_than_l_0} that $\|\widehat{\bbeta}_{M}\|_2\leq \frac{4C_*\log(2)}{\eta_0 \nu}$. Suppose $M\geq\frac{4\log(1+8C_B/(\eta_0\nu))}{\nu^2}p$.
\Cref{lemma:uniform_small_ball_probability} implies that
$$\mathbb P\left(\|\widehat{\bbeta}_{M}\|_2\leq \frac{4C_*\log(2)}{\eta_0 \nu} \right)\geq 1-2\exp(-\min(c_B,\nu^2/4) M).$$

For the pSRE $\widehat{\bbeta}_{\infty}$, we have the following analogy of \eqref{loss_at_beta_less_than_l_0}:
\begin{equation} \label{loss_at_beta_less_than_l_0_infinite}
	\tau\mathbb E \max\{0,-(2Y^*-1)\bX^{*\top} \widehat{\bbeta}_{\infty} \}\leq (n+\tau)\log(2) .
\end{equation}
The left-hand side can be lower bounded using \eqref{eq:clip-sign-separation} in \Cref{lemma:uniform_small_ball_probability}, which proves that
$$\|\widehat{\bbeta}_{\infty}\|_2\leq \frac{C_*\log(2)}{\eta_0 \nu}. $$
Therefore, we complete the proof of \Cref{thm:MAP_bounded}.

The rest of this section is devoted to prove \Cref{lemma:uniform_small_ball_probability}, which is in turn based on the following lemma.

\begin{lemma}\label{lemma:single_beta_small_ball}
	Suppose $\{\bX_i^*,Y_i^*\} $ are i.i.d. copies of $(\bX^*,Y^*)$ generated under \Cref{conditions:synthetic_X_Y}. There are positive constants $\eta_0$ and $\nu$ such that for any $\bbeta\in \mathbb R^p$ with $\|\bbeta\|_2=1$, it holds that
	$$\frac{1}{M}\sum_{i=1}^{M} \max\{0,-(2Y_i^*-1)\bX_i^{*\top} \bbeta \} \geq \frac{\eta_0 \nu}{2} $$
with probability at least $1-\exp\left(- \frac{M\nu^2}{2}\right)$.
\end{lemma}

\begin{proof}[Proof of \Cref{lemma:single_beta_small_ball}]

By \Cref{prop:var_small_ball},
there exist two positive constants $\eta_0,\rho_0\in (0,1)$ that only depend on $\kappa_{-},\kappa_{+},K_X$, such that for any $\bbeta\in \mathbb R^p$ with $\|\bbeta\|_2=1$
$$
\mathbb P(|\bX_i^{*\top} \bbeta|>\eta_0)\geq \rho_0 . $$

For any $i$, let $A_i$ denote the indicator of the event ${\{\max\left(0,-(2Y_i^*-1)\bX_i^{*\top}\bbeta\right)>\eta_0\}}$.
We will first find the lower bound of $\mathbb E(A_i)$ and then apply Hoeffding's inequality to guarantee $\sum_{i=1}^M A_i$ is stochastically large.
Note that $A_i=1$ if and only if $|\bX_i^{*\top}\bbeta|>\eta_0$ and the sign of $(1-2Y_i^*)$ is the same as the sign of $\bX_i^{*\top}\bbeta$.
By the law of total expectation, we have
 \begin{align*}
 	\mathbb P\left(\max\{0,-(2Y_i^*-1)\bX_i^{*\top} \bbeta \}>\eta_0 \right)&=\mathbb E\left[\mathbb E \left(\mathbf{1}\left\{\max\left(-(2Y_i^*-1)\bX_i^{*\top}\bbeta,0\right)>\eta_0 \right\} \mid \bX_i^*\right) \right]\\
 	&=\mathbb E\left[\mathbf{1}\{|\bX_i^{*\top}\bbeta|>\eta_0\} \mathbb {P} \left( (2Y_i^*-1)\bX_i^{*\top}\bbeta<0 |\bX_i^*\right)    \right]\\
 	&\geq  \mathbb P(|\bX_i^{*\top}\bbeta|>\eta_0) \min(q,1-q)\\
 	&\geq \min(q,1-q)\rho_0,
 \end{align*}
where the first inequality is due to \Cref{conditions:synthetic_X_Y}.
Denote by $\nu=  \min(q,1-q)\rho_0$.
We have shown that $\mathbb E(A_i)\geq  \nu$.
By Hoeffding's inequality,
$\mathbb P\left(\sum_{i=1}^M A_i < \frac{M\nu}{2}\right)\leq \exp(-\frac{M\nu^2}{2})$. Note that the event $\{\sum_{i=1}^M A_i \geq \frac{M\nu}{2} \}$ implies that
$\sum_{i=1}^{M} \max\{0,-(2Y_i^*-1)\bX_i^{*\top} \bbeta \} \geq \frac{M\nu}{2}\eta_0 $. Thus, we conclude that
\begin{align*}
	\mathbb P\left(\frac{1}{M}\sum_{i=1}^{M} \max\{0,-(2Y_i^*-1)\bX_i^{*\top} \bbeta \} < \frac{\nu}{2}\eta_0 \right)\leq \exp\left(-\frac{M\nu^2}{2}\right).
\end{align*}
\end{proof}

\begin{proof}[Proof of \Cref{lemma:uniform_small_ball_probability}]

Denote by $\mathbb X^*$ the synthetic covariate matrix.

By \Cref{prop:design_op_norm}, under \Cref{conditions:synthetic_X_Y},
the event $E_1:=\{\|\mathbb X^*\|\leq C_B\sqrt{M} \}$ holds with probability at least
$1-\exp(-c_B M)$, where $c_B,C_B$ are constants that only depend on $(\kappa_+, K_X)$.

We fixed a $\left(\frac{\eta_0\nu}{4C_B}\right)$-net $\mathcal N$ to cover the unit sphere $\mathbb{S}^{p-1}.$  By a volume argument, $|\mathcal N|\leq (1+\frac{8C_B}{\eta_0\nu})^p$.
Denote by $E_2$ the event that
$$\left\{\frac{1}{M}\sum_{i=1}^{M} \max\{0,-(2Y_i^*-1)\bX_i^{*\top} \bbeta \} \geq \frac{\eta_0 \nu}{2} \quad \text{for all }\bbeta_k \in \mathcal N \right\}. $$
By \Cref{lemma:single_beta_small_ball}, $E_2$ happens with probability at least $1-|\mathcal N|\exp \left( -\frac{M \nu^2}{2} \right)$.

Under the events $E_1$ and $E_2$, for any $\|\bbeta\|=1$, we can pick $\bbeta_1\in \mathcal N$ such that $\|\bbeta-\bbeta_1\|\leq \frac{\eta_0\nu}{4C_B}$. Then we derive
\begin{align*}
	\frac{1}{M}&\left(\sum_{i=1}^M  \max\{0,-(2Y_i^*-1)\bX_i^{*\top} \bbeta \}-\sum_{i=1}^M  \max\{0,-(2Y_i^*-1)\bX_i^{*\top} \bbeta_{1} \}\right)\\
	&\stackrel{(1)}{\leq}\frac{1}{M}\sum_{i=1}^M |\bX_i^{*\top} (\bbeta-\bbeta_1)|\stackrel{(2)}{\leq}\frac{1}{\sqrt{M}}\|\mathbb X^*(\bbeta-\bbeta_1)\| \stackrel{(3)}{\leq} \frac{\eta_0\nu}{4C_B\sqrt{M}}\|\mathbb X^*\|_{op}\leq \frac{\eta_0 \nu}{4}
\end{align*}
where the step (1) is due to the inequalities $\max(0,a)-\max(0,b)\leq |a-b|$ and $|2Y_i^*-1|=1$, the step (2) is due to the generalized mean inequality, and the step (3) is due to the definition of operator norm and the fact that $\|\bbeta-\bbeta_1\|\leq \frac{\eta_0 \nu}{4 C_B}$. We complete the proof by noticing that the union bound on the exception probabilities of  $E_1$ and $E_2$ is $\exp(-c_BM)+\exp\left(-\frac{\nu^2}{2} M +p\log\left(1+\frac{8C_B}{\eta_0\nu}\right)\right)$.
\end{proof}

\subsection{Proof of Theorem \ref{thm:exact_cat_M_MAP_noninformative(modify)} part (2) and Theorem \ref{thm:exact_cat_M_MAP_informative} part (2)}
\label{proof_sec:logitic_exact}

In this section, we provide the proof for \Cref{thm:exact_cat_M_MAP_informative} part (2) in the case where $\xi\in (-1,1)$.
We omit the proofs for \Cref{thm:exact_cat_M_MAP_noninformative(modify)} part (2) and the special case of $\xi=1$ in \Cref{thm:exact_cat_M_MAP_informative} part (2) because they follow a similar and simpler argument using a rank-one decomposition.

We recall the distributional conditions and streamline the notations. The observed covariates are
 $\left\{ \boldsymbol{X}_i\right\}_{i=1}^n \stackrel{\text { i.i.d. }}{\sim} \mathcal{N}\left(\mathbf{0},   \mathbf{I}_p\right)$  and the auxiliary covariates are
 $\left\{ \boldsymbol{X}^*_i\right\}_{i=1}^M \stackrel{\text { i.i.d. }}{\sim} \mathcal{N}\left(\mathbf{0},  \mathbf{I}_p\right)$.
 Additionally, the observed responses are
 $Y_i\sim \text{Bern}(\rho^\prime(\boldsymbol{X}_i^\top \boldsymbol{\beta}_0))$, the auxiliary responses are $Y_i^*\sim \text{Bern}(\rho^\prime(\boldsymbol{X}_i^{*\top} \boldsymbol{\beta}_s))$, the true coefficients $\boldsymbol{\beta}_0$ satisfies
 $\lim_{p\rightarrow\infty} \|\boldsymbol{\beta}_0\|^2=\kappa_1^2$, the auxiliary coefficients satisfy
$\lim_{p\rightarrow\infty} \|\boldsymbol{\beta}_s\|^2=\kappa_2^2$,
and $\lim_{p\rightarrow\infty}\frac{1}{\|\boldsymbol{\beta}_0\|\|\boldsymbol{\beta}_s\|}\langle \boldsymbol{\beta}_0,\boldsymbol{\beta}_s\rangle=\xi\in (-1,1)$.
   
To streamline the notation, we write $\mathbf{y}_1=(Y_i)_{i\in [n]}$ for the observed response vector and $\mathbf{y}_2=(Y_j^*)_{j\in [M]}$ for the auxiliary response vector.

In the following, we first present an overview of our proof, followed by an introduction of the main technical tools and a layout of lemmas. We then dive into the details of the proof.

\subsubsection{Road-map of the proof}

\smallskip

\textbf{First step: Reformulation of original problem.}
To make our optimization problem more suitable for exact asymptotic analysis, we execute a series of transformations on the original optimization problem. By integrating these transformation steps, we reach an equivalent formulation known as the Primal Optimization (PO) problem:
$$
\begin{aligned}
	\min _{\boldsymbol{\beta}\in \mathcal{S}_{\bbeta},  \mathbf{u}_1 \in \mathbb{R}^n, \mathbf{u}_2 \in \mathbb{R}^M} \max _{\bv \in \mathcal{S}_{\bv}}&\left(\frac{1}{n} \mathbf{1}^T \rho\left(\mathbf{u}_1\right)-\frac{1}{n} \mathbf{y}_1^T \mathbf{u}_1+\frac{\tau_0}{M} \mathbf{1}^T \rho\left(\mathbf{u}_2\right)-\frac{\tau_0}{M} \mathbf{y}_2^T \mathbf{u}_2\right.\\
	&\left. +\frac{1}{\sqrt{n}} \bv^T\left(\left[\begin{array}{l}
\mathbf{u}_1 \\
\mathbf{u}_2
\end{array}\right]-  \mathbf{H} \boldsymbol{\beta}_S\right)-\frac{1}{\sqrt{n}}   \bv^T \mathbf{H} \boldsymbol{\beta}_{S^{\perp}}\right)
\end{aligned}
$$
where $\mathbf{H}$ is a matrix with entries that are i.i.d. standard normal,  $\bbeta_{S}:=\mathbf{P}\bbeta$ and $\bbeta_{S^{\perp}}:=\mathbf{P}^\perp \bbeta$, where $\mathbf{P}$ is the projection matrix onto the column space spanned by $\bbeta_0$ and $\bbeta_s$ and $\mathbf{P}^\perp$ is the projection onto the orthogonal complement of that space.

\smallskip
\textbf{Second step: Reduction  to an Auxiliary Optimization (AO) problem.}
The particular form of PO allows us to use the Convex Gaussian Min-max Theorem \citep{thrampoulidis2015regularized}, which characterizes the exact asymptotic behavior of min-max optimization problems that are affine in Gaussian matrices.
This result enables us to characterize the properties of $\widehat{\bbeta}_M$ by studying the asymptotic behavior of the following, arguably simpler, Auxiliary Optimization (AO) problem:
$$
\begin{aligned}
	\min _{\boldsymbol{\beta}_S \in \mathcal{S}_{\bbeta}, \boldsymbol{\beta}_{S^{\perp}} \in \mathcal{S}_{\bbeta}, \mathbf{u}_1 \in \mathbb{R}^n, \mathbf{u}_2 \in \mathbb{R}^M} \max _{\bv \in \mathcal{S}_{\bv}}&\left(\frac{1}{n} \mathbf{1}^T \rho\left(\mathbf{u}_1\right)-\frac{1}{n} \mathbf{y}_1^T \mathbf{u}_1+\frac{\tau_0}{M} \mathbf{1}^T \rho\left(\mathbf{u}_2\right)-\frac{\tau_0}{M} \mathbf{y}_2^T \mathbf{u}_2\right.\\
	&\left. +\frac{1}{\sqrt{n}} \bv^T\left(\left[\begin{array}{l}
\mathbf{u}_1 \\
\mathbf{u}_2
\end{array}\right]- \mathbf{H} \boldsymbol{\beta}_S\right) -\frac{1}{ \sqrt{n}}\left(\bv^T \mathbf{h}\left\|\mathbf{P}^{\perp} \boldsymbol{\beta}\right\|+\|\bv\| \mathbf{g}^T \mathbf{P}^{\perp} \boldsymbol{\beta}\right)\right)
\end{aligned}
$$
where $\mathbf{h} \in \mathbb{R}^{n+M}$ and $\mathbf{g} \in \mathbb{R}^p$ have i.i.d. standard normal entries.

\smallskip
\textbf{Third step: Scalarization of the Auxiliary Optimization problem.}
We further simplify AO to an optimization over some scalar variables. Specifically, we demonstrate that the asymptotic behavior of AO can be captured through the following optimization problem:
{\small
$$\begin{aligned}
\min _{\substack{ \alpha_1\in \mathbb{R} , \alpha_2\in \mathbb{R}\\v,\sigma>0}}\max _{r>0} & \quad \left( -\frac{r\sigma}{\sqrt{\noverp}}+\frac{r}{2v} -\frac{1}{4rv}-\kappa_1^2\alpha_1\mathbb{E}(\rho^{\prime\prime}(\kappa_1 Z_1))-\frac{\tau_0^2}{4rvm}      \right.\\
 & -\tau_0 \kappa_2 \mathbb{E}(\rho^{\prime\prime}(\kappa_2 \xi Z_1+\kappa_2 \sqrt{1-\xi^2}Z_2))(\alpha_1\kappa_1\xi+\alpha_2\kappa_2\sqrt{1-\xi^2} )\\
 &+\mathbb{E}(M_{\rho(\cdot)}(\kappa_1\alpha_1 Z_1+\kappa_2\alpha_2 Z_2+\sigma Z_3+\frac{1}{rv}\text{Bern}(\rho^{\prime}(\kappa_1 Z_1)),\frac{1}{rv}))\\
 &+\tau_0\mathbb{E}(M_{\rho(\cdot)}(\kappa_1\alpha_1 Z_1+\kappa_2\alpha_2 Z_2+\sigma Z_3+\frac{\tau_0}{rvm}\text{Bern}(\rho^{\prime}(\kappa_2\xi Z_1+\kappa_2 \sqrt{1-\xi^2}Z_2)),\frac{\tau_0}{rvm}))
 \end{aligned}$$}
By checking the first-order optimality conditions of the above scalar optimization, we can derive the system of equations \eqref{nonlinear_four_equation}.

\subsubsection{Comparison to the Approximate Message Passing approach}\label{proof_sec:amp_comparison}

Another popular tool to establish the precise asymptotics is the Approximate Message Passing (AMP) technique \citep{donoho2009message,bayati2011dynamics}.
The AMP argument requires an iterative algorithm that approximates the estimator, where the iterates of the algorithm need to have known precise asymptotics.
The AMP argument then uses these known precise asymptotic distributions of the iterations to approximate the asymptotic distribution of the estimator.

In \cite{sterzinger2023diaconis}, the authors employ the AMP to study the precise asymptotics of the Maximum Diaconis-Ylvisaker prior penalized likelihood (MDYPL) estimator. The MDYPL estimator is defined as
\begin{equation}
\label{mdypl}
\widehat{\boldsymbol{\beta}}_{DY}=\arg\max_{\boldsymbol{\beta}} \sum_{j=1}^n\left\{\left(\alpha Y_j+(1-\alpha) \zeta^{\prime}\left(\boldsymbol{X}_j^{\top} \boldsymbol{\beta}_P\right)\right) \boldsymbol{x}_j^{\top} \boldsymbol{\beta}-\rho\left(\boldsymbol{X}_j^{\top} \boldsymbol{\beta}\right)\right\},
\end{equation}
where $\rho(t)=\log(1+e^t)$ and $\boldsymbol{\beta}_P$ is the prior mode.
It is clear that MDYPL is restricted to scenarios with $p<n$ and requires a full-rank covariate matrix.
If $p>n$ or if the design matrix is not of full rank,  the MDYPL estimator does not exist.

The AMP technique is suitable for the MDYPL estimator because
\begin{enumerate}
    \item[(a).] there is only one set of covariate vectors, and
    \item [(b).] the synthetic responses are set to be $0.5$ since \cite{sterzinger2023diaconis} assume the prior mode in \eqref{mdypl} is fixed as $\boldsymbol{\beta}_{P}=0$  (see Section 2 therein).
\end{enumerate}
These two conditions are crucial because they make the AMP recursion analytically tractable.

However, such an argument cannot be applied to the analysis of our SRE due to different sets of covariate vectors and general synthetic responses.
More concretely,
\begin{enumerate}
    \item [(a).] the SRE involves synthetic covariates that are not the same as the observed ones (see Equation \eqref{eq: SRE_def}), and
    \item [(b).] the synthetic responses are sampled from the model with a general $\boldsymbol{\beta}_s$, which is allowed to be correlated with the true coefficient $\boldsymbol{\beta}_0$ (see \Cref{condition:informative_syn_data} in \Cref{sec:exact_asymptocis_infor}).
\end{enumerate}
Therefore, the AMP argument used in \cite{sterzinger2023diaconis} does not apply to the analysis of the SRE.

Furthermore, when applying the AMP technique, the iterative algorithm for the SRE will quickly become too complicated due to the synthetic data, and deriving the precise asymptotic distributions of the iterations is very difficult even in the simplest case where $\boldsymbol{\beta}_s=0$.
CGMT provides an effective alternative to bypass these complexities.

\subsubsection{Introduction of Convex Gaussian Min-max Theorem}
Our analysis is based on the Convex Gaussian Min-max Theorem (CGMT), which we will briefly review here; detailed theory and application can be found in \cite{thrampoulidis2015regularized,thrampoulidis2016recovering,thrampoulidis2018precise}.
This technique connects a Primary Optimization (PO) problem with an Auxiliary Optimization (AO) problem, which is easy to analyze yet allows studying various aspects of the PO.
Specifically, we define the PO and AO problems as follows:
\begin{align}
\text{(PO)\quad}	&\Phi(\mathbf{G}):=\min _{\mathbf{w} \in \mathcal{S}_{\mathbf{w}}} \max _{\mathbf{u} \in \mathcal{S}_{\mathbf{u}}} \mathbf{u}^T \mathbf{G w}+\psi(\mathbf{u}, \mathbf{w}) \label{CGMT:PO}\\
\text{(AO)\quad}	&\phi(\mathbf{g}, \mathbf{h}):=\min _{\mathbf{w} \in \mathcal{S}_{\mathbf{w}}} \max _{\mathbf{u} \in \mathcal{S}_{\mathbf{u}}}\|\mathbf{w}\| \mathbf{g}^T \mathbf{u}-\|\mathbf{u}\| \mathbf{h}^T \mathbf{w}+\psi(\mathbf{u}, \mathbf{w}) \label{CGMT:AO}
\end{align}
where $\mathbf{G} \in \mathbb{R}^{m \times n}, \mathbf{g} \in \mathbb{R}^m, \mathbf{h} \in \mathbb{R}^n, \mathcal{S}_{\mathbf{w}} \subset \mathbb{R}^n, \mathcal{S}_{\mathbf{u}} \subset \mathbb{R}^m$ and $\psi: \mathbb{R}^n \times \mathbb{R}^m \rightarrow \mathbb{R}$. Let $\mathbf{w}_{\Phi}:=\mathbf{w}_{\Phi}(\mathbf{G})$ and $\mathbf{w}_\phi:=\mathbf{w}_\phi(\mathbf{g}, \mathbf{h})$ denote arbitrary optimal minimizers of \eqref{CGMT:PO} and \eqref{CGMT:AO}, respectively.

\begin{lemma}[\cite{thrampoulidis2016recovering}] \label{CGMT:nonsaym}
Let $\mathcal{S}_{\mathbf{w}}$ and $\mathcal{S}_{\mathbf{u}}$ be two convex and compact sets. Assume the function $\psi(\cdot, \cdot)$ is convex-concave on $\mathcal{S}_{\mathbf{w}} \times \mathcal{S}_{\mathbf{u}}$. Also assume that $\mathbf{G}, \mathbf{g}$, and $\mathbf{h}$ all have entries i.i.d. standard normal. Then  for all $\mu \in \mathbb{R}$, and $t>0$,
$$
\mathbb{P}(|\Phi(\mathbf{G})-\mu|>t) \leq 2 \mathbb{P}(|\phi(\mathbf{g}, \mathbf{h})-\mu| \geq t)
$$
The probabilities are taken with respect to the randomness in $\mathbf{G}, \mathbf{g}$, and $\mathbf{h}$.
\end{lemma}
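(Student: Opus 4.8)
The plan is to obtain this two-sided probability comparison from Gordon's Gaussian min--max comparison inequality (GMT), using the convexity/concavity hypotheses only for one of the two tails. The starting point is the pair of centered Gaussian processes
\[
X_{\mathbf{w},\mathbf{u}} \;:=\; \mathbf{u}^\top\mathbf{G}\mathbf{w} \;+\; \|\mathbf{w}\|\,\|\mathbf{u}\|\,S,
\qquad
Y_{\mathbf{w},\mathbf{u}} \;:=\; \|\mathbf{w}\|\,\mathbf{g}^\top\mathbf{u} \;+\; \|\mathbf{u}\|\,\mathbf{h}^\top\mathbf{w},
\]
where $S\sim\mathcal N(0,1)$ is auxiliary and independent of $\mathbf{G},\mathbf{g},\mathbf{h}$. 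A short second-moment computation gives $\mathbb E X_{\mathbf{w},\mathbf{u}}^2=\mathbb E Y_{\mathbf{w},\mathbf{u}}^2$, $\mathbb E[X_{\mathbf{w},\mathbf{u}}X_{\mathbf{w},\mathbf{u}'}]=\mathbb E[Y_{\mathbf{w},\mathbf{u}}Y_{\mathbf{w},\mathbf{u}'}]$ for a common $\mathbf{w}$, and $\mathbb E[X_{\mathbf{w},\mathbf{u}}X_{\mathbf{w}',\mathbf{u}'}]-\mathbb E[Y_{\mathbf{w},\mathbf{u}}Y_{\mathbf{w}',\mathbf{u}'}]=(\mathbf{w}^\top\mathbf{w}'-\|\mathbf{w}\|\|\mathbf{w}'\|)(\mathbf{u}^\top\mathbf{u}'-\|\mathbf{u}\|\|\mathbf{u}'\|)\ge 0$ for $\mathbf{w}\ne\mathbf{w}'$, by Cauchy--Schwarz. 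These are exactly the (co)variance relations required by Gordon's min--max lemma, with $\mathbf{w}$ playing the role of the outer (minimization) index and $\mathbf{u}$ that of the inner (maximization) index; combined with the standard extension that allows a continuous additive perturbation $\psi$ over the compact index sets, Gordon's inequality yields, for every $c\in\mathbb R$,
\[
\mathbb P\Big(\min_{\mathbf{w}}\max_{\mathbf{u}}\big[X_{\mathbf{w},\mathbf{u}}+\psi(\mathbf{w},\mathbf{u})\big]\ge c\Big)\;\ge\;\mathbb P\Big(\min_{\mathbf{w}}\max_{\mathbf{u}}\big[Y_{\mathbf{w},\mathbf{u}}+\psi(\mathbf{w},\mathbf{u})\big]\ge c\Big).
\]

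Next I would discard the auxiliary variable $S$. Conditioning on $S\le 0$ makes $\|\mathbf{w}\|\|\mathbf{u}\|S$ nonpositive, hence $\min_{\mathbf{w}}\max_{\mathbf{u}}[X_{\mathbf{w},\mathbf{u}}+\psi]\le\Phi(\mathbf{G})$ on $\{S\le 0\}$, so that $\{\Phi(\mathbf{G})<c\}\cap\{S\le 0\}\subseteq\{\min_{\mathbf{w}}\max_{\mathbf{u}}[X_{\mathbf{w},\mathbf{u}}+\psi]<c\}$; since $\mathbf{G}\perp S$ and $\mathbb P(S\le 0)=\tfrac12$, combining this with the displayed inequality gives the \emph{lower-tail bound} $\mathbb P(\Phi(\mathbf{G})<c)\le 2\,\mathbb P(\phi(\mathbf{g},\mathbf{h})<c)$ for all $c$. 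This half uses no convexity at all and holds for arbitrary continuous $\psi$ and arbitrary compact $\mathcal S_{\mathbf{w}},\mathcal S_{\mathbf{u}}$.

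For the matching \emph{upper-tail bound} $\mathbb P(\Phi(\mathbf{G})>c)\le 2\,\mathbb P(\phi(\mathbf{g},\mathbf{h})>c)$ I would use convexity. Since $\psi$ is convex--concave on the convex compact product set, the PO objective $\mathbf{u}^\top\mathbf{G}\mathbf{w}+\psi(\mathbf{w},\mathbf{u})$ is convex--concave, so Sion's minimax theorem lets me write $\Phi(\mathbf{G})=\max_{\mathbf{u}}\min_{\mathbf{w}}[\cdots]$, and therefore $-\Phi(\mathbf{G})=\min_{\mathbf{u}}\max_{\mathbf{w}}\big[\mathbf{w}^\top(-\mathbf{G}^\top)\mathbf{u}-\psi(\mathbf{w},\mathbf{u})\big]$. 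Because $-\mathbf{G}^\top$ is again a standard Gaussian matrix and $(\mathbf{u},\mathbf{w})\mapsto-\psi(\mathbf{w},\mathbf{u})$ is again convex--concave (now with the two blocks interchanged), this is a PO of exactly the same shape with the roles of the two variables swapped; applying the lower-tail bound already established to it, and then identifying the associated AO with $-\phi(\mathbf{g},\mathbf{h})$ in distribution (using $-\mathbf{g}\overset{d}{=}\mathbf{g}$, $-\mathbf{h}\overset{d}{=}\mathbf{h}$, and a min--max interchange for the AO objective), gives $\mathbb P(\Phi(\mathbf{G})>c)=\mathbb P(-\Phi(\mathbf{G})<-c)\le 2\,\mathbb P(\phi(\mathbf{g},\mathbf{h})>c)$. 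Finally, splitting $\{|\Phi(\mathbf{G})-\mu|>t\}=\{\Phi(\mathbf{G})>\mu+t\}\cup\{\Phi(\mathbf{G})<\mu-t\}$ and adding the two tail bounds yields $\mathbb P(|\Phi(\mathbf{G})-\mu|>t)\le 2\,\mathbb P(\phi(\mathbf{g},\mathbf{h})>\mu+t)+2\,\mathbb P(\phi(\mathbf{g},\mathbf{h})<\mu-t)=2\,\mathbb P(|\phi(\mathbf{g},\mathbf{h})-\mu|>t)\le 2\,\mathbb P(|\phi(\mathbf{g},\mathbf{h})-\mu|\ge t)$, which is the claim.

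The step I expect to be the main obstacle is the last identification in the upper-tail argument, that the AO attached to the swapped PO equals $-\phi(\mathbf{g},\mathbf{h})$ in distribution. Unlike the bilinear PO objective, the AO objective $\|\mathbf{w}\|\mathbf{g}^\top\mathbf{u}-\|\mathbf{u}\|\mathbf{h}^\top\mathbf{w}+\psi(\mathbf{w},\mathbf{u})$ is not jointly convex--concave as it stands --- the term $\|\mathbf{w}\|\mathbf{g}^\top\mathbf{u}$ is convex in $\mathbf{w}$ only on the region $\{\mathbf{g}^\top\mathbf{u}\ge 0\}$ --- so the min--max interchange needed to pass between the two orderings of the AO cannot be read off Sion's theorem directly. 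Carrying it through requires a variational (epigraph) reformulation of the Euclidean-norm factors together with the convexity/concavity of $\psi$ and the compactness of $\mathcal S_{\mathbf{w}},\mathcal S_{\mathbf{u}}$; these are also precisely the hypotheses that legitimize extending Gordon's lemma to an additive $\psi$ and that guarantee all the relevant optima are attained. Handling the sign of $S$ and the remaining $\pm$ bookkeeping so as to keep the constant $2$ rather than a larger factor is routine but must be done carefully.
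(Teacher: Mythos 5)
The paper does not prove this lemma; it is quoted verbatim from \cite{thrampoulidis2016recovering} (see also Thrampoulidis, Oymak, and Hassibi's original CGMT papers), so there is no internal proof to compare against. Your sketch is, in substance, a correct reconstruction of the argument in that reference: the covariance computation for the pair $(X_{\mathbf{w},\mathbf{u}},Y_{\mathbf{w},\mathbf{u}})$ is right, the use of the auxiliary $S$ with independence and $\mathbb{P}(S\le 0)=\tfrac12$ to produce the factor $2$ in the lower-tail bound is the standard device, and the Sion-swap for the upper tail is exactly the role the convexity hypotheses play.

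The one place you go wrong is in overestimating the difficulty of the final step, and this is worth correcting. You do \emph{not} need a min--max interchange for the AO, which, as you correctly observe, is not convex--concave and hence not covered by Sion. After applying the lower-tail bound to the swapped PO, the quantity you obtain is $-\tilde\phi(\mathbf{g},\mathbf{h})$, where
\[
\tilde\phi(\mathbf{g},\mathbf{h}) \;=\; \max_{\mathbf{u}\in\mathcal{S}_{\mathbf{u}}}\min_{\mathbf{w}\in\mathcal{S}_{\mathbf{w}}}\Bigl[\|\mathbf{w}\|\,\mathbf{g}^\top\mathbf{u}-\|\mathbf{u}\|\,\mathbf{h}^\top\mathbf{w}+\psi(\mathbf{w},\mathbf{u})\Bigr],
\]
which is the \emph{max--min} version of the AO, \emph{not} $\phi$ itself. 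All you need is the elementary weak-duality inequality $\tilde\phi(\mathbf{g},\mathbf{h})\le\phi(\mathbf{g},\mathbf{h})$, which holds pointwise for any objective and any sets (no convexity, no compactness, no epigraph reformulation required). Then $\{\tilde\phi\ge c\}\subseteq\{\phi\ge c\}$ on the common probability space, so
\[
\mathbb{P}(\Phi(\mathbf{G})>c)\;\le\;2\,\mathbb{P}\bigl(\tilde\phi(\mathbf{g},\mathbf{h})\ge c\bigr)\;\le\;2\,\mathbb{P}\bigl(\phi(\mathbf{g},\mathbf{h})\ge c\bigr),
\]
and the union bound over the two tails finishes the proof exactly as you describe. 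In short, the step you flag as the main obstacle is not an obstacle at all; you had already done the hard part correctly.
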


\begin{lemma}[Asymptotic CGMT \cite{thrampoulidis2016recovering}]\label{CGMT:asym}
Let $\mathcal{S}$ be an arbitrary open subset of $\mathcal{S}_{\mathbf{w}}$ and $\mathcal{S}^c:=\mathcal{S}_{\mathbf{w}} / \mathcal{S}$. Denote $\Phi_{\mathcal{S}^c}(\mathbf{G})$ and $\phi_{\mathcal{S}^c}(\mathbf{g}, \mathbf{h})$ be the optimal costs of the optimizations in \eqref{CGMT:PO} and \eqref{CGMT:AO}, respectively, when the minimization over $\mathbf{w}$ is now constrained over $\mathbf{w} \in \mathcal{S}^c$.
Suppose that there exist constants $\bar{\phi}<\bar{\phi}_{\mathcal{S}^c}$ such that $\phi(\mathbf{g}, \mathbf{h}) \xrightarrow{\mathbb{P}} \bar{\phi}$, and $\phi_{\mathcal{S}^c}(\mathbf{g}, \mathbf{h}) \longrightarrow \bar{\phi}_{\mathcal{S}^c}$. Then, $\lim _{n \rightarrow \infty} \mathbb{P}\left(\mathbf{w}_{\Phi}(\mathbf{G}) \in \mathcal{S}\right)=1$.
\end{lemma}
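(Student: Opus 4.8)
The plan is to deduce this asymptotic statement from the non-asymptotic comparison of Lemma~\ref{CGMT:nonsaym}, together with the elementary (convexity-free) half of Gordon's Gaussian min-max inequality, via a two-threshold argument. The guiding observation is that membership of the PO minimizer in the bad set $\mathcal{S}^c$ is an event about the \emph{optimal costs}, and costs are exactly what CGMT controls.

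\textbf{Step 1: reduce to a comparison of costs.} First I would record the deterministic inclusion
\[
\{\mathbf{w}_\Phi(\mathbf{G}) \in \mathcal{S}^c\} \;\subseteq\; \{\Phi_{\mathcal{S}^c}(\mathbf{G}) = \Phi(\mathbf{G})\}.
\]
Indeed, $\mathcal{S}^c \subseteq \mathcal{S}_{\mathbf{w}}$ always forces $\Phi_{\mathcal{S}^c}(\mathbf{G}) \ge \Phi(\mathbf{G})$, while on the event that a global minimizer $\mathbf{w}_\Phi(\mathbf{G})$ falls in $\mathcal{S}^c$ it is feasible for the constrained problem and attains the value $\Phi(\mathbf{G})$, so $\Phi_{\mathcal{S}^c}(\mathbf{G}) \le \Phi(\mathbf{G})$. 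Consequently, for any real $\mu$,
\[
\mathbb{P}\big(\mathbf{w}_\Phi(\mathbf{G}) \notin \mathcal{S}\big) \;\le\; \mathbb{P}\big(\Phi_{\mathcal{S}^c}(\mathbf{G}) = \Phi(\mathbf{G})\big) \;\le\; \mathbb{P}\big(\Phi(\mathbf{G}) \ge \mu\big) + \mathbb{P}\big(\Phi_{\mathcal{S}^c}(\mathbf{G}) \le \mu\big),
\]
since on $\{\Phi_{\mathcal{S}^c} = \Phi\}$ at least one of $\{\Phi \ge \mu\}$, $\{\Phi_{\mathcal{S}^c} \le \mu\}$ must occur.

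\textbf{Step 2: separate the two costs with CGMT.} Fix $\mu \in (\bar{\phi}, \bar{\phi}_{\mathcal{S}^c})$. For the first term, apply Lemma~\ref{CGMT:nonsaym} to the PO (whose feasible sets are convex and compact and whose $\psi$ is convex-concave, by assumption) with center $\mu' := \bar{\phi} + \tfrac{1}{3}(\mu-\bar{\phi})$ and radius $t := \mu - \mu' = \tfrac{2}{3}(\mu-\bar{\phi})$; since $\{\Phi(\mathbf{G}) \ge \mu\} \subseteq \{|\Phi(\mathbf{G}) - \mu'| > t\}$, one gets $\mathbb{P}(\Phi(\mathbf{G}) \ge \mu) \le 2\,\mathbb{P}(|\phi(\mathbf{g},\mathbf{h}) - \mu'| \ge t)$, which tends to $0$ because $\phi(\mathbf{g},\mathbf{h}) \xrightarrow{\mathbb{P}} \bar{\phi}$ and $|\bar{\phi} - \mu'| = \tfrac{1}{3}(\mu-\bar{\phi}) < t$. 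For the second term, $\mathcal{S}^c$ is compact (a closed subset of the compact $\mathcal{S}_{\mathbf{w}}$) but need not be convex, so the two-sided Lemma~\ref{CGMT:nonsaym} does not apply directly; however the one-sided direction of Gordon's comparison, $\mathbb{P}(\Phi_{\mathcal{S}^c}(\mathbf{G}) \le t) \le 2\,\mathbb{P}(\phi_{\mathcal{S}^c}(\mathbf{g},\mathbf{h}) \le t)$, holds without any convexity requirement and is precisely the ingredient behind Lemma~\ref{CGMT:nonsaym}; taking $t = \mu$ and using $\phi_{\mathcal{S}^c}(\mathbf{g},\mathbf{h}) \xrightarrow{\mathbb{P}} \bar{\phi}_{\mathcal{S}^c} > \mu$ gives $\mathbb{P}(\Phi_{\mathcal{S}^c}(\mathbf{G}) \le \mu) \to 0$. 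Combining, $\mathbb{P}(\mathbf{w}_\Phi(\mathbf{G}) \notin \mathcal{S}) \to 0$, i.e. $\lim_{n\to\infty}\mathbb{P}(\mathbf{w}_\Phi(\mathbf{G}) \in \mathcal{S}) = 1$.

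The main obstacle is Step~2's handling of the \emph{constrained, non-convex} problem over $\mathcal{S}^c$: one cannot invoke the two-sided Lemma~\ref{CGMT:nonsaym} and must instead appeal to the convexity-free half of Gordon's Gaussian min-max inequality; a secondary technical point is to ensure a measurable selection $\mathbf{w}_\Phi(\mathbf{G})$ of the PO minimizer exists so that the event in Step~1 is well defined, which follows from continuity of the objective and compactness of $\mathcal{S}_{\mathbf{w}}$. It is worth emphasizing that this lemma is a soft general statement; in its applications (Theorems~\ref{thm:exact_cat_M_MAP_noninformative}--\ref{thm:exact_cat_M_MAP_informative}) the genuine work lies not here but in exhibiting, through the scalarized AO of the road-map, the separating constants $\bar{\phi} < \bar{\phi}_{\mathcal{S}^c}$ for each relevant open set $\mathcal{S}$.
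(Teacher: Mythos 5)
The paper does not prove this lemma: it is stated with a citation to \cite{thrampoulidis2016recovering} and used as a black box, so there is no ``paper's proof'' to compare against. Your proposed proof is correct and is, in essence, the argument given in that reference: Step~1's reduction of the argmin-location event to the equality of constrained and unconstrained costs, then splitting by a threshold $\mu\in(\bar\phi,\bar\phi_{\mathcal{S}^c})$, and Step~2's use of the two-sided CGMT (Lemma~\ref{CGMT:nonsaym}) to push $\Phi(\mathbf{G})$ below $\mu$ while the convexity-free, one-sided Gordon comparison $\mathbb{P}(\Phi_{\mathcal{S}^c}(\mathbf{G})<c)\le 2\,\mathbb{P}(\phi_{\mathcal{S}^c}(\mathbf{g},\mathbf{h})\le c)$ keeps $\Phi_{\mathcal{S}^c}(\mathbf{G})$ above $\mu$. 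You correctly identify the crux --- that $\mathcal{S}^c$ is not convex, so only the one-sided inequality is available there --- which is precisely why the asymptotic statement needs both halves of Gordon's machinery rather than Lemma~\ref{CGMT:nonsaym} alone.

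Two small points of hygiene, neither a gap. First, the strict and non-strict inequalities must be matched up: Lemma~\ref{CGMT:nonsaym} bounds $\mathbb{P}(|\Phi-\mu'|>t)$, while $\{\Phi\ge\mu\}=\{\Phi-\mu'\ge t\}$ uses a non-strict inequality, and similarly for the Gordon half; this is repaired by shrinking $t$ slightly or shifting $\mu$ by an arbitrarily small amount, as you implicitly have room to do since $\mu$ ranges over an open interval. Second, you note the measurable-selection issue for $\mathbf{w}_\Phi(\mathbf{G})$; it is worth adding that your Step~1 inclusion holds for \emph{every} minimizer, not just a chosen selection, so the conclusion automatically applies to any measurable selection --- the compactness of $\mathcal{S}_{\mathbf{w}}$ and continuity of the objective are only needed to guarantee that some minimizer (hence some measurable selection) exists.
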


In the following, we equate $\left(1/ \noverp, \tau_0, m\right)=(p, \tau, M) / n$ with the understanding that when $n$ is finite these numbers are ratios and converge to some constants as $n$ increases to infinity.

\subsubsection{Additional useful lemmas}

Through the reformulation and transformation of the original optimization problem \eqref{eq: SRE_def}, we will frequently use the following lemma to flip the optimization order:
\begin{lemma}\citep[Sion's minimax theorem]{sion1958general}
Let $X \subset \mathbb{R}^n$ and $Y \subset \mathbb{R}^m$ be two convex spaces, at least one of which is compact. 
If $f: X \times Y \rightarrow \mathbb{R}$ is a continuous function that is concave-convex, i.e.
$f(\cdot, y): X \rightarrow \mathbb{R}$ is concave for fixed $y$, and
$f(x, \cdot): Y \rightarrow \mathbb{R}$ is convex for fixed $x$.

Then we have that
$$
\sup _{x \in X} \inf _{y \in Y} f(x, y)=\inf _{y \in Y} \sup _{x \in X} f(x, y) .
$$
\end{lemma}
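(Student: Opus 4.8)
The plan is to establish the asserted equality by proving the two inequalities between $\sup_x\inf_y f$ and $\inf_y\sup_x f$ separately. The inequality $\sup_{x\in X}\inf_{y\in Y} f(x,y)\le \inf_{y\in Y}\sup_{x\in X} f(x,y)$ is ``weak duality'' and uses none of the hypotheses: for any $x_0\in X$ and $y_0\in Y$ one has $\inf_{y} f(x_0,y)\le f(x_0,y_0)\le \sup_{x} f(x,y_0)$, so taking the supremum over $x_0$ on the left and the infimum over $y_0$ on the right yields the claim. All the content lies in the reverse inequality $\inf_y\sup_x f(x,y)\le \sup_x\inf_y f(x,y)=:v$. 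Since replacing $f$ by the still continuous, still concave--convex function $(y,x)\mapsto -f(x,y)$ on $Y\times X$ exchanges the two sides of the identity, I may assume without loss of generality that the compact set is the domain $Y$ of the inner infimum.

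Second, I would argue by contradiction. Suppose $\inf_y\sup_x f(x,y)>v$ and fix $c$ with $v<c<\inf_y\sup_x f(x,y)$. For each $x\in X$ the set $U_x:=\{y\in Y: f(x,y)>c\}$ is open in $Y$ by continuity of $f(x,\cdot)$, and the family $\{U_x\}_{x\in X}$ covers $Y$ because $\sup_x f(x,y)>c$ for every $y\in Y$. Compactness of $Y$ yields finitely many points $x_1,\dots,x_k\in X$ with $Y=\bigcup_{i=1}^k U_{x_i}$, hence $\max_{1\le i\le k} f(x_i,y)>c$ for all $y\in Y$, and therefore $\inf_{y\in Y}\max_{1\le i\le k} f(x_i,y)\ge c$.

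Third, I would reach the contradiction by restricting the first variable to the compact convex hull $X_0:=\mathrm{conv}\{x_1,\dots,x_k\}\subseteq X$. On $X_0\times Y$ the function $f$ is continuous, concave in its first argument, convex in its second, and both $X_0$ and $Y$ are compact and convex; the classical concave--convex minimax theorem on a product of compact convex sets then gives $\min_{y\in Y}\max_{x\in X_0} f(x,y)=\max_{x\in X_0}\min_{y\in Y} f(x,y)$. The right-hand side is at most $\sup_{x\in X}\inf_{y\in Y} f(x,y)=v$, whereas the left-hand side is at least $\inf_{y\in Y}\max_{1\le i\le k} f(x_i,y)\ge c>v$, a contradiction. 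This proves $\inf_y\sup_x f\le v$ and hence the identity.

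The main obstacle is precisely the compact--compact concave--convex minimax theorem invoked in the third step: this is where fixed-point machinery genuinely enters and cannot be replaced by elementary estimates. In a self-contained treatment I would prove it along the standard route: approximate $f$ on $X_0\times Y$ by strictly concave--convex perturbations $f_\epsilon$, so that the best-response correspondences $y\mapsto \arg\max_{x} f_\epsilon(x,y)$ and $x\mapsto \arg\min_{y} f_\epsilon(x,y)$ become single-valued and continuous, apply Brouwer's fixed point theorem to their product to obtain a saddle point of $f_\epsilon$, and let $\epsilon\to 0$ using compactness. Since the lemma is stated with an external citation, it is equally legitimate to invoke that result directly (e.g. Kakutani's minimax theorem) at this point and present the covering reduction above in full detail.
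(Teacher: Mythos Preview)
The paper does not prove this lemma; it simply states it with a citation to Sion (1958) and uses it as a black box in the subsequent CGMT analysis. So there is no proof in the paper to compare against.

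Your argument is a correct and standard route to the result: weak duality is trivial, and your reduction of the strong-duality direction to the compact--compact case via a finite subcover of $Y$ and passage to $X_0=\mathrm{conv}\{x_1,\dots,x_k\}$ is clean and valid. The only substantive dependency, as you correctly flag, is the compact--compact concave--convex minimax theorem, which you propose to handle by Brouwer/Kakutani. That works. It is worth noting that Sion's own 1958 proof is actually more elementary than this: it avoids fixed-point theorems entirely, using instead a connectedness/level-set argument (essentially a KKM-type lemma on the sublevel sets of $f$) together with an induction on the size of finite subfamilies. So your route is correct but imports slightly heavier machinery than strictly necessary; for the purposes of this paper, where the lemma is merely quoted, either approach is more than adequate.
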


The following result is also useful in our proof.
\begin{lemma}\label{lemma:flip_min_max}
    Let $K$, $\sigma$, and $V$ be any positive numbers. Let $\bg$ be a vector with the same dimension as $\btheta$ to be minimized. It holds that
$$
\min_{\|\btheta\|=1} \max_{r\in [0,V]}\{ r\sigma \bg^\top \btheta + r K \}= \max_{r\in [0,V]} \min_{\|\btheta\|=1} \{ r\sigma \bg^\top \btheta + r K \}
$$
\end{lemma}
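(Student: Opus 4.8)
\textbf{Proof plan for Lemma~\ref{lemma:flip_min_max}.}

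The plan is to verify the hypotheses of Sion's minimax theorem (the preceding lemma) and then invoke it directly. The function at hand is
\[
f(\btheta, r) := r\sigma \bg^\top \btheta + r K,
\]
viewed on the domain $\{\btheta : \|\btheta\| = 1\} \times [0, V]$. The first thing I would address is that the inner constraint set $\{\btheta : \|\btheta\|=1\}$ is the unit sphere, which is \emph{not} convex, so Sion's theorem does not apply verbatim. The standard fix is to observe that $f$ is linear in $\btheta$, hence for any fixed $r$ the quantity $\max_{\|\btheta\|=1}(r\sigma\bg^\top\btheta + rK)$ equals $\max_{\|\btheta\|\le 1}(r\sigma\bg^\top\btheta + rK)$ when $r\sigma \ge 0$ — the maximum of a linear functional over the ball is attained on the boundary. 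Wait: here we have $\min_{\|\btheta\|=1}$, not $\max$. Since $r\sigma \ge 0$ and $\bg^\top\btheta$ ranges over $[-\|\bg\|, \|\bg\|]$ on both the sphere and the ball, we likewise have $\min_{\|\btheta\|=1} f(\btheta,r) = \min_{\|\btheta\|\le 1} f(\btheta, r) = -r\sigma\|\bg\| + rK$ for every $r \ge 0$. Thus I would replace the sphere by the closed unit ball $B := \{\btheta : \|\btheta\|\le 1\}$ in both the left-hand and right-hand expressions, at which point the domain $B \times [0,V]$ is a product of two convex compact sets.

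Next I would check the concave–convex structure required by Sion's theorem (with the roles matched appropriately: $f(\cdot, r)$ concave in $\btheta$ for fixed $r$, and $f(\btheta, \cdot)$ convex in $r$ for fixed $\btheta$). For fixed $r$, the map $\btheta \mapsto r\sigma\bg^\top\btheta + rK$ is affine, hence both concave and convex. For fixed $\btheta$, the map $r \mapsto r(\sigma\bg^\top\btheta + K)$ is affine in $r$, hence convex. Continuity is immediate. Both $B$ and $[0,V]$ are convex, and both are compact. Therefore Sion's minimax theorem yields
\[
\min_{\btheta \in B}\max_{r\in[0,V]} f(\btheta,r) = \max_{r\in[0,V]}\min_{\btheta\in B} f(\btheta,r).
\]
Combining this with the reduction from the sphere to the ball on both sides (valid because $r\sigma \ge 0$ throughout $[0,V]$, so the linear objective's extreme values over the sphere and the ball coincide) gives the claimed identity
\[
\min_{\|\btheta\|=1}\max_{r\in[0,V]}\{r\sigma\bg^\top\btheta + rK\} = \max_{r\in[0,V]}\min_{\|\btheta\|=1}\{r\sigma\bg^\top\btheta + rK\}.
\]

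The only genuinely delicate point — and the one I would make sure to state carefully — is the sphere-to-ball replacement: one must argue that passing to the convex hull of the constraint set does not change the value of either the min-max or the max-min. For the left-hand side, fix any $\btheta$ on the sphere; the inner $\max_r$ is unaffected by where $\btheta$ lives, and the outer $\min$ over the sphere equals the outer $\min$ over the ball because the whole objective, as a function of $\btheta$, attains its minimum over $B$ at a boundary point whenever the coefficient structure is linear (indeed $r\sigma \bg^\top\btheta$ is minimized over $B$ at $\btheta = -\bg/\|\bg\|$ when $\bg \neq 0$, which lies on the sphere; the degenerate case $\bg = 0$ is trivial since the objective is then independent of $\btheta$). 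The same reasoning applies after swapping the order. Everything else is a routine application of the two quoted lemmas, so I do not anticipate further obstacles.
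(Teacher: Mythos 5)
Your argument is correct but takes a genuinely different route from the paper. The paper proves this lemma by direct case analysis: it splits on the sign of $K - \sigma\|\bg\|$, computes both the min-max and the max-min explicitly in each case, and observes they agree (both equal $V(K - \sigma\|\bg\|)$ when the sign is positive, and both equal $0$ otherwise). No appeal to Sion's theorem is made at all. Your approach instead convexifies the constraint set — replacing the sphere by the closed unit ball — and then invokes Sion's theorem on the product of two compact convex sets; this is more conceptual (it explains \emph{why} the nonconvexity of the sphere is harmless, namely that the relevant objective is monotone in $\bg^\top\btheta$ so minimizing over ball and sphere give the same value), while the paper's computation is more self-contained and elementary. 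One small imprecision worth tightening in your write-up: when you justify the sphere-to-ball replacement on the min-max side, you speak as though the function being minimized in $\btheta$ is linear ("the whole objective, as a function of $\btheta$, attains its minimum over $B$ at a boundary point whenever the coefficient structure is linear"), but after the inner $\max_r$ the objective is $V\max(\sigma\bg^\top\btheta + K,\,0)$, which is only piecewise linear. The conclusion still holds — this is a nondecreasing function of $\bg^\top\btheta$, so its minimum over the ball is attained at $\btheta = -\bg/\|\bg\|$, which lies on the sphere (and the $\bg = 0$ case is trivial) — but the reason should be stated as monotonicity in $\bg^\top\btheta$ rather than linearity of the composed objective.
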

\begin{proof}[Proof of \Cref{lemma:flip_min_max}]
We consider the following cases.
    \begin{itemize}
    \item Suppose $K-\sigma \|\bg\|>0$:
 It is clear that $K+\sigma \bg^\top \btheta\geq K-\sigma \|\bg\|>0$ for any unit vector $\btheta$. Therefore, the two sides can be computed as follows:
    $$\begin{aligned}
        &\min_{\|\btheta\|=1} \max_{r\in [0,V]}\{ r\sigma \bg^\top \btheta + r K \}=\min_{\|\btheta\|=1} V(K+\sigma \bg^\top \btheta)=V(K-\sigma\|\bg\|);\\
        &\max_{r\in [0,V]} \min_{\|\btheta\|=1} \{ r\sigma \bg^\top \btheta + r K \}=\max_{r\in [0,V]} r(K-\sigma\|\bg\|)=V(K-\sigma\|\bg\|).
    \end{aligned}$$
    
    \item Suppose $K-\sigma \|\bg\|\leq 0$:
    The left-hand side is
    $$\begin{aligned}
        \min_{\|\btheta\|=1} \max_{r\in [0,V]}\{ r\sigma \bg^\top \btheta + r K \}&=\min\left\{\min_{\|\btheta\|=1,K+\sigma \bg^\top \btheta>0}\max_{r\in [0,V]}\{ r\sigma \bg^\top \btheta + r K \}, \right. \\
        &\hskip 2cm \left.  \min_{\|\btheta\|=1,K+\sigma \bg^\top \btheta \leq 0}  \max_{r\in [0,V]}\{ r\sigma \bg^\top \btheta + r K \}          \right\}\\
        &=\min\left\{ \min_{\|\btheta\|=1,K+\sigma \bg^\top \btheta>0} V(K+\sigma\bg^\top \btheta)      ,0 \right\}\\
        &=0.
    \end{aligned}$$
The right-hand side is
$$\max_{r\in [0,V]} \min_{\|\btheta\|=1} \{ r\sigma \bg^\top \btheta + r K \}  =\max_{r\in [0,V]} r(K-\sigma \|\bg\|)=0.
$$
\end{itemize}
In either case, the two sides are equal.
\end{proof}

The following lemma shows that $\|\widehat{\bbeta}_M\|$ is bounded with high probability when the MLE based on the auxiliary data exists asymptotically.
According to \cite{candes2020phase}, the inequality that
$\kappa_2 < \overline{\kappa}_{\mathrm{MLE}}(m\delta)$ ($\overline{\kappa}_{\mathrm{MLE}}$
is defined in the lemma)
is sufficient for the auxiliary dataset to be non-separable with high probability.

\begin{lemma}\label{lemma:MAP_bounded_gaussian_design}
    Consider a standard normal variable $Z$ with density function $\varphi(t)$ and an independent continuous random variable $V_{\kappa}$ with density function $2\rho^\prime(\kappa  t)\varphi(t)$. Using the notation $x_{+} = \max(x, 0)$, we define
$$
\overline{\kappa}_{\mathrm{MLE}} (r)=\sup\left\{\kappa\geq 0: \frac{1}{r} < \min_{t \in \mathbb{R}}~\mathbb{E}\left\{ [(Z-t V_{\kappa})_{+}]^2 \right\}\right\}, \quad \forall r>2.$$
$\overline{\kappa}_{\mathrm{MLE}}(r)$ is a nondecreasing function with respect to $r$.
Assume \Cref{condition:proper_scaling,condition:informative_syn_data} hold.
Suppose $m\delta>2$.
If $\kappa_2 < \overline{\kappa}_{\mathrm{MLE}}(m\delta)$, then there exist constants $c_1$, $C_1>0$, and a threshold $M_0>0$ depending only on $m\delta,\tau_0$ and $\kappa_2$. For all $M \geq M_0$, the SRE $\widehat{\boldsymbol{\beta}}_M$ satisfies
 
$$\mathbb{P}\left(\|\widehat{\bbeta}_{M}\|\leq c_1\right)\geq 1-C_1 M^{-\alpha}$$
where $\alpha > 1$ is a constant that depends on $m\delta$ and $\kappa_2$.
\end{lemma}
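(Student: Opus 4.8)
\textbf{Proof proposal for Lemma~\ref{lemma:MAP_bounded_gaussian_design}.}

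The plan is to mimic the strategy of Theorem~\ref{thm:MAP_bounded}, but with the crude bound $\inf_{\|\bbeta\|=1}\frac1M\sum_i\max\{0,-(2Y_i^*-1)\bX_i^{*\top}\bbeta\}\geq \eta_0\nu/4$ replaced by the sharp asymptotic characterization of separability from \cite{candes2020phase}. Recall the key inequality underlying the boundedness argument: since the objective in \eqref{cat_betahat} at $\widehat\bbeta_M$ is no larger than its value at $\bbeta=\mathbf 0$, which equals $(n+\tau)\log 2$, and since $\log(1+e^t)\geq\max\{0,t\}$, we get
\[
\frac{\tau}{M}\sum_{i=1}^M\max\{0,-(2Y_i^*-1)\bX_i^{*\top}\widehat\bbeta_M\}\ \leq\ (n+\tau)\log 2.
\]
Writing $\widehat\bbeta_M = \|\widehat\bbeta_M\|\,\widehat{\be}$ with $\widehat{\be}\in\mathbb S^{p-1}$ and bounding the left side below by $\|\widehat\bbeta_M\|\cdot\tau\cdot\big(\inf_{\|\be\|=1}\frac1M\sum_i(-(2Y_i^*-1)\bX_i^{*\top}\be)_+\big)$, it suffices to show that this infimum is bounded below by a positive constant with probability at least $1-C_1M^{-\alpha}$; then $\|\widehat\bbeta_M\|\leq (1+\frac{n}{\tau})\log 2 / (\text{that constant}) = (1+\frac{1}{m\noverp})\log2/(\text{const})$, which is the claimed bound with $c_1$ depending only on $\noverp_M,\tau_0,\kappa_2$.

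So the crux is a high-probability lower bound on $\mathcal V_M:=\inf_{\|\be\|=1}\frac1M\sum_{i=1}^M\big(-(2Y_i^*-1)\bX_i^{*\top}\be\big)_+$. First I would relate this quantity to separability margins: $\mathcal V_M=0$ exactly when the auxiliary data are separable, and more quantitatively $\mathcal V_M$ is a Lipschitz (hence concentrated) function of the Gaussian matrix $\mathbb X^*$. The natural route is via Gaussian concentration / CGMT applied to the convex optimization $\min_{\|\be\|=1}\frac1M\sum_i(-(2Y_i^*-1)\bX_i^{*\top}\be)_+$ — or equivalently via the conic-geometry computation of \cite{candes2020phase}, where the function $g_{\mathrm{MLE}}$ arises precisely as the phase-transition curve: when $\kappa_2<g_{\mathrm{MLE}}(\noverp_M)$ the population-level optimum $\min_{t}\mathbb E(Z-tV_{\kappa_2})_+^2$ strictly exceeds $1/\noverp_M$, which forces the finite-$M$ value $\mathcal V_M$ to be bounded away from $0$. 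Concretely, I would (i) invoke the asymptotic characterization that $\mathcal V_M\xrightarrow{\mathbb P} v_*(\kappa_2,\noverp_M)>0$ under the strict inequality $\kappa_2<g_{\mathrm{MLE}}(\noverp_M)$, the positivity being exactly the statement that the relevant statistical dimension is subcritical; (ii) upgrade the convergence in probability to a tail bound of the form $\mathbb P(\mathcal V_M<v_*/2)\leq C_1M^{-\alpha}$ using Gaussian Lipschitz concentration of $\mathbb X^*\mapsto\mathcal V_M$ (the map is $1/\sqrt M$-Lipschitz in Frobenius norm after the usual normalization, giving sub-Gaussian deviations $e^{-cMv_*^2}$, which is stronger than polynomial — so the polynomial rate $M^{-\alpha}$ with $\alpha>1$ in the statement is more than covered, and one simply absorbs the net/union-bound term as in Lemma~\ref{lemma:uniform_small_ball_probability}). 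The threshold $M_0$ enters when translating the limiting statement into a finite-$M$ one with explicit constants.

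The main obstacle is step (i)–(ii): establishing that $\mathcal V_M$ is bounded away from zero with the required probability under the \emph{sharp} condition $\kappa_2<g_{\mathrm{MLE}}(\noverp_M)$, rather than under a conservative condition like $M>2p$. This requires either (a) directly quoting the conic-intrinsic-volume / statistical-dimension computation of \cite{candes2020phase} (Corollary/Theorem therein identifying the separability phase transition) and noting that subcriticality yields a positive margin, or (b) re-deriving the limit of $\mathcal V_M$ via CGMT: reformulate $\min_{\|\be\|=1}\frac1M\sum_i(-(2Y_i^*-1)\bX_i^{*\top}\be)_+$ as a PO, pass to the AO, and scalarize to obtain a one-dimensional optimization whose value is positive iff $\kappa_2<g_{\mathrm{MLE}}(\noverp_M)$ — the ReLU/hinge loss makes the proximal operators explicit, so the scalarization is tractable, but matching the resulting variational formula to the closed form $\min_t\mathbb E(Z-tV_{\kappa_2})_+^2$ requires some care (a Lagrangian/Legendre duality computation). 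Once the positive margin and its concentration are in hand, the rest — the deterministic inequality above plus union bound over a net of $\mathbb S^{p-1}$ with cardinality $\leq(1+c/v_*)^p$, absorbed by the exponential rate — is routine and parallels the proof of Theorem~\ref{thm:MAP_bounded} essentially verbatim.
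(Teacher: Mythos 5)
Your high-level skeleton matches the paper's: start from the energy bound $\frac{\tau}{M}\sum_i\max\{0,-(2Y_i^*-1)\bX_i^{*\top}\widehat\bbeta_M\}\le(n+\tau)\log 2$, deduce that $\|\widehat\bbeta_M\|$ is bounded once the empirical hinge margin $\mathcal V_M$ is bounded away from zero, and tie positivity of the margin to the phase-transition curve of \cite{candes2020phase}/\cite{sur2019modern}. The paper implements exactly this idea, but instead of working with $\mathcal V_M$ directly it introduces the cone $\mathcal A_\eps=\{\bu:\sum_j(-u_j)_+\le\sqrt{M}\eps^2\|\bu\|_2\}$, shows (using a smallest-singular-value bound on $\mathbb X^*$) that $\{\|\widehat\bbeta_M\|\le c_1\}$ contains the event $\{(2\by_2-1)\circ\mathbb X^*\boldsymbol b:\boldsymbol b\in\mathbb R^p\}\cap\mathcal A_{\eps_0}=\{\mathbf 0\}$, and then \emph{cites} \citet[Theorem~4]{sur2019modern} for the probability bound $\mathbb P(\text{intersection nontrivial})\le C_1M^{-\alpha}$. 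The two margin quantities are equivalent up to constants, so the reduction step you propose is sound.

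Where your plan has genuine gaps is in steps (i)–(ii). First, the concentration device does not work as stated: $\mathcal V_M$ is not a Lipschitz functional of the Gaussian matrix $\mathbb X^*$ because $Y_i^*$ is a (discontinuous) function of $\bX_i^*$ when $\kappa_2>0$. What is true is that the signed vectors $(2Y_i^*-1)\bX_i^*$ are i.i.d.\ with the tilted density $2\rho'(\bz^\top\bbeta_s)\varphi(\bz)$ — log-concave, but not Gaussian — so Gaussian Lipschitz concentration (giving $e^{-cM t^2}$) does not directly apply, and the claim that the sub-Gaussian rate ``more than covers'' $M^{-\alpha}$ is unjustified. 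In fact the polynomial rate in the statement is not a lossy relaxation; it is what \cite{sur2019modern}'s Theorem~4 delivers, and the paper inherits it rather than improves on it. Second, even granting concentration, you would still need to show that the \emph{mean} (or median) of $\mathcal V_M$ is bounded away from zero whenever $\kappa_2<g_{\mathrm{MLE}}(\noverp_M)$; this positivity is precisely the conic-geometry/statistical-dimension computation, which cannot be avoided — it \emph{is} the content of \cite{sur2019modern}'s result, so your ``option (a)'' is not an alternative to the hard part but a restatement of it. Finally, your closing remark that a net/union bound ``as in Lemma~\ref{lemma:uniform_small_ball_probability}'' handles the remainder is misleading: that crude argument pays a factor $e^{cp}$ and therefore only works when $M/p$ exceeds a large constant (as in Theorem~\ref{thm:MAP_bounded}), not down to the sharp threshold $\kappa_2=g_{\mathrm{MLE}}(\noverp_M)$ that this lemma targets. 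The paper avoids any $\eps$-net entirely precisely for this reason, phrasing everything as a subspace–cone intersection and quoting the conic bound.
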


The condition $\kappa_2 < \overline{\kappa}_{\mathrm{MLE}}(m\delta)$ in \Cref{lemma:MAP_bounded_gaussian_design}
places the auxiliary logistic regression below the separability phase-transition boundary established in Theorem 4 in SI Appendix H of \citet{sur2019modern},
which ensures that with high probability, the auxiliary data are non-separable and the auxiliary MLE is bounded.
In order to show that $\left\|\widehat{\beta}_M\right\|$ is bounded with high probability, we reduce the large-norm event to the existence of an almost-separating direction in the auxiliary data.
We then follow the reasoning presented in Theorem 4 in SI Appendix H of \citet{sur2019modern}. For the sake of completeness, we provide brief arguments here.

\begin{proof}[Proof of \Cref{lemma:MAP_bounded_gaussian_design}]

When the entries of the synthetic covariate matrix $\mathbb X^*$ are independent $N(0,1)$ variables and $p/M<1/2$, the least singular value of $\mathbb X^*\in \mathbb R^{M\times p}$ satisfies
\begin{equation}
	\label{min_singular_Xstar_lower_bound_boundness}
	\sigma_{\min }(\mathbb X^*) \geq \frac{1}{4}\sqrt{M},
\end{equation}
with probability at least $1-2 \exp \left(-\frac{1}{2}\left(\frac{3}{4}-\frac{1}{\sqrt{2}}\right)^2 M\right)$, which follows from \citet[Corollary 5.35]{vershynin2010introduction}. Recall \eqref{loss_at_beta_less_than_l_0}, we have
$$\frac{1}{M} \sum_{i=1}^{M} \max\{0,-(2y_i^*-1)\mathbf{x}_i^{*\top} \widehat{\bbeta}_{M} \}\leq \frac{n+\tau}{\tau}\log(2)  $$
Under event $\mathcal{E}_3=\{\sigma_{\min }(\mathbb X^*) \geq \frac{1}{4}\sqrt{M}\}$,
if $\|\widehat{\bbeta}_{M}\|_2 > \frac{n+\tau}{\tau \eps^2}4\log(2)$ ($\eps>0$ will be specified later), then
\begin{align*}
    \frac{1}{M} \sum_{i=1}^{M} \max\{0,-(2y_i^*-1)\mathbf{x}_i^{*\top} \widehat{\bbeta}_{M} \}&\leq\frac{n+\tau}{\tau}\log(2)\\
    &\leq \frac{n+\tau}{\tau}\log(2)4 \sqrt{\frac{1}{M}}\frac{\|\mathbb X^* \widehat{\bbeta}_{M}\|_2}{\|\widehat{\bbeta}_{M}\|_2}\\
    &\leq \frac{1}{\sqrt{M}}\eps^2 \|(2\mathbf{y}_2 -1) \circ\mathbb X^* \widehat{\bbeta}_{M}\|_2
\end{align*}
where $\circ$ denotes the usual Hadamard product and $\mathbf{y}_2=(y_1^*,\cdots,y_M^*)$; the last inequality implies that
$$
(2\mathbf{y}_2-1) \circ\mathbb X^* \widehat{\bbeta}_{M} \,\in\, \{(2\mathbf{y}_2-1) \circ\mathbb X^* \boldsymbol{b}   \mid \boldsymbol{b} \in \mathbb{R}^p \} \cap \mathcal{A}_{\eps},
$$
where the set $\mathcal{A}_{\eps}$ is defined as
$$\mathcal{A}_{\eps}:=\left\{\boldsymbol{u} \in \mathbb{R}^{M} \mid  \sum_{j=1}^M \max\{-u_j,0\} \leq  \sqrt{M}\eps^2 \|\bu\|_2\right\}.$$

The above relationship implies the following inequality:
\begin{equation}\label{pflemma:lower_bound_set_A_geometry}
    \mathbb{P}\left(\mathcal{E}_3, \|\widehat{\bbeta}_{M}\|_2\leq \frac{n+\tau}{\tau \eps^2}4\log(2)\right) \geq \mathbb{P}\left(\mathcal{E}_3, \{(2\mathbf{y}_2-1) \circ\mathbb X^* \boldsymbol{b}   \mid \boldsymbol{b} \in \mathbb{R}^p \} \cap \mathcal{A}_{\eps}=\{\mathbf{0}\}\right).
\end{equation}

Therefore, it is sufficient to demonstrate that the probability of the complement of the right-hand side of \eqref{pflemma:lower_bound_set_A_geometry} decays polynomially fast.
Following the reasoning in Theorem 4 in SI Appendix H of \citet{sur2019modern}, we identify the existence of positive constants $M_0 := M_0(M/p, \kappa_2)$ and $\epsilon_0 := \epsilon_0(M/p, \kappa_2)$, ensuring that for all $M > M_0$,
\begin{equation}\label{pflemma:interset_prob_upper_bound}
    \mathbb{P}\left(\{(2\mathbf{y}_2-1) \circ\mathbb X^* \boldsymbol{b}   \mid \boldsymbol{b} \in \mathbb{R}^p \} \cap \mathcal{A}_{\eps_0}\neq \{\mathbf{0}\}\right)\leq C_1 M^{-\alpha}, 
\end{equation}
where $\alpha>1$ and $C_1>0$ are constants that depend only on $M/p$ and $\kappa_2$. By combining \eqref{pflemma:lower_bound_set_A_geometry}, \eqref{pflemma:interset_prob_upper_bound}, and the bound on the minimum singular value of $\mathbb{X}^*$, we conclude that constants $c_1, C_1, \alpha (>1)$, and $M_0$ exist, which depend on $M/p$, $\tau/n$, and $\kappa_2$. These constants ensure that, for all $M>M_0$,
$$\mathbb{P}\left(\|\widehat{\bbeta}_{M}\|_2\leq c_1\right)\geq 1-C_1 M^{-\alpha}.
$$
The proof is completed.
\end{proof}

The next lemma establishes a bound on the norm of a normal random vector.

\begin{lemma}\label{lemma:concetration_norm_gaussian_vector}
    Let $\bZ\in \mathbb R^n$ be a vector of i.i.d. standard normal variables, then we have
    $$\mathbb P\left(\|\bZ\|>2\sqrt{n} \right)\leq \exp(-n/2)$$
\end{lemma}
\begin{proof}
See \citet[Example 2.28]{wainwright2019high}.
\end{proof}

The next lemma is useful when we find the optimality condition for the scalar optimization problem.
\begin{lemma}[Identities for logistic link]
\label{useful_identity} Let $\rho^{\prime}(t):=\frac{e^t}{1+e^t}$ and $Z_1,Z_2\sim N(0,1)$  independently. For any $\kappa_1>0,\kappa_2>0$ and $\xi\in [-1,1]$, we have
$$
\begin{aligned}
	\mathbb{E}(\rho^{\prime}(\kappa_1 Z_1))=\frac{1}{2}&\quad\quad  \mathbb{E}(\rho^{\prime}(\kappa_1\xi Z_1+\kappa_2\sqrt{1-\xi^2}Z_2))=\frac{1}{2}\\
	\mathbb{E}(Z_1^2\rho^{\prime}(\kappa_1\xi Z_1+\kappa_2\sqrt{1-\xi^2}Z_2))=\frac{1}{2}&\quad\quad  \mathbb{E}(Z_1Z_2\rho^{\prime}(\kappa_1\xi Z_1+\kappa_2\sqrt{1-\xi^2}Z_2))=0
\end{aligned}
$$
\end{lemma}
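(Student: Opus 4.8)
The plan is to reduce all four identities to a single elementary symmetry of the sigmoid function $\rho^{\prime}$. First I would record the identity $\rho^{\prime}(t)+\rho^{\prime}(-t)=\frac{e^{t}}{1+e^{t}}+\frac{e^{-t}}{1+e^{-t}}=\frac{e^{t}}{1+e^{t}}+\frac{1}{1+e^{t}}=1$, so that the centered function $\psi(t):=\rho^{\prime}(t)-\tfrac12$ is odd. Since $0\le\rho^{\prime}\le1$ and $Z_{1},Z_{2},Z_{1}^{2},Z_{1}Z_{2}$ are all integrable, every expectation appearing in the statement is well defined, and in each of them I would write $\rho^{\prime}(\cdot)=\tfrac12+\psi(\cdot)$ and split the expectation into a constant part and a $\psi$ part.

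The key observation is that the pair $(Z_{1},Z_{2})$ has the same law as $(-Z_{1},-Z_{2})$, and under this sign flip the Gaussian linear combination $W:=\kappa_{1}\xi Z_{1}+\kappa_{2}\sqrt{1-\xi^{2}}\,Z_{2}$ (and likewise $\kappa_{1}Z_{1}$) is sent to $-W$, hence $\psi(W)\mapsto-\psi(W)$, while the prefactors $1$, $Z_{1}^{2}$ and $Z_{1}Z_{2}=(-Z_{1})(-Z_{2})$ are all invariant. Consequently each of $\psi(\kappa_{1}Z_{1})$, $\psi(W)$, $Z_{1}^{2}\psi(W)$ and $Z_{1}Z_{2}\psi(W)$ is an odd function of $(Z_{1},Z_{2})$ relative to a measure-preserving transformation, so all four $\psi$-parts have expectation zero. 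It then remains only to evaluate the constant parts: $\mathbb{E}[\tfrac12]=\tfrac12$ yields the first two identities, $\mathbb{E}[\tfrac12 Z_{1}^{2}]=\tfrac12$ yields the third, and $\mathbb{E}[\tfrac12 Z_{1}Z_{2}]=0$ (by independence of $Z_1$ and $Z_2$) yields the fourth.

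There is essentially no hard step here: the only point that needs a word of care is the interchange of expectation and the decomposition, which is immediate from boundedness of $\rho^{\prime}$ and integrability of the polynomial prefactors, together with the bookkeeping that the sign-flip $(Z_1,Z_2)\mapsto(-Z_1,-Z_2)$ is precisely what renders each integrand odd. For completeness I would also note that the first identity is the $\kappa_{2}=0$ (equivalently $\xi=1$) special case of the second, so all four claims follow from the single symmetry computation above.
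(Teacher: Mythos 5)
Your proof is correct and uses essentially the same idea as the paper, which dispatches the lemma with the one-line remark that it follows directly from the symmetry of the standard normal distribution; you have simply spelled out the underlying mechanism (the decomposition $\rho'=\tfrac12+\psi$ with $\psi$ odd, plus invariance of the Gaussian law under $(Z_1,Z_2)\mapsto(-Z_1,-Z_2)$) in full detail.
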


\begin{proof}
    Direct consequences of the symmetry of the standard normal distribution.
\end{proof}

The next lemma summarizes the partial derivatives of the Moreau envelope function, which will be used frequently when we derive the system of equations from the first-order optimality condition in \eqref{deri: first order}.

\begin{lemma}[\cite{rockafellar2009variational}]
\label{supp:lemma:morea_lemma}
 Let $\Phi: \mathbb{R}^d \rightarrow \mathbb{R}$ be a convex function. For $\mathbf{v} \in \mathbb{R}^d$ and $t \in \mathbb{R}_{+}$, the Moreau envelope function is defined as,
$$
M_{\Phi(\cdot)}(\mathbf{v}, t)=\min _{\mathbf{x} \in \mathbb{R}^d} \Phi(\mathbf{x})+\frac{1}{2 t}\|\mathbf{x}-\mathbf{v}\|^2,
$$
and the proximal operator is the solution to this optimization, i.e.,
$$
\operatorname{Prox}_{t \Phi(\cdot)}(\mathbf{v})=\arg \min _{\mathbf{x} \in \mathbb{R}^d} t\Phi(\mathbf{x})+\frac{1}{2 }\|\mathbf{x}-\mathbf{v}\|^2 .
$$

The derivative of the Moreau envelope function can be computed as follows,
$$
\frac{\partial M_{\Phi(\cdot)}}{\partial \mathbf{v}}=\frac{1}{t}\left(\mathbf{v}-\operatorname{Prox}_{ t\Phi(\cdot)}(\mathbf{v})\right), \quad \frac{\partial M_{\Phi(\cdot)}}{\partial t}=-\frac{1}{2 t^2}\left(\mathbf{v}-\operatorname{Prox}_{t \Phi(\cdot)}(\mathbf{v})\right)^2
$$
\end{lemma}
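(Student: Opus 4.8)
This is the classical differentiability statement for Moreau envelopes (essentially \citet[Theorem 2.26 and its corollaries]{rockafellar2009variational}); the plan is to give a short self-contained derivation via the envelope theorem applied to the strongly convex inner minimization. First I would record the structural facts. For fixed $\mathbf{v}$ and $t>0$, the map $\mathbf{x}\mapsto \Phi(\mathbf{x})+\tfrac{1}{2t}\|\mathbf{x}-\mathbf{v}\|^2$ is $\tfrac1t$-strongly convex and coercive, hence has a unique minimizer; scaling the objective by $t>0$ shows this minimizer is exactly $\operatorname{Prox}_{t\Phi(\cdot)}(\mathbf{v})$, so the prox is single-valued and $M_{\Phi(\cdot)}(\mathbf{v},t)=\Phi(\mathbf{p})+\tfrac{1}{2t}\|\mathbf{p}-\mathbf{v}\|^2$ with $\mathbf{p}:=\operatorname{Prox}_{t\Phi(\cdot)}(\mathbf{v})$. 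From the optimality condition $0\in t\,\partial\Phi(\mathbf{p})+(\mathbf{p}-\mathbf{v})$ together with monotonicity of $\partial\Phi$, I would also deduce that $\mathbf{p}$ is firmly nonexpansive (hence Lipschitz) in $\mathbf{v}$ and continuous in $t$.

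For the gradient in $\mathbf{v}$, I would use a quadratic sandwich. Testing the point $\mathbf{p}=\operatorname{Prox}_{t\Phi(\cdot)}(\mathbf{v})$ in the definition of $M_{\Phi(\cdot)}(\mathbf{v}',t)$ gives $M_{\Phi(\cdot)}(\mathbf{v}',t)\le \Phi(\mathbf{p})+\tfrac{1}{2t}\|\mathbf{p}-\mathbf{v}'\|^2$; expanding $\|\mathbf{p}-\mathbf{v}'\|^2$ around $\mathbf{v}$ and subtracting the identity $M_{\Phi(\cdot)}(\mathbf{v},t)=\Phi(\mathbf{p})+\tfrac{1}{2t}\|\mathbf{p}-\mathbf{v}\|^2$ yields
$$M_{\Phi(\cdot)}(\mathbf{v}',t)-M_{\Phi(\cdot)}(\mathbf{v},t)\le \tfrac1t(\mathbf{v}-\mathbf{p})^\top(\mathbf{v}'-\mathbf{v})+\tfrac{1}{2t}\|\mathbf{v}'-\mathbf{v}\|^2,$$
so $\tfrac1t(\mathbf{v}-\mathbf{p})$ is a subgradient of the convex function $M_{\Phi(\cdot)}(\cdot,t)$ at $\mathbf{v}$. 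Since $\mathbf{v}\mapsto\mathbf{p}$ is continuous, this subgradient selection is continuous, which forces $M_{\Phi(\cdot)}(\cdot,t)$ to be continuously differentiable with $\partial M_{\Phi(\cdot)}/\partial\mathbf{v}=\tfrac1t(\mathbf{v}-\operatorname{Prox}_{t\Phi(\cdot)}(\mathbf{v}))$.

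For the derivative in $t$, I would write $M_{\Phi(\cdot)}(\mathbf{v},t)=\min_{\mathbf{x}} f(\mathbf{x},t)$ with $f(\mathbf{x},t)=\Phi(\mathbf{x})+\tfrac{1}{2t}\|\mathbf{x}-\mathbf{v}\|^2$, so that $\partial f/\partial t=-\tfrac{1}{2t^2}\|\mathbf{x}-\mathbf{v}\|^2$, and the minimizer $\mathbf{p}(t)=\operatorname{Prox}_{t\Phi(\cdot)}(\mathbf{v})$ is unique and continuous in $t$. A one-variable envelope (Danskin) argument—sandwiching the difference quotient of $M_{\Phi(\cdot)}(\mathbf{v},\cdot)$ between those of $f(\mathbf{p}(t'),\cdot)$ and $f(\mathbf{p}(t),\cdot)$ and letting $t'\to t$ using continuity of $\mathbf{p}(\cdot)$—gives $\partial M_{\Phi(\cdot)}/\partial t=\partial f/\partial t\big|_{\mathbf{x}=\mathbf{p}(t)}=-\tfrac{1}{2t^2}\|\operatorname{Prox}_{t\Phi(\cdot)}(\mathbf{v})-\mathbf{v}\|^2$, which is the claimed formula. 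The only delicate points are upgrading subdifferentiability to genuine $C^1$ differentiability for the $\mathbf{v}$-derivative and justifying the limit interchange for the $t$-derivative; both are handled by the strong convexity of the inner problem (which makes the prox Lipschitz and continuous), so I do not expect a real obstacle—alternatively, the entire lemma can simply be quoted from \citet{rockafellar2009variational}.
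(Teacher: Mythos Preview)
Your argument is correct and standard: the strong convexity of the inner problem gives a unique, Lipschitz-in-$\mathbf{v}$ and continuous-in-$t$ proximal map, the quadratic sandwich pins down the $\mathbf{v}$-gradient, and the Danskin-type interchange handles the $t$-derivative. The paper, however, does not prove this lemma at all---it is stated with a bare citation to \cite{rockafellar2009variational} and used as a black box in the subsequent derivations (Lemmas~\ref{deri:lemma_F1gammaF2gamma0}, \ref{deri:lemma_F3F4sigma}, \ref{deri:lemmaF5F6alpha1}). So your self-contained derivation goes beyond what the paper offers; either your argument or a direct citation to Rockafellar--Wets would be acceptable here.
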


\subsubsection{Reformulation and transformation}

The goal of this subsection is to reformulate the optimization for the estimator into a PO problem and define the associated AO problem.
We start with rewriting the optimization in \eqref{eq: SRE_def} as
$$
\min _{\boldsymbol{\beta} \in \mathbb{R}^p}  \left\{ \frac{1}{n} \mathbf{1}^T \rho\left( \mathbf{H}_1 \boldsymbol{\beta}\right)-\frac{1}{n} \mathbf{y}_1^T \mathbf{H}_1 \boldsymbol{\beta}+\frac{\tau_0}{M} \mathbf{1}^T \rho\left( \mathbf{H}_2 \boldsymbol{\beta}\right)-\frac{\tau_0}{M } \mathbf{y}_2^T \mathbf{H}_2 \boldsymbol{\beta} \right\}
$$
where the action of function $\rho(\cdot)$ on a vector is considered entry-wise, $\mathbf{y}_1 \in \mathbb{R}^n$ is the vector of observed responses and $\mathbf{y}_2 \in \mathbb{R}^M$ is the vector of auxiliary responses, $\mathbf{H}_1 \in \mathbb{R}^{n \times p}$ is $\left[\mathbf{x}_1, \ldots, \mathbf{x}_n\right]^T$ and $\mathbf{H}_2 \in \mathbb{R}^{M \times p}$ is $\left[\mathbf{x}_1^*, \ldots, \mathbf{x}_M^*\right]^T$. Let $\mathbf{H}=\left[\begin{array}{c}\mathbf{H}_1 \\ \mathbf{H}_2\end{array}\right]$.
Note the entries of $\mathbf{H}$ are i.i.d. standard normal variables.

Introducing two new variables $\mathbf{u}_1$ and $\mathbf{u}_2$, we further rewrite the optimization as
$$
\begin{aligned}
\min _{\boldsymbol{\beta} \in \mathbb{R}^p, \mathbf{u}_1 \in \mathbb{R}^n, \mathbf{u}_2 \in \mathbb{R}^M}    &   \left(\frac{1}{n} \mathbf{1}^T \rho\left(\mathbf{u}_1\right)-\frac{1}{n} \mathbf{y}_1^T \mathbf{u}_1+\frac{\tau_0}{M} \mathbf{1}^T \rho\left(\mathbf{u}_2\right)-\frac{\tau_0}{M} \mathbf{y}_2^T \mathbf{u}_2\right) \\
\text { s.t. } &  {\left[\begin{array}{c}
\mathbf{u}_1 \\
\mathbf{u}_2
\end{array}\right]=\mathbf{H} \boldsymbol{\beta} }.
\end{aligned}
$$
Using a Lagrange multiplier, we rewrite the above optimization as a min-max optimization
{\footnotesize
\begin{equation}\label{eq_minmax_unconstraint}
\min _{\boldsymbol{\beta} \in \mathbb{R}^p, \mathbf{u}_1 \in \mathbb{R}^n, \mathbf{u}_2 \in \mathbb{R}^M} \max _{\bv \in \mathbb{R}^{n+M}}\left(\frac{1}{n} \mathbf{1}^T \rho\left(\mathbf{u}_1\right)-\frac{1}{n} \mathbf{y}_1^T \mathbf{u}_1+\frac{\tau_0}{M} \mathbf{1}^T \rho\left(\mathbf{u}_2\right)-\frac{\tau_0}{M} \mathbf{y}_2^T \mathbf{u}_2+\frac{1}{\sqrt{n}} \bv^T\left(\left[\begin{array}{l}
\mathbf{u}_1 \\
\mathbf{u}_2
\end{array}\right]- \mathbf{H} \boldsymbol{\beta}\right)\right)
\end{equation}
}

We reformulate the original loss function into a new form that is tailored for the application of CGMT,
as the current mini-max optimization problem is affine in the Gaussian matrix $\mathbf{H}$.
To utilize CGMT, we need to further constrain the feasible sets of $ \boldsymbol{\beta}$, $\mathbf{u}_1$, $\mathbf{u}_2$ and $\bv$ in \eqref{eq_minmax_unconstraint} to be both compact and convex.
We constrain these feasible sets to be compact because this constraint is one of the technical conditions for switching the order of minimization and maximization in the minimax theorem.

\paragraph{Feasible sets for optimization}
We denote by $(\widehat{\bbeta}_{M},\widehat{\mathbf{u}}_1,\widehat{\mathbf{u}}_2)$ the solution to \eqref{eq_minmax_unconstraint}.
According to \Cref{lemma:MAP_bounded_gaussian_design}, there exist constants
 $\alpha > 1$, $c_1>0$, $C_1>0$, and a threshold $M_0$ depending only on  $\noverp_M=m\delta=\lim_{M\rightarrow \infty}\frac{M}{p}$ and $\kappa_2=\lim_{p\rightarrow \infty} \|\bbeta_s\|$ such that for all $M \geq M_0$, we have
 $$\mathbb{P}\left(\|\widehat{\bbeta}_{M}\|> c_1\right)\leq C_1 M^{-\alpha}.$$

Since $\sum_{M=1}^{\infty}C_1M^{-\alpha}<\infty$, the Borel--Cantelli lemma gives
\begin{equation}
    \label{borelcantelli_beta}
    \mathbb P\left(\{\|\widehat{\bbeta}_{M}\|> c_1\} \text{ happens infinitely often }  \right)=0.
\end{equation}

\eqref{borelcantelli_beta} allows us to safely constrain the sets $\mathcal{S}_{\boldsymbol{\beta}}$ in $\mathbb R^p$ to be bounded by some constants depending on $p$ for all $p$.
Specifically, we will choose deterministic compact feasible sets that contain the optimizer
eventually almost surely.
Let
\[
    \mathcal U_p=\operatorname{span}\{\bbeta_0,\bbeta_s\},\qquad
    \mathbf{P}=\mathbf{P}_{\mathcal U_p},\qquad \mathbf{P}^\perp=I_p-\mathbf{P} .
\]
Let $d_p=\dim(\mathcal U_p)\le 2$.
Let
$\mathbf{E}_p\in\mathbb R^{p\times d_p}$ be a matrix whose columns form an
orthonormal basis of $\mathcal U_p$. Fix a constant $R_\beta>c_1$ and define
\begin{equation}
    \label{eq:S_beta_product}
    \mathcal S_{\bbeta}
    =
    \left\{
    \bbeta\in\mathbb R^p:
    \|\mathbf{E}_p^\top\bbeta\|_\infty\le R_\beta,\|
\mathbf{P}^\perp\bbeta\|\le R_\beta
    \right\}.
\end{equation}
This set is convex and compact. Moreover, by \eqref{borelcantelli_beta},
\[
    \mathbb P\left(
    \widehat{\bbeta}_{M}\in\mathcal S_{\bbeta}
    \text{ for all sufficiently large }M
    \right)=1.
\]
Thus restricting the optimization to $\mathcal S_{\bbeta}$ does not affect
the asymptotic analysis.

The particular choice \eqref{eq:S_beta_product} is useful because it is
adapted to the decomposition induced by $P$.
Specifically,
\[
    \mathcal S_{\bbeta}
    =
    \mathbf{P}\mathcal S_{\bbeta}\oplus \mathbf{P}^\perp\mathcal S_{\bbeta},
\]
where
\[
    \mathbf{P}\mathcal S_{\bbeta}
    =
    \left\{
    \mathbf{E}_p a:\ a\in[-R_\beta,R_\beta]^{d_p}
    \right\},
    \qquad
    \mathbf{P}^\perp\mathcal S_{\bbeta}
    =
    \left\{
    \bbeta_\perp\in\mathcal U_p^\perp:\
    \|\bbeta_\perp\|\le R_\beta
    \right\},
\]
and $\mathcal U_p^\perp$ is the orthogonal complement of $\mathcal U_p$.
Consequently, optimizing over $\bbeta\in\mathcal S_{\bbeta}$ is equivalent
to optimizing over
\[
    \bbeta_S\in \mathbf{P}\mathcal S_{\bbeta},
    \qquad
    \bbeta_{S^\perp}\in \mathbf{P}^\perp\mathcal S_{\bbeta},
    \qquad
    \bbeta=\bbeta_S+\bbeta_{S^\perp}.
\]

Furthermore, based on the first-order optimality condition of the min-max
optimization in \eqref{eq_minmax_unconstraint}, the maximizer
$\widehat{\bv}$ of the inner problem satisfies
\[
\sqrt n\,\widehat{\bv}
=
\left[
\begin{array}{c}
\by_1-\rho^{\prime}(\widehat{\mathbf{u}}_1) \\[2mm]
\displaystyle \frac{\tau}{M}
\left\{\by_2-\rho^{\prime}(\widehat{\mathbf{u}}_2)\right\}
\end{array}
\right].
\]
Since the entries of $\rho^{\prime}(\widehat{\mathbf{u}}_1)$ and
$\rho^{\prime}(\widehat{\mathbf{u}}_2)$ are bounded by $1$, and the entries
of $\by_1,\by_2$ are either $0$ or $1$, we have
\[
    \|\widehat{\bv}\|^2
    \le
    \frac{1}{n}\|\by_1-\rho^{\prime}(\widehat{\mathbf{u}}_1)\|^2
    +
    \frac{\tau^2}{nM^2}
    \|\by_2-\rho^{\prime}(\widehat{\mathbf{u}}_2)\|^2
    \le
    1+\frac{\tau_0^2}{m},
\]
where $\tau_0=\tau/n$ and $m=M/n$. Since $m$ is bounded away from zero and
$\tau_0$ is bounded along the asymptotic sequence, we may choose a fixed
constant $R_v>0$ such that
\[
    R_v^2>1+\sup_p\frac{\tau_0^2}{m}.
\]
This radius will be used below in \eqref{AO_with_r_sigma_alpha}.
Define
\[
    \mathcal S_{\bv}
    =
    \left\{
    \bv\in\mathbb R^{n+M}:\ \|\bv\|\le R_v
    \right\}.
\]
We have $\widehat{\bv}\in\mathcal S_{\bv}$ for all sufficiently large $p$.
In the following, these deterministic feasible sets are denoted by
$\mathcal S_{\bbeta}$ and $\mathcal S_{\bv}$; their dependence on $p$ is
suppressed for notational simplicity.

We will show that the values of $\frac{1}{\sqrt{n}}\|\mathbf{u}_1\|$ and $\frac{1}{\sqrt{n}}\|\mathbf{u}_2\|$ can be constrained by some universal constants without affecting the original optimization problem.
This property is needed below in \eqref{domain_tilde_v_compact}.
Note that the first-order optimality condition with respect to $\bv$ implies
$$
\begin{aligned}
\left\|\left[\begin{array}{l}
\widehat{\mathbf{u}}_1 \\
\widehat{\mathbf{u}}_2
\end{array}\right]\right\| & = \left\|  \mathbf{H} \widehat{\bbeta}_M\right\| \\
& \leq \|\mathbf{H}\|_{op}\|\widehat{\bbeta}_M \|_2
\end{aligned}
$$
To show $\frac{1}{\sqrt{n}}\|\widehat{\mathbf{u}}_1\|$ and $\frac{1}{\sqrt{n}}\|\widehat{\mathbf{u}}_2\|$ are bounded by some universal constants, it suffices to show $\frac{1}{\sqrt{n}}\|\mathbf{H}\|_{op}$ is bounded by some universal constant for all sufficiently large sample sizes.
Using the standard upper bound on the operator norm of Gaussian random matrices \citep[Corollary 5.35]{vershynin2010introduction}, we have $\mathbb P( \|\mathbf{H}\|_{op}> \sqrt{n+M}+\sqrt{p}+\sqrt{2n} )\leq 2\exp(-n)$. Recalling that $M/n=m$ and $n/p=\noverp$, we have
$$
\sum_{n=1}^{\infty}\mathbb P\left(\frac{1}{\sqrt{n}}\|\mathbf{H}\|_{op}> \sqrt{1+m}+\sqrt{\frac{1}{\noverp}}+\sqrt{2}\right)\leq 2\sum_{n=1}^{\infty}\exp(-n)< \infty.
$$
By Borel--Cantelli lemma, we conclude that
 \begin{equation}
    \label{borel_cantelli_H_operator_norm}
    \mathbb P\left( \left\{\frac{1}{\sqrt{n}}\|\mathbf{H}\|_{op}> \sqrt{1+m}+\sqrt{\frac{1}{\noverp}}+\sqrt{2}\right\} \text{ happens infinitely many times}\right)=0.
\end{equation}
Thus, it is safe to constrain the feasible sets of $\mathbf{u}_1$ and $\mathbf{u}_2$ to be some closed balls with diverging radii $C\sqrt{n}$ for some sufficiently large constant $C$, which are denoted by $\mathcal{S}_{\bu_1}$ and $\mathcal{S}_{\bu_2}$, respectively.

\paragraph{Formulations of PO and AO}
In order to define the PO and AO problems in the context of \Cref{sec:linear_asymptotic_regime}, we need to decompose $\bbeta$ into a ``signal part'' and a ``noise part''.

Denoted by $S$ the space spanned by $\boldsymbol{\beta}_0$ and $\boldsymbol{\beta}_s$. Let $\mathbf{P}$ be the projection matrix onto $S$ and let $\mathbf{P}^{\perp}:=\mathbf{I}_p-\mathbf{P}$ be the projection matrices onto the orthogonal complement of $S$.
We use these projections to decompose $\bbeta$ as the sum of $\bbeta_{S}:=\mathbf{P}\bbeta$ and $\bbeta_{S^{\perp}}:=\mathbf{P}^\perp \bbeta$. Since the length and the direction of $\mathbf{P}\boldsymbol{\beta}$ and those of $\mathbf{P}^{\perp}\boldsymbol{\beta}$ are independent of each other, the optimization can be conducted over these directions and lengths separately.
Besides, we will define the feasible set $\mathcal{S}_{\bbeta}$ appropriately such that
 the images of projections,
$\mathbf{P}\mathcal{S}_{\bbeta}$ and $\mathbf{P}^{\perp}\mathcal{S}_{\bbeta}$, are convex,  compact, and bounded sets.
In light of these observations, the optimization can be rewritten as

\begin{equation}\label{PO_before}
    \begin{aligned}
	\min _{\substack{\boldsymbol{\beta}_S \in \mathbf{P}\mathcal{S}_{\bbeta}, \boldsymbol{\beta}_{S^{\perp}} \in \mathbf{P}^{\perp}\mathcal{S}_{\bbeta}\, \\ \mathbf{u}_1 \in \mathcal{S}_{\bu_1}, \mathbf{u}_2 \in \mathcal{S}_{\bu_2}}} \quad \max _{\bv \in \mathcal{S}_{\bv}}&\left(\frac{1}{n} \mathbf{1}^T \rho\left(\mathbf{u}_1\right)-\frac{1}{n} \mathbf{y}_1^T \mathbf{u}_1+\frac{\tau_0}{M} \mathbf{1}^T \rho\left(\mathbf{u}_2\right)-\frac{\tau_0}{M} \mathbf{y}_2^T \mathbf{u}_2\right.\\
	&\left. +\frac{1}{\sqrt{n}} \bv^T\left(\left[\begin{array}{l}
\mathbf{u}_1 \\
\mathbf{u}_2
\end{array}\right]-  \mathbf{H} \boldsymbol{\beta}_S\right)-\frac{1}{\sqrt{n}}  \bv^T \mathbf{H} \boldsymbol{\beta}_{S^{\perp}}\right).
\end{aligned}
\end{equation}

In addition, the objective function is jointly convex with respect to $\left(\boldsymbol{\beta}_S,\boldsymbol{\beta}_S^{\perp}, \boldsymbol{u}_1,\boldsymbol{u}_2\right)$, and is concave with respect to $\boldsymbol{v}$.
Based on  Sion's minimax theorem and the compactness of all the feasible sets, we can rewrite \eqref{PO_before} by flipping the min and max signs as follows

$$
\begin{aligned}
	\min _{\substack{  \boldsymbol{\beta}_{S^{\perp}} \in \mathbf{P}^{\perp}\mathcal{S}_{\bbeta} }} \quad \max _{\bv \in \mathcal{S}_{\bv}}\quad \min _{\substack{\boldsymbol{\beta}_S \in \mathbf{P}\mathcal{S}_{\bbeta} \\  \mathbf{u}_1 \in \mathcal{S}_{\bu_1}, \mathbf{u}_2 \in \mathcal{S}_{\bu_2}}}&\left(\frac{1}{n} \mathbf{1}^T \rho\left(\mathbf{u}_1\right)-\frac{1}{n} \mathbf{y}_1^T \mathbf{u}_1+\frac{\tau_0}{M} \mathbf{1}^T \rho\left(\mathbf{u}_2\right)-\frac{\tau_0}{M} \mathbf{y}_2^T \mathbf{u}_2\right.\\
	&\left. +\frac{1}{\sqrt{n}} \bv^T\left(\left[\begin{array}{l}
\mathbf{u}_1 \\
\mathbf{u}_2
\end{array}\right]-  \mathbf{H} \boldsymbol{\beta}_S\right)-\frac{1}{\sqrt{n}}   \bv^T \mathbf{H} \boldsymbol{\beta}_{S^{\perp}}\right).
\end{aligned}
$$

It is important to note that the vector of observed and auxiliary responses, $\left(\mathbf{y}_1, \mathbf{y}_2\right)$, is independent of $\mathbf{HP}^{\perp}$. This independence arises because $\mathbf{H}_1 \boldsymbol{\beta}_0=\mathbf{H}_1 \mathbf{P} \boldsymbol{\beta}_0$ and $\mathbf{H}_2 \boldsymbol{\beta}_s=\mathbf{H}_2 \mathbf{P} \boldsymbol{\beta}_s$. Given that $\mathbf{HP}$ and $\mathbf{HP}^\perp$ are independent of each other, and considering that $\mathbf{HP}^\perp$ has the same distribution as $\Tilde{\mathbf{H}}\mathbf{P}^\perp$, where $\Tilde{\mathbf{H}}$ denotes an independent copy of $\mathbf{H}$, we can conclude that the solution to the optimization problem above follows the same distribution of the solution to the following
$$
\begin{aligned}
	\min _{\substack{  \boldsymbol{\beta}_{S^{\perp}} \in \mathbf{P}^{\perp}\mathcal{S}_{\bbeta} }} \quad \max _{\bv \in \mathcal{S}_{\bv}}\quad \min _{\substack{\boldsymbol{\beta}_S \in \mathbf{P}\mathcal{S}_{\bbeta} \\  \mathbf{u}_1 \in \mathcal{S}_{\bu_1}, \mathbf{u}_2 \in \mathcal{S}_{\bu_2}}}&\left(\frac{1}{n} \mathbf{1}^T \rho\left(\mathbf{u}_1\right)-\frac{1}{n} \mathbf{y}_1^T \mathbf{u}_1+\frac{\tau_0}{M} \mathbf{1}^T \rho\left(\mathbf{u}_2\right)-\frac{\tau_0}{M} \mathbf{y}_2^T \mathbf{u}_2\right.\\
	&\left. +\frac{1}{\sqrt{n}} \bv^T\left(\left[\begin{array}{l}
\mathbf{u}_1 \\
\mathbf{u}_2
\end{array}\right]-  \mathbf{H} \boldsymbol{\beta}_S\right)-\frac{1}{\sqrt{n}}  \bv^T \Tilde{\mathbf{H}} \boldsymbol{\beta}_{S^{\perp}}\right).
\end{aligned}
$$

We are ready to define the PO problem as
\begin{equation}
    \label{PO_our}
   \text{PO:} \quad  \min _{ \boldsymbol{\beta}_{S^{\perp}} \in \mathbf{P}^{\perp}\mathcal{S}_{\bbeta}} \max_{\bv \in \mathcal{S}_{\bv}} \left\{ \frac{-1}{\sqrt{n}}  \bv^\top \Tilde{\mathbf{H}}\boldsymbol{\beta}_{S^{\perp}} + \psi(\boldsymbol{\beta}_{S^{\perp}},\bv) \right\},
\end{equation}
where $\psi(\boldsymbol{\beta}_{S^{\perp}},\bv)$ is defined as
\begin{align*}
    \psi(\bbeta_{S^{\perp}},\bv) := \min _{\substack{\boldsymbol{\beta}_S \in \mathbf{P}\mathcal{S}_{\bbeta} \\ \mathbf{u}_1 \in \mathcal{S}_{\bu_1}, \mathbf{u}_2 \in \mathcal{S}_{\bu_2}}} \left\{ \frac{1}{n} \mathbf{1}^T \rho\left(\mathbf{u}_1\right)-\frac{1}{n} \mathbf{y}_1^T \mathbf{u}_1+\frac{\tau_0}{M} \mathbf{1}^T \rho\left(\mathbf{u}_2\right)-\frac{\tau_0}{M} \mathbf{y}_2^T \mathbf{u}_2 \right. \\
    \left.
    +\frac{1}{\sqrt{n}}\bv^T\left(\left[\begin{array}{l}
\mathbf{u}_1 \\
\mathbf{u}_2
\end{array}\right]- \mathbf{H} \bbeta_S\right) \right\}.
\end{align*}
It is easy to see the objective function in \eqref{PO_our} is jointly convex with respect to $\left(\boldsymbol{\beta}_S,\boldsymbol{\beta}_S^{\perp}, \boldsymbol{u}_1,\boldsymbol{u}_2\right)$, and is concave with respect to $\boldsymbol{v}$.

Furthermore, we define the AO problem as follows
\begin{equation}
    \label{AO_our}
   \text{AO:\quad}  \min _{ \boldsymbol{\beta}_{S^{\perp}} \in \mathbf{P}^{\perp}\mathcal{S}_{\bbeta}} \max_{\bv \in \mathcal{S}_{\bv}}
   \left\{
   -\frac{1}{ \sqrt{n}}\left(\bv^T \mathbf{h}\left\|\boldsymbol{\beta}_{S^{\perp}}\right\|+\|\bv\| \mathbf{g}^T \boldsymbol{\beta}_{S^{\perp}} \right) + \psi(\boldsymbol{\beta}_{S^{\perp}},\bv)\right\},
\end{equation}
where $\mathbf{h} \in \mathbb{R}^{n+M}$ and $\mathbf{g} \in \mathbb{R}^p$ have i.i.d. standard normal entries and are independent with $\mathbf{H}$.

\subsubsection{Analyzing the auxiliary optimization}
\label{supp:sec:convergence_AO}

Since the objective function in \eqref{AO_our} is concave with respect to $\boldsymbol{v}$, and the objective function in the definition of  $\psi(\bbeta_{S^{\perp}},\bv)$ is
 jointly convex with respect to
$\left(\boldsymbol{\beta}_S, \boldsymbol{u}_1,\boldsymbol{u}_2\right)$,  and all the feasible sets of $\bbeta_S$,$\bv$ and  $\mathbf{u}_1,\mathbf{u}_2$ are compact and convex, we apply Sion's minimax theorem to rewrite \eqref{AO_our} by flipping the $\min_{\bbeta_S,\mathbf{u}_1,\mathbf{u}_2}$ and $\max_{\bv}$:

\begin{equation}\label{AO_after}
    \begin{aligned}
	\min _{\substack{\boldsymbol{\beta}_S \in \mathbf{P}\mathcal{S}_{\bbeta}, \boldsymbol{\beta}_{S^{\perp}} \in \mathbf{P}^{\perp}\mathcal{S}_{\bbeta}\, \\  \mathbf{u}_1 \in \mathcal{S}_{\bu_1}, \mathbf{u}_2 \in \mathcal{S}_{\bu_2}}} \quad \max _{\bv \in \mathcal{S}_{\bv}}&\left(\frac{1}{n} \mathbf{1}^T \rho\left(\mathbf{u}_1\right)-\frac{1}{n} \mathbf{y}_1^T \mathbf{u}_1+\frac{\tau_0}{M} \mathbf{1}^T \rho\left(\mathbf{u}_2\right)-\frac{\tau_0}{M} \mathbf{y}_2^T \mathbf{u}_2\right.\\
	&\left. +\frac{1}{\sqrt{n}} \bv^T\left(\left[\begin{array}{l}
\mathbf{u}_1 \\
\mathbf{u}_2
\end{array}\right]- \mathbf{H} \boldsymbol{\beta}_S\right) -\frac{1}{ \sqrt{n}}\left(\bv^T \mathbf{h}\left\|\boldsymbol{\beta}_{S^{\perp}}\right\|+\|\bv\| \mathbf{g}^T \boldsymbol{\beta}_{S^{\perp}} \right)\right).
\end{aligned}
\end{equation}

Ideally, we would like to solve the optimization in \eqref{AO_after} with respect to the directions of the vectors while fixing the norms of the vectors, so that we get a scalar optimization.
We first perform the maximization with respect to the direction of $\bv$.
The maximization with respect to $\bv$ in \eqref{AO_after} can be rewritten as
$$
\max_{\bv \in \mathcal{S}_{\bv}} \frac{1}{\sqrt{n}}\|\bv\| \mathbf{g}^T \boldsymbol{\beta}_{S^{\perp}} +\frac{1}{\sqrt{n}} \bv^T\left(\left[\begin{array}{c}
\mathbf{u}_1 \\
\mathbf{u}_2
\end{array}\right]- \mathbf{H} \boldsymbol{\beta}_{S} -{\left\| \boldsymbol{\beta}_{S^{\perp}} \right\|} \mathbf{h}\right) .
$$
For this maximization, we choose the direction of $\bv$ to be the same as the direction of the vector that it is multiplied to and introduce a variable $r:=\|\bv\|$ to denote the length of $\bv$. Additionally, the feasible set of $r$ is $[0, V]$ where $V$ comes from the compact set $\mathcal{S}_{v}$.
The maximization then becomes
$$
\max _{r\in [0,V]} \frac{r}{\sqrt{n}}\left(\mathbf{g}^T \boldsymbol{\beta}_{S^{\perp}}+\left\|\left[\begin{array}{l}
\mathbf{u}_1 \\
\mathbf{u}_2
\end{array}\right]- \mathbf{H} \boldsymbol{\beta}_{S} -{\left\| \boldsymbol{\beta}_{S^{\perp}} \right\|} \mathbf{h}\right\|\right)
$$
The AO is now given by
\begin{equation}\label{AO_with_r}
    \begin{aligned}
\min _{\substack{\boldsymbol{\beta}_S \in \mathbf{P}\mathcal{S}_{\bbeta}, \boldsymbol{\beta}_{S^{\perp}} \in \mathbf{P}^{\perp}\mathcal{S}_{\bbeta}\, \\ \mathbf{u}_1 \in \mathbb{R}^n, \mathbf{u}_2 \in \mathbb{R}^M}} \max _{r\in [0,V]} & \left\{\frac{1}{n} \mathbf{1}^T \rho\left(\mathbf{u}_1\right)-\frac{1}{n} \mathbf{y}_1^T \mathbf{u}_1+\frac{\tau_0}{M} \mathbf{1}^T \rho\left(\mathbf{u}_2\right)-\frac{\tau_0}{M} \mathbf{y}_2^T \mathbf{u}_2 \right. \\
+ & \left.\frac{r}{\sqrt{n}}\left( \mathbf{g}^T \boldsymbol{\beta}_{S^{\perp}}+\left\|\left[\begin{array}{c}
\mathbf{u}_1 \\
\mathbf{u}_2
\end{array}\right]- \mathbf{H} \boldsymbol{\beta}_{S}-{\left\|\boldsymbol{\beta}_{S^{\perp}}\right\|} \mathbf{h}\right\|\right)\right\}
\end{aligned}
\end{equation}

For further analyses, we need to compute the projection matrix $\mathbf{P}$ explicitly.
It is worth mentioning that in the literature,  the projection matrix is often equal to $\frac{\bbeta_0\bbeta_0^\top}{\|\bbeta_0\|^2}$, which has rank $1$.
In the current work, the projection matrix is slightly more complicated as it is the projection onto a two-dimensional space spanned by $\{\bbeta_0,\bbeta_s\}$.

Since $\boldsymbol{\beta}_0$ and $\boldsymbol{\beta}_s$ are linearly independent,
we can use the Gram-Schmidt process to find two orthogonal vectors $\be_1,\be_2$ such that $\operatorname{span}\{\boldsymbol{\beta}_0,\boldsymbol{\beta}_s\}=\operatorname{span}\{\be_1,\be_2\}$, and thus the projection matrix can be written as $\mathbf{P}=\be_1\be_1^T+\be_2\be_2^T$.
The expressions for $\be_1,\be_2$ are given by
\begin{equation}
\label{e1e2}
\left\{\begin{aligned}
\be_1 & := \frac{\boldsymbol{\beta}_0}{\|\boldsymbol{\beta}_0\|_2} ~,\\
\be_2 & := \frac{\boldsymbol{\beta}_s-\xi^{(p)}\frac{\kappa_2^{(p)}}{\kappa_1^{(p)}}\boldsymbol{\beta}_0}{\|\boldsymbol{\beta}_s-\xi^{(p)}\frac{\kappa_2^{(p)}}{\kappa_1^{(p)}}\boldsymbol{\beta}_0\|_2} ~, \\
\end{aligned}\right.
\end{equation}
with the following constants
\begin{equation}\label{eq:kappa-xi-finite}
\left\{\begin{aligned}
\kappa_1^{(p)} & := \|\boldsymbol{\beta}_0\|_2, \\
\kappa_2^{(p)} & := \|\boldsymbol{\beta}_s\|_2, \\
\xi^{(p)} & := \frac{1}{\|\boldsymbol{\beta}_0\|_2 \|\boldsymbol{\beta}_s\|_2} \langle \boldsymbol{\beta}_0, \boldsymbol{\beta}_s \rangle,
\end{aligned}\right.
\end{equation}
By SLLN, $(\kappa_1^{(p)}, \kappa_2^{(p)}, \xi^{(p)})$ converges to $(\kappa_1, \kappa_2, \xi)$ a.s. and we will drop the superscript $(p)$ in the following to ease the notation.

For any candidate $\bbeta$ in \eqref{AO_with_r}, since the length and the direction of  $\mathbf{P}\boldsymbol{\beta}$ and those of $ \mathbf{P}^{\perp}\boldsymbol{\beta}$ are independent with each other, we can optimize over the directions and the lengths separately. To see how this works, we decompose $\bbeta$ as follows:

\begin{equation}
	\label{logic:AO_distribution}
	\begin{aligned}
	\bbeta&=\mathbf{P}\bbeta+\mathbf{P}^{\perp} \bbeta \\
 &=(\be_1^T \bbeta)\be_1 + (\be_2^T \bbeta)\be_2 +\mathbf{P}^{\perp}\bbeta\\
	&=(\frac{\be_1^T \bbeta}{\|\bbeta_0\|_2})\bbeta_0+(\frac{\be_2^T \bbeta}{\|\boldsymbol{\beta}_s-\xi\frac{\kappa_2}{\kappa_1}\boldsymbol{\beta}_0\|_2})(\boldsymbol{\beta}_s-\xi\frac{\kappa_2}{\kappa_1}\boldsymbol{\beta}_0)+\|\mathbf{P}^{\perp}\bbeta\|\cdot\text{direction}(\mathbf{P}^{\perp}\bbeta).
\end{aligned}
\end{equation}

For the SRE $\widehat{\bbeta}_M$, the three scalar quantities $\frac{\be_1^T \widehat{\bbeta}_M}{\|\bbeta_0\|_2}, \frac{\be_2^T \widehat{\bbeta}_M}{\|\boldsymbol{\beta}_s-\xi\frac{\kappa_2}{\kappa_1}\boldsymbol{\beta}_0\|_2},\|\mathbf{P}^{\perp}\widehat{\bbeta}_M\| $ will be tracked in the asymptotics with a system of equations.
Using the above decomposition, we interpret $\bbeta_0$ as the true signal, $\left(\boldsymbol{\beta}_s-\xi\frac{\kappa_2}{\kappa_1}\boldsymbol{\beta}_0\right)$ as the bias induced by the auxiliary data, and $\mathbf{P}^{\perp}\widehat{\bbeta}_M $ as the noise, which will be approximated by a standard Gaussian vector.
The essence of the application of CGMT is to characterize the asymptotic behaviors of the scalar quantities aforementioned.

To be concrete, we introduce the scalars $\alpha_1:=\frac{\be_1^T \boldsymbol{\beta}}{ \kappa_1},\alpha_2:=\frac{\be_2^T \boldsymbol{\beta}}{ \kappa_2}$,  $\sigma:=\left\|\mathbf{P}^{\perp} \boldsymbol{\beta}\right\|$ and let $\btheta$ be the direction of $\mathbf{P}^{\perp} \boldsymbol{\beta}$.
In the following, we drop the feasible sets to ease the notation whenever there is no ambiguity.
The AO problem is now written as
$$   \begin{aligned}
\min _{\substack{\sigma \geq 0 \\ \mathbf{u}_1 \in \mathbb{R}^n, \mathbf{u}_2 \in \mathbb{R}^M \\  \alpha_1, \alpha_2\in \mathbb{R} }}\min_{\|\btheta\|_2=1}\max _{\substack{r\in [0,V]}} &\left(  \frac{1}{n} \mathbf{1}^T \rho\left(\mathbf{u}_1\right)-\frac{1}{n} \mathbf{y}_1^T \mathbf{u}_1+\frac{\tau_0}{M} \mathbf{1}^T \rho\left(\mathbf{u}_2\right)-\frac{\tau_0}{M} \mathbf{y}_2^T \mathbf{u}_2 \right.  \\
+ &\left. \frac{r}{\sqrt{n}}\left(  \sigma\mathbf{g}^T\btheta+\left\|\left[\begin{array}{c}
\mathbf{u}_1 \\
\mathbf{u}_2
\end{array}\right]-\kappa_1\alpha_1\bq_1-\kappa_2\alpha_2\bq_2-\sigma \mathbf{h}\right\|\right)\right),
\end{aligned}$$
where $\bq_1 :=\mathbf{H}\be_1,\bq_2 :=\mathbf{H}\be_2$.
Notice that $\bq_1$ and $\bq_2$ are independent and have i.i.d. standard normal entries (recall that $\mathbf{H}$ has i.i.d. standard normal entries and $ \langle \be_1,\be_2\rangle=0$).
In the next step, we exchange the order of the $\min_{\|\btheta\|=1}$ and $\max_{r\in [0,V]}$ in the above problem. This flipping is based on \Cref{lemma:flip_min_max}.
The AO problem can be reformulated as
\begin{equation}
    \label{AO_with_r_sigma_alpha}
   \begin{aligned}
\min _{\substack{\sigma \geq 0 \\ \mathbf{u}_1 \in \mathbb{R}^n, \mathbf{u}_2 \in \mathbb{R}^M \\  \alpha_1, \alpha_2\in \mathbb{R} }}\max _{\substack{r\in [0,V]}} \min_{\|\btheta\|_2=1} &\left(  \frac{1}{n} \mathbf{1}^T \rho\left(\mathbf{u}_1\right)-\frac{1}{n} \mathbf{y}_1^T \mathbf{u}_1+\frac{\tau_0}{M} \mathbf{1}^T \rho\left(\mathbf{u}_2\right)-\frac{\tau_0}{M} \mathbf{y}_2^T \mathbf{u}_2 \right.  \\
+ &\left. \frac{r}{\sqrt{n}}\left(  \sigma\mathbf{g}^T\btheta+\left\|\left[\begin{array}{c}
\mathbf{u}_1 \\
\mathbf{u}_2
\end{array}\right]-\kappa_1\alpha_1\bq_1-\kappa_2\alpha_2\bq_2-\sigma \mathbf{h}\right\|\right)\right),
\end{aligned}
\end{equation}
Optimizing this problem with respect to the direction of $\btheta$
yields the following
$$
\begin{aligned}
\min _{\substack{\sigma \geq 0 \\ \mathbf{u}_1 \in \mathbb{R}^n, \mathbf{u}_2 \in \mathbb{R}^M \\  \alpha_1, \alpha_2\in \mathbb{R} }} \max _{\substack{ r\in [0,V]}} &\left(  \frac{1}{n} \mathbf{1}^T \rho\left(\mathbf{u}_1\right)-\frac{1}{n} \mathbf{y}_1^T \mathbf{u}_1+\frac{\tau_0}{M} \mathbf{1}^T \rho\left(\mathbf{u}_2\right)-\frac{\tau_0}{M} \mathbf{y}_2^T \mathbf{u}_2 -\frac{r\sigma}{\sqrt{n}}\left\| \mathbf{P}^{\perp}\bg\right\|\right.  \\
+ & \left.r\frac{1}{\sqrt{n}}\left\|\left[\begin{array}{c}
\mathbf{u}_1 \\
\mathbf{u}_2
\end{array}\right]-\kappa_1\alpha_1\bq_1-\kappa_2\alpha_2\bq_2-\sigma \mathbf{h}\right\|\right).
\end{aligned}
$$

Next, we use the identity that $\|\mathbf{a}\|=\min _{\tilde{\nu}>0}\left(\frac{1}{2\tilde{\nu}}\|\mathbf{a}\|^2+\frac{\tilde{\nu}}{2 }\right)$, with optima $\widehat{\tilde{\nu}}=\|\mathbf{a}\|$, to replace the norm in the last display by a squared term:

\begin{equation}
	\label{optim:before_remove_u}
	\begin{aligned}
\min _{\substack{\sigma \geq 0 \\ \mathbf{u}_1 \in \mathbb{R}^n, \mathbf{u}_2 \in \mathbb{R}^M \\  \alpha_1, \alpha_2\in \mathbb{R} }} \max _{\substack{r\in [0,V]}} \min_{\tilde{\nu}>0}&\left(  \frac{1}{n} \mathbf{1}^T \rho\left(\mathbf{u}_1\right)-\frac{1}{n} \mathbf{y}_1^T \mathbf{u}_1+\frac{\tau_0}{M} \mathbf{1}^T \rho\left(\mathbf{u}_2\right)-\frac{\tau_0}{M} \mathbf{y}_2^T \mathbf{u}_2 -\frac{\sigma  r}{ \sqrt{n}}\left\|\mathbf{P}^{\perp} \mathbf{g}\right\|\right.  \\
+ & \frac{r\tilde{\nu}}{2}+\left. \frac{r}{2\tilde{\nu}}\left\|\frac{1}{\sqrt{n}}\left[\begin{array}{c}
\mathbf{u}_1 \\
\mathbf{u}_2
\end{array}\right]-\frac{1}{\sqrt{n}}\kappa_1\alpha_1\bq_1-\frac{1}{\sqrt{n}}\kappa_2\alpha_2\bq_2-\frac{1}{\sqrt{n}}\sigma \mathbf{h}\right\|^2\right)
\end{aligned}
\end{equation}

We shall show the above objective function is jointly convex in $(\mathbf{u}_1,\mathbf{u}_2,\alpha_1,\alpha_2,\sigma,\tilde{\nu})$ and concave in $r$. The concavity is easy since the objective function is linear in $r$. To show the joint convexity, we first note that the function {\small $h_1(\tilde{\btheta}):=1+\left\|\frac{1}{\sqrt{n}}\left[\begin{array}{c}
\mathbf{u}_1 \\
\mathbf{u}_2
\end{array}\right]-\frac{1}{\sqrt{n}}\kappa_1\alpha_1\bq_1-\frac{1}{\sqrt{n}}\kappa_2\alpha_2\bq_2-\frac{1}{\sqrt{n}}\sigma \mathbf{h}\right\|^2$ }is jointly convex in $\tilde{\btheta}:=(\mathbf{u}_1,\mathbf{u}_2,\alpha_1,\alpha_2,\sigma)$ since $h_1$ is quadratic over some linear functions.
We then note that the perspective function of $h_1(\tilde{\btheta})$ is
$$\begin{aligned}
g_1(\tilde{\btheta},\tilde{\nu})    &:= \tilde{\nu}+ \frac{1}{\tilde{\nu}}\left\|\frac{1}{\sqrt{n}}\left[\begin{array}{c}
\mathbf{u}_1 \\
\mathbf{u}_2
\end{array}\right]-\frac{1}{\sqrt{n}}\kappa_1\alpha_1\bq_1-\frac{1}{\sqrt{n}}\kappa_2\alpha_2\bq_2-\frac{1}{\sqrt{n}}\sigma \mathbf{h}\right\|^2\\
&=\tilde{\nu} \left(1+ \frac{1}{\tilde{\nu}^2}\left\|\frac{1}{\sqrt{n}}\left[\begin{array}{c}
\mathbf{u}_1 \\
\mathbf{u}_2
\end{array}\right]-\frac{1}{\sqrt{n}}\kappa_1\alpha_1\bq_1-\frac{1}{\sqrt{n}}\kappa_2\alpha_2\bq_2-\frac{1}{\sqrt{n}}\sigma \mathbf{h}\right\|^2  \right)\\
&=\tilde{\nu}h_1(\frac{\tilde{\btheta}}{\tilde{\nu}}),
\end{aligned}$$
which  is jointly convex in $(\tilde{\btheta},\tilde{\nu})$ since $h_1$ is convex in $\tilde{\btheta}$.
The joint convexity of the objective function follows from the joint convexity of $g_1(\tilde{\btheta},\tilde{\nu})$ and the convexity of $\rho(\cdot)$. To perform minimization over $\mathbf{u}_1,\mathbf{u}_2$, we  use Sion's minimax theorem to swap the order of minimization and maximization,  arrive at

$$
\begin{aligned}
\min _{\substack{\sigma \geq 0,\tilde{\nu}>0  \\  \alpha_1, \alpha_2\in \mathbb{R} }} \max _{\substack{r\in [0,V]}} \min _{\substack{  \mathbf{u}_1 \in \mathbb{R}^n, \mathbf{u}_2 \in \mathbb{R}^M   }}&\left(  \frac{1}{n} \mathbf{1}^T \rho\left(\mathbf{u}_1\right)-\frac{1}{n} \mathbf{y}_1^T \mathbf{u}_1+\frac{\tau_0}{M} \mathbf{1}^T \rho\left(\mathbf{u}_2\right)-\frac{\tau_0}{M} \mathbf{y}_2^T \mathbf{u}_2 -\frac{\sigma  r}{ \sqrt{n}}\left\|\mathbf{P}^{\perp} \mathbf{g}\right\|\right.  \\
+ & \frac{r\tilde{\nu}}{2}+\left. \frac{r}{2\tilde{\nu}}\left\|\frac{1}{\sqrt{n}}\left[\begin{array}{c}
\mathbf{u}_1 \\
\mathbf{u}_2
\end{array}\right]-\frac{1}{\sqrt{n}}\kappa_1\alpha_1\bq_1-\frac{1}{\sqrt{n}}\kappa_2\alpha_2\bq_2-\frac{1}{\sqrt{n}}\sigma \mathbf{h}\right\|^2\right)
\end{aligned}
$$

\paragraph{Minimization over \texorpdfstring{$\mathbf{u}_1,\mathbf{u}_2$}{\mathbf{u}_1,\mathbf{u}_2}:}
 
We now focus on the optimization over $\mathbf{u}_1 \in \mathbb{R}^n$ and $\mathbf{u}_2 \in \mathbb{R}^M$. Specifically,  we analyze the following problem:

\begin{equation}
	\label{optim:u1u2}
	\begin{aligned}
\min _{ \mathbf{u}_1 \in \mathbb{R}^n, \mathbf{u}_2 \in \mathbb{R}^M} &\left(  \frac{1}{n} \mathbf{1}^T \rho\left(\mathbf{u}_1\right)-\frac{1}{n} \mathbf{y}_1^T \mathbf{u}_1+\frac{\tau_0}{M} \mathbf{1}^T \rho\left(\mathbf{u}_2\right)-\frac{\tau_0}{M} \mathbf{y}_2^T \mathbf{u}_2 \right.  \\
 &+\left. \frac{r}{2\tilde{\nu}}\left\|\frac{1}{\sqrt{n}}\left[\begin{array}{c}
\mathbf{u}_1 \\
\mathbf{u}_2
\end{array}\right]-\frac{1}{\sqrt{n}}\kappa_1\alpha_1\bq_1-\frac{1}{\sqrt{n}}\kappa_2\alpha_2\bq_2-\frac{1}{\sqrt{n}}\sigma \mathbf{h}\right\|^2 \right).
\end{aligned}
\end{equation}

Note that the three vectors $\bq_1,\bq_2,\mathbf{h}$ are $n+M$ dimensional and have independent standard normal entries. Each of these vectors can be divided into two parts corresponding to $\mathbf{u}_1$ and $\mathbf{u}_2$ as
$$
\bq_1=\left[\begin{array}{c}
\bq_1^{up} \\
\bq_1^{down}
\end{array}\right],\quad \bq_2=\left[\begin{array}{c}
\bq_2^{up} \\
\bq_2^{down}
\end{array}\right],\quad \mathbf{h}=\left[\begin{array}{c}
\mathbf{h}^{up} \\
\mathbf{h}^{down}
\end{array}\right].
$$

For the terms involving $\mathbf{y}_1$ and $\mathbf{u}_1$, we use the following completion of squares:
{\scriptsize{\begin{equation}
	\label{optim:complete_square_y1u1}
	\begin{aligned}
		&-\frac{1}{n} \mathbf{y}_1^T \mathbf{u}_1+\frac{r}{2\tilde{\nu}}\left\|\frac{1}{\sqrt{n}}\mathbf{u}_1-\frac{1}{\sqrt{n}}\kappa_1\alpha_1\bq_1^{up}-\frac{1}{\sqrt{n}}\kappa_2\alpha_2\bq_2^{up}-\frac{1}{\sqrt{n}}\sigma \mathbf{h}^{up}\right\|^2+\frac{\sigma}{n}\mathbf{y}_1^T\mathbf{h}^{up}\\
		=&\frac{r}{2\tilde{\nu}}\left\|\frac{1}{\sqrt{n}}\mathbf{u}_1-\frac{1}{\sqrt{n}}\kappa_1\alpha_1\bq_1^{up}-\frac{1}{\sqrt{n}}\kappa_2\alpha_2\bq_2^{up}-\frac{1}{\sqrt{n}}\sigma \mathbf{h}^{up}-\frac{\tilde{\nu}}{r\sqrt{n}}\mathbf{y}_1\right\|^2-\frac{\tilde{\nu}}{2rn}\|\mathbf{y}_1\|^2-\frac{\kappa_1\alpha_1}{n}\mathbf{y}_1^T\bq_1^{up}-\frac{\kappa_2\alpha_2}{n}\mathbf{y}_1^T\bq_2^{up}.
	\end{aligned}
\end{equation}}}

Similarly, by completing the squares for the terms that involve $\mathbf{y}_2$ and $\mathbf{u}_2$, we have
{\scriptsize{\begin{equation}
	\label{optim:complete_square_y2u2}
	\begin{aligned}
		&-\frac{\tau_0}{M} \mathbf{y}_2^T \mathbf{u}_2+\frac{r}{2\tilde{\nu}}\left\|\frac{1}{\sqrt{n}}\mathbf{u}_2-\frac{1}{\sqrt{n}}\kappa_1\alpha_1\bq_1^{down}-\frac{1}{\sqrt{n}}\kappa_2\alpha_2\bq_2^{down}-\frac{1}{\sqrt{n}}\sigma \mathbf{h}^{down}\right\|^2+\frac{\tau_0\sigma}{M}\mathbf{y}_2^T\mathbf{h}^{down}\\
		=&\frac{\tau_0}{M}\left[ \frac{rm}{2\tau_0\tilde{\nu}}\left\|\mathbf{u}_2-\kappa_1\alpha_1\bq_1^{down}-\kappa_2\alpha_2\bq_2^{down}-\sigma \mathbf{h}^{down}-\frac{\tau_0\tilde{\nu}}{rm}\mathbf{y}_2\right\|^2-\frac{\tau_0\tilde{\nu}}{2rm}\|\mathbf{y}_2\|^2-\kappa_1\alpha_1\mathbf{y}_2^T\bq_1^{down}-\kappa_2\alpha_2\mathbf{y}_2^T\bq_2^{down}\right].
	\end{aligned}
\end{equation}}}

\eqref{optim:before_remove_u} can be rewritten as
{\scriptsize
\begin{equation*}
	\begin{aligned}
\min _{\substack{\sigma \geq 0,\tilde{\nu}>0  \\  \alpha_1, \alpha_2\in \mathbb{R} }} \max _{\substack{ r\in [0,V]}}\min _{\substack{ \mathbf{u}_1 \in \mathbb{R}^n, \mathbf{u}_2 \in \mathbb{R}^M   }} &\left( \frac{1}{n}\mathbf{1}^T \rho\left(\mathbf{u}_1\right)+ \frac{r}{2\tilde{\nu}n}\left\|\mathbf{u}_1-\kappa_1 \alpha_1 \bq_1^{up}-\kappa_2 \alpha_2 \bq_2^{up}-\sigma \mathbf{h}^{up}-\frac{\tilde{\nu}}{r}\mathbf{y}_1\right\|^2  \right. \\
& -\frac{\tilde{\nu}}{2rn}\|\mathbf{y}_1\|^2-\frac{\kappa_1\alpha_1}{n}\mathbf{y}_1^T\bq_1^{up}-\frac{\kappa_2\alpha_2}{n}\mathbf{y}_1^T\bq_2^{up}-\frac{\sigma}{n}\mathbf{y}_1^T\mathbf{h}^{up}\\
& +\frac{\tau_0}{M} \mathbf{1}^T \rho\left(\mathbf{u}_2\right)+\frac{\tau_0}{M} \frac{rm}{2\tau_0\tilde{\nu}}\left\|\mathbf{u}_2-\kappa_1 \alpha_1 \bq_1^{down}-\kappa_2 \alpha_2 \bq_2^{down}-\sigma \mathbf{h}^{down}-\frac{\tau_0\tilde{\nu}}{rm}\mathbf{y}_2\right\|^2\\
& +\frac{\tau_0}{M}\left[-\frac{\tau_0\tilde{\nu}}{2rm}\|\mathbf{y}_2\|^2-\kappa_1\alpha_1\mathbf{y}_2^T\bq_1^{down}-\kappa_2\alpha_2\mathbf{y}_2^T\bq_2^{down}-\frac{\tau_0\sigma}{M}\mathbf{y}_2^T\mathbf{h}^{down}\right]\\
&\left.-\frac{\sigma  r}{ \sqrt{n}}\left\|\mathbf{P}^{\perp} \mathbf{g}\right\|+ \frac{r\tilde{\nu}}{2} \right).
\end{aligned}
\end{equation*}
}

Now we can perform the minimization over $\mathbf{u}_1,\mathbf{u}_2$.
Based on the definition of the Moreau envelope, we can express the  minimization over $\mathbf{u}_1$ as
$$
\begin{aligned}
	\min _{ \mathbf{u}_1 \in \mathbb{R}^n}& \frac{1}{n}\mathbf{1}^T \rho\mathbf{u}_1+ \frac{r}{2\tilde{\nu}n}\left\|\mathbf{u}_1-\kappa_1 \alpha_1 \bq_1^{up}-\kappa_2 \alpha_2 \bq_2^{up}-\sigma \mathbf{h}^{up}-\frac{\tilde{\nu}}{r}\mathbf{y}_1\right\|^2\\
	&=\frac{1}{n}M_{\rho(\cdot)}\left(\kappa_1 \alpha_1 \bq_1^{up}+\kappa_2 \alpha_2\bq_2^{up}+\sigma \mathbf{h}^{up}+\frac{\tilde{\nu}}{r } \mathbf{y}_1, \frac{\tilde{\nu}}{r }\right),
\end{aligned}
$$
and the one over $\mathbf{u}_2$ as
$$
\begin{aligned}
	\min _{ \mathbf{u}_2 \in \mathbb{R}^M}& \frac{\tau_0}{M} \mathbf{1}^T \rho\left(\mathbf{u}_2\right)+\frac{\tau_0}{M} \frac{rm}{2\tau_0\tilde{\nu}}\left\|\mathbf{u}_2-\kappa_1 \alpha_1 \bq_1^{down}-\kappa_2 \alpha_2 \bq_2^{down}-\sigma \mathbf{h}^{down}-\frac{\tau_0\tilde{\nu}}{rm}\mathbf{y}_2\right\|^2\\
	&=\frac{\tau_0}{M}M_{\rho(\cdot)}\left(\kappa_1 \alpha_1 \bq_1^{down}+\kappa_2 \alpha_2\bq_2^{down}+\sigma \mathbf{h}^{down}+\frac{\tau_0\tilde{\nu}}{r m} \mathbf{y}_2, \frac{\tau_0\tilde{\nu}}{r m}\right).
\end{aligned}
$$
As a result, \eqref{optim:before_remove_u} can be simplified as
\begin{equation}
\label{eq_reduced_AO}
\min _{\substack{\sigma \geq 0,\tilde{\nu}>0  \\  \alpha_1, \alpha_2\in \mathbb{R} }} \max _{\substack{ r \in [0,V]}} \quad \mathcal{R}_n(\sigma,r,\tilde{\nu},\alpha_1,\alpha_2)
\end{equation}
where
\begin{align*}
    \mathcal{R}_n(\sigma,r,\tilde{\nu},\alpha_1,\alpha_2)&:=\frac{1}{n}M_{\rho(\cdot)}\left(\kappa_1 \alpha_1 \bq_1^{up}+\kappa_2 \alpha_2\bq_2^{up}+\sigma \mathbf{h}^{up}+\frac{\tilde{\nu}}{r } \mathbf{y}_1, \frac{\tilde{\nu}}{r }\right) \\
    &+\frac{\tau_0}{M}M_{\rho(\cdot)}\left(\kappa_1 \alpha_1 \bq_1^{down}+\kappa_2 \alpha_2\bq_2^{down}+\sigma \mathbf{h}^{down}+\frac{\tau_0\tilde{\nu}}{r m} \mathbf{y}_2, \frac{\tau_0\tilde{\nu}}{r m}\right)\\
    &-\frac{\tilde{\nu}}{2rn}\|\mathbf{y}_1\|^2-\frac{\kappa_1\alpha_1}{n}\mathbf{y}_1^T\bq_1^{up}-\frac{\kappa_2\alpha_2}{n}\mathbf{y}_1^T\bq_2^{up}-\frac{\sigma}{n}\mathbf{y}_1^T\mathbf{h}^{up}\\
    & +\frac{\tau_0}{M}\left[-\frac{\tau_0\tilde{\nu}}{2rm}\|\mathbf{y}_2\|^2-\kappa_1\alpha_1\mathbf{y}_2^T\bq_1^{down}-\kappa_2\alpha_2\mathbf{y}_2^T\bq_2^{down}-\frac{\tau_0\sigma}{M}\mathbf{y}_2^T\mathbf{h}^{down}\right]\\
&-\frac{\sigma  r}{ \sqrt{n}}\left\|\mathbf{P}^{\perp} \mathbf{g}\right\|+ \frac{r\tilde{\nu}}{2}.
\end{align*}
Since the partial minimization of a convex function over a convex feasible set preserves the convexity, the objective function $\mathcal{R}_n$ is jointly convex in $(\sigma,\tilde{\nu}, \alpha_1,\alpha_2)$ for any $r$.
By Danskin's theorem \citep{danskin1966theory}, $\mathcal{R}_n$ is concave in $r$ for any $(\sigma,\tilde{\nu},\alpha_1,\alpha_2)$.
In the following, we aim to find the limit of $\mathcal{R}_n$ and then show that the solution to $\mathcal{R}_n$ converges to the solution to the limit.

\paragraph{Limit of \texorpdfstring{$\mathcal{R}_n(\sigma,r,\tilde{\nu},\alpha_1,\alpha_2)$}{Rn(sigma,r,nu,alpha1,alpha2)}}

Fix any $(\sigma,r,\tilde{\nu},\alpha_1,\alpha_2)$.
Using SLLN (as well as the SLLN for the constants defined in \eqref{eq:kappa-xi-finite}), we have as $n\rightarrow\infty$,
{\scriptsize{\begin{equation}\label{envelope_converge_SLLN}
	\begin{aligned}
		&\frac{1}{n}M_{\rho(\cdot)}\left(\kappa_1 \alpha_1 \bq_1^{up}+\kappa_2 \alpha_2\bq_2^{up}+\sigma \mathbf{h}^{up}+\frac{\tilde{\nu}}{r } \mathbf{y}_1, \frac{\tilde{\nu}}{r }\right) \stackrel{a . s}{\longrightarrow} \mathbb{E}(M_{\rho(\cdot)}(\kappa_1\alpha_1 Z_1+\kappa_2\alpha_2 Z_2+\sigma Z_3+\frac{\tilde{\nu}}{r}\text{Bern}(\rho^{\prime}(\kappa_1 Z_1)),\frac{\tilde{\nu}}{r})),\\
		&\frac{\tau_0}{M}M_{\rho(\cdot)}\left(\kappa_1 \alpha_1 \bq_1^{down}+\kappa_2 \alpha_2\bq_2^{down}+\sigma \mathbf{h}^{down}+\frac{\tau_0\tilde{\nu}}{r m} \mathbf{y}_2, \frac{\tau_0\tilde{\nu}}{r m}\right)
\\
&\quad \hskip 3cm \stackrel{a . s}{\longrightarrow}  \tau_0\mathbb{E}(M_{\rho(\cdot)}(\kappa_1\alpha_1 Z_1+\kappa_2\alpha_2 Z_2+\sigma Z_3+\frac{\tau_0\tilde{\nu}}{rm}\text{Bern}(\rho^{\prime}(\kappa_2\xi Z_1+\kappa_2 \sqrt{1-\xi^2}Z_2)),\frac{\tau_0\tilde{\nu}}{rm})).
	\end{aligned}
\end{equation}}}

Recall that $\mathbf{y}_1=\text{Bern}(\rho^{\prime}(\mathbf{H}_1 \boldsymbol{\beta}_0 ))=\text{Bern}(\rho^{\prime}(\kappa_1 \bq_1^{up}))$, we have
{\small
$$
\frac{1}{n} \mathbf{y}_1^T \bq_1^{up}=\frac{1}{n} \sum_{i=1}^n y_{1i} q_{1i}^{up}=\frac{1}{n} \sum_{i=1}^n B e r\left(\rho^{\prime}\left(\kappa_1 q_{1i}^{up}\right)\right) q_{1i}^{up}\stackrel{a . s}{\longrightarrow}
 \mathbb{E}_Z\left[Z \cdot \rho^{\prime}(\kappa_1 Z)\right]=\kappa_1 \mathbb{E}_Z\left[\rho^{\prime \prime}(\kappa_1 Z)\right],
$$}
$$
\text{and\quad} \frac{1}{n}\|\mathbf{y}_1\|^2=\frac{1}{n} \sum_{i=1}^n y_{1i}^2 \underset{n \rightarrow \infty}{\stackrel{\mathrm{SLLN}}{\Longrightarrow}} \mathbb{E}\left[y_{1i}^2\right]=\mathbb{E}\left[y_{1i}\right]=\mathbb{E}_Z\left[\rho^{\prime}(\kappa_1 Z)\right]=\frac{1}{2},
$$
where the last equality follows from \Cref{useful_identity}.
The other two inner products $\frac{1}{n}\mathbf{y}_1^T\mathbf{h}^{up}$ and $\frac{1}{n}\mathbf{y}_1^T\bq_2^{up}$ are of order $1/\sqrt{n}$ since $\mathbf{y}_1$ is independent of both $\mathbf{h}$ and $\bq_2$, and we can ignore them in the limit.

Recall that $\mathbf{y}_2=\text{Bern}(\rho^{\prime}(\mathbf{H}_2\boldsymbol{\beta}_s))=\text{Bern}(\rho^{\prime}(\kappa_2 \xi\bq_1^{down}+\kappa_2\sqrt{1-\xi^2}\bq_2^{down} ))$, we have
$$
\begin{aligned}
	& \quad~ \frac{1}{M} \mathbf{y}_2^T \bq_1^{down}
    \\
    &=\frac{1}{M} \sum_{i=1}^M y_{2i} q_{1i}^{down}\\
    =&\frac{1}{M} \sum_{i=1}^M B e r\left(\rho^{\prime}\left(\kappa_2 \xi q_{1i}^{down}+\kappa_2\sqrt{1-\xi^2} q_{2i}^{down} \right)\right) q_{1i}^{down} \\
 \stackrel{a . s}{\longrightarrow} &\mathbb{E}\left[Z_1 \cdot \rho^{\prime}(\kappa_2 \xi Z_1+\kappa_2\sqrt{1-\xi^2}Z_2)\right]=\kappa_2\xi \mathbb{E}\left[\rho^{\prime \prime}(\kappa_2 \xi Z_1+\kappa_2\sqrt{1-\xi^2}Z_2)\right]
\end{aligned}
$$
and
{\scriptsize{$$
\begin{aligned}
	\frac{1}{M} \mathbf{y}_2^T \bq_2^{down}=\frac{1}{M} \sum_{i=1}^M y_{2i} q_{2i}^{down}=&\frac{1}{M} \sum_{i=1}^M B e r\left(\rho^{\prime}\left(\kappa_2 \xi q_{1i}^{down}+\kappa_2\sqrt{1-\xi^2} q_{2i}^{down} \right)\right) q_{2i}^{down} \\
 \stackrel{a . s}{\longrightarrow}   &\mathbb{E}\left[Z_2 \cdot \rho^{\prime}(\kappa_2 \xi Z_1+\kappa_2\sqrt{1-\xi^2}Z_2)\right]=\kappa_2\sqrt{1-\xi^2} \mathbb{E}\left[\rho^{\prime \prime}(\kappa_2 \xi Z_1+\kappa_2\sqrt{1-\xi^2}Z_2)\right],
\end{aligned}
$$}}
where $Z_1,Z_2\sim N(0,1)$ independently.

For the term $\frac{\sigma  r}{ \sqrt{n}}\left\|\mathbf{P}^{\perp} \mathbf{g}\right\|$,  since $\mathbf{g} \in \mathbb{R}^p$ has i.i.d. standard normal entries, we can approximate $\frac{\sigma  r}{ \sqrt{n}}\left\|\mathbf{P}^{\perp} \mathbf{g}\right\|$ with $\frac{\sigma  r}{ \sqrt{\noverp}}$ by SLLN  for any fixed $(\sigma,r)$,  where $\noverp:=\frac{n}{p}$ is the oversampling ratio.

Putting all these together, the point-wise limit of the objective function  $\mathcal{R}_n(\sigma,r,\tilde{\nu},\alpha_1,\alpha_2)$, denoted by $\mathcal{R}(\sigma,r,\tilde{\nu},\alpha_1,\alpha_2)$, can be expressed as follows:
\begin{equation}
    \label{supp:obj_five_convex_concave}
    \begin{aligned}
& \quad~  \mathcal{R}(\sigma,r,\tilde{\nu},\alpha_1,\alpha_2)\\
& =\lim_{n\rightarrow\infty}\mathcal{R}_n(\sigma,r,v,\alpha_1,\alpha_2)\\
&=\quad \left\{ -\frac{r\sigma}{\sqrt{\noverp}}+\frac{r\tilde{\nu}}{2} -\frac{\tilde{\nu}}{4r}-\kappa_1^2\alpha_1\mathbb{E}(\rho^{\prime\prime}(\kappa_1 Z_1))       \right.\\
 & -\frac{\tau_0^2\tilde{\nu}}{4rm}-\tau_0 \kappa_2 \mathbb{E}(\rho^{\prime\prime}(\kappa_2 \xi Z_1+\kappa_2 \sqrt{1-\xi^2}Z_2))(\alpha_1\kappa_1\xi+\alpha_2\kappa_2\sqrt{1-\xi^2} )\\
 &+\mathbb{E}\left[M_{\rho(\cdot)}\left(\kappa_1\alpha_1 Z_1+\kappa_2\alpha_2 Z_2+\sigma Z_3+\frac{\tilde{\nu}}{r}\text{Bern}(\rho^{\prime}(\kappa_1 Z_1)),\frac{\tilde{\nu}}{r}\right)\right]\\
 &\left.  +\tau_0\mathbb{E}\left[M_{\rho(\cdot)}\left(\kappa_1\alpha_1 Z_1+\kappa_2\alpha_2 Z_2+\sigma Z_3+\frac{\tau_0\tilde{\nu}}{rm}\text{Bern}(\rho^{\prime}(\kappa_2\xi Z_1+\kappa_2 \sqrt{1-\xi^2}Z_2)),\frac{\tau_0\tilde{\nu}}{rm}\right) \right] \right\}.
 \end{aligned}
\end{equation}

Since taking point-wise limit preserves the convexity and the concavity, we know that $\mathcal{R}(\sigma,r,\tilde{\nu},\alpha_1,\alpha_2)$ is concave in $r$ and jointly convex in $(\sigma,\tilde{\nu},\alpha_1,\alpha_2)$.

Define a scalar optimization based on $\mathcal{R}(\sigma,r,\tilde{\nu},\alpha_1,\alpha_2)$
\begin{equation}\label{optim:scalar}
\min _{\substack{\sigma \geq 0,\tilde{\nu}>0  \\  \alpha_1, \alpha_2\in \mathbb{R} }}    \max _{\substack{ r\in [0,V]}}  \mathcal{R}(\sigma,r,\tilde{\nu},\alpha_1,\alpha_2),
\end{equation}
and let $(\sigma_*,r_*,\tilde{\nu}_*,\alpha_{1*},\alpha_{2*})$ be the solution to the optimization in \eqref{optim:scalar}.
We will show below that optima of \eqref{eq_reduced_AO} will converge to $(\sigma_*,r_*,\tilde{\nu}_*,\alpha_{1*},\alpha_{2*})$.

\paragraph{Convergence of the optima}

In order to justify the convergence of the optima of $\mathcal{R}_n$, we should show that the domain for $(\sigma,r,\tilde{\nu},\alpha_1,\alpha_2)$ is uniformly bounded in the following sense:
\begin{equation}\label{scalar_compact_domain}
    \begin{aligned}
         & \sigma= \left\|\mathbf{P}^{\perp} \boldsymbol{\beta}\right\|\leq  {\|\bbeta\|} \leq c_1, \\
    & |\alpha_1|=\left|\frac{\be_1^T \boldsymbol{\beta}}{\kappa_1}\right|\leq \frac{\|\bbeta\|}{ \kappa_1}\leq c_1/\kappa_1,\\
    &|\alpha_2|=\left|\frac{\be_2^T \boldsymbol{\beta}}{ \kappa_2}\right|\leq \frac{\|\bbeta\|}{ \kappa_2}\leq c_1/\kappa_2,\\
    &r=\|\bv\|\leq V
    \end{aligned}
\end{equation}
The first three inequalities in \eqref{scalar_compact_domain} follow from the fact that the feasible set of $\bbeta$ is a closed ball centered at the origin and has a constant radius, as proved in \eqref{borelcantelli_beta}.
The last inequality regarding $r$ follows from the fact that the feasible set for the variable  $\bv$ is a closed ball with a constant radius.
For the scalar variable $\tilde{\nu}$, we recall its definition in
\begin{equation}\label{domain_tilde_v_compact}
    \begin{aligned}
   & \left\|\frac{1}{\sqrt{n}}\left[\begin{array}{c}
\mathbf{u}_1 \\
\mathbf{u}_2
\end{array}\right]-\frac{1}{\sqrt{n}}\kappa_1\alpha_1\bq_1-\frac{1}{\sqrt{n}}\kappa_2\alpha_2\bq_2-\frac{1}{\sqrt{n}}\sigma \mathbf{h}\right\|\\
=& \min_{\tilde{\nu}>0}\left\{\frac{\tilde{\nu}}{2}+\frac{1}{2\tilde{\nu}} \left\|\frac{1}{\sqrt{n}}\left[\begin{array}{c}
\mathbf{u}_1 \\
\mathbf{u}_2
\end{array}\right]-\frac{1}{\sqrt{n}}\kappa_1\alpha_1\bq_1-\frac{1}{\sqrt{n}}\kappa_2\alpha_2\bq_2-\frac{1}{\sqrt{n}}\sigma \mathbf{h}\right\|^2  \right\},
\end{aligned}
\end{equation}
where the optimal $\widehat{\tilde{\nu}}$ is equal to  $\frac{1}{\sqrt{n}}\left\|\left[\begin{array}{c}
\mathbf{u}_1 \\
\mathbf{u}_2
\end{array}\right]-\kappa_1\alpha_1\bq_1-\kappa_2\alpha_2\bq_2-\sigma \mathbf{h}\right\|$.
Therefore, we can, without changing the formulation, restrict the feasible set of $\tilde{\nu}$ to be an interval with the right end larger than $\widehat{\tilde{\nu}}$.
Since we have already shown $\left\|\left[\begin{array}{c}
\mathbf{u}_1 \\
\mathbf{u}_2
\end{array}\right]\right\|\leq C\sqrt{n}$ for large enough sample size $n$ in \eqref{borel_cantelli_H_operator_norm},
by the triangle inequality, it suffices to bound $\frac{1}{\sqrt{n}}\|\kappa_1\alpha_1\bq_1+\kappa_2\alpha_2\bq_2+\sigma \mathbf{h}\|$.
Recall $\bq_1,\bq_2$ and $\mathbf{h}$ are random vectors with independent standard Gaussian random variable as entries. By \Cref{lemma:concetration_norm_gaussian_vector} and \eqref{scalar_compact_domain}, we have
\begin{align*}
    & \mathbb P(\|\kappa_1\alpha_1\bq_1\|>2c_1\sqrt{n+M})\leq \exp(-(n+M)/2),\\
    & P(\|\kappa_2\alpha_2\bq_2\|>2c_1\sqrt{n+M})\leq \exp(-(n+M)/2),\\
    &P(\|\sigma\mathbf{h}\|>2c_1\sqrt{n+M})\leq \exp(-(n+M)/2).
\end{align*}
By union bound and Borel Cantelli lemma, we have
\begin{equation}\label{borem_cantelli_tilde_v}
    \mathbb P\left(\left\{\frac{1}{\sqrt{n}}\|\kappa_1\alpha_1\bq_1+\kappa_2\alpha_2\bq_2+\sigma \mathbf{h}\|>6c_1\sqrt{1+m}\right\} \text{ happens infinitely many times}  \right)=0
\end{equation}
Therefore, we can constrain the feasible set of $\tilde{\nu}$  to be  bounded.

Up to this point, we have shown that the objective function in \eqref{optim:before_remove_u} converges point-wise to the objective function $\mathcal{R}(\sigma,r,\tilde{\nu},\alpha_1,\alpha_2)$. Furthermore, we've established that both objective functions are joint convex with respect to $(\sigma,\tilde{\nu},\alpha_1,\alpha_2)$ and concave with respect to $r$, within a compact domain for these parameters.
Drawing on similar reasoning as presented in the proof of \citet[Lemma A.1]{dai2023scale} and in \citet[Appendix B.3.3]{javanmard2022precise}, which in turn make use of arguments from \citet[Lemma A.5]{thrampoulidis2018precise}, we can conclude that the optimal solutions in \eqref{optim:before_remove_u}, denoted as $(\widehat{\sigma},\widehat{r},\widehat{\tilde{\nu}},\widehat{\alpha}_1,\widehat{\alpha}_2)$, will uniformly converge to the optimal solution  $(\sigma_*,r_*,\tilde{\nu}_*,\alpha_{1*},\alpha_{2*})$ in  \eqref{optim:scalar}.

\subsubsection{Uniqueness of  the optima}  \label{sec:unique-saddle-R}

Although the objective function $\mathcal{R}(\sigma,r,\tilde{\nu},\alpha_1,\alpha_2)$ is jointly convex in $(\sigma,\tilde{\nu},\alpha_1,\alpha_2)$ and concave in $r$ over a compact domain, these properties alone do not guarantee that the optimization problem \eqref{optim:scalar} admits a unique solution $(\sigma_*,r_*,\tilde{\nu}_*,\alpha_{1*},\alpha_{2*})$. To ensure uniqueness, we must additionally verify: (1) for fixed $r>0$, $\mathcal{R}(\sigma,r,\tilde{\nu},\alpha_1,\alpha_2)$ is jointly strictly convex in $(\sigma,\tilde{\nu},\alpha_1,\alpha_2)$, and (2) for fixed $\sigma,r,\tilde{\nu},\alpha_1,\alpha_2$, $\mathcal{R}(\sigma,r,\tilde{\nu},\alpha_1,\alpha_2)$ is strictly concave in $r$.

We begin with a simplification of the objective function \eqref{supp:obj_five_convex_concave}. We first expand the last two terms involving the Moreau envelope. Observe that
\begin{align*}
   &  \mathbb{E}\left[M_{\rho(\cdot)}\left(\kappa_1\alpha_1 Z_1+\kappa_2\alpha_2 Z_2+\sigma Z_3+\frac{\tilde{\nu}}{r}\text{Bern}(\rho^{\prime}(\kappa_1 Z_1)),\frac{\tilde{\nu}}{r}\right)\right]\\
   =& \mathbb{E}\left[\rho^{\prime}(-\kappa_1 Z_1)M_{\rho(\cdot)}\left(\kappa_1\alpha_1 Z_1+\kappa_2\alpha_2 Z_2+\sigma Z_3 ,\frac{\tilde{\nu}}{r}\right)\right]\\
   &+  \mathbb{E}\left[\rho^{\prime}(\kappa_1 Z_1)M_{\rho(\cdot)}\left(\kappa_1\alpha_1 Z_1+\kappa_2\alpha_2 Z_2+\sigma Z_3+\frac{\tilde{\nu}}{r} ,\frac{\tilde{\nu}}{r}\right)\right]
\end{align*}
Using the definition of the Moreau envelope,
\begin{align*}
	 &\mathbb{E}\left[\rho^{\prime}(\kappa_1 Z_1)M_{\rho(\cdot)}\left(\kappa_1\alpha_1 Z_1+\kappa_2\alpha_2 Z_2+\sigma Z_3+\frac{\tilde{\nu}}{r} ,\frac{\tilde{\nu}}{r}\right)\right]\\
	 = & \mathbb{E}\left(\rho^{\prime}\left(\kappa_1 Z_1\right) \min _t\left[\rho(t)+\frac{r }{2\tilde{\nu}}\left( \kappa_1  \alpha_1 Z_1+ \kappa_2\alpha_2 Z_2+\sigma Z_3+\frac{\tilde{\nu}}{  r}-t\right)^2\right]\right) \\
	 = & \mathbb{E}\left(\rho^{\prime}\left(\kappa_1 Z_1\right) \min _t\left[\rho(t)-t+\frac{r }{2\tilde{\nu}}\left(\kappa_1  \alpha_1 Z_1+ \kappa_2\alpha_2 Z_2+\sigma Z_3-t\right)^2\right]\right) \\
	 & +\mathbb{E}\left(\rho^{\prime}\left(\kappa_1 Z_1\right)\left[\frac{\tilde{\nu}}{2r}+\left(\kappa_1  \alpha_1 Z_1+ \kappa_2\alpha_2 Z_2+\sigma Z_3\right)\right]\right) \\
	 =&  \mathbb{E}\left(\rho^{\prime}\left(\kappa_1 Z_1\right) \min _t\left[\rho(t)-t+\frac{r }{2\tilde{\nu}}\left(\kappa_1  \alpha_1 Z_1+ \kappa_2\alpha_2 Z_2+\sigma Z_3-t\right)^2\right]\right) \\
	 & +\frac{\tilde{\nu} }{4r}  +\kappa_1^2\alpha_1\mathbb{E}(\rho^{\prime\prime}(\kappa_1 Z_1))
\end{align*}
where in the last step we use $\mathbb{E}\left(\rho^{\prime}\left(\kappa Z_1\right)\right)=1 / 2$ and the Stein identity $\mathbb{E}\left(\rho^{\prime}\left(\kappa Z_1\right) Z_1\right)=\kappa \mathbb{E}_Z\left[\rho^{\prime \prime}(\kappa Z)\right]$.
A similar argument yields
\begin{align*}
&\mathbb{E}\left[M_{\rho(\cdot)}\left(\kappa_1\alpha_1 Z_1+\kappa_2\alpha_2 Z_2+\sigma Z_3+\frac{\tau_0\tilde{\nu}}{rm}\text{Bern}(\rho^{\prime}(\kappa_2\xi Z_1+\kappa_2 \sqrt{1-\xi^2}Z_2)),\frac{\tau_0\tilde{\nu}}{rm}\right) \right]\\
	=&\mathbb{E}\left[\rho^{\prime}(-\kappa_2\xi Z_1-\kappa_2 \sqrt{1-\xi^2}Z_2)M_{\rho(\cdot)}\left(\kappa_1\alpha_1 Z_1+\kappa_2\alpha_2 Z_2+\sigma Z_3 ,\frac{\tau_0\tilde{\nu}}{rm}\right)\right]\\
	&+ \mathbb{E}\left[\rho^{\prime}(\kappa_2\xi Z_1+\kappa_2 \sqrt{1-\xi^2}Z_2)M_{\rho(\cdot)}\left(\kappa_1\alpha_1 Z_1+\kappa_2\alpha_2 Z_2+\sigma Z_3 +\frac{\tau_0\tilde{\nu}}{rm},\frac{\tau_0\tilde{\nu}}{rm}\right)\right]\\
	=&\mathbb{E}\left[\rho^{\prime}(-\kappa_2\xi Z_1-\kappa_2 \sqrt{1-\xi^2}Z_2)M_{\rho(\cdot)}\left(\kappa_1\alpha_1 Z_1+\kappa_2\alpha_2 Z_2+\sigma Z_3 ,\frac{\tau_0\tilde{\nu}}{rm}\right)\right]\\
	&+ \mathbb{E}\left(\rho^{\prime}(\kappa_2\xi Z_1+\kappa_2 \sqrt{1-\xi^2}Z_2)\min _t\left[\rho(t)-t+\frac{rm }{2\tau_0\tilde{\nu}}\left(\kappa_1  \alpha_1 Z_1+ \kappa_2\alpha_2 Z_2+\sigma Z_3-t\right)^2\right]\right)\\
	&+\frac{\tau_0\tilde{\nu}}{4rm}+  \kappa_2 \mathbb{E}(\rho^{\prime\prime}(\kappa_2 \xi Z_1+\kappa_2 \sqrt{1-\xi^2}Z_2))(\alpha_1\kappa_1\xi+\alpha_2\kappa_2\sqrt{1-\xi^2} )
\end{align*}

Putting the pieces together, the objective function $\mathcal{R}(\sigma,r,\tilde{\nu},\alpha_1,\alpha_2)$ in \eqref{supp:obj_five_convex_concave} can be expressed as
\begin{align*}
	& \mathcal{R}(\sigma,r,\tilde{\nu},\alpha_1,\alpha_2)=\quad \left\{ -\frac{r\sigma}{\sqrt{\noverp}}+\frac{r\tilde{\nu}}{2}         \right.\\
  & +\mathbb{E}\left[\rho^{\prime}(-\kappa_1 Z_1)M_{\rho(\cdot)}\left(\kappa_1\alpha_1 Z_1+\kappa_2\alpha_2 Z_2+\sigma Z_3 ,\frac{\tilde{\nu}}{r}\right)\right]\\
   &+  \mathbb{E}\left(\rho^{\prime}\left(\kappa_1 Z_1\right) \min _t\left[\rho(t)-t+\frac{r }{2\tilde{\nu}}\left(\kappa_1  \alpha_1 Z_1+ \kappa_2\alpha_2 Z_2+\sigma Z_3-t\right)^2\right]\right) \\
    &+ \tau_0\mathbb{E}\left(\rho^{\prime}(-\kappa_2\xi Z_1-\kappa_2 \sqrt{1-\xi^2}Z_2)\min _t\left[\rho(t)+\frac{rm }{2\tau_0\tilde{\nu}}\left(\kappa_1  \alpha_1 Z_1+ \kappa_2\alpha_2 Z_2+\sigma Z_3-t\right)^2\right]\right)\\
	&+\left. \tau_0\mathbb{E}\left(\rho^{\prime}(\kappa_2\xi Z_1+\kappa_2 \sqrt{1-\xi^2}Z_2)\min _t\left[\rho(t)-t+\frac{rm }{2\tau_0\tilde{\nu}}\left(\kappa_1  \alpha_1 Z_1+ \kappa_2\alpha_2 Z_2+\sigma Z_3-t\right)^2\right]\right)\right\}
\end{align*}
Thus, to establish uniqueness of the optimizer, it remains to verify that (1) for fixed $r>0$, $\mathcal{R}(\sigma,r,\tilde{\nu},\alpha_1,\alpha_2)$ is jointly strictly convex in $(\sigma,\tilde{\nu},\alpha_1,\alpha_2)$, and (2) for fixed ($\sigma,\tilde{\nu},\alpha_1,\alpha_2$), $\mathcal{R}(\sigma,r,\tilde{\nu},\alpha_1,\alpha_2)$ is strictly concave in $r$.

\textbf{Task (1): for fixed $r>0$, $\mathcal{R}(\sigma,r,\tilde{\nu},\alpha_1,\alpha_2)$ is jointly strictly convex in $(\sigma,\tilde{\nu},\alpha_1,\alpha_2)$.}

Since  $\rho(t)-t=\log(1+e^t)-t$ is convex in $t$, and the perspective of a convex function is also convex, the term $\rho(t)-t+\frac{r}{2\tilde{\nu}}[\kappa_1\alpha_1 Z_1+\kappa_2\alpha_2 Z_2+\sigma Z_3 -t]^2$ is jointly convex in ($\tilde{\nu},\alpha_1,\alpha_2,\sigma,t$).
Since partial minimization and expectation both preserve convexity,  $$\mathbb{E}\left(\rho^{\prime}\left(\kappa Z_1\right) \min _t\left[\rho(t)-t+\frac{r }{2\tilde{\nu}}\left(\kappa_1  \alpha_1 Z_1+ \kappa_2\alpha_2 Z_2+\sigma Z_3-t\right)^2\right]\right)$$ is jointly convex in ($\tilde{\nu},\alpha_1,\alpha_2,\sigma$). The same reasoning establishes joint convexity for the other three expectation terms of $\mathcal R$.

To obtain strict convexity, it suffices to show that one expectation term is strictly convex in ($\sigma,\tilde{\nu},\alpha_1,\alpha_2$) for fixed $r>0$. We focus on
 \begin{equation*}
 	\mathbb{E}\left[\rho^{\prime}(-\kappa_1 Z_1)M_{\rho(\cdot)}\left(\kappa_1\alpha_1 Z_1+\kappa_2\alpha_2 Z_2+\sigma Z_3 ,\frac{\tilde{\nu}}{r}\right)\right].
 \end{equation*}
By \Cref{supp:lemma:strict_convex1}, it is enough to show that
 \begin{equation}
 	\mathbb{E}\left[ M_{\rho(\cdot)}\left(\kappa_1\alpha_1 Z_1+\kappa_2\alpha_2 Z_2+\sigma Z_3 ,\frac{\tilde{\nu}}{r}\right)\right]
 \end{equation}
is strictly jointly convex in ($\sigma,\tilde{\nu},\alpha_1,\alpha_2$).
The proof proceeds in two steps:
\begin{enumerate}
	\item Set $\tilde q:=\sqrt{\kappa_1^2\alpha_1^2+\kappa_2^2\alpha_2^2+\sigma^2}>0$ and define $L(\tilde q,\tilde \nu):= \mathbb{E}\left[ M_{\rho(\cdot)}\left(\tilde q Z ,\frac{\tilde{\nu}}{r}\right)\right]$, \Cref{supp:lemma:M_q_v_strict} shows  that $L(\tilde q,\tilde \nu)$ is jointly strictly convex in $(\tilde q,\tilde \nu)$.
		\item Applying the strict convexity of $L(\tilde q,\tilde \nu)$ and \Cref{supp:lemma:M_alpha12_sigma_v_strict} yields the strict convexity of $\mathbb{E}\left[ M_{\rho(\cdot)}\left(\kappa_1\alpha_1 Z_1+\kappa_2\alpha_2 Z_2+\sigma Z_3 ,\frac{\tilde{\nu}}{r}\right)\right]$ in ($\sigma,\tilde{\nu},\alpha_1,\alpha_2$).
\end{enumerate}
Thus, $\mathcal{R}$ is jointly strictly convex in $\left(\sigma, \tilde{\nu}, \alpha_1, \alpha_2\right)$ for fixed $r>0$.

\textbf{Task (2): for fixed $\sigma,\tilde{\nu},\alpha_1,\alpha_2$, the function $\mathcal{R}(\sigma,r,\tilde{\nu},\alpha_1,\alpha_2)$ is strictly concave in $r$.}
For any functions $A(t)$ and $B(t)$,
$$
\inf _t\left[A(t)+\left(\lambda r_1+(1-\lambda) r_2\right) B(t)\right] \geq \lambda \inf _t\left[A(t)+r_1 B(t)\right]+(1-\lambda) \inf _t\left[A(t)+r_2 B(t)\right],
$$
showing that the infimum of an affine function of $r$ is concave in $r$. Hence every "min" term in $\mathcal{R}$ is concave in $r$. To obtain strict concavity, it again suffices to study a single term, e.g.,
$$
\tilde{L}(r):=\mathbb{E}\left[\rho^{\prime}\left(-\kappa_1 Z_1\right) M_{\rho(\cdot)}\left(\kappa_1 \alpha_1 Z_1+\kappa_2 \alpha_2 Z_2+\sigma Z_3, \frac{\tilde{\nu}}{r}\right)\right] .
$$

By dominated convergence, we may differentiate under the expectation. \Cref{supp:lemma:morea_lemma} yields:
\begin{align*}
	&\frac{d \tilde L}{d r}=\mathbb E\left[ \frac{\tilde \nu}{2r^2} \rho^{\prime}(-\kappa_1 Z_1) \rho^{\prime}\left(Prox_{\rho}(\kappa_1\alpha_1 Z_1+\kappa_2\alpha_2 Z_2+\sigma Z_3;\frac{\tilde \nu }{r})\right)^2   \right]\\
	&\frac{d^2 \tilde L}{d r^2}=\mathbb E\left[ -\frac{\tilde \nu}{r^3} \rho^{\prime}(-\kappa_1 Z_1)  \frac{\left[\rho^{\prime}\left(Prox_{\rho}(Q(Z);\frac{\tilde \nu }{r})\right) \right]^2}{1+\frac{\tilde \nu}{r}\rho^{\prime\prime}\left(Prox_{\rho}(Q(Z);\frac{\tilde \nu }{r})\right) }  \right]<0
\end{align*}
where $Q(Z):=\kappa_1\alpha_1 Z_1+\kappa_2\alpha_2 Z_2+\sigma Z_3$. Because the second derivative is strictly negative for all $r>0$ and $\tilde \nu>0$, the function $\tilde{L}(r)$ is strictly concave in $r$, and hence so is $\mathcal{R}$.

\begin{lemma}\label{supp:lemma:strict_convex1}
Fix $r>0$ and let $Z_1,Z_2,Z_3 \stackrel{\mathrm{i.i.d.}}{\sim} N(0,1)$.
Assume that $\rho$ is convex and $\rho'(t)>0$ for all $t\in\mathbb R$.
Suppose that all expectations below are finite. If
\[
(\sigma,\tilde{\nu},\alpha_1,\alpha_2)
\mapsto
\mathbb{E}\left[
M_{\rho(\cdot)}
\left(
\kappa_1 \alpha_1 Z_1+\kappa_2 \alpha_2 Z_2+\sigma Z_3,
\frac{\tilde{\nu}}{r}
\right)
\right]
\]
is strictly convex on a convex domain contained in $\{\tilde{\nu}>0\}$, then
\[
(\sigma,\tilde{\nu},\alpha_1,\alpha_2)
\mapsto
\mathbb{E}\left[
\rho^{\prime}\left(-\kappa_1 Z_1\right)
M_{\rho(\cdot)}
\left(
\kappa_1 \alpha_1 Z_1+\kappa_2 \alpha_2 Z_2+\sigma Z_3,
\frac{\tilde{\nu}}{r}
\right)
\right]
\]
is also strictly convex on the same domain.
\end{lemma}

\begin{proof}[Proof of \Cref{supp:lemma:strict_convex1}]
We write
$\theta=(\sigma,\tilde{\nu},\alpha_1,\alpha_2)$, $z=(z_1,z_2,z_3)$, and  $Z=(Z_1,Z_2,Z_3)$.
Define
\[
G(z;\theta)
:=
M_{\rho(\cdot)}
\left(
\kappa_1 \alpha_1 z_1+\kappa_2 \alpha_2 z_2+\sigma z_3,
\frac{\tilde{\nu}}{r}
\right),
\]
and
\[
w(z):=\rho'(-\kappa_1 z_1).
\]
By assumption that $\rho'(t)>0$ for all $t\in\mathbb R$, $w(Z)>0$ almost surely.

We first note that, for every fixed $z$, the mapping $\theta\mapsto G(z;\theta)$ is convex.
Indeed, since $\rho$ is convex, the Moreau envelope
\(
(x,t)\mapsto M_{\rho(\cdot)}(x,t)
\)
is jointly convex in $(x,t)$ when $t>0$.
Since
\(
x=\kappa_1 \alpha_1 z_1+\kappa_2 \alpha_2 z_2+\sigma z_3\) and
\(t=\frac{\tilde{\nu}}{r}\)
are affine functions of $\theta$ for fixed $r>0$, the composition $\theta\mapsto G(z;\theta)$ is convex.

Take any $\lambda\in(0,1)$ and any two distinct points
\[
\theta^{(1)}
=
(\sigma^{(1)},\tilde{\nu}^{(1)},\alpha_1^{(1)},\alpha_2^{(1)})
\neq
(\sigma^{(2)},\tilde{\nu}^{(2)},\alpha_1^{(2)},\alpha_2^{(2)})
=
\theta^{(2)}.
\]
Define the convex combination
\[
\theta^{(m)}
:=
(1-\lambda)\theta^{(1)}+\lambda\theta^{(2)}.
\]
For each $z$, define the convexity gap
\[
\Delta(z)
:=
(1-\lambda)G(z;\theta^{(1)})
+
\lambda G(z;\theta^{(2)})
-
G(z;\theta^{(m)}).
\]
By the pointwise convexity of $G(z;\cdot)$, we have
\[
\Delta(z)\ge 0
\quad\text{for all }z.
\]

Let
\(F(\theta):=\mathbb E[G(Z;\theta)]\).
The assumed strict convexity of $F$ gives
\[
F(\theta^{(m)})
<
(1-\lambda)F(\theta^{(1)})+\lambda F(\theta^{(2)}),
\]
or equivalently,
\[
\mathbb E[\Delta(Z)]>0.
\]
Since $\Delta(Z)\ge 0$ almost surely, the last expectation inequality implies that
\[
\mathbb P\{\Delta(Z)>0\}>0.
\]
Because $w(Z)>0$ almost surely, it follows that
\[
\mathbb E[w(Z)\Delta(Z)]>0.
\]
Expanding the function $\Delta(\cdot)$, this is equivalent to
\[
\mathbb{E}\left[
w(Z)
G(Z;\theta^{(m)})
\right]
<
(1-\lambda)
\mathbb{E}\left[
w(Z)
G(Z;\theta^{(1)})
\right] + \lambda \mathbb{E}\left[
w(Z)
G(Z;\theta^{(2)})
\right].
\]
This proves the desired strict convexity.
\end{proof}

\begin{lemma}\label{supp:lemma:M_q_v_strict}
	For $\tilde q>0, \tilde\nu>0,r>0$, and $Z\sim N(0,1)$, $\rho(t)=\log(1+e^t)$. Then the function  $L(\tilde q,\tilde \nu):= \mathbb{E}\left[ M_{\rho(\cdot)}\left(\tilde q Z ,\frac{\tilde{\nu}}{r}\right)\right]$ is jointly strictly convex in ($\tilde q, \tilde\nu$).
\end{lemma}

\begin{proof}[Proof of \Cref{supp:lemma:M_q_v_strict}]
	For any $\tilde q>0, \tilde\nu>0 $, it suffices to show that
	$$
\Gamma(x, y):=L(\tilde q+x, \tilde\nu+y)-L(\tilde q, \tilde\nu)-L_1(\tilde q, \tilde\nu) x-L_2(\tilde q, \tilde\nu) y>0, \quad \text { for all } x >-\tilde q , y>-\tilde\nu,
$$
where   $L_1=\partial L / \partial \tilde q$ and $L_2=\partial L / \partial \tilde \nu$. First note that $M_{\rho}(a,b)$ is jointly convex in $(a,b)$, which implies that $\Gamma(x, y)$ is jointly convex in $(x,y)$. Moreover, $\Gamma(0, 0)=0$, so by the mean value theorem there exists some $t^*\in (0,1)$ such that

\begin{align*}
	\Gamma(x, y)-\Gamma(0, 0)&=[\nabla \Gamma(t^*x,t^*y)-\nabla \Gamma (0,0)]^\top (x,y)\\
	&=\frac{1}{t^*}\nabla \Gamma(t^*x,t^*y)^\top (t^*x,t^*y)
\end{align*}
Here we use
\begin{align*}
	&\nabla\Gamma(0,0)=\left[\begin{array}{c}
\frac{\partial \Gamma}{\partial x}(0,0) \\
\frac{\partial \Gamma}{\partial y}(0,0)
\end{array}\right]=\left[\begin{array}{c}
L_1(\tilde q+0,\tilde \nu+0)-L_1(\tilde q,\tilde \nu) \\
L_2(\tilde q+0,\tilde \nu+0)-L_2(\tilde q,\tilde \nu)
\end{array}\right]=\left[\begin{array}{c}
0 \\
0
\end{array}\right].
\end{align*}
So it suffices to show for any $x>-\tilde q,y>-\tilde \nu$
$$\nabla \Gamma (x,y)^\top (x,y)>0 \text { for all }(x, y) \neq(0,0) .$$
This is equivalent to show
$$ \left[L_1(\tilde q+x,\tilde \nu+y)-L_1(\tilde q,\tilde \nu)\right]x+\left[L_2(\tilde q+x,\tilde \nu+y)-L_2(\tilde q,\tilde \nu)\right]y>0$$
Using dominated convergence to interchange derivatives and expectation, it is equivalent to show
\begin{align*}
	& \left[L_1(\tilde q+x,\tilde \nu+y)-L_1(\tilde q,\tilde \nu)\right]x+\left[L_2(\tilde q+x,\tilde \nu+y)-L_2(\tilde q,\tilde \nu)\right]y\\
	&=x \mathbb E\left[Z\rho^{\prime}\left(Prox_{\rho}((\tilde q+x)Z;\frac{\tilde \nu +y}{r})\right)-Z   \rho^{\prime}\left(Prox_{\rho}(\tilde q Z;\frac{\tilde \nu }{r})\right)     \right]\\
	&\quad +y\mathbb E\left[\frac{-1}{2r}\rho^{\prime}\left(Prox_{\rho}((\tilde q+x)Z;\frac{\tilde \nu +y}{r})\right)^2+ \frac{1}{2r}   \rho^{\prime}\left(Prox_{\rho}(\tilde q Z;\frac{\tilde \nu }{r})\right)^2     \right]\\
	&=\mathbb E \left\{\left[\rho^{\prime}\left(Prox_{\rho}((\tilde q+x)Z;\frac{\tilde \nu +y}{r})\right)-   \rho^{\prime}\left(Prox_{\rho}(\tilde q Z;\frac{\tilde \nu }{r})\right)  \right] \times\right.\\
	&\quad\left. \left[xZ-\frac{y}{2r}\left(\rho^{\prime}\left(Prox_{\rho}((\tilde q+x)Z;\frac{\tilde \nu +y}{r})\right)+  \rho^{\prime}\left(Prox_{\rho}(\tilde q Z;\frac{\tilde \nu }{r})\right)     \right)\right]\right\}\\
	&= \left(\frac{\tilde \nu}{r}+\frac{y}{2r}\right)\mathbb E\left[\rho^{\prime}\left(Prox_{\rho}((\tilde q+x)Z;\frac{\tilde \nu +y}{r})\right)-   \rho^{\prime}\left(Prox_{\rho}(\tilde q Z;\frac{\tilde \nu }{r})\right)  \right]^2\\
	&\quad +\mathbb E\left[\rho^{\prime}\left(Prox_{\rho}\left((\tilde q+x)Z;\frac{\tilde \nu +y}{r}\right)\right)-   \rho^{\prime}\left(Prox_{\rho}\left(\tilde q Z;\frac{\tilde \nu }{r}\right)\right)  \right] \\
	&\quad \quad \times \left[ Prox_{\rho}\left((\tilde q+x)Z;\frac{\tilde \nu +y}{r}\right) -    Prox_{\rho}\left(\tilde q Z;\frac{\tilde \nu }{r}\right)  \right]>0
\end{align*}
where we use the identity $z-\operatorname{Prox}_{\rho}(z ; b)=b \rho^{\prime}(\operatorname{Prox}_{\rho}(z ; b)) $ in the last equation. The proof is completed by observing that
\begin{align*}
	& (1) \left(\frac{\tilde \nu}{r}+\frac{y}{2r}\right)>0,\\
	& (2) \rho^{\prime\prime}(t)>0 \Longrightarrow [\rho^{\prime}(t_1)-\rho^{\prime}(t_2)](t_1-t_2)>0,\\
	& (3) \mathbb E\left[\rho^{\prime}\left(Prox_{\rho}((\tilde q+x)Z;\frac{\tilde \nu +y}{r})\right)-   \rho^{\prime}\left(Prox_{\rho}(\tilde q Z;\frac{\tilde \nu }{r})\right)  \right]^2>0.
\end{align*}

\end{proof}

\begin{lemma}\label{supp:lemma:M_alpha12_sigma_v_strict}
	Let $\sigma>0$ and denote
    $\tilde q:=\sqrt{\kappa_1^2\alpha_1^2+\kappa_2^2\alpha_2^2+\sigma^2}$. Let $Z\sim N(0,1)$. Suppose that $L(\tilde q,\tilde \nu)= \mathbb{E}\left[ M_{\rho(\cdot)}\left(\tilde q Z ,\frac{\tilde{\nu}}{r}\right)\right]$  is jointly strictly convex in $(\tilde q,\tilde \nu)$. Then $$\mathbb{E}\left[ M_{\rho(\cdot)}\left(\kappa_1\alpha_1 Z_1+\kappa_2\alpha_2 Z_2+\sigma Z_3 ,\frac{\tilde{\nu}}{r}\right)\right]$$ is strictly convex in ($\sigma,\tilde{\nu},\alpha_1,\alpha_2$), where $Z_i\stackrel{i.i.d}{\sim} N(0,1)$.
\end{lemma}

\begin{proof}[Proof of \Cref{supp:lemma:M_alpha12_sigma_v_strict}]
	By dominated convergence theorem and \Cref{supp:lemma:morea_lemma},  for fixed $\tilde \nu>0$ we have
	\begin{equation}\label{supp:eq:strict_increasing_L_q}
		\frac{\partial L}{\partial \tilde q}= \mathbb E \left[ \frac{\tilde q \rho^{\prime\prime }\left(Prox_{\rho}(\tilde q Z;\frac{\tilde \nu }{r})\right)}{1+\frac{\tilde \nu}{r} \rho^{\prime\prime }\left(Prox_{\rho}(\tilde q Z;\frac{\tilde \nu }{r})\right)  } \right]>0,
	\end{equation}
which suggests that for fixed $\tilde \nu$, $L(\tilde q,\tilde \nu)$ is strictly increasing and strictly convex in $\tilde q$.

Now fix $\tilde{\nu}>0$. Write $\xi=\left(\alpha_1, \alpha_2, \sigma\right)$ and let $\lambda \in(0,1)$. Define the diagonal matrix $D= \operatorname{diag}\left(\kappa_1, \kappa_2, 1\right)$, so that $\tilde{q}(\xi)=\|D \xi\|_2$. Take two distinct vectors $\xi_1=\left(\alpha_1^{(1)}, \alpha_2^{(1)}, \sigma^{(1)}\right) \neq \xi_2= \left(\alpha_1^{(2)}, \alpha_2^{(2)}, \sigma^{(2)}\right)$. We will prove strict convexity of the map
$$
\xi \mapsto L(\tilde{q}(\xi), \tilde{\nu})
$$
by the definition of strict convexity.
\begin{itemize}
	\item \textbf{Case 1}: $\xi_1$ is not parallel to $\xi_2$. Then, by strict convexity of the Euclidean norm, we have
	\begin{align*}
			\tilde q((1-\lambda )\xi_1+\lambda \xi_2) &=\|D[(1-\lambda )\xi_1+\lambda \xi_2]\|_2\\
		&< (1-\lambda) \|D\xi_1\|_2+\lambda \|D\xi_2\|_2\\
		&=(1-\lambda) \tilde q(\xi_1)+\lambda \tilde q(\xi_2)
	\end{align*}
	Let us denote the convex combination of components as
$$
\tilde{\alpha}_1=(1-\lambda) \alpha_1^{(1)}+\lambda \alpha_1^{(2)}, \quad \tilde{\alpha}_2=(1-\lambda) \alpha_2^{(1)}+\lambda \alpha_2^{(2)}, \quad \tilde{\sigma}=(1-\lambda) \sigma^{(1)}+\lambda \sigma^{(2)} .
$$
Then we have
	\begin{align*}
	&\mathbb{E}\left[ M_{\rho(\cdot)}\left(\kappa_1\tilde\alpha_1 Z_1+\kappa_2\tilde\alpha_2 Z_2+\tilde\sigma Z_3 ,\frac{\tilde{\nu}}{r}\right)\right]\\
	&=\mathbb{E}\left[ M_{\rho(\cdot)}\left( \sqrt{\kappa_1^2\tilde \alpha_1^2+\kappa_2\tilde \alpha_2^2+\tilde \sigma^2} Z ,\frac{\tilde{\nu}}{r}\right)\right] \\
		&=\mathbb{E}\left[ M_{\rho(\cdot)}\left(\tilde q((1-\lambda )\xi_1+\lambda \xi_2) Z ,\frac{\tilde{\nu}}{r}\right)\right]\\
		&<\mathbb{E}\left[ M_{\rho(\cdot)}\left((1-\lambda) \tilde q(\xi_1)Z+\lambda \tilde q(\xi_2)Z ,\frac{\tilde{\nu}}{r}\right)\right]\\
		&<(1-\lambda) \mathbb{E}\left[ M_{\rho(\cdot)}\left(\tilde q(\xi_1) Z ,\frac{\tilde{\nu}}{r}\right)\right]+\lambda \mathbb{E}\left[ M_{\rho(\cdot)}\left(\tilde q(\xi_2) Z ,\frac{\tilde{\nu}}{r}\right)\right]\\
		&=(1-\lambda) \mathbb{E}\left[ M_{\rho(\cdot)}\left( \kappa_1\alpha_1^{(1)} Z_1+\kappa_2\alpha_2^{(1)} Z_2+\sigma^{(1)} Z_3  ,\frac{\tilde{\nu}}{r}\right)\right]\\
		& \quad +\lambda \mathbb{E}\left[ M_{\rho(\cdot)}\left( \kappa_1\alpha_1^{(2)} Z_1+\kappa_2\alpha_2^{(2)} Z_2+\sigma^{(2)} Z_3  ,\frac{\tilde{\nu}}{r}\right)\right],
	\end{align*}
	where the first inequality follows from the strictly increasing property, and the second inequality follows from strict convexity(it is possible that $\tilde q(\xi_1)=\tilde q(\xi_2)$).
	\item \textbf{Case 2}: when $\xi_1$ is parallel to $\xi_2$ but $\|\xi_1\|\neq \|\xi_2\|$, we have $\tilde q((1-\lambda )\xi_1+\lambda \xi_2)  =(1-\lambda) \tilde q(\xi_1)+\lambda \tilde q(\xi_2)$.\footnote{Note that $\sigma>0$ is assumed, so $\xi_1$ and $\xi_2$ cannot be in opposite directions. }
    But $\tilde q(\xi_1)\neq \tilde q(\xi_2)$, then by strict convexity, we have
	\begin{align*}
		&\mathbb{E}\left[ M_{\rho(\cdot)}\left(\tilde q((1-\lambda )\xi_1+\lambda \xi_2) Z ,\frac{\tilde{\nu}}{r}\right)\right]\\
		&=\mathbb{E}\left[ M_{\rho(\cdot)}\left((1-\lambda) \tilde q(\xi_1)Z+\lambda \tilde q(\xi_2)Z ,\frac{\tilde{\nu}}{r}\right)\right]\\
		&<(1-\lambda) \mathbb{E}\left[ M_{\rho(\cdot)}\left(\tilde q(\xi_1) Z ,\frac{\tilde{\nu}}{r}\right)\right]+\lambda \mathbb{E}\left[ M_{\rho(\cdot)}\left(\tilde q(\xi_2) Z ,\frac{\tilde{\nu}}{r}\right)\right]
	\end{align*}
\end{itemize}
Based on these two cases, we conclude that for fixed $\tilde \nu$, $\mathbb{E}\left[ M_{\rho(\cdot)}\left(\kappa_1\alpha_1 Z_1+\kappa_2\alpha_2 Z_2+\sigma Z_3 ,\frac{\tilde{\nu}}{r}\right)\right]$ is strictly convex in ($\alpha_1,\alpha_2,\sigma$).

For joint convexity including $\tilde \nu$, let $(\alpha_1^{(1)},\alpha_2^{(1)},\sigma^{(1)},\tilde \nu^{(1)})\neq (\alpha_1^{(2)},\alpha_2^{(2)},\sigma^{(2)},\tilde \nu^{(2)})$, and define
 $\tilde \alpha_1=(1-\lambda)\alpha_1^{(1)}+\lambda \alpha_1^{(2)}$ , $\tilde \alpha_2,\tilde \sigma, \tilde{\tilde\nu}$ similarly.

\begin{itemize}
	\item \textbf{Scenario 1}: If $(\alpha_1^{(1)},\alpha_2^{(1)},\sigma^{(1)})= (\alpha_1^{(2)},\alpha_2^{(2)},\sigma^{(2)}), \tilde\nu^{(1)}\neq \tilde\nu^{(2)}$
	\begin{align*}
		&\mathbb{E}\left[ M_{\rho(\cdot)}\left(\kappa_1\tilde\alpha_1 Z_1+\kappa_2\tilde\alpha_2 Z_2+\tilde\sigma Z_3 ,\frac{(1-\lambda)\tilde\nu^{(1)}+\lambda \tilde\nu^{(2)} }{r}\right)\right]\\
	&=\mathbb{E}\left[ M_{\rho(\cdot)}\left( \sqrt{\kappa_1^2\tilde \alpha_1^2+\kappa_2\tilde \alpha_2^2+\tilde \sigma^2} Z ,\frac{(1-\lambda)\tilde\nu^{(1)}+\lambda \tilde\nu^{(2)} }{r}\right)\right] \\
	&=\mathbb{E}\left[ M_{\rho(\cdot)}\left( (1-\lambda +\lambda)\sqrt{\kappa_1^2\tilde \alpha_1^2+\kappa_2\tilde \alpha_2^2+\tilde \sigma^2} Z ,\frac{(1-\lambda)\tilde\nu^{(1)}+\lambda \tilde\nu^{(2)} }{r}\right)\right] \\
	\text{(Lemma \ref{supp:lemma:M_q_v_strict})}\quad	&<(1-\lambda) \mathbb{E}\left[ M_{\rho(\cdot)}\left( \sqrt{\kappa_1^2\tilde \alpha_1^2+\kappa_2\tilde \alpha_2^2+\tilde \sigma^2} Z ,\frac{ \tilde\nu^{(1)}  }{r}\right)\right]\\
		&\quad +\lambda \mathbb{E}\left[ M_{\rho(\cdot)}\left( \sqrt{\kappa_1^2\tilde \alpha_1^2+\kappa_2\tilde \alpha_2^2+\tilde \sigma^2} Z ,\frac{ \tilde\nu^{(2)} }{r}\right)\right]\\
		&=(1-\lambda) \mathbb{E}\left[ M_{\rho(\cdot)}\left(\kappa_1 \alpha_1^{(1)} Z_1+\kappa_2\alpha_2^{(1)} Z_2+\sigma^{(1)} Z_3 ,\frac{ \tilde\nu^{(1)}  }{r}\right)\right]\\
		&\quad +\lambda \mathbb{E}\left[ M_{\rho(\cdot)}\left( \kappa_1 \alpha_1^{(2)} Z_1+\kappa_2\alpha_2^{(2)} Z_2+\sigma^{(2)} Z_3  ,\frac{ \tilde\nu^{(2)} }{r}\right)\right]
	\end{align*}
\item \textbf{Scenario 2}: If $(\alpha_1^{(1)},\alpha_2^{(1)},\sigma^{(1)})\neq  (\alpha_1^{(2)},\alpha_2^{(2)},\sigma^{(2)}), \tilde\nu^{(1)}= \tilde\nu^{(2)}$, this scenario is identical to the fixed $\tilde \nu$ case, which we have already established in Case 1 and Case 2 above.
\item \textbf{Scenario 3}: If $(\alpha_1^{(1)},\alpha_2^{(1)},\sigma^{(1)})\neq  (\alpha_1^{(2)},\alpha_2^{(2)},\sigma^{(2)})$ and  $ \tilde\nu^{(1)}\neq  \tilde\nu^{(2)}$, denote $\xi_1=(\alpha_1^{(1)},\alpha_2^{(1)},\sigma^{(1)}),$ $ \xi_2=(\alpha_1^{(2)},\alpha_2^{(2)},\sigma^{(2)})$. By the subadditivity of the   $L_2$ norm, we have
{\small $$ \sqrt{\kappa_1^2\tilde \alpha_1^2+\kappa_2\tilde \alpha_2^2+\tilde \sigma^2} =\left\|D \left(\begin{array}{c}
(1-\lambda) \alpha_1^{(1)}+\lambda \alpha_1^{(2)} \\
  (1-\lambda) \alpha_2^{(1)}+\lambda \alpha_2^{(2)}\\
  (1-\lambda) \sigma^{(1)}+\lambda \sigma^{(2)}
\end{array}\right)\right\|_2\leq (1-\lambda) \left\|D \left(\begin{array}{c}
 \alpha_1^{(1)} \\
  \alpha_2^{(1)}\\
 \sigma^{(1)}
\end{array}\right)\right\|_2+ \lambda \left\|D \left(\begin{array}{c}
 \alpha_1^{(2)} \\
  \alpha_2^{(2)}\\
 \sigma^{(2)}
\end{array}\right)\right\|_2$$}
We prove strict convexity by definition:
\begin{align*}
	&\mathbb{E}\left[ M_{\rho(\cdot)}\left(\kappa_1\tilde\alpha_1 Z_1+\kappa_2\tilde\alpha_2 Z_2+\tilde\sigma Z_3 ,\frac{(1-\lambda)\tilde\nu^{(1)}+\lambda \tilde\nu^{(2)} }{r}\right)\right]\\
	&=\mathbb{E}\left[ M_{\rho(\cdot)}\left( \sqrt{\kappa_1^2\tilde \alpha_1^2+\kappa_2\tilde \alpha_2^2+\tilde \sigma^2} Z ,\frac{(1-\lambda)\tilde\nu^{(1)}+\lambda \tilde\nu^{(2)} }{r}\right)\right]	\\
(Eq. \eqref{supp:eq:strict_increasing_L_q})\quad 	&\leq \mathbb{E}\left[ M_{\rho(\cdot)}\left( (1-\lambda) \|D\xi_1\|_2 Z+\lambda \|D\xi_2\|_2 Z  ,\frac{(1-\lambda)\tilde\nu^{(1)}+\lambda \tilde\nu^{(2)} }{r}\right)\right]\\
\text{(Lemma \ref{supp:lemma:M_q_v_strict})}\quad	&<(1-\lambda) \mathbb{E}\left[ M_{\rho(\cdot)}\left(   \|D\xi_1\|_2 Z   ,\frac{ \tilde\nu^{(1)}  }{r}\right)\right]+\lambda  \mathbb{E}\left[ M_{\rho(\cdot)}\left(   \|D\xi_2\|_2 Z   ,\frac{ \tilde\nu^{(2)}  }{r}\right)\right]\\
&=(1-\lambda) \mathbb{E}\left[ M_{\rho(\cdot)}\left(\kappa_1 \alpha_1^{(1)} Z_1+\kappa_2\alpha_2^{(1)} Z_2+\sigma^{(1)} Z_3 ,\frac{ \tilde\nu^{(1)}  }{r}\right)\right]\\
		&\quad +\lambda \mathbb{E}\left[ M_{\rho(\cdot)}\left( \kappa_1 \alpha_1^{(2)} Z_1+\kappa_2\alpha_2^{(2)} Z_2+\sigma^{(2)} Z_3  ,\frac{ \tilde\nu^{(2)} }{r}\right)\right]
\end{align*}

\end{itemize}

\end{proof}

\subsubsection{Finding the optimality condition of the limiting scalar optimization}
\label{subsubsection:Finding the optimality condition of the scalar optimization}
We characterize the solution to the optimization in \eqref{optim:scalar}.
To facilitate the analysis in the following, we reparametrize $\tilde{\nu}$ by introducing $v=1/\tilde{\nu}$.
The original scalar optimization becomes:
{\footnotesize{\begin{equation}
	\label{optim:final_after_remove_u_repara}
	\begin{aligned}
\min_{\substack{ \alpha_1\in \mathbb{R} , \alpha_2\in \mathbb{R}\\v,\sigma>0}} \max _{r>0}  & \quad \left\{ -\frac{r\sigma}{\sqrt{\noverp}}+\frac{r}{2v}       \right.\\
 & -\frac{1}{4rv}-\kappa_1^2\alpha_1\mathbb{E}(\rho^{\prime\prime}(\kappa_1 Z_1))-\frac{\tau_0^2}{4rvm}-\tau_0 \kappa_2 \mathbb{E}(\rho^{\prime\prime}(\kappa_2 \xi Z_1+\kappa_2 \sqrt{1-\xi^2}Z_2))(\alpha_1\kappa_1\xi+\alpha_2\kappa_2\sqrt{1-\xi^2} )\\
 &+\mathbb{E}(M_{\rho(\cdot)}(\kappa_1\alpha_1 Z_1+\kappa_2\alpha_2 Z_2+\sigma Z_3+\frac{1}{rv}\text{Bern}(\rho^{\prime}(\kappa_1 Z_1)),\frac{1}{rv}))\\
 &\left.  +\tau_0\mathbb{E}(M_{\rho(\cdot)}(\kappa_1\alpha_1 Z_1+\kappa_2\alpha_2 Z_2+\sigma Z_3+\frac{\tau_0}{rvm}\text{Bern}(\rho^{\prime}(\kappa_2\xi Z_1+\kappa_2 \sqrt{1-\xi^2}Z_2)),\frac{\tau_0}{rvm}) \right\}.
 \end{aligned}
\end{equation}}}

Let $C(r,v,\sigma,\alpha_1,\alpha_2)$ denote the objective function in \eqref{optim:final_after_remove_u_repara}, we aim to analyze the optima of $C(\cdot)$, i.e., ($r^*,v^*,\sigma^*,\alpha_1^*,\alpha_2^*$).

We shall use the first-order characterization only for interior saddle points.
For the problem in \eqref{optim:final_after_remove_u_repara}, call a tuple
$(r,v,\sigma,\alpha_1,\alpha_2,)$ an admissible interior solution if
\[
        \sigma>0,\qquad \gamma>0,\qquad r\in(0,V),
\]
and with $\tilde{\nu}=1/v$,
the transformed tuple $(\sigma,r,\tilde{\nu},\alpha_1,\alpha_2)$ belongs to the
relative interior of the domain of \eqref{optim:scalar}. In this case $v>0$ and
$\tilde{\nu}>0$, and the change of variables between $v$ and $\tilde{\nu}$ is one-to-one
and smooth.

Assuming that the optimizer lies in the interior of the domain, the smoothness of $C(\cdot)$ implies the following first-order optimality condition:
\begin{equation}
	\label{deri: first order}
	\nabla C=\mathbf{0}
\end{equation}

We next show that \eqref{deri: first order} will reduce to our system of nonlinear equations in \eqref{nonlinear_four_equation}.
We start by taking derivatives of the objective function $C(\cdot)$ w.r.t. $r$ and $v$ and setting them equal to zero. We state the following lemma which will be exploited in taking the derivatives.
For ease of notation, we adopt a shorthand $\gamma_0:=\tau_0\gamma/m$.
 
\begin{lemma}
	\label{deri:lemma_F1gammaF2gamma0}
	For fixed values of $\kappa_1,\kappa_2, \alpha$, and $\sigma$, let the functions $F_1: \mathbb{R}_{+} \rightarrow \mathbb{R}$ and $F_2: \mathbb{R}_{+} \rightarrow \mathbb{R}$ be defined as follows:
 {\scriptsize{\begin{equation}
		\label{deri:F1F2}
		\begin{aligned}
			&F_1(\gamma)=\mathbb{E}(M_{\rho(\cdot)}(\kappa_1\alpha_1 Z_1+\kappa_2\alpha_2 Z_2+\sigma Z_3+\gamma \text{Bern}(\rho^{\prime}(\kappa_1 Z_1)),\gamma)),\\
			&F_2(\gamma_0)=\mathbb{E}(M_{\rho(\cdot)}(\kappa_1\alpha_1 Z_1+\kappa_2\alpha_2 Z_2+\sigma Z_3+\gamma_0 \text{Bern}(\rho^{\prime}(\kappa_2\xi Z_1+\kappa_2 \sqrt{1-\xi^2}Z_2)),\gamma_0)).
		\end{aligned}
	\end{equation}}}
Then, the derivatives of $F_1(\cdot)$ and $F_2(\cdot)$ are as follows:
 {\scriptsize{$$
	\begin{aligned}
		&F_1^{\prime}(\gamma)=\frac{1}{4}-\frac{1}{\gamma^2} \mathbb{E}\left[\rho^{\prime}\left(-\kappa_1 Z_1\right)\left(\kappa_1\alpha_1 Z_1+\kappa_2\alpha_2 Z_2+\sigma Z_3-\operatorname{Prox}_{\gamma \rho(\cdot)}\left(\kappa_1\alpha_1 Z_1+\kappa_2\alpha_2 Z_2+\sigma Z_3\right)\right)^2\right]\\
		&F_2^{\prime}(\gamma_0)=\frac{1}{4}-\frac{1}{\gamma_0^2} \mathbb{E}\left[\rho^{\prime}\left(-\kappa_2 \xi Z_1-\kappa_2 \sqrt{1-\xi^2}Z_2\right)\left(\kappa_1\alpha_1 Z_1+\kappa_2\alpha_2 Z_2+\sigma Z_3-\operatorname{Prox}_{\gamma_0 \rho(\cdot)}\left(\kappa_1\alpha_1 Z_1+\kappa_2\alpha_2 Z_2+\sigma Z_3\right)\right)^2\right]
	\end{aligned}
	$$}}
\end{lemma}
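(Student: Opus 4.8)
Looking at Lemma~\ref{deri:lemma_F1gammaF2gamma0}, the statement to prove is a formula for $F_1'(\gamma)$ and $F_2'(\gamma_0)$, where $F_1$ and $F_2$ are expectations of Moreau envelope functions with the smoothing parameter $\gamma$ (resp.~$\gamma_0$) appearing simultaneously in the shift term (as $\gamma\,\mathrm{Ber}(\cdots)$) and as the envelope parameter itself.

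\textbf{Proof strategy.} The plan is to differentiate under the expectation using the chain rule, carefully accounting for the two places where $\gamma$ enters. Write $W := \kappa_1\alpha_1 Z_1+\kappa_2\alpha_2 Z_2+\sigma Z_3$ and let $B$ denote the $\{0,1\}$-valued Bernoulli variable $\mathrm{Ber}(\rho'(\kappa_1 Z_1))$ (for $F_1$). Then $F_1(\gamma) = \mathbb E\big[M_{\rho(\cdot)}(W+\gamma B,\gamma)\big]$. First I would condition on $B$: since $B\in\{0,1\}$ with $\mathbb P(B=1\mid Z_1) = \rho'(\kappa_1 Z_1)$, tower property gives $F_1(\gamma) = \mathbb E\big[(1-\rho'(\kappa_1 Z_1))\,M_{\rho}(W,\gamma) + \rho'(\kappa_1 Z_1)\,M_{\rho}(W+\gamma,\gamma)\big]$. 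Next I would invoke the lemma of \cite{rockafellar2009variational} quoted in the excerpt, which gives $\partial_v M_\Phi(v,t) = \frac1t(v-\mathrm{Prox}_{t\Phi}(v))$ and $\partial_t M_\Phi(v,t) = -\frac{1}{2t^2}(v-\mathrm{Prox}_{t\Phi}(v))^2$. For the second term $M_\rho(W+\gamma,\gamma)$ the total $\gamma$-derivative is $\partial_v M_\rho(W+\gamma,\gamma)\cdot 1 + \partial_t M_\rho(W+\gamma,\gamma)$; for the first term it is just $\partial_t M_\rho(W,\gamma)$.

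\textbf{Key simplification.} The crucial observation is the identity $\mathrm{Prox}_{\gamma\rho}(z+\gamma) = \mathrm{Prox}_{\gamma\rho}(z) + \gamma$ applied to... actually more useful: since $\rho(t) = \log(1+e^t)$ satisfies $\rho'(t) + \rho'(-t) = 1$ and $\rho(t) - t = \rho(-t)$, one gets a reflection relation. The term with $\rho'(\kappa_1 Z_1)$ multiplying the shifted envelope should, after substituting $Z_1 \mapsto -Z_1$ (using that $W$ is symmetric in the relevant sense after accounting for signs, or more precisely using the change of variables that flips the sign of the Gaussian coordinates) and using the $\mathrm{Prox}$ reflection, recombine with the unshifted term so that the argument of $\mathrm{Prox}$ becomes simply $W$ and the weight becomes $\rho'(-\kappa_1 Z_1)$. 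After this algebra, the $\partial_v$ contribution from the shifted term and the explicit $+\gamma B$ bookkeeping collapse, leaving exactly $F_1'(\gamma) = \frac14 - \frac{1}{\gamma^2}\mathbb E\big[\rho'(-\kappa_1 Z_1)(W - \mathrm{Prox}_{\gamma\rho}(W))^2\big]$, where the $\frac14$ comes from the $\partial_v$ term $\frac1\gamma(v-\mathrm{Prox})$ paired against the weights summing appropriately (one uses $\mathbb E[\rho'(\kappa_1 Z_1)] = \tfrac12$ from Lemma~\ref{useful_identity} together with the fact that $v - \mathrm{Prox}_{\gamma\rho}(v) = \gamma\rho'(\mathrm{Prox}_{\gamma\rho}(v))$, whose range is $(0,\gamma)$, and the $\frac14$ is $\gamma\cdot\frac{1}{2\gamma}$ after the reflection symmetry kills the $Z$-dependence of that piece). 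The computation for $F_2$ is identical with $\kappa_1 Z_1$ replaced by $\kappa_2\xi Z_1 + \kappa_2\sqrt{1-\xi^2}Z_2$ and $\gamma$ by $\gamma_0$.

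\textbf{Main obstacle.} I expect the delicate part to be the reflection/change-of-variables argument that merges the shifted and unshifted Moreau terms and produces the clean weight $\rho'(-\kappa_1 Z_1)$ together with the constant $\tfrac14$. One must verify that flipping the sign of $Z_1$ (and simultaneously adjusting $Z_2, Z_3$ if needed so the joint law is preserved) maps $W + \gamma \mapsto -(−W)$ in a way compatible with $\mathrm{Prox}_{\gamma\rho}(-z) = -\mathrm{Prox}_{\gamma\rho^{\vee}}(z)$ where $\rho^\vee(t)=\rho(-t)$, and that $\rho(t)$ and $\rho(-t)$ differ by the linear term $t$ so the proximal operators differ by the identity shift. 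Tracking these sign conventions correctly — and checking integrability so that differentiation under the expectation is justified (which follows from the Lipschitz bound $|\partial_t M_\rho| \le \frac{1}{2}\rho'(\cdot)^2 \le \frac12$ and $|\partial_v M_\rho|\le \rho'(\cdot)\le 1$, uniformly) — is where the care is needed; everything else is routine application of the Moreau-envelope derivative formulas.
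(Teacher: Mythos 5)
Your proposal is correct and follows essentially the same route that the paper points to, namely the proof of Lemma~7 in Salehi et al.\ (2019) combined with the identities in Lemma~\ref{useful_identity}: condition on the Bernoulli variable, use the shift identity $M_{\rho}(v+\gamma,\gamma) = v + \tfrac{\gamma}{2} + M_{\rho}(-v,\gamma)$ (which follows from $\rho(t)=t+\rho(-t)$), flip the sign of the Gaussian vector to merge the shifted and unshifted terms into a single expectation weighted by $\rho'(-\kappa_1 Z_1)$, and then differentiate the Moreau envelope in its second argument. A small cleanup: the constant $\tfrac14$ enters more transparently than your sketch suggests — after the reflection one gets $F_1(\gamma)=2\,\mathbb E[\rho'(-\kappa_1 Z_1)\,M_\rho(W,\gamma)]+\tfrac{\gamma}{4}+\text{(terms independent of }\gamma\text{)}$, where $\tfrac{\gamma}{4}=\tfrac{\gamma}{2}\,\mathbb E[\rho'(\kappa_1 Z_1)]$, and differentiating the remaining envelope term via $\partial_t M_\rho(v,t)=-\tfrac{1}{2t^2}(v-\operatorname{Prox}_{t\rho}(v))^2$ gives exactly the stated formula.
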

Using Lemma~\ref{useful_identity}, the derivation of Lemma~\ref{deri:lemma_F1gammaF2gamma0} follows directly from the proof of Lemma~7 in \cite{salehi2019impact}. To make use of Lemma~\ref{deri:lemma_F1gammaF2gamma0}, we set the new variables $\gamma=\frac{1}{rv}$ and $\gamma_0=\frac{\tau_0}{rvm}$. Then we have
{\scriptsize{$$
\begin{aligned}
	\frac{\partial C}{\partial v}=&-\frac{r}{2v^2}+\frac{1}{v^2r\gamma^2}\mathbb{E}\left[\rho^{\prime}\left(-\kappa_1 Z_1\right)\left(\kappa_1\alpha_1 Z_1+\kappa_2\alpha_2 Z_2+\sigma Z_3-\operatorname{Prox}_{\gamma \rho(\cdot)}\left(\kappa_1\alpha_1 Z_1+\kappa_2\alpha_2 Z_2+\sigma Z_3\right)\right)^2\right] \\
	&+\frac{\tau_0^2}{mv^2r\gamma_0^2}\mathbb{E}\left[\rho^{\prime}\left(-\kappa_2 \xi Z_1-\kappa_2 \sqrt{1-\xi^2}Z_2\right)\left(\kappa_1\alpha_1 Z_1+\kappa_2\alpha_2 Z_2+\sigma Z_3-\operatorname{Prox}_{\gamma_0 \rho(\cdot)}\left(\kappa_1\alpha_1 Z_1+\kappa_2\alpha_2 Z_2+\sigma Z_3\right)\right)^2\right]
\end{aligned}
$$}}

Setting $\frac{\partial C}{\partial v}=0$ we can get
{\scriptsize{\begin{equation}
	\label{deri:dv=0}
	\begin{aligned}
		\frac{r^2\gamma^2}{2}&=\mathbb{E}\left[\rho^{\prime}\left(-\kappa_1 Z_1\right)\left(\kappa_1\alpha_1 Z_1+\kappa_2\alpha_2 Z_2+\sigma Z_3-\operatorname{Prox}_{\gamma \rho(\cdot)}\left(\kappa_1\alpha_1 Z_1+\kappa_2\alpha_2 Z_2+\sigma Z_3\right)\right)^2\right]\\
		&+m\mathbb{E}\left[\rho^{\prime}\left(-\kappa_2 \xi Z_1-\kappa_2 \sqrt{1-\xi^2}Z_2\right)\left(\kappa_1\alpha_1 Z_1+\kappa_2\alpha_2 Z_2+\sigma Z_3-\operatorname{Prox}_{\gamma_0 \rho(\cdot)}\left(\kappa_1\alpha_1 Z_1+\kappa_2\alpha_2 Z_2+\sigma Z_3\right)\right)^2\right]
	\end{aligned}
\end{equation}}}

Since $\frac{\partial C}{\partial v}$ and $\frac{\partial C}{\partial r}$ contain the same expectation term, we omit the computation of $\frac{\partial C}{\partial r}$. By setting $\frac{\partial C}{\partial r}=0$, we can get
\begin{equation}
	\label{deri:dtau=0}
	\sigma^2=\noverp r^2 \gamma^2, \quad
\end{equation}

\begin{lemma}
	\label{deri:lemma_F3F4sigma}
	For fixed values of $\kappa, \alpha$, and $\gamma$, let the functions $F_3: \mathbb{R}_{+} \rightarrow \mathbb{R}$ and $F_4: \mathbb{R}_{+} \rightarrow \mathbb{R}$ be defined as follows:
	\begin{equation}
		\label{deri:F3F4}
		\begin{aligned}
			&F_3(\sigma)=\mathbb{E}(M_{\rho(\cdot)}(\kappa_1\alpha_1 Z_1+\kappa_2\alpha_2 Z_2+\sigma Z_3+\gamma \text{Bern}(\rho^{\prime}(\kappa_1 Z_1)),\gamma))\\
			&F_4(\sigma)=\mathbb{E}(M_{\rho(\cdot)}(\kappa_1\alpha_1 Z_1+\kappa_2\alpha_2 Z_2+\sigma Z_3+\gamma_0 \text{Bern}(\rho^{\prime}(\kappa_2\xi Z_1+\kappa_2 \sqrt{1-\xi^2}Z_2)),\gamma_0))
		\end{aligned}
	\end{equation}
	then the derivatives of $F_3(\cdot)$ and $F_4(\cdot)$ are as follows:
	$$
	\begin{aligned}
		&F_3^{\prime}(\sigma)=\frac{\sigma}{\gamma}\left[1-2\mathbb{E}\left(\frac{\rho^{\prime}(-\kappa_1 Z_1)}{1+\gamma \rho^{\prime\prime}(\operatorname{Prox}_{\gamma \rho(\cdot)}\left(\kappa_1\alpha_1 Z_1+\kappa_2\alpha_2 Z_2+\sigma Z_3\right))}     \right) \right]\\
		&F_4^{\prime}(\sigma)=\frac{\sigma}{\gamma_0}\left[1-2\mathbb{E}\left(\frac{\rho^{\prime}(-\kappa_2 \xi Z_1-\kappa_2\sqrt{1-\xi^2}Z_2)}{1+\gamma_0 \rho^{\prime\prime}(\operatorname{Prox}_{\gamma_0 \rho(\cdot)}\left(\kappa_1\alpha_1 Z_1+\kappa_2\alpha_2 Z_2+\sigma Z_3\right))}     \right) \right]
	\end{aligned}
	$$
\end{lemma}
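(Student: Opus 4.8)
The plan is to prove Lemma~\ref{deri:lemma_F3F4sigma} by differentiating under the expectation and then applying Gaussian integration by parts (Stein's identity) in the variable $Z_3$, in the same spirit as the derivative computations in \cite{salehi2019impact}. I will carry out the argument for $F_3$; the argument for $F_4$ is word-for-word identical after replacing the Bernoulli parameter $\rho'(\kappa_1 Z_1)$ by $\rho'(\kappa_2\xi Z_1+\kappa_2\sqrt{1-\xi^2}Z_2)$ and $\gamma$ by $\gamma_0$.

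First I would fix $\kappa_1,\kappa_2,\alpha_1,\alpha_2,\gamma$, abbreviate $Q:=\kappa_1\alpha_1 Z_1+\kappa_2\alpha_2 Z_2+\sigma Z_3$ and $Y:=\mathrm{Ber}(\rho'(\kappa_1 Z_1))$, so that $F_3(\sigma)=\mathbb{E}[M_{\rho(\cdot)}(Q+\gamma Y,\gamma)]$. Using the Moreau-envelope derivative $\partial_v M_{\rho(\cdot)}(v,\gamma)=\tfrac1\gamma\big(v-\operatorname{Prox}_{\gamma\rho(\cdot)}(v)\big)=\rho'(\operatorname{Prox}_{\gamma\rho(\cdot)}(v))$, which is bounded by $1$ in absolute value, dominated convergence (the difference quotient in $\sigma$ is dominated by $|Z_3|$, and $M_{\rho(\cdot)}(Q+\gamma Y,\gamma)\le \rho(Q+\gamma)$ is integrable) justifies differentiating under the expectation, giving $F_3'(\sigma)=\mathbb{E}\big[\tfrac1\gamma\big(W_\sigma-\operatorname{Prox}_{\gamma\rho(\cdot)}(W_\sigma)\big)Z_3\big]$ with $W_\sigma:=Q+\gamma Y$. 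Since $Z_3\sim N(0,1)$ is independent of $(Z_1,Z_2,Y)$, Stein's identity applied in $Z_3$ replaces the factor $Z_3$ by the $Z_3$-derivative of $\tfrac1\gamma\big(W_\sigma-\operatorname{Prox}_{\gamma\rho(\cdot)}(W_\sigma)\big)$; by the chain rule together with the standard identity $\operatorname{Prox}'_{\gamma\rho(\cdot)}(z)=\big(1+\gamma\rho''(\operatorname{Prox}_{\gamma\rho(\cdot)}(z))\big)^{-1}$ (obtained by differentiating the fixed-point equation $t+\gamma\rho'(t)=z$), this derivative equals $\tfrac{\sigma}{\gamma}\big(1-(1+\gamma\rho''(\operatorname{Prox}_{\gamma\rho(\cdot)}(W_\sigma)))^{-1}\big)$. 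Hence $F_3'(\sigma)=\tfrac{\sigma}{\gamma}\,\mathbb{E}\big[1-(1+\gamma\rho''(\operatorname{Prox}_{\gamma\rho(\cdot)}(W_\sigma)))^{-1}\big]$.

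The crucial step is to eliminate the Bernoulli term $\gamma Y$ from the argument of the proximal operator. I would condition on $Z_1$: given $Z_1$, one has $Y=0$ with probability $\rho'(-\kappa_1 Z_1)$ (then $W_\sigma=Q$) and $Y=1$ with probability $\rho'(\kappa_1 Z_1)$ (then $W_\sigma=Q+\gamma$). For the event $\{Y=1\}$ I will use the elementary identity $\operatorname{Prox}_{\gamma\rho(\cdot)}(z+\gamma)=-\operatorname{Prox}_{\gamma\rho(\cdot)}(-z)$, which follows from the fixed-point equation $t+\gamma\rho'(t)=z$ and $\rho'(t)=1-\rho'(-t)$, and the evenness of $\rho''$; combined with the sign-flip symmetry $(Z_1,Z_2,Z_3)\stackrel{d}{=}(-Z_1,-Z_2,-Z_3)$, this shows that the contribution of $\{Y=1\}$ equals that of $\{Y=0\}$. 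Therefore $F_3'(\sigma)=\tfrac{2\sigma}{\gamma}\,\mathbb{E}\big[\rho'(-\kappa_1 Z_1)\big(1-(1+\gamma\rho''(\operatorname{Prox}_{\gamma\rho(\cdot)}(Q)))^{-1}\big)\big]$, and invoking Lemma~\ref{useful_identity} (so that $2\mathbb{E}[\rho'(-\kappa_1 Z_1)]=1$) yields the claimed formula $F_3'(\sigma)=\tfrac{\sigma}{\gamma}\big[1-2\mathbb{E}\big(\rho'(-\kappa_1 Z_1)(1+\gamma\rho''(\operatorname{Prox}_{\gamma\rho(\cdot)}(\kappa_1\alpha_1 Z_1+\kappa_2\alpha_2 Z_2+\sigma Z_3)))^{-1}\big)\big]$. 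The identical computation with $\kappa_2\xi Z_1+\kappa_2\sqrt{1-\xi^2}Z_2$ in place of $\kappa_1 Z_1$ and $\gamma_0$ in place of $\gamma$, using $\mathbb{E}[\rho'(\kappa_2\xi Z_1+\kappa_2\sqrt{1-\xi^2}Z_2)]=\tfrac12$ from Lemma~\ref{useful_identity}, gives the formula for $F_4'$.

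I expect the main obstacle to be the Bernoulli-removal step: establishing the proximal-shift identity $\operatorname{Prox}_{\gamma\rho(\cdot)}(z+\gamma)=-\operatorname{Prox}_{\gamma\rho(\cdot)}(-z)$ and then correctly pairing the $\{Y=0\}$ and $\{Y=1\}$ contributions through the Gaussian sign symmetry. Once this symmetrization is in place, the remaining ingredients---interchange of differentiation and expectation, Stein's identity, and the chain rule for the proximal map---are routine calculus.
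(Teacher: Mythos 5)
Your proof is correct and uses the same ingredients as the paper's (differentiation under the expectation, Stein's identity in $Z_3$, and Bernoulli elimination via the shift identity $\operatorname{Prox}_{\gamma\rho}(z+\gamma)=-\operatorname{Prox}_{\gamma\rho}(-z)$ combined with the sign-flip symmetry and Lemma~\ref{useful_identity}); the only difference is that you apply Stein's identity before symmetrizing out the Bernoulli term, whereas the paper's displayed computation symmetrizes first and then applies Stein. The reordering is cosmetic and both yield the stated formula.
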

\Cref{deri:lemma_F3F4sigma} can be derived based on the derivative of the Moreau envelope and Stein's identity as follows:
$$
\begin{aligned}
&\frac{\partial}{\partial \sigma} \mathbb{E}(M_{\rho(\cdot)}(\kappa_1\alpha_1 Z_1+\kappa_2\alpha_2 Z_2+\sigma Z_3+\gamma \text{Bern}(\rho^{\prime}(\kappa_1 Z_1)),\gamma)) \\&=\frac{2}{\gamma} \mathbb{E}\left[Z_3 \rho^{\prime}\left(-\kappa_1 Z_1\right)(\kappa_1\alpha_1 Z_1+\kappa_2\alpha_2 Z_2+\sigma Z_3-\operatorname{Prox}_{\gamma \rho(\cdot)}\left(\kappa_1\alpha_1 Z_1+\kappa_2\alpha_2 Z_2+\sigma Z_3\right)) \right] \\
& =\frac{\sigma}{\gamma}-\frac{2}{\gamma}\mathbb{E}\left[Z_3 \rho^{\prime}\left(-\kappa_1 Z_1\right)\operatorname{Prox}_{\gamma \rho(\cdot)}\left(\kappa_1\alpha_1 Z_1+\kappa_2\alpha_2 Z_2+\sigma Z_3\right) \right]\\
&=\frac{\sigma}{\gamma}-\frac{2}{\gamma}\mathbb{E}\left(\frac{\sigma\rho^{\prime}(-\kappa_1 Z_1)}{1+\gamma \rho^{\prime\prime}(\operatorname{Prox}_{\gamma \rho(\cdot)}\left(\kappa_1\alpha_1 Z_1+\kappa_2\alpha_2 Z_2+\sigma Z_3\right))}     \right)
\end{aligned}
$$
The derivation of $F_4^{\prime}(\sigma)$ is similar, hence omitted. Based on \Cref{deri:lemma_F3F4sigma}, the derivative of $C(\cdot)$ with respect to $\sigma$ is given by
\begin{equation}
	\label{deri:dsigma}
\begin{aligned}
		\frac{\partial C}{\partial \sigma}&=-\frac{r}{\sqrt{\noverp}}+\frac{\sigma}{\gamma}\left[1-2\mathbb{E}\left(\frac{\rho^{\prime}(-\kappa_1 Z_1)}{1+\gamma \rho^{\prime\prime}(\operatorname{Prox}_{\gamma \rho(\cdot)}\left(\kappa_1\alpha_1 Z_1+\kappa_2\alpha_2 Z_2+\sigma Z_3\right))}     \right) \right]\\
		&+\tau_0\frac{\sigma}{\gamma_0}\left[1-2\mathbb{E}\left(\frac{\rho^{\prime}(-\kappa_2 \xi Z_1-\kappa_2\sqrt{1-\xi^2}Z_2)}{1+\gamma_0 \rho^{\prime\prime}(\operatorname{Prox}_{\gamma_0 \rho(\cdot)}\left(\kappa_1\alpha_1 Z_1+\kappa_2\alpha_2 Z_2+\sigma Z_3\right))}     \right) \right]
\end{aligned}
\end{equation}
Setting $\frac{\partial C}{\partial \sigma}=0$ and taking advantage of  \eqref{deri:dtau=0}, we are able to get
\begin{equation}
	\label{deri:dsigma=0}
	\begin{aligned}
		1-\frac{1}{\noverp}+m&=2\mathbb{E}\left(\frac{\rho^{\prime}(-\kappa_1 Z_1)}{1+\gamma \rho^{\prime\prime}(\operatorname{Prox}_{\gamma \rho(\cdot)}\left(\kappa_1\alpha_1 Z_1+\kappa_2\alpha_2 Z_2+\sigma Z_3\right))}     \right) \\
		&+2m\mathbb{E}\left(\frac{\rho^{\prime}(-\kappa_2 \xi Z_1-\kappa_2\sqrt{1-\xi^2}Z_2)}{1+\gamma_0 \rho^{\prime\prime}(\operatorname{Prox}_{\gamma_0 \rho(\cdot)}\left(\kappa_1\alpha_1 Z_1+\kappa_2\alpha_2 Z_2+\sigma Z_3\right))}     \right)
	\end{aligned}
\end{equation}
where we use the relationship $\gamma_0=\tau_0\gamma/m$. So far we have shown that the optimality conditions of $C(\cdot)$ are the same as the first and second non-linear equations \eqref{nonlinear_four_equation}. Next we take derivatives with respect to $\alpha_1 $ and $\alpha_2$. We first present a lemma on the derivative of the Moreau envelope with respect to $\alpha_1 $ and $\alpha_2$.
\begin{lemma}
	\label{deri:lemmaF5F6alpha1}
	For fixed values of $\kappa, \sigma$, and $\gamma$, let the functions $F_5: \mathbb{R} \rightarrow \mathbb{R}$, $F_6: \mathbb{R} \rightarrow \mathbb{R}$, $F_7: \mathbb{R} \rightarrow \mathbb{R}$, and $F_8: \mathbb{R} \rightarrow \mathbb{R}$ be defined as follows: 
	\begin{equation*}
		\begin{aligned}
			&F_5(\alpha_1)=\mathbb{E}(M_{\rho(\cdot)}(\kappa_1\alpha_1 Z_1+\kappa_2\alpha_2 Z_2+\sigma Z_3+\gamma \text{Bern}(\rho^{\prime}(\kappa_1 Z_1)),\gamma)),\\
			&F_6(\alpha_1)=\mathbb{E}(M_{\rho(\cdot)}(\kappa_1\alpha_1 Z_1+\kappa_2\alpha_2 Z_2+\sigma Z_3+\gamma_0 \text{Bern}(\rho^{\prime}(\kappa_2\xi Z_1+\kappa_2 \sqrt{1-\xi^2}Z_2)),\gamma_0)),\\
			&F_7(\alpha_2)=\mathbb{E}(M_{\rho(\cdot)}(\kappa_1\alpha_1 Z_1+\kappa_2\alpha_2 Z_2+\sigma Z_3+\gamma \text{Bern}(\rho^{\prime}(\kappa_1 Z_1)),\gamma)),\\
			&F_8(\alpha_2)=\mathbb{E}(M_{\rho(\cdot)}(\kappa_1\alpha_1 Z_1+\kappa_2\alpha_2 Z_2+\sigma Z_3+\gamma_0 \text{Bern}(\rho^{\prime}(\kappa_2\xi Z_1+\kappa_2 \sqrt{1-\xi^2}Z_2)),\gamma_0)).
		\end{aligned}
	\end{equation*}
Then, the derivatives of $F_5(\cdot),F_6(\cdot),F_7(\cdot)$, and $F_8(\cdot)$ are as follows:
 
{\footnotesize
\begin{align*}
    &\frac{\partial F_5}{\partial \alpha_1}=\kappa_1^2\mathbb{E}[\rho^{\prime\prime}(\kappa_1 Z_1)]+ \frac{\kappa_1^2\alpha_1}{\gamma}+\frac{2\kappa_1^2}{\gamma}\mathbb{E}\left[\rho^{\prime\prime}(-\kappa_1 Z_1)\operatorname{Prox}_{\gamma \rho(\cdot)}\left(\kappa_1\alpha_1 Z_1+\kappa_2\alpha_2 Z_2+\sigma Z_3\right) \right]\\
			&\hskip 3cm-\frac{2\kappa_1^2}{\gamma} \mathbb{E}\left(\frac{\alpha_1\rho^{\prime}(-\kappa_1 Z_1)}{1+\gamma \rho^{\prime\prime}(\operatorname{Prox}_{\gamma \rho(\cdot)}\left(\kappa_1\alpha_1 Z_1+\kappa_2\alpha_2 Z_2+\sigma Z_3\right))}     \right)\\
			&\frac{\partial F_6}{\partial \alpha_1}=\kappa_1\kappa_2\xi \mathbb{E}(\rho^{\prime\prime}(-\kappa_2 \xi Z_1-\kappa_2 \sqrt{1-\xi^2}Z_2))+\frac{\kappa_1^2\alpha_1}{\gamma_0}\\
			&\hskip 3cm+\frac{2\kappa_1\kappa_2\xi}{\gamma_0}\mathbb{E}\left[\rho^{\prime\prime}(-\kappa_2 \xi Z_1-\kappa_2 \sqrt{1-\xi^2}Z_2)\operatorname{Prox}_{\gamma_0 \rho(\cdot)}\left(\kappa_1\alpha_1 Z_1+\kappa_2\alpha_2 Z_2+\sigma Z_3\right) \right]\\
			&\hskip 3cm -\frac{2\kappa_1^2}{\gamma_0} \mathbb{E}\left(\frac{\alpha_1\rho^{\prime}(-\kappa_2 \xi Z_1-\kappa_2 \sqrt{1-\xi^2}Z_2)}{1+\gamma_0 \rho^{\prime\prime}(\operatorname{Prox}_{\gamma_0 \rho(\cdot)}\left(\kappa_1\alpha_1 Z_1+\kappa_2\alpha_2 Z_2+\sigma Z_3\right))}     \right)
\end{align*}}
{\footnotesize
\begin{align*}
    &\frac{\partial F_7}{\partial \alpha_2}= \frac{\kappa_2^2\alpha_2}{\gamma}-\frac{2\kappa_2^2}{\gamma} \mathbb{E}\left(\frac{\alpha_2\rho^{\prime}(-\kappa_1 Z_1)}{1+\gamma \rho^{\prime\prime}(\operatorname{Prox}_{\gamma \rho(\cdot)}\left(\kappa_1\alpha_1 Z_1+\kappa_2\alpha_2 Z_2+\sigma Z_3\right))}     \right)\\
			&\frac{\partial F_8}{\partial \alpha_2}=\kappa_2^2\sqrt{1-\xi^2} \mathbb{E}(\rho^{\prime\prime}(-\kappa_2 \xi Z_1-\kappa_2 \sqrt{1-\xi^2}Z_2))+\frac{\kappa_2^2\alpha_2}{\gamma_0}\\
			&\hskip 3cm+\frac{2\kappa_2^2\sqrt{1-\xi^2}}{\gamma_0}\mathbb{E}\left[\rho^{\prime\prime}(-\kappa_2 \xi Z_1-\kappa_2 \sqrt{1-\xi^2}Z_2)\operatorname{Prox}_{\gamma_0 \rho(\cdot)}\left(\kappa_1\alpha_1 Z_1+\kappa_2\alpha_2 Z_2+\sigma Z_3\right) \right]\\
			&\hskip 3cm -\frac{2\kappa_2^2}{\gamma_0} \mathbb{E}\left(\frac{\alpha_2\rho^{\prime}(-\kappa_2 \xi Z_1-\kappa_2 \sqrt{1-\xi^2}Z_2)}{1+\gamma_0 \rho^{\prime\prime}(\operatorname{Prox}_{\gamma_0 \rho(\cdot)}\left(\kappa_1\alpha_1 Z_1+\kappa_2\alpha_2 Z_2+\sigma Z_3\right))}     \right)
\end{align*}
}

\end{lemma}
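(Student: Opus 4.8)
The plan is to prove Lemma~\ref{deri:lemmaF5F6alpha1} by differentiating under the expectation, exactly as in the proofs of Lemma~\ref{deri:lemma_F1gammaF2gamma0} and Lemma~\ref{deri:lemma_F3F4sigma} above, which themselves follow Lemma~7 of \cite{salehi2019impact}. Throughout write $W=\kappa_1\alpha_1 Z_1+\kappa_2\alpha_2 Z_2+\sigma Z_3+\gamma V$, where $V$ is the relevant $\mathrm{Bern}(\rho'(m))$ variable and $m$ is the argument of its success probability (with $\gamma_0$ and the corresponding Bernoulli replacing $\gamma$ and $V$ in the $F_6,F_8$ cases), and let $R:=\kappa_1\alpha_1 Z_1+\kappa_2\alpha_2 Z_2+\sigma Z_3$ denote the Gaussian part. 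First I would justify interchanging $\partial_{\alpha_j}$ and $\mathbb{E}$: the stationarity condition of the proximal map gives $W-\operatorname{Prox}_{\gamma\rho(\cdot)}(W)=\gamma\rho'\!\big(\operatorname{Prox}_{\gamma\rho(\cdot)}(W)\big)$, whose modulus is at most $\gamma$, so the Moreau gradient $\tfrac1\gamma\big(W-\operatorname{Prox}_{\gamma\rho(\cdot)}(W)\big)$ \citep{rockafellar2009variational} is bounded; since $\partial W/\partial\alpha_1=\kappa_1 Z_1$ and $\partial W/\partial\alpha_2=\kappa_2 Z_2$, the post-chain-rule integrand is dominated by a multiple of $|Z_j|$ and dominated convergence applies. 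This reduces each derivative to $\tfrac1\gamma\mathbb{E}\big[\kappa_j Z_j(W-\operatorname{Prox}_{\gamma\rho(\cdot)}(W))\big]$, which I would split into a linear piece $\tfrac{\kappa_j}{\gamma}\mathbb{E}[Z_j W]$ and a proximal piece $-\tfrac{\kappa_j}{\gamma}\mathbb{E}[Z_j\operatorname{Prox}_{\gamma\rho(\cdot)}(W)]$.

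For the linear piece, expanding $W$ and using mutual independence of $Z_1,Z_2,Z_3$ leaves $\kappa_j^2\alpha_j/\gamma$ (the cross terms vanish since $\mathbb{E}[Z_iZ_j]=0$) together with $\kappa_j\,\mathbb{E}[\rho'(m)Z_j]$; Gaussian integration by parts converts this last expectation into $c_j\,\mathbb{E}[\rho''(m)]$, where $c_j$ is the coefficient of $Z_j$ inside $m$ — namely $\kappa_1$ for $F_5$, $\kappa_2\xi$ for $F_6$, $0$ for $F_7$, and $\kappa_2\sqrt{1-\xi^2}$ for $F_8$. This is the origin of the ``bias'' terms $\kappa_1^2\mathbb{E}[\rho''(\kappa_1 Z_1)]$, $\kappa_1\kappa_2\xi\,\mathbb{E}[\rho''(\cdot)]$, $\kappa_2^2\sqrt{1-\xi^2}\,\mathbb{E}[\rho''(\cdot)]$ in $\partial F_5/\partial\alpha_1$, $\partial F_6/\partial\alpha_1$, $\partial F_8/\partial\alpha_2$, and it explains their absence from $\partial F_7/\partial\alpha_2$, where $c_2=0$; the evenness of $\rho''$ accounts for the $\rho''(-m)$ versus $\rho''(m)$ conventions in the statement.

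The substantive step is the proximal piece. The obstacle is that when $Z_j$ enters the Bernoulli mean $m$ — the cases $F_5$ (via $Z_1$), $F_6$ (via $Z_1$), $F_8$ (via $Z_2$) — the random variable $W$ is \emph{not} a differentiable function of $Z_j$, so Stein's identity cannot be applied to $W$ directly; a careless application would drop the $\mathbb{E}[\rho'']$ terms altogether. The remedy is to integrate out the discrete Bernoulli first, conditionally on $(Z_1,Z_2,Z_3)$, so that $\mathbb{E}[Z_j\operatorname{Prox}_{\gamma\rho(\cdot)}(W)]=\mathbb{E}\big[Z_j\big(\rho'(m)\operatorname{Prox}_{\gamma\rho(\cdot)}(R+\gamma)+\rho'(-m)\operatorname{Prox}_{\gamma\rho(\cdot)}(R)\big)\big]$, which is smooth in $Z_j$. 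Gaussian integration by parts in $Z_j$ is now legitimate: the derivative hitting $\rho'(\pm m)$ produces a term proportional to $c_j\,\rho''(m)\big(\operatorname{Prox}_{\gamma\rho(\cdot)}(R+\gamma)-\operatorname{Prox}_{\gamma\rho(\cdot)}(R)\big)$, while the derivative hitting the proximal arguments produces $\kappa_j\alpha_j\big(\rho'(m)\operatorname{Prox}'_{\gamma\rho(\cdot)}(R+\gamma)+\rho'(-m)\operatorname{Prox}'_{\gamma\rho(\cdot)}(R)\big)$, using $\operatorname{Prox}'_{\gamma\rho(\cdot)}(z)=\big(1+\gamma\rho''(\operatorname{Prox}_{\gamma\rho(\cdot)}(z))\big)^{-1}$. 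To put these in the displayed form I would use the logistic identities $\rho'(t)+\rho'(-t)=1$ and $\rho(t)-\rho(-t)=t$, the latter yielding the proximal reflection identity $\operatorname{Prox}_{\gamma\rho(\cdot)}(z)=-\operatorname{Prox}_{\gamma\rho(\cdot)}(\gamma-z)$, together with the sign-flip symmetry $(m,R)\stackrel{d}{=}(-m,-R)$ of the Gaussian part; these combine to collapse the two proximal branches into a single $\rho'(-m)$-weighted expectation evaluated at $R$ and to supply the factor $2$ appearing in the last terms of each formula. For $F_7$, where $W$ is already affine in $Z_2$, a single application of Stein's identity suffices with no splitting. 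Collecting all pieces and simplifying the residual expectations with Lemma~\ref{useful_identity} (in particular $\mathbb{E}[Z_1^2\rho'(\cdot)]=\tfrac12$ and $\mathbb{E}[Z_1Z_2\rho'(\cdot)]=0$) gives the four stated identities; substituting them together with \eqref{deri:dv=0}, \eqref{deri:dtau=0}, \eqref{deri:dsigma=0} into $\partial C/\partial\alpha_1=\partial C/\partial\alpha_2=0$ reproduces the third and fourth equations of \eqref{nonlinear_four_equation}, which is the eventual purpose of the lemma. I expect the bookkeeping of the Bernoulli integrate-out and the re-collapsing of the proximal branches via the reflection identity and the Gaussian sign-flip symmetry to be the only genuinely delicate part; the remainder is routine Gaussian calculus.
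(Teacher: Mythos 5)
Your proposal is correct and follows essentially the same route as the paper's own proof: differentiate under the expectation via the Moreau gradient and chain rule, integrate out the Bernoulli before applying Gaussian integration by parts, and use the logistic reflection identity $\operatorname{Prox}_{\gamma\rho}(z)=-\operatorname{Prox}_{\gamma\rho}(\gamma-z)$ together with the distributional sign-flip to collapse the two proximal branches into a single $\rho'(-m)$-weighted term carrying the factor $2$. The only organizational difference is that you split $\frac{\kappa_j}{\gamma}\mathbb{E}[Z_j(W-\operatorname{Prox}(W))]$ into a ``linear'' and a ``proximal'' piece up front, whereas the paper conditions on the Bernoulli first and carries the full expression through; both routes apply the same Stein steps and reflection/sign-flip simplifications and arrive at the same intermediate form $-\tfrac{2\kappa_j}{\gamma}\mathbb{E}[Z_j\rho'(-m)\operatorname{Prox}_{\gamma\rho}(R)]$ before the final integration by parts.
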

The proof for $\frac{\partial F_5}{\partial \alpha_1}$ is shown below, other three derivatives can be derived in same way.
{\footnotesize
\begin{align*}
    \frac{\partial F_5}{\partial \alpha_1}&=\mathbb{E}\left[\rho^{\prime}(\kappa_1 Z_1)\frac{\kappa_1 Z_1}{\gamma}\left(\kappa_1 \alpha_1 Z_1+\kappa_2 \alpha_2 Z_2+\sigma Z_3+\gamma + \operatorname{Prox}_{\gamma \rho(\cdot)}\left(-\kappa_1\alpha_1 Z_1-\kappa_2\alpha_2 Z_2-\sigma Z_3\right)\right) \right]\\
	&+\mathbb{E}\left[\rho^{\prime}(-\kappa_1 Z_1)\frac{\kappa_1 Z_1}{\gamma}\left(\kappa_1 \alpha_1 Z_1+\kappa_2 \alpha_2 Z_2+\sigma Z_3- \operatorname{Prox}_{\gamma \rho(\cdot)}\left(\kappa_1\alpha_1 Z_1+\kappa_2\alpha_2 Z_2+\sigma Z_3\right)\right) \right]\\
	&=\kappa_1^2\mathbb{E}[\rho^{\prime\prime}(\kappa_1 Z_1)]+ \frac{\kappa_1^2\alpha_1}{\gamma}-\frac{2\kappa_1}{\gamma}\mathbb{E}\left[Z_1 \rho^{\prime}(-\kappa_1 Z_1)\operatorname{Prox}_{\gamma \rho(\cdot)}\left(\kappa_1\alpha_1 Z_1+\kappa_2\alpha_2 Z_2+\sigma Z_3\right) \right]\\
	&=\kappa_1^2\mathbb{E}[\rho^{\prime\prime}(\kappa_1 Z_1)]+ \frac{\kappa_1^2\alpha_1}{\gamma}+\frac{2\kappa_1^2}{\gamma}\mathbb{E}\left[\rho^{\prime\prime}(-\kappa_1 Z_1)\operatorname{Prox}_{\gamma \rho(\cdot)}\left(\kappa_1\alpha_1 Z_1+\kappa_2\alpha_2 Z_2+\sigma Z_3\right) \right]\\
			&\hskip 3cm-\frac{2\kappa_1^2}{\gamma} \mathbb{E}\left(\frac{\alpha_1\rho^{\prime}(-\kappa_1 Z_1)}{1+\gamma \rho^{\prime\prime}(\operatorname{Prox}_{\gamma \rho(\cdot)}\left(\kappa_1\alpha_1 Z_1+\kappa_2\alpha_2 Z_2+\sigma Z_3\right))}     \right)
\end{align*}
}
where we use $\rho^{\prime}(-x)=1-\rho^{\prime}(x)$, $\operatorname{Prox}_{\gamma \rho(\cdot)}\left(b+\gamma\right)=-\operatorname{Prox}_{\gamma \rho(\cdot)}\left(-b\right)$ and the derivative of the Moreau envelope in the first equality. For the second equality, we apply the Stein identity and \Cref{useful_identity}, and we use the Stein identity and the derivative of the proximal operator of $\rho(\cdot)$ in the last equality.

Now we are ready to state the result for $\frac{\partial C}{\partial \alpha_1}$ and $\frac{\partial C}{\partial \alpha_2}$ based on \Cref{deri:lemmaF5F6alpha1}, we use \eqref{deri:dsigma=0} to replace two expectations when we set partial derivative to zero, we have
{\footnotesize{\begin{equation}
	\label{deri:dalpha1dalpha2=01}
	\begin{aligned}
&		0=\frac{\partial C}{\partial \alpha_1}= \frac{\kappa_1^2 \alpha_1}{\noverp \gamma}+\frac{2\kappa_1^2}{\gamma}\mathbb{E}\left[\rho^{\prime\prime}(-\kappa_1 Z_1)\operatorname{Prox}_{\gamma \rho(\cdot)}\left(\kappa_1\alpha_1 Z_1+\kappa_2\alpha_2 Z_2+\sigma Z_3\right) \right]\\
&\hskip 2cm+\frac{2\tau_0\kappa_1\kappa_2\xi}{\gamma_0} \mathbb{E}\left[\rho^{\prime\prime}(-\kappa_2 \xi Z_1-\kappa_2 \sqrt{1-\xi^2}Z_2)\operatorname{Prox}_{\gamma_0 \rho(\cdot)}\left(\kappa_1\alpha_1 Z_1+\kappa_2\alpha_2 Z_2+\sigma Z_3\right) \right]\\
&0=\frac{\partial C}{\partial \alpha_2}= \frac{\kappa_2^2 \alpha_2}{\noverp \gamma}+\frac{2\kappa_2^2\tau_0\sqrt{1-\xi^2}}{\gamma_0}\mathbb{E}\left[\rho^{\prime\prime}(-\kappa_2 \xi Z_1-\kappa_2 \sqrt{1-\xi^2}Z_2)\operatorname{Prox}_{\gamma_0 \rho(\cdot)}\left(\kappa_1\alpha_1 Z_1+\kappa_2\alpha_2 Z_2+\sigma Z_3\right) \right]
	\end{aligned}
\end{equation}}}

Combine the result from  \eqref{deri:dv=0} \eqref{deri:dtau=0} \eqref{deri:dsigma=0} \eqref{deri:dalpha1dalpha2=01}, we have

{\scriptsize{\begin{equation}
\label{nonlinear_5_equation}
\left\{\begin{aligned}
\frac{\gamma^2r^2}{2 } & =\mathbb{E}\left[\rho^{\prime}\left(-\kappa_1 Z_1\right)\left(\kappa_1 \alpha_1 Z_1+\kappa_2 \alpha_2 Z_2 +\sigma Z_3-\operatorname{Prox}_{\gamma \rho(\cdot)}\left(\kappa_1 \alpha_1 Z_1+\kappa_2 \alpha_2 Z_2 +\sigma Z_3\right)\right)^2\right] \\
& \quad  +m \mathbb{E}\left[\rho^{\prime}\left(-\kappa_2 \xi Z_1-\kappa_2 \sqrt{1-\xi^2}Z_2\right)\left(\kappa_1 \alpha_1 Z_1+\kappa_2 \alpha_2 Z_2 +\sigma Z_3-\operatorname{Prox}_{\gamma_0 \rho(\cdot)}\left(\kappa_1 \alpha_1 Z_1+\kappa_2 \alpha_2 Z_2 +\sigma Z_3\right)\right)^2\right]\\
\sigma^2&=\noverp\gamma^2r^2 \\
1-\frac{1}{\noverp}+m  & =\mathbb{E}\left[\frac{2 \rho^{\prime}\left(-\kappa_1 Z_1\right)}{1+\gamma \rho^{\prime \prime}\left(\operatorname{Prox}_{\gamma \rho(\cdot)}\left(\kappa_1 \alpha_1 Z_1+\kappa_2 \alpha_2 Z_2 +\sigma Z_3\right)\right)}\right] \\
& \quad +m \mathbb{E}\left[\frac{2 \rho^{\prime}\left(-\kappa_2 \xi Z_1-\kappa_2 \sqrt{1-\xi^2}Z_2\right)}{1+\gamma_0 \rho^{\prime \prime}\left(\operatorname{Prox}_{\gamma_0 \rho(\cdot)}\left(\kappa_1 \alpha_1 Z_1+\kappa_2 \alpha_2 Z_2 +\sigma Z_3\right)\right)}\right] \\
-\frac{\alpha_1}{2 \noverp} & =\mathbb{E}\left[\rho^{\prime \prime}\left(-\kappa_1 Z_1\right) \operatorname{Prox}_{\gamma \rho(\cdot)}\left(\kappa_1 \alpha_1 Z_1+\kappa_2 \alpha_2 Z_2 +\sigma Z_3\right)\right] \\
&\quad +m \xi \frac{\kappa_2}{\kappa_1}\mathbb{E}\left[\rho^{\prime \prime}\left(-\kappa_2 \xi Z_1-\kappa_2 \sqrt{1-\xi^2}Z_2\right) \operatorname{Prox}_{\gamma_0 \rho(\cdot)}\left(\kappa_1 \alpha_1 Z_1+\kappa_2 \alpha_2 Z_2 +\sigma Z_3\right)\right]\\
-\frac{\alpha_2}{2 \noverp} & =m \sqrt{
1-\xi^2
} \mathbb{E}\left[\rho^{\prime \prime}\left(-\kappa_2 \xi Z_1-\kappa_2 \sqrt{1-\xi^2}Z_2\right) \operatorname{Prox}_{\gamma_0 \rho(\cdot)}\left(\kappa_1 \alpha_1 Z_1+\kappa_2 \alpha_2 Z_2 +\sigma Z_3\right)\right]
\end{aligned}\right.
\end{equation}}}

The equations in \eqref{nonlinear_5_equation} are the first-order conditions of the
reparametrized scalar problem in \eqref{optim:final_after_remove_u_repara}.
We now show that, among admissible interior solutions, these equations characterize the unique optimizer
of \eqref{optim:scalar}.

Suppose that $(\alpha_1,\alpha_2,\sigma,\gamma,r)$ is an admissible interior solution of
\eqref{nonlinear_5_equation} with $\sigma>0$ and $r\in (0, V)$.
Define
\[
        v=\frac{1}{r\gamma},
        \qquad
        \tilde{\nu}=r\gamma .
\]
Then the tuple $(\sigma,r,\tilde{\nu},\alpha_1,\alpha_2)$ satisfies the
first-order conditions of the scalar objective
$\mathcal R(\sigma,r,\tilde{\nu},\alpha_1,\alpha_2)$ in \eqref{optim:scalar}.

Let $x=(\sigma,\tilde{\nu},\alpha_1,\alpha_2)$. Since
$\mathcal R(\cdot,r)$ is convex in $x$ and $\mathcal R(x,\cdot)$ is concave in $r$, these
interior first-order conditions are sufficient for a saddle point. Indeed, for any feasible
$x'$ and $r'$,
\[
        \mathcal R(x',r)
        \ge
        \mathcal R(x,r)+\langle \nabla_x \mathcal R(x,r),x'-x\rangle
        =
        \mathcal R(x,r),
\]
and
\[
        \mathcal R(x,r')
        \le
        \mathcal R(x,r)+\partial_r\mathcal R(x,r)(r'-r)
        =
        \mathcal R(x,r).
\]
Therefore $(x,r)$ is a saddle point of $\mathcal{R}$ in \eqref{optim:scalar}.
\Cref{sec:unique-saddle-R} has established that the
saddle point of $\mathcal{R}$ is unique, so \eqref{nonlinear_5_equation} has at most one
admissible interior solution, and this solution is the one induced by the unique optimizer of
\eqref{optim:scalar}.

Finally, by the second equation $\sigma^2=\noverp \gamma^2 r^2$ in \eqref{nonlinear_5_equation},
we have
\[
        r=\frac{\sigma}{\sqrt{\noverp}\gamma},
        \qquad
        \gamma^2r^2=\frac{\sigma^2}{\noverp}.
\]
Substituting this identity into the remaining equations of \eqref{nonlinear_5_equation},
and using $\gamma_0=\tau_0\gamma/m$, gives the reduced four-equation system in
\eqref{nonlinear_four_equation}. Consequently, \eqref{nonlinear_four_equation} has a
unique admissible solution.

\subsubsection{Applying CGMT to connect PO and AO}

Recall in the process of simplifying AO, we decompose $\bbeta$ in \eqref{logic:AO_distribution} and obtain the equality that $\text{direction}( \mathbf{P}^{\perp} \widehat{\bbeta}^{AO})=\text{direction}(\mathbf{P}^{\perp}\bg)$.
Therefore, the solution of AO can be expressed as
\begin{equation}
	\label{AO:solution}
	\widehat{\bbeta}^{AO}=\widehat{\sigma} \btheta_{g}+\widehat{\alpha}_1 \kappa_1 \be_1+ {\widehat{\alpha}_2}\kappa_2 \be_2
\end{equation}
where $\|\btheta_g\|=1$ and direction$(\btheta_g)=\text{direction}(\mathbf{P}^{\perp}\bg)$, and $\bg\sim N(0,\mathbb \mathbb{I}_p)$  is independent of $(\be_1,\be_2)$. Based on   the convergence of optima $(\widehat{\sigma},\widehat{r},\widehat{\tilde{\nu}},\widehat{\alpha}_1,\widehat{\alpha}_2)\xrightarrow{a.s} (\sigma_*,r_*,\tilde{\nu}_*,\alpha_{1*},\alpha_{2*})$ and \eqref{e1e2}, we have
\begin{align}
    & \langle \widehat{\bbeta}^{AO},\be_1    \rangle \xrightarrow{a.s} \alpha_{1*}\kappa_1   \label{AO_cosine_sim_e1}  \\
    & \langle \widehat{\bbeta}^{AO},\be_2    \rangle \xrightarrow{a.s} \alpha_{2*}\kappa_2 \label{AO_cosine_sim_e2} \\
    & \|\mathbf{P}^\perp \widehat{\bbeta}^{AO} \|_2\xrightarrow{a.s} \sigma_* \label{P_perp_AO_norm}
\end{align}

To apply the asymptotic convergence of CGMT (\Cref{CGMT:asym}), for any $\epsilon>0$, we introduce three sets $\mathcal{S}_1,\mathcal{S}_2,\mathcal{S}_3$ as follows:
\begin{align*}
    &\mathcal{S}_1=\left\{\boldsymbol{\beta} \in \mathbb{R}^p:\left|  \langle  {\bbeta} ,\be_1    \rangle - \alpha_{1*}\kappa_1
\right|<\epsilon\right\},\\
&\mathcal{S}_2=\left\{\boldsymbol{\beta} \in \mathbb{R}^p:\left|  \langle  {\bbeta} ,\be_2    \rangle - \alpha_{2*}\kappa_2
\right|<\epsilon\right\},\\
&\mathcal{S}_3=\left\{\boldsymbol{\beta} \in \mathbb{R}^p:\left|  \|\mathbf{P}^\perp  {\bbeta}  \|_2- \sigma_*
\right|<\epsilon\right\}.
\end{align*}
The convergence in \eqref{AO_cosine_sim_e1} \eqref{AO_cosine_sim_e2} and \eqref{P_perp_AO_norm}  guarantees that as $n \rightarrow \infty, \widehat{\boldsymbol{\beta}}^{A O} \in \mathcal{S}_j$ with probability  $1$ for $j\in \{1,2,3\}$.
To extend such a statement to the PO solution, we will show $ \widehat{\boldsymbol{\beta}}^{P O} \in \mathcal{S}_j$ with probability approaching $1$ using \Cref{CGMT:asym}.
First, we recall the PO, AO, and the scalar optimization we defined  in \eqref{PO_our}, \eqref{AO_our}, and \eqref{optim:scalar}:

  \begin{align*}
     \text{(PO)}\quad  \Phi(\Tilde{\mathbf{H}})
=& \min _{\substack{\boldsymbol{\beta}  \in  \mathcal{S}_{\bbeta} \\ \mathbf{u}_1 \in \mathcal{S}_{\bu_1}, \mathbf{u}_2 \in \mathcal{S}_{\bu_2}}}  \max_{\bv \in \mathcal{S}_{\bv}}  \left\{ \frac{-1}{\sqrt{n}}  \bv^\top \Tilde{\mathbf{H}}\mathbf{P}^{\perp}\boldsymbol{\beta}  + \frac{1}{n} \mathbf{1}^T \rho\left(\mathbf{u}_1\right)-\frac{1}{n} \mathbf{y}_1^T \mathbf{u}_1+ \right. \\
  & \left.\frac{\tau_0}{M} \mathbf{1}^T \rho\left(\mathbf{u}_2\right)-\frac{\tau_0}{M} \mathbf{y}_2^T \mathbf{u}_2 +\frac{1}{\sqrt{n}}\bv^T\left(\left[\begin{array}{l}
\mathbf{u}_1 \\
\mathbf{u}_2
\end{array}\right]-  \mathbf{H} \mathbf{P}\bbeta\right) \right\}
  \end{align*}

\begin{align*}
     \text{(AO)}\quad  \phi(\bg,\bh)
=& \min _{\substack{\boldsymbol{\beta}  \in  \mathcal{S}_{\bbeta} \\ \mathbf{u}_1 \in \mathcal{S}_{\bu_1}, \mathbf{u}_2 \in \mathcal{S}_{\bu_2}}}  \max_{\bv \in \mathcal{S}_{\bv}}  \left\{  -\frac{1}{ \sqrt{n}}\left(\bv^T \mathbf{h}\left\|\mathbf{P}^{\perp}\boldsymbol{\beta} \right\|+\|\bv\| \mathbf{g}^T \mathbf{P}^{\perp}\boldsymbol{\beta}  \right)  + \frac{1}{n} \mathbf{1}^T \rho\left(\mathbf{u}_1\right)- \right. \\
  & \left. \frac{1}{n} \mathbf{y}_1^T \mathbf{u}_1+ \frac{\tau_0}{M} \mathbf{1}^T \rho\left(\mathbf{u}_2\right)-\frac{\tau_0}{M} \mathbf{y}_2^T \mathbf{u}_2 +\frac{1}{\sqrt{n}}\bv^T\left(\left[\begin{array}{l}
\mathbf{u}_1 \\
\mathbf{u}_2
\end{array}\right]-  \mathbf{H} \mathbf{P}\bbeta\right) \right\}
\end{align*}

$$
\text{(scalar optimization)} \quad \quad \bar \phi:=  \quad \min _{\substack{\sigma \geq 0 \\   \alpha_1, \alpha_2\in \mathbb{R},\tilde{\nu}>0 }} \max _{\substack{ r\in [0,V]}} \mathcal{R}(\sigma,r,\tilde{\nu},\alpha_1,\alpha_2)
$$
We start with showing that $ \widehat{\boldsymbol{\beta}}^{P O} \in \mathcal{S}_1$ with probability approaching 1.
Let $\mathcal{S}_1^c:=\mathcal{S}_{\bbeta}\setminus S_1$. Denote $\Phi_{\mathcal{S}_1^c}(\Tilde{\mathbf{H}})$ and $\phi_{\mathcal{S}_1^c}(\mathbf{g}, \mathbf{h})$ the optimal loss of the PO and AO, respectively, when the minimization over $\bbeta$ is constrained over $\bbeta \in \mathcal{S}_1^c$. In terms of AO, $\bbeta \in \mathcal{S}_1^c$ is equivalent to put constraints on $\alpha_1$, we can express $\phi_{\mathcal{S}_1^c}(\mathbf{g}, \mathbf{h})$ as follows under same argument,
\begin{align*}
  \phi_{\mathcal{S}_1^c}(\mathbf{g}, \mathbf{h})=  \min _{\substack{0 \leq \sigma \leq c_1,0< \tilde{\nu}\leq 6c_1  \\  |\alpha_1|\leq c_1/\kappa_1, |\alpha_2| \leq c_1/\kappa_2 \\ |\alpha_1-\alpha_{1*}|\kappa_1\geq \epsilon }} \max _{\substack{ r \geq 0}} \quad \mathcal{R}_n(\sigma,r,\tilde{\nu},\alpha_1,\alpha_2).
\end{align*}
Recall in \Cref{supp:sec:convergence_AO}, we show that $\phi(\bg,\bh)\xrightarrow{\mathbb{P}} \bar \phi$. Following a similar argument, we can show that there exists a constant $\bar \phi_{\mathcal{S}_1^c}$, defined as
$$\bar \phi_{\mathcal{S}_1^c}:=  \min _{\substack{0 \leq \sigma \leq c_1,0< \tilde{\nu}\leq 6c_1  \\  |\alpha_1|\leq c_1/\kappa_1, |\alpha_2| \leq c_1/\kappa_2 \\ |\alpha_1-\alpha_{1*}|\kappa_1\geq \epsilon }} \max _{\substack{ r \geq 0}} \quad \mathcal{R}(\sigma,r,\tilde{\nu},\alpha_1,\alpha_2),$$
such that $\phi_{\mathcal{S}_1^c}(\mathbf{g}, \mathbf{h})\xrightarrow{\mathbb{P}} \bar \phi_{\mathcal{S}_1^c}$.
Based on the uniqueness of the optima $(\sigma_*,r_*,\tilde{\nu}_*,\alpha_{1*},\alpha_{2*})$, we have $\bar \phi< \bar \phi_{\mathcal{S}_1^c}$.  Then based on \Cref{CGMT:asym}, we have
\begin{equation}\label{PO_S1}
    \lim_{n\rightarrow \infty} \mathbb P(\widehat{\boldsymbol{\beta}}^{P O} \in \mathcal{S}_1)= 1.
\end{equation}
By the same argument, we can show that \eqref{PO_S1} holds for $S_2$ and $S_3$.
Define $\alpha_1(p):=\langle \be_1,\widehat \bbeta ^{PO} \rangle /\|\bbeta_0\|$, $\alpha_2(p):=\langle \be_2,\widehat \bbeta ^{PO} \rangle /\|\bbeta_s\|$ and $\sigma(p):= \|\mathbf{P}^\perp \widehat{\bbeta}^{PO} \|_2 $.
Since we have proved that the events $\widehat{\boldsymbol{\beta}}^{P O} \in \mathcal{S}_j$ for $j=1,2,3$ happen with probability approaching $1$,
we arrive at the following results:
\begin{align}
    & \alpha_1(p) \xrightarrow{\mathbb{P}} \alpha_{1*}  \label{PO_cosine_sim_e1} , \\
    & \alpha_2(p) \xrightarrow{\mathbb{P}} \alpha_{2*}  \label{PO_cosine_sim_e2} ,\\
    & \sigma(p)\xrightarrow{\mathbb{P}} \sigma_* \label{P_perp_PO_norm}.
\end{align}

\subsubsection{Proving asymptotics with locally Lipschitz function}
In this section, we will show for any locally Lipschitz function $\Psi$,
\begin{equation}\label{supp:proof_pseudo_converge_MAP}
\frac{1}{p} \sum_{j=1}^p \Psi\left(\sqrt{p}[\widehat{\boldsymbol{\beta}}_{M,j}-\alpha_{1*}\bbeta_{0,j}-\frac{\alpha_{2*}}{\sqrt{1-\xi^2}}(\bbeta_{s,j}-\xi \frac{\kappa_2}{\kappa_1}\bbeta_{0,j})], \sqrt{p}\boldsymbol{\beta}_{0,j}\right) \xrightarrow{\mathbb{P}} \mathbb{E}[\Psi( \sigma_{*}Z, \beta)],
\end{equation}
where $\beta\sim \Pi$ is independent of $Z\sim N(0,1)$. Our proof is an extension of the proof in \cite{zhao2022asymptotic}, and we include the details below for completeness.
Recall that we can decompose the SRE as follows:
\begin{align*}
\widehat{\bbeta}_M&=\mathbf{P}\widehat{\bbeta}_M+\mathbf{P}^{\perp}\widehat{\bbeta}_M \\
	&=(\frac{\bbeta_0^T \widehat{\bbeta}_M}{\|\bbeta_0\|^2})\bbeta_0+(\frac{ (\boldsymbol{\beta}_s-\frac{\langle \bbeta_s,\bbeta_0 \rangle}{\|\bbeta_0\|^2}\boldsymbol{\beta}_0)^T \widehat{\bbeta}_M}{\|\boldsymbol{\beta}_s-\frac{\langle \bbeta_s,\bbeta_0 \rangle}{\|\bbeta_0\|^2}\boldsymbol{\beta}_0\|^2})(\boldsymbol{\beta}_s-\frac{\langle \bbeta_s,\bbeta_0 \rangle}{\|\bbeta_0\|^2}\boldsymbol{\beta}_0)+\mathbf{P}^{\perp}\widehat{\bbeta}_M\\
 &= \alpha_1(p)\bbeta_0+\frac{\alpha_2(p)\|\bbeta_s\|}{\|\boldsymbol{\beta}_s-\frac{\langle \bbeta_s,\bbeta_0 \rangle}{\|\bbeta_0\|^2}\boldsymbol{\beta}_0\|}(\boldsymbol{\beta}_s-\frac{\langle \bbeta_s,\bbeta_0 \rangle}{\|\bbeta_0\|^2}\boldsymbol{\beta}_0)+\sigma(p)\frac{\mathbf{P}^{\perp}\widehat{\bbeta}_M}{\|\mathbf{P}^{\perp}\widehat{\bbeta}_M\|}.
\end{align*}

To prove \eqref{supp:proof_pseudo_converge_MAP}, we first introduce some notations. Let $\bZ=(Z_1,\cdots ,Z_p)$ be a random vector with independent standard Gaussian entries.
We define vectors $\bT$, $\bT^{\textrm{approx}}$, and $\tilde \bZ^{\textrm{scaled}}$ whose entries are defined as follows:

\begin{equation}\label{supp:define_T_j_Z_j_scaled}
    \begin{aligned}
        &T_j:=\frac{\sqrt{p}\left(\widehat \beta_{M,j}-\alpha_{1*}\beta_{0,j}-\frac{\alpha_{2*} }{\sqrt{1-\xi^2}} ( \beta_{s,j}-\frac{\xi\kappa_2}{\kappa_1}  \beta _{0,j}) \right)}{\sigma_*},\\
 &T^{\textrm{approx}}_j:=\frac{\sqrt{p}\left(\widehat \beta_{M,j}-\alpha_1(p)\beta_{0,j}-\frac{\alpha_2(p)\|\bbeta_s\|}{\|\boldsymbol{\beta}_s-\frac{\langle \bbeta_s,\bbeta_0 \rangle}{\|\bbeta_0\|^2}\boldsymbol{\beta}_0\|} ( \beta_{s,j}-\frac{\langle \bbeta_s,\bbeta_0 \rangle}{\|\bbeta_0\|^2}  \beta _{0,j}) \right)}{\sigma(p)},\\
        &\tilde Z^{\textrm{scaled}}_j:=\frac{\sqrt{p}}{\|\mathbf{P}^{\perp}\bZ\|} \left(Z_j-(\frac{\bbeta_0^T \bZ}{\|\bbeta_0\|^2})\beta_{0,j}-(\frac{ (\boldsymbol{\beta}_s-\frac{\langle \bbeta_s,\bbeta_0 \rangle}{\|\bbeta_0\|^2}\boldsymbol{\beta}_0)^T \bZ}{\|\boldsymbol{\beta}_s-\frac{\langle \bbeta_s,\bbeta_0 \rangle}{\|\bbeta_0\|^2}\boldsymbol{\beta}_0\|^2})( \beta_{s,j}-\frac{\langle \bbeta_s,\bbeta_0 \rangle}{\|\bbeta_0\|^2}  \beta _{0,j}) \right).
    \end{aligned}
\end{equation}
We comment that $T_j$ corresponds to an entry that appears in \eqref{supp:proof_pseudo_converge_MAP} and  $\tilde Z^{\textrm{scaled}}_j$ is a scaled version of $\mathbf{P}^{\perp}\bZ$. Note that $\tilde Z^{\textrm{scaled}}_j$ does not depend on the samples so its limiting distribution can be easily characterized. To analyze $T_j$, we utilize the key that $T^{\textrm{approx}}_j$ approximates $T_j$ closely while sharing the same distribution as $\tilde Z^{\textrm{scaled}}_j$.

\bigskip

For any locally Lipschitz function $\Psi$, the proof of \eqref{supp:proof_pseudo_converge_MAP} is decomposed into four steps:
\begin{enumerate}
    \item Utilizing \eqref{PO_cosine_sim_e1},\eqref{PO_cosine_sim_e2} and \eqref{P_perp_PO_norm}, we can show
    \begin{equation}\label{supp:step1_converge_T_T_approx}
\frac{1}{p} \sum_{j=1}^p \Psi\left(\sigma_{\star} T_j, \sqrt{p}   \beta_{0,j}\right)-\frac{1}{p} \sum_{j=1}^p \Psi\left(\sigma_{\star} T_j^{\textrm{approx}}, \sqrt{p}   \beta_{0,j}\right) \xrightarrow{\mathbb{P}} 0 .
\end{equation}
\item Utilizing \eqref{PO_cosine_sim_e1},\eqref{PO_cosine_sim_e2} and \eqref{P_perp_PO_norm}, we can show
\begin{equation}\label{supp:step2_converge_Z_Z_scaled}
\frac{1}{p} \sum_{j=1}^p \Psi\left(\sigma_{\star} \tilde{Z}_j^{\textrm{scaled}}, \sqrt{p}   \beta_{0,j}\right)-\frac{1}{p} \sum_{j=1}^p \Psi\left(\sigma_{\star} Z_j , \sqrt{p}   \beta_{0,j}\right) \xrightarrow{\mathbb{P}} 0 .
\end{equation}
\item Using the law of large numbers, we can show
\begin{equation}\label{supp:step3_final_limit}
\frac{1}{p} \sum_{j=1}^p \Psi\left(\sigma_{\star} Z_j , \sqrt{p}  \beta_{0,j}\right) \xrightarrow{\mathbb{P}} \mathbb{E}\left[\Psi\left(\sigma_{\star} Z, \eta\right)\right].
\end{equation}
\item To close the gap between $\bT$ and $\tilde \bZ^{\textrm{scaled}}$, we show that $\bT^{\textrm{approx}} \stackrel{d}{=} \tilde \bZ^{\textrm{scaled}}$, which immediately implies
$$
\frac{1}{p} \sum_{j=1}^p \Psi\left(\sigma_{\star} T_j^{\textrm{approx}}, \sqrt{p}   \beta_{0,j}\right) \stackrel{d}{=} \frac{1}{p} \sum_{j=1}^p \Psi\left(\sigma_{\star} \tilde{Z}_j^{\textrm{scaled}}, \sqrt{p}  \beta_{0,j}\right) ,
$$
and both the RHS and the LHS converge to the same limit stated in \eqref{supp:step3_final_limit}.
\end{enumerate}

\textbf{Step 1: Prove \eqref{supp:step1_converge_T_T_approx}}

We control the difference between $ \Psi\left(\sigma_{\star} T_j, \sqrt{p}   \beta_{0,j}\right)$ and $\Psi\left(\sigma_{\star} T_j^{\textrm{approx}}, \sqrt{p}   \beta_{0,j}\right) $ based on the definition of locally Lipschitz function.

\begin{align*}
    & \left|\frac{1}{p} \sum_{j=1}^p \Psi\left(\sigma_{\star} T_j, \sqrt{p}   \beta_{0,j}\right)-\frac{1}{p} \sum_{j=1}^p \Psi\left(\sigma_{\star} T_j^{\textrm{approx}}, \sqrt{p}   \beta_{0,j}\right)\right| \\
\leq & \frac{L \sigma_{\star}}{p} \sum_{j=1}^p\left(1+\left\|\left(\sigma_{\star} T_j, \sqrt{p} \beta_{0,j}\right)\right\|+\left\|\left(\sigma_{\star} T_j^{\textrm{approx}}, \sqrt{p}  \beta_{0,j}\right)\right\|\right)\left|T_j-T_j^{\textrm{approx}}\right| \\
\leq & {L \sigma_{\star}} \left\{\frac{1}{p}\sum_{j=1}^p\left(1+\sigma_{\star}\left|T_j\right|+\sigma_{\star}\left|T_j^{\textrm{approx}}\right|+2 \sqrt{p}\left| \beta_{0,j}\right|\right)^2\right\}^{1 / 2}\left\{\frac{1}{p}\sum_{j=1}^p\left(T_j-T_j^{\textrm{approx}}\right)^2\right\}^{1 / 2},
\end{align*}
where the second inequality follows from the Cauchy-Schwarz inequality. We will show that the first term is stochastically bounded by a constant and the second term converges to zero. The second term satisfies

\begin{align*}
    \frac{1}{p}\sum_{j=1}^p\left(T_j-T_j^{\textrm{approx}}\right)^2 & = \sum_{j=1}^p \left( (\frac{\sigma_*}{\sigma(p)}-1)\widehat \beta_{M,j}-(\frac{\sigma_*}{\sigma(p)}\alpha_1(p)-\alpha_{1*}) \beta_{0,j} \right.\\
& \quad \quad \left. +\left(\frac{\sigma_*}{\sigma(p)}\frac{\alpha_2(p)\|\bbeta_s\|}{\|\boldsymbol{\beta}_s-\frac{\langle \bbeta_s,\bbeta_0 \rangle}{\|\bbeta_0\|^2}\boldsymbol{\beta}_0\|}\frac{\langle \bbeta_s,\bbeta_0 \rangle}{\|\bbeta_0\|^2}   -\frac{\alpha_{2*}\xi \kappa_2}{\kappa_1\sqrt{1-\xi^2}}\right)\beta_{0,j} \right. \\
&\quad \quad \left. -\left(\frac{\sigma_*}{\sigma(p)}\frac{\alpha_2(p)\|\bbeta_s\|}{\|\boldsymbol{\beta}_s-\frac{\langle \bbeta_s,\bbeta_0 \rangle}{\|\bbeta_0\|^2}\boldsymbol{\beta}_0\|}    -\frac{\alpha_{2*} }{ \sqrt{1-\xi^2}}\right)\beta_{s,j}  \right)^2\\
&\leq 2 \|\widehat \bbeta_M\|^2\left(\frac{\sigma_*}{\sigma(p)}-1\right)^2+2\|\bbeta_0\|^2\left(\frac{\sigma_*}{\sigma(p)}\alpha_1(p)-\alpha_{1*}\right)^2\\
& \quad \quad +2\|\bbeta_0\|^2\left(\frac{\sigma_*}{\sigma(p)}\frac{\alpha_2(p)\|\bbeta_s\|}{\|\boldsymbol{\beta}_s-\frac{\langle \bbeta_s,\bbeta_0 \rangle}{\|\bbeta_0\|^2}\boldsymbol{\beta}_0\|}\frac{\langle \bbeta_s,\bbeta_0 \rangle}{\|\bbeta_0\|^2}   -\frac{\alpha_{2*}\xi \kappa_2}{\kappa_1\sqrt{1-\xi^2}}\right)^2\\
&\quad \quad +2\|\bbeta_s\|^2\left(\frac{\sigma_*}{\sigma(p)}\frac{\alpha_2(p)\|\bbeta_s\|}{\|\boldsymbol{\beta}_s-\frac{\langle \bbeta_s,\bbeta_0 \rangle}{\|\bbeta_0\|^2}\boldsymbol{\beta}_0\|}    -\frac{\alpha_{2*} }{ \sqrt{1-\xi^2}}\right)^2 \xrightarrow{\mathbb{P}} 0,
\end{align*}
where the last convergence follows from \eqref{PO_cosine_sim_e1},\eqref{PO_cosine_sim_e2}, \eqref{P_perp_PO_norm} and the continuous mapping theorem, together with conditions that $\|\bbeta_s\|\rightarrow \kappa_2,\|\bbeta_0\|\rightarrow \kappa_1$ and $\frac{\langle \bbeta_s,\bbeta_0 \rangle}{\|\bbeta_0\|\|\bbeta_s\|}\rightarrow \xi$.

For the first term,
\begin{align*}
    & \frac{1}{p}\sum_{j=1}^p\left(1+\sigma_{\star}\left|T_j\right|+\sigma_{\star}\left|T_j^{\textrm{approx}}\right|+2 \sqrt{p}\left| \beta_{0,j}\right|\right)^2\\
    &\leq 4\frac{1}{p}  \left(p+p\sum_{j=1}^p   \beta_{0,j}^2+\sum_{j=1}^p \sigma_{\star}^2\left|T_j\right|^2+\sum_j \sigma_{\star}^2\left|T_j^{\textrm{approx}}\right|^2\right)\\
    &=4 +4\|\bbeta_0\|^2+ 4\sum_{j=1}^p \left(\widehat \beta_{M,j}-\alpha_{1*}\beta_{0,j}-\frac{\alpha_{2*} }{\sqrt{1-\xi^2}} ( \beta_{s,j}-\frac{\xi\kappa_2}{\kappa_1}  \beta _{0,j}) \right)^2   \\
    &\quad \quad +4\frac{\sigma_*^2}{\sigma^2(p)}\sum_{j=1}^p \left(\widehat \beta_{M,j}-\alpha_1(p)\beta_{0,j}-\frac{\alpha_2(p)\|\bbeta_s\|}{\|\boldsymbol{\beta}_s-\frac{\langle \bbeta_s,\bbeta_0 \rangle}{\|\bbeta_0\|^2}\boldsymbol{\beta}_0\|} ( \beta_{s,j}-\frac{\langle \bbeta_s,\bbeta_0 \rangle}{\|\bbeta_0\|^2}  \beta _{0,j}) \right)^2\\
    &\leq 4 +4\|\bbeta_0\|^2+ 8\left( \|\widehat{\bbeta}_M\|^2+ \alpha_{1*}^2\|\bbeta_0\|^2 +\frac{\alpha_{2*}^2\xi^2\kappa_2^2}{(1-\xi^2)\kappa_1^2}\|\bbeta_0\|^2 +\frac{\alpha_{2*}^2}{1-\xi^2}\|\bbeta_s\|^2 \right)\\
    &\quad + 8\frac{\sigma_*^2}{\sigma^2(p)} \left(\|\widehat \bbeta_M\|^2 +\alpha^2(p) \|\bbeta_0\|^2+  \frac{\alpha_2^2(p)\|\bbeta_s\|^2}{\|\boldsymbol{\beta}_s-\frac{\langle \bbeta_s,\bbeta_0 \rangle}{\|\bbeta_0\|^2}\boldsymbol{\beta}_0\|^2}\frac{\langle \bbeta_s,\bbeta_0 \rangle^2}{\|\bbeta_0\|^2} +\frac{\alpha_2^2(p)\|\bbeta_s\|^4}{\|\boldsymbol{\beta}_s-\frac{\langle \bbeta_s,\bbeta_0 \rangle}{\|\bbeta_0\|^2}\boldsymbol{\beta}_0\|^2} \right)\\
    &\xrightarrow{\mathbb{P}}4+4\kappa_1^2+16\left(2\alpha^2_{1*}\kappa_1^2+\frac{2\alpha^2_{2*} \kappa_2^2}{1-\xi^2}+\sigma_*^2+ \frac{\alpha^2_{2*}\xi^2 \kappa_2^2}{1-\xi^2} \right),
\end{align*}
which suggests that $\frac{1}{p}\sum_{j=1}^p\left(1+\sigma_{\star}\left|T_j\right|+\sigma_{\star}\left|T_j^{\textrm{approx}}\right|+2 \sqrt{p}\left| \beta_{0,j}\right|\right)^2=O_p(1)$.

\medskip

\textbf{Step 2: Prove \eqref{supp:step2_converge_Z_Z_scaled}}

We control the difference between $ \Psi\left(\sigma_{\star} Z_j, \sqrt{p}   \beta_{0,j}\right)$ and
$\Psi\left(\sigma_{\star} \tilde Z_j^{\textrm{scaled}}, \sqrt{p}   \beta_{0,j}\right)$ as follows:
\begin{align*}
    & \left|\frac{1}{p} \sum_{j=1}^p \psi\left(\sigma_{\star} \tilde{Z}_j^{ scaled }, \sqrt{p} \  \beta_{0,j}\right)-\frac{1}{p} \sum_{j=1}^p \psi\left(\sigma_{\star} Z_j, \sqrt{p} \  \beta_{0,j}\right)\right| \\
\leq & \frac{L \sigma_{\star}}{p} \sum_{j=1}^p\left(1+\left\|\left(\sigma_{\star} \tilde{Z}_j^{ scaled }, \sqrt{p} \  \beta_{0,j}\right)\right\|+\left\|\left(\sigma_{\star} Z_j, \sqrt{p} \  \beta_{0,j}\right)\right\|\right)\left|\tilde{Z}_j^{ scaled }-Z_j\right| \\
\leq & L \sigma_{\star}\left\{ \frac{1}{p}\sum_{j=1}^p\left(1+\left|\tilde{Z}_j^{ scaled }\right|+\left|Z_j\right|+2 \sqrt{p}\left|\  \beta_{0,j}\right|\right)^2\right\}^{1 / 2}\left\{\frac{1}{p}\sum_{j=1}^p\left(\tilde{Z}_j^{ scaled }-Z_j\right)^2\right\}^{1 / 2} .
\end{align*}
Similar to the proof of \eqref{supp:step1_converge_T_T_approx}, we show the second term is $o_p(1)$ and the first term is $O_p(1)$.

For the second term, we have
\begin{align*}
    \frac{1}{p}\sum_{j=1}^p\left(\tilde{Z}_j^{ scaled }-Z_j\right)^2&=\frac{1}{p}\sum_{j=1}^p\left(Z_j (\frac{\sqrt{p}}{\|\mathbf{P}^{\perp}\bZ\|}-1)-\frac{\sqrt{p}}{\|\mathbf{P}^{\perp}\bZ\|}(\frac{\bbeta_0^T \bZ}{\|\bbeta_0\|^2})\beta_{0,j} \right.\\
    &\left. \quad \quad -\frac{\sqrt{p}}{\|\mathbf{P}^{\perp}\bZ\|}(\frac{ (\boldsymbol{\beta}_s-\frac{\langle \bbeta_s,\bbeta_0 \rangle}{\|\bbeta_0\|^2}\boldsymbol{\beta}_0)^T \bZ}{\|\boldsymbol{\beta}_s-\frac{\langle \bbeta_s,\bbeta_0 \rangle}{\|\bbeta_0\|^2}\boldsymbol{\beta}_0\|^2})( \beta_{s,j}-\frac{\langle \bbeta_s,\bbeta_0 \rangle}{\|\bbeta_0\|^2}  \beta _{0,j}) \right)^2\\
    &\leq 2 \left(\frac{\sqrt{p}}{\|\mathbf{P}^{\perp}\bZ\|}-1 \right)^2 \frac{1}{p}\sum_{j=1}^p Z_j^2 + 2\frac{1}{p}\frac{{p}}{\|\mathbf{P}^{\perp}\bZ\|^2}\left[(\boldsymbol{\beta}_s-\frac{\langle \bbeta_s,\bbeta_0 \rangle}{\|\bbeta_0\|^2}\boldsymbol{\beta}_0)^T \bZ\right]^2 \\
    &\xrightarrow{\mathbb{P}}0,
\end{align*}
where the last convergence follows from the properties of the independent normal vector $\bZ$ that $\|\mathbf{P}^{\perp}\bZ\|/\sqrt{p} \xrightarrow{a.s}1$ and $(\boldsymbol{\beta}_s-\frac{\langle \bbeta_s,\bbeta_0 \rangle}{\|\bbeta_0\|^2}\boldsymbol{\beta}_0)^T \bZ=O_p(1)$.

The first term is constant order, which is a direct consequence of the facts that $\frac{1}{p}\|\bZ\|^2\xrightarrow{a.s}1 $ and $\frac{1}{p}\|\tilde \bZ^{\textrm{scaled}}\|^2=1$.

\medskip

\textbf{Step 3:} \eqref{supp:step3_final_limit}  follows from  \citet[Lemma C.1]{zhao2022asymptotic}.
\medskip

\textbf{Step 4:} Recall that $\tilde \bZ^{\textrm{scaled}}=\sqrt{p}\frac{\mathbf{P}^{\perp}\bZ}{\|\mathbf{P}^{\perp}\bZ\|}$ and $\bT^{\textrm{approx}}=\sqrt{p}\frac{\mathbf{P}^{\perp}\widehat \bbeta_M}{\|\mathbf{P}^{\perp}\widehat \bbeta_M\|}$, where $\bZ\sim N(0,\mathbb{I}_p)$.
It suffices to show that
\begin{equation}
    \label{P_prop_beta_M_normal}
    \frac{\mathbf{P}^{\perp}\widehat \bbeta_M}{\|\mathbf{P}^{\perp}\widehat \bbeta_M\|}\stackrel{d}{=}\frac{\mathbf{P}^{\perp}\bZ}{\|\mathbf{P}^{\perp}\bZ\|}.
\end{equation}

We write $\mathbf{P}=\mathbf{A}\mathbf{A}^{\top}$ where $\mathbf{A}$ is a $p \times 2$ matrix. This projects onto a 2-dimensional subspace of $\mathbb{R}^p$.
We write $\mathbf{P}^{\perp}=\mathbf{B} \mathbf{B}^{\top}$ where $B$ is a $p \times(p-2)$ matrix. This projects onto the orthogonal complement of the subspace spanned by $\mathbf{A}$, which is $(p-2)$-dimensional. We have $\mathbf{B}^\top \mathbf{A}=\mathbf{0}\in \mathbb R^{(p-2)\times 2}$, $\mathbf{A}^\top \mathbf{A}=\mathbf{I}_2$ and $\mathbf{B}^\top \mathbf{B}=\mathbf{I}_{p-2}$. For any   $(p-2) \times(p-2)$ orthonormal matrix $\mathbf{G}$, $\mathbf{B}\mathbf{G}\mathbf{B}^\top$ rotates the subspace spanned by the columns of $\mathbf{B}$.

Consider $\mathcal{U}:=\{ \mathbf{A}\mathbf{A}^{\top}+\mathbf{B}\mathbf{G}\mathbf{B}^\top:\mathbf{G} \text{ is } (p-2)\times (p-2) \text{(orthonormal matrix )}\} $, the set of all orthonormal matrices $\mathbf{U}\in\mathbb R^{p\times p}$ such that $\mathbf{U} \bbeta_0=\bbeta_0$, $\mathbf{U}\bbeta_s=\bbeta_s$ and perform rotation on the unit sphere lying in $span\{\bbeta_0,\bbeta_s\}^{\perp}$.
By the isotropy of $N(0,\mathbf{I}_p)$, the distribution of $\frac{\mathbf{P}^{\perp}\bZ}{\|\mathbf{P}^{\perp}\bZ\|}$ is $\mathcal{U}$-invariant, that is, it is the uniform distribution on the unit sphere lying in $span\{\bbeta_0,\bbeta_s\}^{\perp}$. Therefore, it suffices to show that the distribution of $\frac{\mathbf{P}^{\perp}\widehat \bbeta_M}{\|\mathbf{P}^{\perp}\widehat \bbeta_M\|}$ is also $\mathcal{U}$-invariant.

For any $\mathbf{U}\in \mathcal{U}$, there exists an orthonormal matrix $\mathbf{G}$ such that $\mathbf{U}=\mathbf{A}\mathbf{A}^{\top}+\mathbf{B}\mathbf{G}\mathbf{B}^\top$. We want to show
    $$\mathbf{U}\frac{\mathbf{P}^{\perp}\widehat \bbeta_M}{\|\mathbf{P}^{\perp}\widehat \bbeta_M\|}
    \stackrel{d}{=} \frac{\mathbf{P}^{\perp}\widehat \bbeta_M}{\|\mathbf{P}^{\perp}\widehat \bbeta_M\|} .$$
Since $\|\mathbf{P}^{\perp}\widehat \bbeta_M\|=\|\mathbf{U}\mathbf{P}^{\perp}\widehat \bbeta_M\|$, it suffices to show that $\mathbf{U}\mathbf{P}^{\perp}\widehat \bbeta_M\stackrel{d}{=}\mathbf{P}^{\perp}\widehat \bbeta_M$.

We first show that $\mathbf{U}\widehat \bbeta_M\stackrel{d}{=}\widehat \bbeta_M$.
Note that $\mathbf{U}\widehat \bbeta_M$ is the SRE in \eqref{eq: SRE_def} with observed covariates replaced by $\{\mathbf{U}\bX_i\}_{i=1}^n$ and auxiliary covariates replaced by $\{\mathbf{U}\bX^*_i\}_{i=1}^M$.
Since $\mathbf{U}$ is orthonormal, all these covariate vectors have i.i.d. \(N(0,1)\) entries. 
Since $\mathbf{U}\in \mathcal{U}$, we have $\bbeta_0^\top \mathbf{U}\bX_i=\bbeta_0^\top \bX_i$ for $i\leq n$ and $\bbeta_s^\top \mathbf{U}\bX_j^{*}=\bbeta_s^\top \bX_j^{*}$ for $j\leq M$.
Therefore, the joint distribution of the new observed data and new auxiliary data remains the same as the original joint distribution. As a result, the distribution of the SRE remains the same, i.e., $\mathbf{U}\widehat \bbeta_M\stackrel{d}{=}\widehat \bbeta_M$.

Consequently, we derive that
$$\mathbf{U}\widehat \bbeta_M\stackrel{d}{=}\widehat \bbeta_M   \quad \Longrightarrow \quad    \mathbf{B}\mathbf{B}^{\top}   \mathbf{U}\widehat \bbeta_M\stackrel{d}{=}\mathbf{B}\mathbf{B}^{\top} \widehat \bbeta_M        \Longrightarrow   \quad  \mathbf{B}\mathbf{G}\mathbf{B}^\top \widehat \bbeta_M \stackrel{d}{=}\mathbf{P}^{\perp}\widehat \bbeta_M.  $$
We complete the proof by observing $\mathbf{U}\mathbf{P}^{\perp}=\mathbf{B}\mathbf{G}\mathbf{B}^\top $.

\subsubsection{The convergence of the empirical distribution of $T_j$}

We note that \citet{zhao2022asymptotic} have proved $ \frac{1}{p} \sum_{j=1}^p \mathbf{1}\left\{  T_j \leq t\right\} \xrightarrow{\mathbb{P}} \Phi(t)$ for any fixed $t\in \mathbb{R}$. In this section, we extend their result to the following: for any fixed $t>0$,
\begin{equation}\label{supp:indicator_T_j_converge}
    \frac{1}{p} \sum_{j=1}^p \mathbf{1}\left\{-t \leq T_j \leq t\right\} \xrightarrow{\mathbb{P}} \mathbb P(|Z|\leq t),
\end{equation}
where $Z\sim N(0,1)$. Our proof is largely adapted from \cite{zhao2022asymptotic} and we present it here for completeness.

We continue to use the notations defined in \eqref{supp:define_T_j_Z_j_scaled}. Furthermore, we denote the indicator function $\mathbf{1}\left\{-t \leq s \leq t\right\} $ as   $I_t(s)$.
We will prove \eqref{supp:indicator_T_j_converge} by approximating $I_t(s)$ using a Lipschitz function $I^{\textrm{approx}}_{t,\epsilon/8}(s) $,  defined as:
$$
I^{\textrm{approx}}_{t,\epsilon/8}(s) = \begin{cases}0 & \text { if } s<-t-\epsilon/8 \text { or } s>t+\epsilon/8, \\ \frac{s+t+\epsilon/8}{\epsilon/8} & \text { if }-t-\epsilon/8 \leq s<-t, \\ 1 & \text { if }-t \leq s \leq t \\ \frac{t+\epsilon/8-s}{\epsilon/8} & \text { if } t<s \leq t+\epsilon/8,
\end{cases}
$$
where $\varepsilon$ is any fixed positive constant.
\Cref{fig:approximation_indicator} provides an illustration for this approximation.

\begin{figure}[H]
    \centering
    \includegraphics[scale=0.3]{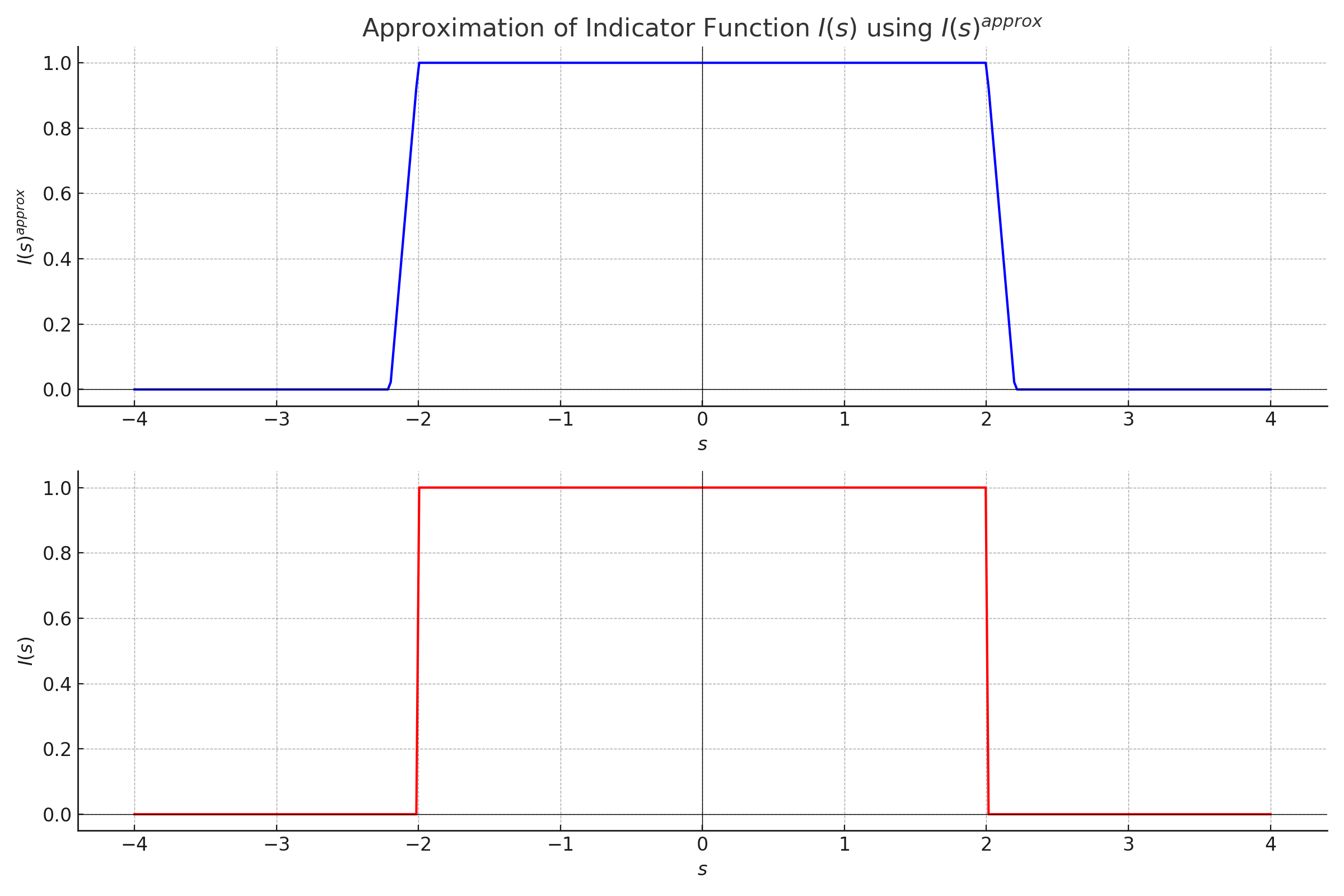}
    \caption{Comparison of the Indicator Function $I(s)$ and its Approximation $I^{\textrm{approx}}(s) $ with $\epsilon=1.6$}
    \label{fig:approximation_indicator}
\end{figure}
Note that $I^{\textrm{approx}}_{t,\epsilon/8}(s)>I_t(s)$ for every $s\in \mathbb R$.
We have

\begin{equation}\label{supp:proof_indicator_T_j_converge_1}
    \mathbb{P}\left(\frac{1}{p} \sum_{j=1}^p I_t(T_j)-P(|Z|\leq t)>\varepsilon\right) \leq \mathbb{P}\left(\frac{1}{p} \sum_{i=1}^p I^{\textrm{approx}}_{t,\epsilon/8}\left(T_j\right)-\mathbb P(|Z|\leq t)>\varepsilon\right).
\end{equation}

Based on the definition of $I^{\textrm{approx}}_{t,\epsilon/8}$, we have $\mathbb E[I^{\textrm{approx}}_{t,\epsilon/8}(Z)]-\mathbb P(|Z|\leq t)<\epsilon/4$. It follows that
{\scriptsize
\begin{equation}\label{supp:proof_indicator_T_j_converge_2}
\begin{aligned}
&\qquad \mathbb{P}\left(\frac{1}{p} \sum_{j=1}^p I^{\textrm{approx}}_{t,\epsilon/8}\left(T_j\right)-\mathbb P(|Z|\leq t)>\epsilon\right) \\
& \leq \mathbb{P}\left(\frac{1}{p} \sum_{j=1}^p I^{\textrm{approx}}_{t,\epsilon/8}\left(T_j\right)-\mathbb{E}\left[I^{\textrm{approx}}_{t,\epsilon/8}(Z)\right]>\epsilon / 2\right) \\
&\leq \mathbb{P}\left(\frac{1}{p} \sum_{j=1}^p\left[I^{\textrm{approx}}_{t,\epsilon/8}\left(T_j\right)-I^{\textrm{approx}}_{t,\epsilon/8}\left(T_j^{approx }\right)\right]>\epsilon / 4\right)  +\mathbb{P}\left(\frac{1}{p} \sum_{j=1}^p I^{\textrm{approx}}_{t,\epsilon/8}\left(T_j^{approx }\right)-\mathbb{E}\left[I^{\textrm{approx}}_{t,\epsilon/8}(Z)\right]>\epsilon / 4\right) \\
& =\mathbb{P}\left(\frac{1}{p} \sum_{j=1}^p\left[I^{\textrm{approx}}_{t,\epsilon/8}\left(T_j\right)-I^{\textrm{approx}}_{t,\epsilon/8}\left(T_j^{approx }\right)\right]>\epsilon / 4\right)  +\mathbb{P}\left(\frac{1}{p} \sum_{j=1}^p I^{\textrm{approx}}_{t,\epsilon/8}\left(\tilde{Z}_j^{\textrm{scaled}}\right)-\mathbb{E}\left[I^{\textrm{approx}}_{t,\epsilon/8}(Z)\right]>\epsilon / 4\right) \\
& \leq \mathbb{P}\left(\frac{1}{p} \sum_{j=1}^p\left[I^{\textrm{approx}}_{t,\epsilon/8}\left(T_j\right)-I^{\textrm{approx}}_{t,\epsilon/8}\left(T_j^{approx }\right)\right]>\epsilon / 4\right)  +\mathbb{P}\left(\frac{1}{p} \sum_{j=1}^p\left[I^{\textrm{approx}}_{t,\epsilon/8}\left(\tilde{Z}_j^{\textrm{scaled}}\right)-I^{\textrm{approx}}_{t,\epsilon/8}\left(Z_j\right)\right]>\epsilon / 8\right) \\
& +\mathbb{P}\left(\frac{1}{p} \sum_{j=1}^p I^{\textrm{approx}}_{t,\epsilon/8}\left(Z_j\right)-\mathbb{E}\left[I^{\textrm{approx}}_{t,\epsilon/8}(Z)\right]>\epsilon / 8\right),
\end{aligned}
\end{equation}
}
where the first equality follows from $\bT^{\textrm{approx}}\stackrel{d}{=}\tilde \bZ^{\textrm{scaled}}$. Since for any fixed $\epsilon>0$, $I^{\textrm{approx}}_{t,\epsilon/8}$ is a Lipschitz function, from \eqref{supp:step1_converge_T_T_approx}, \eqref{supp:step2_converge_Z_Z_scaled} and the law of large numbers, we have
\begin{equation}\label{supp:one_side_three_upper}
    \begin{aligned}
        & \lim_{n\rightarrow \infty}  \mathbb{P}\left(\frac{1}{p} \sum_{j=1}^p\left[I^{\textrm{approx}}_{t,\epsilon/8}\left(T_j\right)-I^{\textrm{approx}}_{t,\epsilon/8}\left(T_j^{approx }\right)\right]>\epsilon / 4\right) =0,\\
    &\lim_{n\rightarrow \infty} \mathbb{P}\left(\frac{1}{p} \sum_{j=1}^p\left[I^{\textrm{approx}}_{t,\epsilon/8}\left(\tilde{Z}_j^{\textrm{scaled}}\right)-I^{\textrm{approx}}_{t,\epsilon/8}\left(Z_j\right)\right]>\epsilon / 8\right)  =0     ,\\
    & \lim_{n\rightarrow \infty}\mathbb{P}\left(\frac{1}{p} \sum_{j=1}^p I^{\textrm{approx}}_{t,\epsilon/8}\left(Z_j\right)-\mathbb{E}\left[I^{\textrm{approx}}_{t,\epsilon/8}(Z)\right]>\epsilon / 8\right)=0     .
    \end{aligned}
\end{equation}
Combining \eqref{supp:proof_indicator_T_j_converge_1}, \eqref{supp:proof_indicator_T_j_converge_2}, and \eqref{supp:one_side_three_upper}, we have
\begin{equation}\label{supp:side1}
    \lim_{n\rightarrow \infty} \mathbb{P}\left(\frac{1}{p} \sum_{j=1}^p I_t(T_j)-P(|Z|\leq t)>\varepsilon\right)=0.
\end{equation}
For the other direction, we can use $I^{\textrm{approx}}_{t-\epsilon/8,\epsilon/8}(s)$ to approximate $I_t(s)$. By a similar argument, we get
\begin{equation}\label{supp:side2}
    \lim_{n\rightarrow \infty} \mathbb{P}\left(\frac{1}{p} \sum_{j=1}^p I_t(T_j)-P(|Z|\leq t)< -\varepsilon\right)=0.
\end{equation}
Since $\varepsilon$ is arbitrary, the proof of \eqref{supp:indicator_T_j_converge} is completed based on \eqref{supp:side1} and \eqref{supp:side2}.

\subsection{Proof of Theorem \ref{thm:exact_cat_M_MAP_noninformative(modify)} part (1)}

Part (1) of \Cref{thm:exact_cat_M_MAP_noninformative(modify)} follows from the following proposition.
\begin{proposition}\label{prop:asym_normal}
   Assume all conditions from \Cref{thm:exact_cat_M_MAP_noninformative(modify)}  hold, then for each coordinate $j \in[p]$ where the regression coefficient satisfies $\sqrt{p}  \beta_{0, j}=O(1)$, we have the following asymptotic normality:
   \begin{equation}\label{eq:asym_normal_one_coordinate}
       \frac{\sqrt{p}\left(\widehat{\beta}_{M, j}-\alpha_* \beta_{0, j}\right)}{\sigma_*} \xrightarrow{d} \mathcal{N}(0,1) .
   \end{equation}
   Furthermore, for any deterministic sequence of vectors $\boldsymbol{v} \in \mathbb{R}^p$ with unit norm $\|\boldsymbol{v}\|_2=1$ such that $\sqrt{p}\boldsymbol{v}^{\top} \boldsymbol{\beta}_0=$ $O(1)$, the following holds:
   \begin{equation}\label{eq:asym_normal_v_vector}
       \frac{\sqrt{p} \boldsymbol{v}^{\top}\left(\widehat{\boldsymbol{\beta}}_M-\alpha_* \boldsymbol{\beta}_0\right)}{\sigma_*} \xrightarrow{d} \mathcal{N}(0,1) .
   \end{equation}
Consequently, by applying the Cramér-Wold theorem, for any fixed index set $\mathcal{S} \subset$ $\{1, \ldots, p\}$ with $\sqrt{p}\left\|\boldsymbol{\beta}_{0, \mathcal{S}}\right\|_2=O(1)$, we obtain
$$
\frac{\sqrt{p}\left(\widehat{\boldsymbol{\beta}}_{M, \mathcal{S}}-\alpha_* \boldsymbol{\beta}_{0, \mathcal{S}}\right)}{\sigma_*} \stackrel{d}{\longrightarrow} \mathcal{N}\left(\mathbf{0}, \boldsymbol{I}_{|\mathcal{S}|}\right) .
$$

\end{proposition}

\begin{proof}[Proof of \Cref{prop:asym_normal}]

To establish \eqref{eq:asym_normal_v_vector}, it suffices to prove that equation \eqref{eq:asym_normal_one_coordinate} holds. Then, by leveraging the rotational invariance of the standard Gaussian distribution and considering an orthogonal matrix $\mathbf{U}$ with first row equal to $\bv$, \eqref{eq:asym_normal_v_vector} follows   directly from \eqref{eq:asym_normal_one_coordinate}.

We recall \eqref{P_prop_beta_M_normal} and rewrite it as
$$
\frac{\sqrt{p}\left(\widehat{\bbeta}_M- \alpha(p) \bbeta_0 \right)}{\sigma(p)}\stackrel{d}{=}\frac{\mathbf{P}^{\perp} \boldsymbol{Z}}{ \|\mathbf{P}^{\perp} \boldsymbol{Z} \|_2 / \sqrt{p}},
$$
where $\alpha(p)=\langle \widehat{\bbeta}_M,\bbeta_0 \rangle/\|\bbeta_0\|^2, \mathbf{P}=\bbeta_0\bbeta_0^\top/\|\bbeta_0\|^2, \mathbf{P}^{\perp}=\mathbf{I}-\mathbf{P} , \sigma(p)=\|\mathbf{P}^{\perp} \widehat{\bbeta}_M\|_2$ and  $\boldsymbol{Z}=\left(Z_1, \ldots, Z_p\right) \sim \mathcal{N}\left(\mathbf{0}, \boldsymbol{I}_p\right)$.
We expand the projection as
$$
\mathbf{P}^{\perp} \boldsymbol{Z}=\boldsymbol{Z}-\left\langle\boldsymbol{Z}, \frac{\boldsymbol{\beta}_0}{\|\boldsymbol{\beta}_0\|}\right\rangle \frac{\boldsymbol{\beta}_0}{\|\boldsymbol{\beta}_0\|},
$$
and note that the $j$th coordinate of the second term on the right-hand side is $o_p(1)$ since $\beta_{0,j}=o_p(1)$ while $\|\boldsymbol{\beta}_0\|=\Theta_p(1)$.
Therefore, the $j$th coordinate of $\mathbf{P}^{\perp} \boldsymbol{Z}$ is $Z_j+o_p(1)$.
Using the fact that $\left\|\mathbf{P}^{\perp} \boldsymbol{Z}\right\| / \sqrt{p} \xrightarrow{\text { a.s. }} 1$ and combining the convergence in \eqref{PO_cosine_sim_e1} and \eqref{P_perp_PO_norm}, Slutsky's theorem gives us
 $$\frac{\sqrt{p}\left(\widehat{\beta}_{M, j}-\alpha_* \beta_{0, j}\right)}{\sigma_*} \xrightarrow{d} \mathcal{N}(0,1) .$$
\end{proof}

\begin{remark}
The proof of \Cref{prop:asym_normal}
relies on equations \eqref{P_prop_beta_M_normal}, \eqref{PO_cosine_sim_e1}, and \eqref{P_perp_PO_norm}, which remain valid even without the condition $\frac{1}{p}\sum_{j=1}^p \noverpmass_{\sqrt{p}\beta_{0,j}} \rightsquigarrow \Pi$.
\end{remark}

\subsection{Proof of Theorem \ref{thm:exact_cat_M_MAP_informative} part (1)} Part (1) of Theorem \ref{thm:exact_cat_M_MAP_informative} follows from the following proposition.

\begin{proposition}\label{prop:asym_normal2}
   Assume all conditions from \Cref{thm:exact_cat_M_MAP_informative}  hold except  $\frac{1}{p}\sum_{j=1}^p \noverpmass_{\sqrt{p}\beta_{0,j}} \rightsquigarrow \Pi$,  then for each coordinate $j \in[p]$ where the regression coefficient satisfies $\sqrt{p}   \beta_{0, j}=O(1)$ and $\sqrt{p}   \beta_{s, j}=O(1)$, we have the following asymptotic normality:
$$
 \frac{\sqrt{p}\left(\widehat{\beta}_{M, j}-\alpha_{1*}\bbeta_{0,j}-\frac{\alpha_{2*}}{\sqrt{1-\xi^2}}(\bbeta_{s,j}-\xi \frac{\kappa_2}{\kappa_1}\bbeta_{0,j})\right)}{\sigma_*} \xrightarrow{d} \mathcal{N}(0,1) .
$$

\end{proposition}
The proof of \Cref{prop:asym_normal2} directly follows from the proof of \Cref{prop:asym_normal}.

\subsubsection{Deriving the limit for the squared error}\label{sec:exact_limit_square_error}
In this part, we derive the expressions for the limiting squared error and cosine similarity given in \eqref{eq:exact_cat_M_MAP_informative_square_error} and \eqref{eq:exact_cat_M_MAP_informative_similarity}.

Based on \Cref{thm:exact_cat_M_MAP_informative}, for any function $\Psi$ satisfying the stated regularity conditions, we have:
$$
\frac{1}{p} \sum_{j=1}^p \Psi\left(\sqrt{p}\left[\widehat{\boldsymbol{\beta}}_{M, j}-\alpha_{1 *} \beta_{0, j}-\frac{\alpha_{2 *}}{\sqrt{1-\xi^2}}\left(\beta_{s, j}-\xi \frac{\kappa_2}{\kappa_1} \beta_{0, j}\right)\right], \sqrt{p} \beta_{0, j}\right) \xrightarrow{\mathbb{P}} \mathbb{E}\left[\Psi\left(\sigma_* Z, \beta\right)\right],
$$
where $Z \sim \mathcal{N}(0,1)$ is independent of $\beta \sim \Pi(\beta)$, with $\mathbb{E}[\beta^2]=\kappa_1^2$.

We now derive the limiting squared error through the following four steps.

\noindent\textbf{Step 1:}
 Taking $\Psi(a, b)=\left(a+\left(\alpha_{1 *}-1\right) b\right)^2$, the RHS equals
 
 $\mathbb{E}\left[\left(\sigma_* Z+\left(\alpha_{1 *}-1\right) \beta\right)^2\right]=\sigma_*^2+\left(\alpha_{1 *}-1\right)^2 \kappa_1^2$,
 which implies
 \begin{equation}
     \label{eq1}
     \left\|\widehat{\boldsymbol{\beta}}_M-\boldsymbol{\beta}_0-\frac{\alpha_{2 *}}{\sqrt{1-\xi^2}}\left(\boldsymbol{\beta}_s-\xi \frac{\kappa_2}{\kappa_1} \boldsymbol{\beta}_0\right)\right\|_2^2\xrightarrow{\mathbb P}\sigma_*^2+\left(\alpha_{1 *}-1\right)^2 \kappa_1^2 .
 \end{equation}
\noindent\textbf{Step 2:}
 Taking $\Psi(a, b)=\left(a+ \alpha_{1 *}  b\right)^2$,
  the RHS  equals $ \sigma_*^2+\alpha_{1 *}^2 \kappa_1^2$,
 which implies
 \begin{equation}
     \label{eq2}
     \left\|\widehat{\boldsymbol{\beta}}_M- \frac{\alpha_{2 *}}{\sqrt{1-\xi^2}}\left(\boldsymbol{\beta}_s-\xi \frac{\kappa_2}{\kappa_1} \boldsymbol{\beta}_0\right)\right\|_2^2\xrightarrow{\mathbb P}\sigma_*^2+ \alpha_{1 *} ^2 \kappa_1^2 .
 \end{equation}
\noindent\textbf{Step 3:}
Taking $\Psi(a, b)=a   b$, the RHS equals 0,
 which implies that
 \begin{equation}
     \label{eq3}
    \left\langle \widehat{\boldsymbol{\beta}}_M-\alpha_{1*}\boldsymbol{\beta}_0-\frac{\alpha_{2 *}}{\sqrt{1-\xi^2}}\left(\boldsymbol{\beta}_s-\xi \frac{\kappa_2}{\kappa_1} \boldsymbol{\beta}_0\right),\bbeta_0\right\rangle\xrightarrow{\mathbb P}0 .
 \end{equation}

\noindent \textbf{Step 4:}
According to \Cref{condition:dist_condition(modify),condition:informative_syn_data}  that $\lim \|\bbeta_0\|^2=\kappa_1^2$, $\lim \|\boldsymbol{\beta}_s\|^2=\kappa_2^2$ and

$\lim \frac{1}{\|\boldsymbol{\beta}_0\|\|\boldsymbol{\beta}_s\|}\langle \boldsymbol{\beta}_0,\boldsymbol{\beta}_s\rangle=\xi$. We have
\begin{equation}
    \label{eq4}
    \left\langle \boldsymbol{\beta}_0,\boldsymbol{\beta}_s-\xi \frac{\kappa_2}{\kappa_1} \boldsymbol{\beta}_0 \right\rangle \rightarrow 0, \quad \left\|\frac{\alpha_{2 *}}{\sqrt{1-\xi^2}}\left(\boldsymbol{\beta}_s-\xi \frac{\kappa_2}{\kappa_1} \boldsymbol{\beta}_0\right)\right\|^2\rightarrow \alpha_{2*}^2\kappa_2^2
\end{equation}

Combining \eqref{eq1}--\eqref{eq4}, we obtain  $ \|\widehat{\bbeta}_M-\bbeta_0\|_2^2\xrightarrow{\mathbb{P}}(\alpha_{1*}-1)^2\kappa_1^2+\alpha_{2*}^2\kappa_2^2+\sigma_*^2$.

For cosine similarity of SRE, by \eqref{eq3}--\eqref{eq4}, the limit of numerator is
\begin{align*}
   \left\langle\widehat{\boldsymbol{\beta}}_M, \boldsymbol{\beta}_0\right\rangle&=  \left\langle \widehat{\boldsymbol{\beta}}_M-\alpha_{1*}\boldsymbol{\beta}_0-\frac{\alpha_{2 *}}{\sqrt{1-\xi^2}}\left(\boldsymbol{\beta}_s-\xi \frac{\kappa_2}{\kappa_1} \boldsymbol{\beta}_0\right),\bbeta_0\right\rangle\\
   & \quad +\alpha_{1 *}\left\|\boldsymbol{\beta}_0\right\|_2^2+\frac{\alpha_{2 *}}{\sqrt{1-\xi^2}}\left\langle\boldsymbol{\beta}_s-\xi \frac{\kappa_2}{\kappa_1} \boldsymbol{\beta}_0, \boldsymbol{\beta}_0\right\rangle\\
   &\xrightarrow{\mathbb P} \alpha_{1*}\kappa_1^2
\end{align*}
By the same logic, we have $\left\|\widehat{\boldsymbol{\beta}}_M\right\|_2^2 \xrightarrow{\mathbb{P}} \alpha_{1 *}^2 \kappa_1^2+\alpha_{2 *}^2 \kappa_2^2+\sigma_*^2$, then by Slutsky's theorem, we have
$$ \frac{\langle \widehat{\boldsymbol{\beta}}_M,\boldsymbol{\beta}_0 \rangle}{\|\widehat{\boldsymbol{\beta}}_M\|_2\|\boldsymbol{\beta}_0\|_2} \xrightarrow{\mathbb{P}}\frac{\alpha_{1*}\kappa_1}{\sqrt{\alpha_{1*}^2\kappa_1^2+\alpha_{2*}^2\kappa_2^2+\sigma_*^2}}.$$

\subsection{Limiting Predictive deviance and Generalization error}
\label{proof:deviance_generalization}

\Cref{thm:exact_cat_M_MAP_noninformative(modify)} also suggests the convergence of two quantities regarding the prediction performance of the SRE---specifically, the generalization error and the predictive deviance. 
Let $(\boldsymbol{X}_T, Y_T)$ be a pair of future data sampled from the same population as the observed data. 
Given the covariate vector $\boldsymbol{X}_T$ and the SRE $\widehat{\boldsymbol{\beta}}_{M}$, 
the binary prediction is $\widehat{Y}=\mathbf{1}\{\boldsymbol{X}_T^\top\widehat{\boldsymbol{\beta}}_M\geq 0 \}$. 
The following convergence of the generalization error holds:
\begin{align*}
    \mathbb{E}_{T}[\mathbf{1}\{\widehat{Y}\neq Y_T\}]  & \xrightarrow{\mathbb{P}} \mathbb{E}[\mathbf{1}\{Y_1\neq Y_2\}],   
\end{align*}
where $\mathbb{E}_{T}$ denotes the expectation over the randomness in $(\boldsymbol{X}_T,Y_T)$  and  $ Y_1= \mathbf{1} \{\sigma_*Z_1+\alpha_*\kappa_1 Z_2\geq 0 \}, Y_2\sim \text{Bern}(\rho^\prime(\kappa_1 Z_2))$ for i.i.d. standard normal variables $Z_1$ and $Z_2$. 
Furthermore, the predictive probability for $Y_T$ is $\rho^{\prime}(\boldsymbol{X}_T^\top\widehat{\boldsymbol{\beta}}_M)$ and we have the following convergence of the predictive deviance:
$$\begin{aligned}
    \mathbb{E}_{T}& \left[
       D(Y_T, \rho^{\prime}(\boldsymbol{X}_T^\top\widehat{\boldsymbol{\beta}}_M)) \right] \xrightarrow{\mathbb{P}}  \mathbb{E}\left[
       D( \rho^\prime(\kappa_1 Z_2) , \rho^\prime(\sigma_*Z_1+\alpha_*\kappa_1 Z_2))  \right], 
 \end{aligned}
$$
where the deviance is $D(a,b)=a\log(a/b)+(1-a)\log((1-a)/(1-b))$ with the  convention that $0\log(0):=0$. 


To begin with, we recall from \Cref{condition:dist_condition(modify)} and \Cref{thm:exact_cat_M_MAP_noninformative(modify)} that the following convergences hold: 
\begin{equation}\label{useful_result_deviance_generation}
    \begin{aligned}
        \|\bbeta_0\|_2^2 & \xrightarrow{\mathbb{P}}\kappa_1^2 ,\\
         \|\widehat \bbeta_M\|_2^2 & \xrightarrow{\mathbb{P}} \alpha_*^2\kappa_1^2+\sigma_*^2,\\
        \frac{\widehat{\bbeta}_M^\top \bbeta_0}{\|\widehat{\bbeta}_M\|_2\|\bbeta_0\|_2}& \xrightarrow{\mathbb{P}}\frac{\alpha_*\kappa_1}{\sqrt{\alpha_*^2\kappa_1^2+\sigma_*^2}}. 
    \end{aligned}
\end{equation}

\subsubsection{Limit of generalization error }
\label{supp:sec:limit_generalization_error}

Let $(\boldsymbol{X}_T, Y_T)$ be a pair of future data sampled from the same population as the observed data, i.e., $\bX_T\sim N(0,\mathbb{I}_p), Y_T\sim Bern(\rho^{\prime}(\bX_T^{\top}\bbeta_0))$. 
Given the covariate vector $\boldsymbol{X}_T$ and the SRE $\widehat{\boldsymbol{\beta}}_{M}$, the binary prediction is given by $\widehat{Y}=\mathbf{1}\{\boldsymbol{X}_T^\top\widehat{\boldsymbol{\beta}}_M\geq 0 \}$. 
We will use $\mathbb{E}_{T}$ to denote the expectation w.r.t. $(\boldsymbol{X}_T, Y_T)$. 
Therefore, $\mathbb{E}_{T}[\mathbf{1}\{\widehat{Y}\neq Y_T\}]$ is a random variable where randomness comes from $\widehat{\bbeta}_M$.

We first simplify $\mathbb{E}_{T}[\mathbf{1}\{\widehat{Y}\neq Y_T\}]$ as follows:
\begin{equation}\label{proof_simplif_generalization_error}
    \begin{aligned}
         \mathbb{E}_{T}[\mathbf{1}\{\widehat{Y}\neq Y_T\}]&=\mathbb E_{\bX_T}\left[ \mathbb E_{T} \left(\mathbf{1}\left\{Y_T\neq \mathbf{1}\{\boldsymbol{X}_T^\top\widehat{\boldsymbol{\beta}}_M\geq 0 \}\right\}\mid \bX_T\right)  \right]\\
    &=\mathbb E_{\bX_T} \left[\rho^{\prime}(\bX_T^\top \bbeta_0) \mathbf{1}\{\boldsymbol{X}_T^\top\widehat{\boldsymbol{\beta}}_M<0\} +(1-\rho^{\prime}(\bX_T^\top \bbeta_0))\mathbf{1}\{\boldsymbol{X}_T^\top\widehat{\boldsymbol{\beta}}_M\geq 0\}\right].
    \end{aligned}
\end{equation}
 
The evaluation of the second equation in \eqref{proof_simplif_generalization_error} relies on the following characterizations of $\bX_T^\top \bbeta_0$ and $\bX_T^\top \widehat{\bbeta}_M$. Let $Z_1, Z_2$ be two independent standard normal random variables. 
We introduce two random variables:
\begin{align*}
    & W_1:=\|\bbeta_0\|_2 Z_1,
\\
& W_2:= \frac{1}{\|\bbeta_0\|_2}\bbeta_0^\top \widehat{\bbeta}_M Z_1 +\sqrt{\|\widehat \bbeta_M\|_2^2-\left(\frac{1}{\|\bbeta_0\|_2}\bbeta_0^\top \widehat{\bbeta}_M\right)^2}Z_2 .
\end{align*}
This construction of $(W_1,W_2)$ preserves the conditional distribution of $(\bX_T^\top \bbeta_0,\bX_T^\top \widehat{\bbeta}_M)$ given the actual observed data, i.e.,
\begin{align*}
    & W_1\sim N(0,\|\bbeta_0\|^2),\quad  W_2 \sim N(0,\|\widehat \bbeta_M\|^2),\quad \operatorname{Cov}(\bX_T^\top \bbeta_0,W_2)= \bbeta_0^\top \widehat{\bbeta}_M,\\
    & \bX_T^\top \bbeta_0\sim N(0,\|\bbeta_0\|^2),\quad  \bX_T^\top \widehat{\bbeta}_M \sim N(0,\|\widehat \bbeta_M\|^2),\quad \operatorname{Cov}(\bX_T^\top \bbeta_0,\bX_T^\top \widehat{\bbeta}_M)= \bbeta_0^\top \widehat{\bbeta}_M. 
\end{align*}
Since $(W_1,W_2)\stackrel{D}{=}(\bX_T^\top \bbeta_0,\bX_T^\top \widehat{\bbeta}_M)$ conditional on the observed data, we can evaluate the second equation in \eqref{proof_simplif_generalization_error} as follows:
{\allowdisplaybreaks
\begin{align}\label{generalization_error_random}
&\mathbb E_{\bX_T} \left[\rho^{\prime}(\bX_T^\top \bbeta_0) \mathbf{1}\{\boldsymbol{X}_T^\top\widehat{\boldsymbol{\beta}}_M<0\} +(1-\rho^{\prime}(\bX_T^\top \bbeta_0))\mathbf{1}\{\boldsymbol{X}_T^\top\widehat{\boldsymbol{\beta}}_M\geq 0\}\right]\nonumber\\
        =&\mathbb E_{(W_1,W_2)} \left[\rho^{\prime}(W_1) \mathbf{1}\{W_2<0\} +(1-\rho^{\prime}(W_1))\mathbf{1}\{W_2\geq 0\} \right]\nonumber\\
        =&\mathbb E_{(Z_1,Z_2)} \left[\rho^{\prime}(\|\bbeta_0\|_2 Z_1) \mathbf{1}\left\{\frac{1}{\|\bbeta_0\|_2}\bbeta_0^\top \widehat{\bbeta}_M Z_1 +\sqrt{\|\widehat \bbeta_M\|_2^2-\left(\frac{1}{\|\bbeta_0\|_2}\bbeta_0^\top \widehat{\bbeta}_M\right)^2}Z_2<0\right\} \right.\nonumber\\
        & + \left.  (1-\rho^{\prime}(\|\bbeta_0\|_2 Z_1))\mathbf{1}\left\{\frac{1}{\|\bbeta_0\|_2}\bbeta_0^\top \widehat{\bbeta}_M Z_1 +\sqrt{\|\widehat \bbeta_M\|_2^2-\left(\frac{1}{\|\bbeta_0\|_2}\bbeta_0^\top \widehat{\bbeta}_M\right)^2}Z_2\geq 0\right\} \right]\\
        =&\mathbb E_{Z_1} \left[\rho^{\prime}(\|\bbeta_0\|_2 Z_1) \Phi\left(-\frac{\bbeta_0^\top \widehat{\bbeta}_M}{\sqrt{\|\bbeta_0\|^2\|\widehat \bbeta_M\|^2-(\bbeta_0^\top \widehat{\bbeta}_M)^2}}Z_1 \right)  \right.\nonumber\\
        & + \left.  (1-\rho^{\prime}(\|\bbeta_0\|_2 Z_1)) \Phi\left(\frac{\bbeta_0^\top \widehat{\bbeta}_M}{\sqrt{\|\bbeta_0\|^2\|\widehat \bbeta_M\|^2-(\bbeta_0^\top \widehat{\bbeta}_M)^2}}Z_1 \right) \right]\nonumber\\
        &= \mathbb E_{Z_1} [\rho^{\prime}(a_1 Z_1) \Phi(-a_2 Z_1)]+\mathbb E_{Z_1}[(1-\rho^{\prime}(a_1 Z_1)) \Phi(a_2 Z_1) ]\nonumber,   
\end{align}
}
where we use the shorthands $a_1:= \|\bbeta_0\|_2$ and $a_2:=\bbeta_0^\top \widehat{\bbeta}_M/{\sqrt{\|\bbeta_0\|^2\|\widehat \bbeta_M\|^2-(\bbeta_0^\top \widehat{\bbeta}_M)^2}} $ to  simplify the notation. 

Next we will study the convergence of $\mathbb E_{Z_1} [\rho^{\prime}(a_1 Z_1) \Phi(-a_2 Z_1)]$; the convergence of $\mathbb E_{Z_1} [(1-\rho^{\prime}(a_1 Z_1)) \Phi(a_2 Z_1)]$ can be shown using the same argument. Note that
\begin{align*}
    \mathbb E_{Z_1} \left[\rho^{\prime}(a_1 Z_1) \Phi\left(-a_2 Z_1 \right)  \right]
    = \int_{-\infty}^{\infty} \left[\rho^{\prime}(a_1 z) \Phi\left(-a_2 z \right) \phi(z)dz  \right].
\end{align*}
We will show $ \mathbb E_{Z_1} \left[\rho^{\prime}(a_1 Z_1) \Phi\left(-a_2 Z_1 \right)  \right]$ converges in probability to $ \mathbb E_{Z_1} \left[\rho^{\prime}(\kappa_1 Z_1) \Phi\left(-\alpha_*\kappa_1/\sigma_* Z_1 \right)  \right]$. 
Let $\bv=(v_1,v_2)$  be a two-dimensional vector. We define the function  $h(\bv,z):= \rho^{\prime}(v_1z)\Phi(-v_2 z) \phi(z)$, which is continuous with respect to $\bv$ for any $z\in \mathbb R$. Furthermore, $|h(\bv,z)|\leq \phi(z)$ for any $z\in \mathbb R$. By the dominated convergence theorem, the function $g(\bv):=\int_{-\infty}^{\infty} h(\bv,z)dz$ is   continuous   with respect to $\bv$. According to \eqref{useful_result_deviance_generation} and applying Slutsky's theorem, we conclude that
\begin{align*}
    \ba:=\left(\begin{array}{l}
 a_1 \\
 a_2
\end{array}\right) = \left(\begin{array}{l}
\|\bbeta_0\|_2 \\
\bbeta_0^\top \widehat{\bbeta}_M/{\sqrt{\|\bbeta_0\|^2\|\widehat \bbeta_M\|^2-(\bbeta_0^\top \widehat{\bbeta}_M)^2}}
\end{array}\right) \xrightarrow{\mathbb{P}} \left(\begin{array}{l}
 \kappa_1 \\
 {\alpha_*\kappa_1}/{\sigma_*}
\end{array}\right):= \ba_*.
\end{align*}
By the continuous mapping theorem, we have $g(\ba)\xrightarrow{\mathbb{P}} g(\ba_*)$, i.e., 
\begin{equation}\label{generalization_error_limit_1}
    \mathbb E_{Z_1} \left[\rho^{\prime}(a_1 Z_1) \Phi\left(-a_2 Z_1 \right)  \right]\xrightarrow{\mathbb{P}} \mathbb E_{Z_1} \left[\rho^{\prime}(\kappa_1 Z_1) \Phi\left(-\alpha_*\kappa_1/\sigma_* Z_1 \right)  \right].
\end{equation}
Similarly, we can show
\begin{equation}\label{generalization_error_limit_2}
    \mathbb E_{Z_1} \left[(1-\rho^{\prime}(a_1 Z_1)) \Phi\left(a_2 Z_1 \right)  \right]\xrightarrow{\mathbb{P}} \mathbb E_{Z_1} \left[(1-\rho^{\prime}(\kappa_1 Z_1)) \Phi\left(\alpha_*\kappa_1/\sigma_* Z_1 \right)  \right].
\end{equation}
Based on \eqref{proof_simplif_generalization_error}, \eqref{generalization_error_random}, \eqref{generalization_error_limit_1} and \eqref{generalization_error_limit_2}, the following convergence of the generalization error holds:
\begin{equation}\label{generalization_error_convergeence}
    \mathbb{E}_{T}[\mathbf{1}\{\widehat{Y}\neq Y_T\}]   \xrightarrow{\mathbb{P}}  \mathbb E_{Z_1} \left[\rho^{\prime}(\kappa_1 Z_1) \Phi\left(-\alpha_*\kappa_1/\sigma_* Z_1 \right)  \right] +\mathbb E_{Z_1} \left[(1-\rho^{\prime}(\kappa_1 Z_1)) \Phi\left(\alpha_*\kappa_1/\sigma_* Z_1 \right)  \right].
\end{equation}
To further simplify, the right-hand side of \eqref{generalization_error_convergeence} can be expressed as $\mathbb E[\mathbf{1}\{Y_1\neq Y_2 \}]$, where  $ Y_1= \mathbf{1} \{\sigma_*Z_1+\alpha_*\kappa_1 Z_2\geq 0 \}$, $ Y_2\sim \text{Bern}(\rho^\prime(\kappa_1 Z_2))$.  

\subsubsection{Limit of predictive deviance}

We will use a similar argument as in \Cref{supp:sec:limit_generalization_error} to show the following convergence of the predictive deviance:
       $$\begin{aligned}
    \mathbb{E}_{T} \left[
       D(Y_T, \rho^{\prime}(\boldsymbol{X}_T^\top\widehat{\boldsymbol{\beta}}_M)) \right] \xrightarrow{\mathbb{P}}  \mathbb{E}\left[
       D( \rho^\prime(\kappa_1 Z_1) , \rho^\prime(\sigma_*Z_2+\alpha_*\kappa_1 Z_1))  \right], 
 \end{aligned}
 $$
where the deviance is $D(a,b)=a\log(a/b)+(1-a)\log((1-a)/(1-b))$ with the convention that $0\log0 :=0$. 
 To prove this convergence, we first simplify $\mathbb{E}_{T} \left[
       D(Y_T, \rho^{\prime}(\boldsymbol{X}_T^\top\widehat{\boldsymbol{\beta}}_M)) \right] $ as follows:
\begin{equation}\label{deviance_simplification}
   \begin{aligned}
        \mathbb{E}_{T} \left[
       D(Y_T, \rho^{\prime}(\boldsymbol{X}_T^\top\widehat{\boldsymbol{\beta}}_M)) \right]&=-\mathbb E_T \left[Y_T \log(\rho^{\prime}(\boldsymbol{X}_T^\top\widehat{\boldsymbol{\beta}}_M))+(1-Y_T)\log(1-\rho^{\prime}(\boldsymbol{X}_T^\top\widehat{\boldsymbol{\beta}}_M))\right]  \\
       &=\mathbb E_T[\log(1+\exp(\boldsymbol{X}_T^\top\widehat{\boldsymbol{\beta}}_M))  -Y_T \boldsymbol{X}_T^\top\widehat{\boldsymbol{\beta}}_M ]\\
       &= \mathbb E_{\bX_T }[\log(1+\exp(\boldsymbol{X}_T^\top\widehat{\boldsymbol{\beta}}_M))]-\mathbb E_{\bX_T}[\rho^{\prime}(\bX_T^\top \bbeta_0)\boldsymbol{X}_T^\top\widehat{\boldsymbol{\beta}}_M], 
   \end{aligned}
\end{equation}
where the first equation follows from $0\log 0 +1 \log 1=0$. 
Based on the characterizations of $\bX_T^\top \bbeta_0$ and $\bX_T^\top \widehat{\bbeta}_M$ we used in \Cref{supp:sec:limit_generalization_error}, the right-hand side of the last equation in \eqref{deviance_simplification} is equal to
\begin{equation}\label{deviance_limit}
    \begin{aligned}
        &\mathbb E_{W_2}\left[\log(1+\exp(W_2))\right]-\mathbb E_{(W_1,W_2)}\left[\rho^{\prime}(W_1)W_2 \right]\\
     =&\mathbb E_{Z_1}\left[\log(1+\exp(\|\widehat{\bbeta}_M\|Z_1))\right]\\
     &-\mathbb E_{(Z_1,Z_2)}\left[\rho^{\prime}(\|\bbeta_0\|_2 Z_1)\left(\frac{1}{\|\bbeta_0\|_2}\bbeta_0^\top \widehat{\bbeta}_M Z_1 +Z_2\sqrt{\|\widehat \bbeta_M\|_2^2-\left(\frac{1}{\|\bbeta_0\|_2}\bbeta_0^\top \widehat{\bbeta}_M\right)^2} \right)\right]\\
     =&\mathbb E_{Z_1}\left[\log(1+\exp(\|\widehat{\bbeta}_M\|_2Z_1))\right]-\mathbb E_{Z_1}\left[\rho^{\prime}(\|\bbeta_0\|_2Z_1)\frac{1}{\|\bbeta_0\|_2}\bbeta_0^\top \widehat{\bbeta}_M Z_1 \right].
    \end{aligned}
\end{equation}
To apply the continuous mapping theorem, we define two functions:  $h_2(x,z)=\log(1+\exp(x z))\phi(z)$ and $h_3(y_1,y_2,z)=\rho^{\prime}(y_1z)y_2 z \phi(z)$.
We need to show that the function $g_2(x)= \int_{-\infty}^{\infty} h_2(x,z) dz $ is continuous with respect to $x>0$ and that the function $g_3(y_1,y_2)=\int_{-\infty}^{\infty} h_3(y_1,y_2,z)dz$ is continuous with respect to $(y_1,y_2)$, where $y_1>0, y_2\in \mathbb R$. Note that $h_2$ and $h_3$ are continuous. Furthermore, based on the uniform boundedness that $\|\widehat{\boldsymbol{\beta}}_M\|\leq c_1$ indicated in \eqref{borelcantelli_beta} and the two inequalities $\log(1+\exp(t))\leq |t|+\log(2)$ and $|\rho^{\prime}(t)|\leq 1/4$, we conclude that there exists a large constant $c_1>0$ independent of the sample size $n$ such that $|h_2(x,z)|\leq c_1|z|\phi(z)+\log(2)\phi(z)$ and $|h_3(y_1,y_2,z)|\leq c_1 |z|\phi(z)$. By the dominated convergence theorem, the function $g_2(x)$ is   continuous   with respect to $x\in (0,c_1]$ and $g_3(y_1,y_2)$ is continuous with respect to $(y_1,y_2)$, where $y_1>0, -c_1<y_2 <c_1$. According to \eqref{useful_result_deviance_generation} and applying Slutsky's theorem, we conclude that
\begin{align*}
    &\mathbb E_{Z_1}\left[\log(1+\exp(\|\widehat{\bbeta}_M\|_2Z_1))\right]-\mathbb E_{Z_1}\left[\rho^{\prime}(\|\bbeta_0\|_2Z_1)\frac{1}{\|\bbeta_0\|_2}\bbeta_0^\top \widehat{\bbeta}_M Z_1 \right]\\
      \xrightarrow{\mathbb{P}}& \mathbb E_{Z_1}\left[\log(1+\exp(\sqrt{\sigma_*^2+\alpha_*^2\kappa_1^2}Z_1))\right]-\mathbb E_{Z_1}\left[\rho^{\prime}(\kappa_1 Z_1)\alpha_*\kappa_1 Z_1 \right].
\end{align*}
 Furthermore, we can express the limiting value as $ \mathbb{E}\left[
       D( \rho^\prime(\kappa_1 Z_1) , \rho^\prime(\sigma_*Z_2+\alpha_*\kappa_1 Z_1))  \right]$ by verifying the equivalence through similar steps to those in \eqref{deviance_simplification}.

 \subsection{Proofs for GLM}\label{proof_GLM_supp}

\Cref{GLM_thm:stability_finite_M} has been proved in \Cref{proof_sec:logitic_stability}.
The proof of \Cref{GLM_thm:MAP_uniqueness} directly follows the proof in \Cref{proof_sec:logitic_existence}.
The proof of \Cref{thm:GLM_MAP_bounded} directly follows the proof in \Cref{proof:MAP_bounded_proportional}.
For \Cref{thm:GLM_post_mode_consistency}, the proof follows the proof in \Cref{proof_sec:logitic_consistency} where we replace the inequality $|y-\rho^{\prime}\left(\boldsymbol{x}^{\top} \boldsymbol{\beta}\right)|\leq 1$ by
$|\partial_\theta \ell_{G}(y,\boldsymbol{x}^{\top} \boldsymbol{\beta})|\leq L_g$.

The proof of \Cref{proposition:GLM_exact_cat_M_MAP_informative}
 follows from the proof in \Cref{proof_sec:logitic_exact}; the only difference is the application of the strong law of large numbers to $\mathbf{y}_1$ and $\mathbf{y}_2$, because the response distributions differ.
Accordingly, the forms of PO and AO remain unchanged. 
The asymptotic behavior of the SRE is tracked by the optima of the following optimization problem:
\begin{equation}\label{GLM_AO_limit}
    \max_{r}\min_{\sigma,\tilde{\nu}, \alpha_1,\alpha_2}  \mathcal{R}(\sigma,r,\tilde{\nu},\alpha_1,\alpha_2)
\end{equation}
with
\begin{equation}
    \begin{aligned}
         \mathcal{R}(\sigma,r,\tilde{\nu},\alpha_1,\alpha_2)&:=\mathbb{E}(M_{\rho(\cdot)}(\kappa_1\alpha_1 Z_1+\kappa_2\alpha_2 Z_2+\sigma Z_3+\frac{\tilde{\nu}}{r}Y_1,\frac{\tilde{\nu}}{r})) \\
    &+ \tau_0\mathbb{E}(M_{\rho(\cdot)}(\kappa_1\alpha_1 Z_1+\kappa_2\alpha_2 Z_2+\sigma Z_3+\frac{\tau_0\tilde{\nu}}{rm}Y_2,\frac{\tau_0\tilde{\nu}}{rm}))\\
    & +\tau_0\left[-\frac{\tau_0\tilde{\nu}}{2rm}\mathbb E(Y_2^2)-\kappa_1\alpha_1\mathbb E(Y_2Z_1)-\kappa_2\alpha_2\mathbb E(Y_2Z_2)\right]\\
&-\frac{r\sigma}{\sqrt{\delta}}+\frac{r\tilde{\nu}}{2}  -\frac{\tilde{\nu}}{2r}\mathbb E (Y_1^2)-\kappa_1\alpha_1 \mathbb E(Y_1Z_1),
    \end{aligned}
\end{equation}
where $Z_1,Z_2,Z_3$ are independent standard Gaussian random variables,  $Y_1\mid Z_1$ is distributed according to the corresponding GLM with linear predictor equal to $\kappa_1 Z_1$ and $Y_2\mid Z_1,Z_2$ is distributed according to the corresponding GLM with linear predictor equal to $\kappa_2\xi Z_1+\kappa_2 \sqrt{1-\xi^2}Z_2$.

\end{appendix}

\bibliographystyle{agsm}

\bibliography{bibfile}

\end{document}